\newtheorem{theorem}{Theorem}[section]
\newtheorem{prop}[theorem]{Proposition}
\newtheorem{lemma}[theorem]{Lemma}
\newtheorem{problem}[theorem]{Problem}
\newtheorem{cor}[theorem]{Corollary}
\newcommand{\R}{{\mathbb R}}
\newcommand{\C}{{\mathbb C}}
\newcommand{\Z}{{\mathbb Z}}
\newcommand{\Q}{{\mathbb Q}}
\newcommand{\T}{{\mathbb T}}
\newcommand{\la}{\lambda} 
\newcommand{\al}{\alpha}
\newcommand{\om}{\omega}
\newcommand{\abs}[1]{\left|#1\right|}
\newenvironment{remark}{\refstepcounter{theorem}\par\medskip\noindent{\em
Remark~\thetheorem.}}{\unskip\nobreak\hfill\hbox{ $\oslash$}\par\bigskip}
\newenvironment{question}{\refstepcounter{theorem}\par\medskip\noindent{\em 
Question~\thetheorem.}}{\unskip\normalfont \unskip\nobreak\hfill\hbox{\bigskip}}
\newenvironment{example}{\refstepcounter{theorem}\par\medskip\noindent{\em
Example~\thetheorem.}}{\unskip\nobreak\hfill\hbox{ $\oslash$}\par\bigskip}
\newenvironment{definition}{\refstepcounter{theorem}\par\medskip\noindent{\em
Definition~\thetheorem.}}
\newcommand{\got}[1]{\mathfrak{#1}}
\newcommand{\imm}{\hookrightarrow}
\newcommand{\adjustedarrow}[1]{\xhookrightarrow{\raisebox{-1.5pt}[3pt][0pt]{\ensuremath{\scriptstyle{#1}}}\,}}
\newcommand{\immrk}{\adjustedarrow{\R^k}}
\newcommand{\immG}{\adjustedarrow{G}}
\newcommand{\sympplain}{\mathrm{Symp}}
\newcommand{\sympG}{\sympplain^{2n,G}}
\newcommand{\sympT}{\sympplain^{2n,\T^n}_{\mathrm{T}}}
\newcommand{\vol}{\mathrm{vol}}
\newcommand{\cB}[2]{c_\mathrm{B}^{#1,#2}}
\newcommand{\cBmk}{\cB{m}{k}}
\newcommand{\toricpack}{\mathcal{T}}
\newlength{\leftside}\setlength{\leftside}{2.5cm}
\newlength{\rightside}\setlength{\rightside}{2.5cm}
\title{
Hamiltonian and symplectic symmetries: an introduction}
\author{\'Alvaro Pelayo}
\begin{document}

\maketitle

\begin{center}
\emph{In memory of Professor J.J.~Duistermaat (1942--2010)}
\end{center}

\begin{abstract}
Classical 
 mechanical systems are modeled by a symplectic manifold $(M,\omega)$, and their symmetries,
 encoded in the action of a Lie group $G$ on $M$ by diffeomorphisms that preserves $\omega$. These actions, which are called ``symplectic",  
 have been studied in the past forty years, following  the works 
of Atiyah, Delzant, Duistermaat, Guillemin, Heckman, Kostant, Souriau, and Sternberg in the 1970s and 1980s
on symplectic actions of compact abelian Lie groups that are, in addition, of ``Hamiltonian" type, i.e. they also
satisfy Hamilton's equations. Since then a number of connections with combinatorics, finite dimensional integrable 
Hamiltonian systems, more general symplectic actions, and topology, have flourished. 
In this paper we review classical and recent results on Hamiltonian and non Hamiltonian symplectic group actions roughly 
starting from the results of these authors. The paper also serves as a quick introduction to the basics
of symplectic geometry.
\end{abstract}

\section{Introduction}

Symplectic geometry is a geometry concerned with the study of a notion of signed area, rather
than length or distance.  It can be, as we will see, less intuitive than Euclidean or metric geometry and it is taking
mathematicians many years to understand some of its intricacies (which is still work in progress).

The word ``symplectic" goes back
to Hermann Weyl's (1885-1955) book~\cite{Wy} on Classical Groups (1946). It derives from a Greek word 
meaning ``complex".  Since the word ``complex" had already a precise meaning in mathematics, and was
already used at the time of Weyl, he  took the Latin roots of ``complex" (which means ``plaited together") and replaced them by the Greek roots ``symplectic".

The origins of symplectic geometry are in classical mechanics, where the phase space of a mechanical
system is modeled by a ``{symplectic manifold}" $(M,\omega)$,  that is, a smooth manifold $M$ endowed with a non\--degenerate closed $2$\--form $\omega \in \Omega^2(M)$, called a ``{symplectic form}".  At each point $x \in M$,
 $\omega_x \colon {\rm T}_xM \times {\rm T}_x M \to \mathbb{R}$ is an antisymmetric bilinear form on ${\rm T}_xM$, and given $u,v \in {\rm T}_xM$ the real number
 $\omega_x(u,v)$  is called the ``{symplectic area}" spanned by $u$ and $v$.  Intuitively, $\omega$ gives
 a way to measure area along $2$\--dimensional sections of $M$, which itself can be of an arbitrarily large dimension.
 
The most typical example of a symplectic manifold  is a cotangent bundle, the phase space of mechanics, 
which comes endowed with a canonical
symplectic form. Initially it was the study of mechanical systems which motivated many of the developments in
symplectic geometry.  

Joseph\--Louis Lagrange (1736\--1813) gave the first example of a symplectic manifold in 1808,
in his study of the motion of the planets under the influence of their mutual gravitational 
interaction~\cite{La1,La2}. An explicit description of Lagrange's construction and his derivation 
of what are known today  as Hamilton's equations is given by  Weinstein in~\cite[Section 2]{Wesurvey}. 

The origins of the current view point in symplectic geometry may be traced back to Carl Gustav Jacob Jacobi (1804\--1851) and then William Rowan Hamilton's (1805\--1865) deep formulation of Lagrangian mechanics, around 1835. Hamilton was expanding on and reformulating  ideas 
of Galileo Galilei (1564\--1642), Christiaan Huygens (1629\--1695), Leonhard Euler (1707\--1883), Lagrange, 
and Isaac Newton (1642\--1727) 
about the structure and behavior of orbits of planetary systems.  

At the time of Newton and Huygens 
the point of view in classical mechanics was geometric. Later Lagrange, Jacobi, and Hamilton approached 
the subject from an analytic view point. Through their influence the more geometric view point fell out
of fashion. Further historical details and references are given by Weinstein in~\cite{Wesurvey}. Several treatments about mechanical systems
in the 1960s and 1970s, notably including~\cite{Ar74, Ar1978, am0, am, st12},  had an influence in the development 
of ideas in symplectic geometry.

The modern  view point in symplectic geometry starts with the important contributions of a number of authors in the early 1970s (some slightly before or slightly after) including the works of Ralph Abraham, Vladimir Arnold, 
Johannes J. Duistermaat,  Victor Guillemin,  Bertram Kostant, Paulette Libermann, George Mackey,  Jerrold Marsden,  Clark Robinson, Jean-Marie Souriau, Shlomo Sternberg, and Alan Weinstein. Even at these early stages, many other authors contributed to aspects of the subject so the list of 
developments is extensive and we do not make an attempt to cover it here.

 Symplectic geometry went through a series of developments in the period 1970\--1985 where connections with other
areas flourished, including: (i) geometric, microlocal and semiclassical analysis, as in the works of Duistermaat, Heckman, and  H\"ormander~\cite{DuHe, DuHo,Du4}; Duistermaat played a leading role
 in establishing relations between the microlocal and symplectic communities in particular through his
 article on oscillatory integrals and Lagrange immersions~\cite{Du4}; (ii)
 completely integrable systems, of which Duistermaat's article on global action\--angle coordinates~\cite{Du1980} may be considered to mark the beginning of the global theory of completely integrable systems; (iii) Poisson geometry, as in Weinstein's foundational article~\cite{WePoisson}; (iv) Lie theory and geometric quantization, as in Kostant and Souriau's geometric quantization~\cite{kostant1970, souriau1966} (in early 1960s  the quantum view point had already reached significant relevance in mathematics, 
see Mackey's mathematical foundations of quantum mechanics~\cite{Ma}), on which the works
by Segal~\cite{Se} and Kirillov~\cite{Ki} had an influence; and (v) symplectic and Hamiltonian group actions, as pioneered by Atiyah~\cite{atiyah},
Guillemin\--Sternberg~\cite{gs},  Kostant~\cite{kostant1966}, and Souriau~\cite{souriau1970}. 
It is precisely symplectic and Hamiltonian group actions that we are interested in this paper, 
 and we will give abundant references later.

An influential precursor in the study of global aspects in symplectic geometry, the study of which is often referred to as ``symplectic topology",  is Arnold's 
conjecture~\cite[Appendix 9]{Ar1978} (a particular case appeared in~\cite{Ar76};  
see  Zehnder's article~\cite{Ze1986} for an expository account). 
 Arnold's conjecture is  a higher dimensional analogue of the classical fixed point theorem of 
Henri Poincar\'e (1854\--1912) and George Birkhoff (1884-1944) which says that any area\--preserving periodic twist  of a closed annulus  has at least two geometrically distinct fixed points.  This fixed point result can be traced to the work of  Poincar\'e in celestial mechanics~\cite{Po93}, where he showed that the study of the dynamics of certain cases of the restricted $3$\--Body Problem 
may be reduced to investigating area\--preserving maps, and led him to this result, which he stated in~\cite{Po12} 
 in 1912. The complete proof was given  by Birkhoff~\cite{Bi1} in 1925.  Arnold realized
that the higher dimensional version of the result of Poincar\'e and Birkhoff should concern ``{symplectic maps}", 
that is, maps preserving a symplectic form, and not volume\--preserving
maps, and formulated his conjecture. Arnold's conjecture has been responsible for many of the developments in
symplectic geometry (as well as in other subjects like Hamiltonian dynamics and topology).

 In 1985 Gromov~\cite{Go85} introduced pseudoholomorphic curve techniques into symplectic geometry
and constructed the first so called ``{symplectic capacity}", a notion of monotonic symplectic invariant pioneered
by Ekeland and Hofer~\cite{EkHo1989, Hofer1990, Hofer1990b} and developed by Hofer and his collaborators,
as well as many others, from the angle of 
dynamical systems and Hamiltonian dynamics.

There have been many major developments since the early 1980s, and on many different fronts of the symplectic geometry and topology, and covering them (even very superficially)
would be beyond the scope of this paper. In this article we study only on the topic of   
symplectic and Hamiltonian group actions, item (v) above, starting roughly with the work of Atiyah and Guillemin\--Sternberg.

 While the phase space of a mechanical
system is mathematically modeled by a symplectic manifold, its 
symmetries are described by symplectic group actions. The study of such symmetries or actions
 fits into a large body of work by the name of ``{equivariant symplectic geometry}", which
 includes tools of high current interest also in algebraic geometry, such as equivariant cohomology
 on which we will (very) briefly touch.

Mathematically speaking, equivariant symplectic geometry is concerned with the study of smooth actions of Lie groups $G$ on symplectic manifolds $M$, by means of diffeomorphisms $\varphi \in {\rm Diff}(M)$
which pull\--back the symplectic form $\omega$ to itself:
 $\varphi^*\omega=\omega.$ A map $\varphi$ satisfying this condition is called a
``{symplectomorphism}"  following Souriau, or a ``{canonical transformation}". Actions
satisfying this natural condition are called ``{symplectic}".  As a first example of a symplectic action 
 consider $S^2 \times (\R/\Z)^2$  with the product form (of any areas forms on
$S^2$ and $(\R/\Z)^2$). The action of the $2$\--torus $(\R/\Z)^2$ by translations 
on the right factor is symplectic.

 In this paper we treat primarily the case when
$G$ is a compact, connected, abelian Lie group, that is, a torus: $T \simeq (S^1)^k, \,\,k \geq 1.$ 
Let $\mathfrak{t}$ be the Lie algebra of $T$, and let $\mathfrak{t}^*$ be its dual Lie algebra. We think
of $\mathfrak{t}$ as the tangent space at the identity $1 \in T$.

Equivalently, a $T$\--action is symplectic if
 ${\rm L}_{X_M}\omega=0$ for every $X \in \mathfrak{t}$, where ${\rm L}$ is
the Lie derivative and $X_M$ is the vector field generated by the $T$\--action from $X \in \mathfrak{t}$
via the exponential map. In view of the homotopy
formula for the Lie derivative, this is equivalent to 
\begin{eqnarray} \label{closed}
{\rm d}(\omega(X_M,\cdot))=0
\end{eqnarray}
 for every $X \in \mathfrak{t}$.

A fundamental subclass of symplectic
 actions admit what is called a ``{momentum map}" $\mu \colon M \to \mathfrak{t}^*$,
 which is a $\mathfrak{t}^*$\--valued smooth
 function on $M$ which encodes information about $M$ itself, the symplectic form, and the $T$\--action, and
 is characterized by the condition that for all $X \in \mathfrak{t}$:
 \begin{equation}
-{\rm d} \langle \mu, \, X \rangle = \omega(X_M,\cdot). \label{x}
\end{equation}
Such very special symplectic actions  are called ``{Hamiltonian}" (the momentum map was
introduced generally for any Lie group action by Kostant \cite{kostant1966} and Souriau \cite{souriau1966}). 
A simple example would
be to take $M=S^2 \subset \R^3$ and $T=S^1$ acting by rotations about the $z=0$ axis in $\R^3$.
In this case $\mathfrak{t}^*\simeq \R^* \simeq \R$ and $\mu \colon (\theta,h) \mapsto h$. 
 
The fundamental observation here is that the right hand side
of equation (\ref{x}) is always a closed $1$\--form by (\ref{closed}) and being Hamiltonian may be rephrased as 
the requirement that this form is moreover exact. Therefore, the obstruction for a symplectic 
action to being Hamiltonian lies in the first cohomology group ${\rm H}^1(M;\mathbb{R})$.

In particular,
any symplectic action on a simply connected manifold is Hamiltonian. Notice that is an extremely
stringent condition, for instance by (\ref{x}) it forces the action to have fixed points on a compact manifold (because
$\mu$ always has critical points, and these correspond to the fixed points of the action). 

Many symplectic
actions of interest in complex algebraic geometry and K\"ahler geometry are symplectic but not
Hamiltonian; one such case is the action of the $2$\--torus on the Kodaira variety, which appears in  Kodaira's description~\cite[Theorem~19]{kodaira} of the  compact complex analytic surfaces that 
have a  holomorphic $(2,\, 0)$\--form that is nowhere 
vanishing, described later in this paper (Example~\ref{ktexample}).  Other symplectic actions that do not admit a momentum map include examples of interest  in classical differential geometry (eg.~multiplicty free spaces), and topology (eg.~nilmanifolds over nilpotent Lie groups).  

 Recent work of Susan Tolman (Theorem~\ref{MT}) 
indicates that even for low dimensional Lie
groups $G$ most symplectic actions are not Hamiltonian. The advantage of having the existence of a momentum map $\mu \colon M \to \mathfrak{t}^*$ for
a symplectic action  has led to a rich 
general theory, part of which is described in this article. One can often find out
information about $(M,\omega)$ and the $T$\--action through the study of $\mu$. For instance, if $\dim M =2 \dim T$,
Delzant proved~\cite{De} that the image 
$\mu(M) \subset \mathfrak{t}^*$ completely characterizes $(M,\omega)$ and the $T$\--action, up
to symplectic and $T$\--equivariant transformations.
We will see a proof of this result later in the paper.

 Hamiltonian actions
have been extensively studied since the 1970s following  the seminal works 
of Atiyah~\cite{atiyah}, Delzant~\cite{De}, Duistermaat\--Heckman~\cite{DuHe}, 
Guillemin\--Sternberg~\cite{gs}, and Kostant~\cite{kostant1966}, and have been
a motivation to study more general symplectic actions.  

The majority of proofs/results about general
symplectic actions use in an essential way the Hamiltonian theory, 
but also include other ingredients. The
fact that  there is not necessarily a momentum map $\mu$ means that Morse
theory for $\mu$ and Duistermaat\--Heckman theory, often used in the Hamiltonian case,
must be replaced by alternative techniques.

Research on symplectic actions is still at its infancy and there are many unsolved
problems. A question of high interest has been whether there are symplectic non\--Hamiltonian $S^1$\--actions
with some, but only finitely many fixed points on compact connected manifolds. The aforementioned result by 
 Tolman provides an example~\cite{To15} of such an action with thirty two fixed points.  
In \cite{GoPeSa2015} the authors give a general lower bound for the number of fixed
points of any symplectic $S^1$\--action, under a mild assumption.

  Approximately the first half of the paper concerns
 the period from 1970 to approximately 2002, where  the emphasis is on symplectic Hamiltonian actions,
 its applications, and its implications,  including the (subsequent) interactions with completely integrable systems.
 The second half of the paper concerns symplectic actions which are not necessarily Hamiltonian, with a focus
 on the developments that took place in the approximate period from 2002 to 2015.
 
 Of course this separation is somewhat artificial, because Hamiltonian actions play a fundamental
 role in the study of other types of symplectic actions.

The paper  gives a  succinct introduction to the basics of symplectic
 geometry, followed by an introduction to symplectic and Hamiltonian actions, and
 it is is written for a general audience of mathematicians.    It  is not a survey, which would require, due to the volume of works,
 a much longer paper.  
 We will cover a few representative proofs with the goal of giving readers a flavor of the subject. The background assumed is  knowledge of
 geometry and topology for instance as covered in second year graduate courses.

 The author is supported by  NSF CAREER Grant DMS-1518420. He is very thankful to
Tudor Ratiu and Alan Weinstein for discussions and for pointing out  several useful references.
 \smallskip

   \emph{Outline of Topics.} Section~\ref{symm} give 
 a succinct introduction to symplectic geometry for
 a general mathematical audience.

 Sections~\ref{ham:sec} and \ref{ham2:sec} introduce the basics of  symplectic
 and Hamiltonian Lie group actions.
 
Section~\ref{examples2} contains examples of  Hamiltonian and symplectic non\--Hamiltonian torus actions. 
 
 Section~\ref{hck} includes classification results on symplectic Hamiltonian actions of Lie groups.
 In most cases, the Lie group is compact, connected, and abelian; but a certain case of
 noncompact groups which is pertinent to completely integrable systems is also included.
  
 The material from Section~\ref{ffg} onwards is probably less well known to 
 nonexperts; it focuses on developments on symplectic group actions, not necessarily Hamiltonian, 
 in the past fifteen years, with an emphasis on classification results in terms of
 symplectic invariants.
  
\section{Symplectic manifolds} \label{symm}

Symplectic geometry is  concerned with 
the study a notion of signed \emph{area}, rather than length,  distance, or volume.  In this sense it is a
peculiar type of geometry, which displays certain non intuitive features, as we see in this section. 

Symplectic geometry displays a degree of flexibility and rigidity at the same time which makes it a
rich subject, the study of which is of interest well beyond its original connection to classical mechanics.

\subsection{Basic properties} \label{s2}

For the basics of symplectic geometry, we recommend the textbooks \cite{HoZe1994, AC, MS}. This section gives a 
quick overview of the subject, and develops the fundamental notions which we need for the following
sections. Unless otherwise specified all manifolds are ${\rm C}^{\infty}$\--smooth and have no boundary.

\begin{definition}
A \emph {symplectic manifold} is a pair $(M,\, \omega)$ consisting of
a smooth ${\rm C}^{\infty}$\--manifold $M$, and a smooth $2$\--form $\omega$ on $M$ which is:
(1) \emph{closed}, i.e. ${\rm d}\omega=0$;
(2)  \emph{non\--degenerate}, i.e. for each $x \in M$ it holds that if $u \in {\rm T}_xM$ is such that 
$\omega_x(u,v)=0$ for all $v \in {\rm T}_xM$
at $x$, then necessarily $u=0$.  The  form $\omega$ is called a \emph{symplectic form}. 
\end{definition}

\medskip
Conditions (1) and (2) can be geometrically understood as we will see.

\begin{example}
In dimension $2$ a symplectic form and
an area form are the same object.  Accordingly, the simplest example of a symplectic
manifold is given by a surface endowed with an area form.  
 A typical non\--compact example is the Euclidean space $\mathbb{R}^{2n}$ with 
coordinates $(x_1,y_1,\ldots,x_n,y_n)$ equipped with the symplectic form 
$\sum_{i=1}^n{\rm d}x_i \wedge {\rm d}y_i.$
Any open subset $U$ of $\mathbb{R}^{2n}$ endowed with the symplectic form
given by this same formula is also a symplectic manifold.
\end{example}

Let $X$ be a smooth $n$\--dimensional manifold and let
$(V,\, x_1, \ldots, x_n)$ be a smooth chart for $X$. To this chart we can associate a cotangent chart 
$({\rm T}^*V,\,x_1,\, \ldots,\,x_n,\,\xi_1,\, \ldots,\,\xi_n)$ on which we can define, in coordinates,
the smooth $2$\--form given by the formula
$
\omega_0:=\sum_{i=1}^n {\rm d} x_i \wedge {\rm d} \xi_i .
$

\begin{prop} \label{pqr}
The expression for $\omega_0$ is coordinate independent, that is, it defines a canonical form
on the cotangent bundle ${\rm T}^*X$, which is moreover symplectic and exact. 
\end{prop}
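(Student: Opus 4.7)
The plan is to exhibit $\omega_0$ as the exterior derivative of a globally defined $1$\--form on ${\rm T}^*X$ (the tautological or Liouville $1$\--form), which will immediately give coordinate independence, closedness, and exactness all at once. Non\--degeneracy will then be a local computation.

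First I would define, intrinsically and without reference to charts, the \emph{tautological $1$\--form} $\theta \in \Omega^1({\rm T}^*X)$ as follows. Let $\pi \colon {\rm T}^*X \to X$ denote the canonical projection. At a point $p=(x,\xi) \in {\rm T}^*X$, the differential ${\rm d}\pi_p \colon {\rm T}_p({\rm T}^*X) \to {\rm T}_xX$ is well defined, and $\xi \in {\rm T}_x^*X$ is by definition a linear functional on ${\rm T}_xX$. Set
\[
\theta_p(v) := \xi\bigl({\rm d}\pi_p(v)\bigr) \qquad \text{for } v \in {\rm T}_p({\rm T}^*X).
\]
This definition uses only the manifold structure of $X$ and the bundle structure of ${\rm T}^*X$, so $\theta$ is manifestly coordinate independent.

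Next I would compute $\theta$ in the cotangent chart $({\rm T}^*V, x_1,\ldots,x_n,\xi_1,\ldots,\xi_n)$. By construction ${\rm d}\pi$ sends $\partial/\partial x_i$ to $\partial/\partial x_i$ and annihilates $\partial/\partial \xi_i$, while at $(x,\xi)$ the covector $\xi$ is $\sum_i \xi_i\,({\rm d}x_i)_x$. A direct check then gives the local formula $\theta = \sum_{i=1}^n \xi_i\,{\rm d}x_i$. Taking the exterior derivative yields ${\rm d}\theta = \sum_{i=1}^n {\rm d}\xi_i \wedge {\rm d}x_i = -\sum_{i=1}^n {\rm d}x_i \wedge {\rm d}\xi_i = -\omega_0$. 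Since $\theta$ is globally defined on ${\rm T}^*X$ independently of any chart, so is $-{\rm d}\theta$; hence the local expressions for $\omega_0$ in different cotangent charts patch together into a single global form, which is both closed (being exact) and exact (with primitive $-\theta$).

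Finally, for non\--degeneracy it suffices to work in one cotangent chart, since the property is pointwise. In the basis $\{\partial/\partial x_i, \partial/\partial \xi_i\}_{i=1}^n$ of ${\rm T}_p({\rm T}^*X)$, the matrix of $\omega_0$ is the standard block form $\begin{pmatrix} 0 & I_n \\ -I_n & 0 \end{pmatrix}$, which is invertible; equivalently, if $u = \sum_i(a_i\,\partial/\partial x_i + b_i\,\partial/\partial \xi_i)$ satisfies $\omega_0(u,\cdot)=0$, pairing with $\partial/\partial \xi_i$ and $\partial/\partial x_i$ forces $a_i=b_i=0$. I do not expect any serious obstacle here: the only subtle point is being careful that the intrinsic definition of $\theta$ is the right one, so that the local formula falls out unambiguously; once that is in place everything else is a routine exterior\--algebra computation.
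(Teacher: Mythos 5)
Your proposal is correct and follows essentially the same route as the paper: both exhibit $\omega_0$ as $-{\rm d}$ of the tautological one\--form $\sum_i \xi_i\,{\rm d}x_i$, whose intrinsic definition gives coordinate independence and exactness at once. You simply spell out the intrinsic definition of the primitive and the pointwise non\--degeneracy check, which the paper leaves to the reader.
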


\begin{proof}
It follows by
observing that 
$\omega_0=-{\rm d}\alpha
$ 
where
$
\alpha:=\sum_{i=1}^n \xi_i \,{\rm d}x_i, 
$
which one can  check that is intrinsically defined. 
\end{proof}

A  way to construct symplectic
manifolds is by taking products, and endowing them with the product
symplectic form. As such, $S^2 \times \mathbb{R}^{2n}$ or ${\rm T}^*X \times {\rm T}^* Y$
are naturally symplectic manifolds, where $X$ and $Y$ are manifolds of any finite dimension.

There is a geometric interpretation of the closedness of the symplectic form in terms of area of
a surface, as follows. Define the \emph{symplectic area} of a surface $S$ (with or without boundary) 
inside of a symplectic
manifold $(M,\omega)$ 
 to be the integral 
 \begin{eqnarray} \label{sf}
 {\rm symplectic \,\,area\,\, of\,\,} S:=\varint_S \,\, \omega \in \mathbb{R}.
 \end{eqnarray}
By Stokes' theorem,
the closedness condition ${\rm d}\omega=0$ implies:

\begin{prop} \label{xc2}
Every point $x \in M$ has an open neighborhood such that if the surface $S$ is contained in $U$ then
the symplectic area of $S$ does not change when 
deforming  $S$ inside of $U$ while keeping the boundary $\partial S$ of $S$ fixed under the deformation. If $\omega$
is exact then $U=M$.
\end{prop}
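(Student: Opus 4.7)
The plan is to combine the Poincar\'e lemma (to get a local primitive of $\omega$) with Stokes' theorem. Since $d\omega = 0$, every point $x \in M$ has a contractible open neighborhood $U$ on which the Poincar\'e lemma yields a smooth $1$-form $\alpha \in \Omega^1(U)$ with $d\alpha = \omega$. For instance one can take $U$ to be a Darboux chart (or just any open ball in a chart), on which $\omega$ is automatically exact.

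Next I would formalize the deformation. A deformation of $S$ inside $U$ keeping $\partial S$ fixed is a smooth homotopy $F\colon S \times [0,1] \to U$ with $F(\,\cdot\,,0)$ and $F(\,\cdot\,,1)$ equal to the two surfaces $S_0$ and $S_1$, and with $F$ constant on $\partial S \times [0,1]$. Viewing the image of $F$ (or more precisely the chain defined by $F$) as a $3$-chain $W$ in $U$, its boundary decomposes as $\partial W = S_1 - S_0 + L$, where $L$ is the lateral piece $F(\partial S \times [0,1])$. Because the deformation fixes $\partial S$ pointwise, the lateral piece $L$ is degenerate (it factors through the lower-dimensional set $\partial S$), so $\int_L \omega = 0$.

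The conclusion then follows from Stokes' theorem applied to the primitive $\alpha$, or equivalently to $\omega$ on the $3$-chain:
\[
\int_{S_1} \omega - \int_{S_0} \omega \;=\; \int_{\partial W} \omega \;=\; \int_{W} d\omega \;=\; 0,
\]
using $d\omega = 0$. (Alternatively, one can write $\int_{S_i} \omega = \int_{\partial S_i} \alpha = \int_{\partial S} \alpha$ for $i=0,1$, which depends only on the common boundary.) If $\omega = -d\alpha$ is exact on all of $M$, the argument above goes through globally and we may take $U = M$; this is the situation of Proposition~\ref{pqr} for cotangent bundles.

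The only mildly delicate point is making sense of the boundary-fixing deformation so that the lateral contribution genuinely vanishes; once $F$ is constant on $\partial S \times [0,1]$ this is immediate from the fact that $\omega$ pulled back to a space of dimension less than two vanishes. The rest is a direct application of Stokes and the Poincar\'e lemma, both of which are standard.
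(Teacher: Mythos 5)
Your proposal is correct and follows essentially the same route as the paper: the paper's proof is exactly your parenthetical version (Poincar\'e lemma to get a local primitive $\sigma$ with $\omega = {\rm d}\sigma$ near $x$, then Stokes to write $\int_S \omega = \int_{\partial S}\sigma$, which manifestly depends only on $\partial S$). Your main-text variant via the $3$-chain swept out by the homotopy and ${\rm d}\omega = 0$ is an equivalent formulation of the same Stokes argument, and your care about the degenerate lateral piece is a reasonable extra detail the paper leaves implicit.
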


\begin{proof}
If ${\rm d}\omega=0$ then locally near every point $x \in M$, $\omega={\rm d}\sigma$
for some smooth $1$\--form $\sigma \in \Omega^1(M)$. Hence
$$
\varint_S \omega=\varint_S {\rm d}\sigma=\varint_{\partial S}\sigma,
$$
and the result follows.
\end{proof}

In view of
Proposition~\ref{pqr} and Proposition~\ref{xc2} we obtain the following.

\begin{cor}
The symplectic area of any surface $S$ in a cotangent bundle $({\rm T}^*X,\omega_0)$
depends only on the boundary $\partial S$. Moreover, if $\partial S=\varnothing$ then
the symplectic area of $S$ is zero.
\end{cor}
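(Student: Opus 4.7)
The plan is to deduce this corollary directly from the two propositions already established, namely Proposition~\ref{pqr} (which yields the global primitive $\alpha$ on $T^*X$ with $\omega_0 = -d\alpha$) and Proposition~\ref{xc2} (the local deformation invariance of symplectic area, upgraded to a global statement whenever the symplectic form is exact). The key observation is that on a cotangent bundle the symplectic form is not merely closed but \emph{exact}, so the local primitive in the proof of Proposition~\ref{xc2} is actually a global one.

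First, I would invoke Proposition~\ref{pqr} to write $\omega_0 = -d\alpha$ globally on $T^*X$, where $\alpha = \sum_i \xi_i \, dx_i$ is the tautological 1-form. Given any (oriented, compact) surface $S \subset T^*X$ with boundary $\partial S$, Stokes' theorem yields
\begin{equation*}
\varint_S \omega_0 \;=\; -\varint_S d\alpha \;=\; -\varint_{\partial S} \alpha,
\end{equation*}
so the symplectic area of $S$ is computed purely from the restriction of $\alpha$ to $\partial S$. In particular, if $S_1$ and $S_2$ are two surfaces sharing the same (oriented) boundary, then $\int_{S_1}\omega_0 = \int_{S_2}\omega_0$, establishing the first claim.

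For the second assertion, if $\partial S = \varnothing$, the boundary integral $\int_{\partial S}\alpha$ is zero by convention (integration over the empty set), so the symplectic area of $S$ vanishes. Alternatively, this is the $U = M$ case of Proposition~\ref{xc2}: since $\omega_0$ is globally exact, any closed surface can be deformed to a point while preserving its symplectic area, and a point has area zero.

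There is no real obstacle here; the work was already done in Proposition~\ref{pqr} (exactness of $\omega_0$) and Proposition~\ref{xc2} (the Stokes-based invariance argument). The only minor subtlety is being careful that $S$ is assumed compact and oriented so that Stokes' theorem applies as stated, and that the orientation of $\partial S$ is the induced one; both are standard conventions already implicit in the definition~(\ref{sf}).
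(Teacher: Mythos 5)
Your proof is correct and follows exactly the route the paper intends: the corollary is stated there as an immediate consequence of Proposition~\ref{pqr} (global exactness $\omega_0=-{\rm d}\alpha$ on ${\rm T}^*X$) combined with the Stokes'-theorem argument of Proposition~\ref{xc2}, which is precisely the computation $\varint_S\omega_0=-\varint_{\partial S}\alpha$ you carry out. Nothing further is needed.
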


 The fact that $\omega$ is non\--degenerate gives:

\begin{prop}
Let $(M,\omega)$ be a symplectic manifold.
There is an isomorphism between
the tangent and the cotangent bundles 
$
  {\rm T}M \to {\rm T}^*M$ by means of the mapping
$\mathcal{X}	\mapsto \omega(\mathcal{X}, \,\, \cdot ).$
\end{prop}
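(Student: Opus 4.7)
The plan is to define the claimed map as a smooth bundle morphism $\Phi \colon {\rm T}M \to {\rm T}^*M$ by $\mathcal{X}_x \mapsto \omega_x(\mathcal{X}_x,\cdot)$ and then verify the two things a vector bundle isomorphism requires: smoothness of $\Phi$ and fiberwise linear invertibility. Smoothness is automatic from the fact that $\omega$ is a smooth section of $\Lambda^2 {\rm T}^*M$: contracting $\omega_x$ with a smooth vector field yields a smooth $1$-form, and in any local frame the component functions of $\Phi(\mathcal{X})$ are just smooth combinations of the components of $\omega$ and of $\mathcal{X}$. Fiberwise linearity of $\Phi_x \colon {\rm T}_xM \to {\rm T}_x^*M$ is immediate from the $\mathbb{R}$-bilinearity of $\omega_x$.

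The real content is the fiberwise isomorphism claim, which I would reduce directly to non-degeneracy. Injectivity of $\Phi_x$ is precisely condition (2) in the definition of a symplectic manifold: if $\Phi_x(\mathcal{X}) = 0$ then $\omega_x(\mathcal{X},v) = 0$ for every $v \in {\rm T}_xM$, whence $\mathcal{X}=0$. Since ${\rm T}_xM$ and ${\rm T}_x^*M$ are finite-dimensional vector spaces of the same dimension $\dim M$, the rank-nullity theorem upgrades injectivity to bijectivity at each point. Thus $\Phi_x$ is a linear isomorphism for every $x$, and combined with the global smoothness this gives the desired smooth vector bundle isomorphism ${\rm T}M \cong {\rm T}^*M$.

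I do not expect any genuine obstacle here: the proposition is essentially a global repackaging of the pointwise non-degeneracy axiom, with the equality of fiber dimensions doing the work of turning a kernel statement into a bijection. It is worth noting in passing that closedness of $\omega$ plays no role in this argument; everything hinges on condition (2), and the resulting isomorphism is exactly the ``musical'' flat map $\mathcal{X} \mapsto \mathcal{X}^{\flat}$ familiar from Riemannian geometry, with $\omega$ replacing the metric.
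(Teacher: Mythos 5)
Your proof is correct and is exactly the argument the paper has in mind: the paper states this proposition as an immediate consequence of non-degeneracy and gives no further proof, and your write-up (fiberwise injectivity from condition (2), bijectivity via equal dimensions, smoothness from smoothness of $\omega$) simply fills in those routine details. Your closing observation that closedness of $\omega$ is irrelevant here is also accurate.
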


In other words, the symplectic form allows us to give 
a natural correspondence between one\--forms and vector fields. 

One can ask  some basic questions about symplectic manifolds. For instance, one can wonder:

\begin{question}
Does the $3$\--dimensional sphere $S^3$ admit a symplectic form? 
\end{question}

\medskip

 The answer is ``no", because symplectic
manifolds are even\--dimensional; for otherwise the non\--degeneracy condition is violated,
which follows from elementary linear algebra.  Similarly:

\begin{question}
Does the Klein bottle admit a symplectic form?
\end{question}

\medskip

The answer to this question is again ``no" since symplectic manifolds must be orientable; indeed, 
since the symplectic form $\omega$ is non\--degenerate,  the wedge $\omega^n=\omega \wedge \ldots (n\,\, \textup{{\tiny times}})\,\, \wedge \omega,$ where $2n$ is the
dimension of the manifold, is a volume form giving an orientation to $M$.  

\medskip

To summarize:

\begin{prop}
Symplectic manifolds are even\--dimensional and orientable.
\end{prop}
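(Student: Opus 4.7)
The plan is to prove both claims pointwise, reducing them to facts from linear algebra applied to $\omega_x$ on each tangent space $T_xM$, and then globalizing via the smoothness of $\omega$.

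For the even-dimensionality, I would fix $x \in M$ and work with the non-degenerate antisymmetric bilinear form $\omega_x$ on $V := T_xM$. Choosing any basis, $\omega_x$ is represented by a skew-symmetric matrix $A$, and since $\det A = \det A^T = \det(-A) = (-1)^{\dim V}\det A$, if $\dim V$ is odd we get $\det A = 0$, forcing the existence of a nonzero vector $u$ with $\omega_x(u,\cdot)=0$, contradicting non-degeneracy. Hence $\dim V$ is even, and since this holds at every point and $M$ is assumed to be a manifold (with locally constant dimension on connected components), $\dim M = 2n$ for some $n\in\N$.

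For orientability, the idea is to exhibit a nowhere-vanishing top form, namely $\omega^n$, where $2n = \dim M$. The key linear-algebra fact, which I would prove first and apply pointwise, is that a skew-symmetric bilinear form $\omega_x$ on a $2n$-dimensional vector space is non-degenerate if and only if $\omega_x^{\wedge n} \neq 0$ in $\Lambda^{2n}V^*$. The standard way to see this is to use the symplectic analogue of Gram–Schmidt to produce a basis $e_1,f_1,\ldots,e_n,f_n$ in which $\omega_x = \sum_i e_i^* \wedge f_i^*$, and then compute directly that $\omega_x^{\wedge n} = n!\,e_1^*\wedge f_1^*\wedge\cdots\wedge e_n^*\wedge f_n^*$, which is a nonzero volume element. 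Since $\omega$ is smooth, $\omega^n$ is a smooth $2n$-form on $M$, and the pointwise computation shows it vanishes nowhere; hence it is a volume form and orients $M$.

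The main technical point, and the only step that is not completely trivial, is the symplectic Gram–Schmidt procedure producing the standard basis at a point; everything else is a direct consequence. Note that closedness of $\omega$ plays no role in this proposition, only the algebraic conditions of skew-symmetry and non-degeneracy of $\omega_x$, so no use of Stokes' theorem or of the local models established earlier is needed.
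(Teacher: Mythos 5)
Your proof is correct and follows exactly the route the paper takes: even-dimensionality from the elementary linear-algebra fact that a non-degenerate antisymmetric bilinear form forces even dimension, and orientability from the observation that $\omega^n$ is a nowhere-vanishing top form. You simply supply the details (the determinant argument and the symplectic Gram--Schmidt computation) that the paper leaves implicit.
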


To continue the discussion with spheres:

\begin{question}
Does the $4$\--dimensional sphere 
$S^4$ admit a symplectic form?
\end{question}

\medskip
 The answer  is given by the following.

\begin{prop}
Let $(M,\omega)$ be a compact symplectic manifold of dimension $2n$. Then its even\--dimensional
cohomology groups are non trivial, that is, ${\rm H}^{2k}(M,\mathbb{R})\neq 0$ for $1 \leq k \le n$.
\end{prop}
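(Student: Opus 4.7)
The plan is to exhibit, for each $k$ with $1\leq k\leq n$, a closed $2k$-form on $M$ whose de Rham cohomology class is nonzero; the natural candidate is the wedge power $\omega^k$.

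First I would observe that since $\omega$ is closed, every power $\omega^k$ is closed (by the Leibniz rule for $\mathrm{d}$), so each $\omega^k$ defines a class $[\omega^k]\in\mathrm{H}^{2k}(M;\R)$. Next I would use non-degeneracy to pin down the top power: as noted already in the discussion preceding the statement, $\omega^n$ is a volume form, and in particular it is nowhere zero and orients $M$. Because $M$ is compact, integrating against this chosen orientation yields
\[
\varint_M \omega^n \neq 0,
\]
so by Stokes' theorem $\omega^n$ cannot be exact. Hence $[\omega^n]\neq 0$ in $\mathrm{H}^{2n}(M;\R)$.

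The key step is to propagate this nonvanishing down to every even degree. Suppose for contradiction that $[\omega^k]=0$ for some $1\leq k\leq n$; then $\omega^k=\mathrm{d}\eta$ for some $(2k-1)$-form $\eta$. Since $\omega^{n-k}$ is closed, one computes
\[
\omega^n \;=\; \omega^k\wedge\omega^{n-k} \;=\; \mathrm{d}\eta\wedge\omega^{n-k} \;=\; \mathrm{d}\!\left(\eta\wedge\omega^{n-k}\right),
\]
which would make $\omega^n$ exact, contradicting the integral computation above. Therefore $[\omega^k]\neq 0$, and in particular $\mathrm{H}^{2k}(M;\R)\neq 0$ for every $1\leq k\leq n$.

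The only subtle ingredient is the fact that $\omega^n$ has nonzero integral; this rests on the non-degeneracy of $\omega$ (which ensures $\omega^n$ is a volume form) together with compactness of $M$ (so that the integral is finite and, after choosing the orientation induced by $\omega^n$ itself, strictly positive). Everything else is formal manipulation of the de Rham complex, so I do not anticipate a genuine obstacle.
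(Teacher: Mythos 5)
Your argument is correct and is exactly the standard one the paper has in mind: the paper's proof simply asserts that $[\omega^k]$ is nontrivial "by Stokes' theorem, using ${\rm d}\omega=0$", and your write-up (non-degeneracy gives $\varint_M\omega^n\neq 0$, and $\omega^k={\rm d}\eta$ would force $\omega^n={\rm d}(\eta\wedge\omega^{n-k})$ to be exact) is precisely the intended filling-in of that exercise. No gaps.
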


\begin{proof}
The cohomology class $[\omega^k]$ is nontrivial. This is an exercise which follows from Stokes' theorem, using
${\rm d}\omega=0$. 
\end{proof}

One immediate consequence of these observations is the following.

\begin{prop} \label{rv}
The $2$\--dimensional
sphere $S^2$ is the only sphere $S^n,n \geq 1$, which may be endowed with a symplectic form. 
\end{prop}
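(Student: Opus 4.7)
The plan is to proceed by cases on $n$, using the two structural obstructions already established just above the statement: even-dimensionality of any symplectic manifold, and non-triviality of all even-degree cohomology of a compact symplectic manifold.

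First I would dispose of the odd-dimensional spheres. If $n$ is odd, then $S^n$ is an odd-dimensional manifold, and by the previously noted fact that symplectic manifolds are even-dimensional (a consequence of non-degeneracy of $\omega$ at a single tangent space), $S^n$ admits no symplectic form.

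Next I would handle even $n \geq 4$. Since $S^n$ is compact, the preceding proposition applies: if $S^n$ admitted a symplectic form, then $H^{2k}(S^n;\mathbb{R}) \neq 0$ for every $1 \leq k \leq n/2$. In particular, taking $k=1$, we would need $H^{2}(S^n;\mathbb{R}) \neq 0$. However, the standard computation of the cohomology of spheres gives $H^j(S^n;\mathbb{R}) = 0$ for $0 < j < n$, and since $n \geq 4$ we have $0 < 2 < n$, so $H^{2}(S^n;\mathbb{R}) = 0$, a contradiction.

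Finally, the case $n=2$ must be exhibited rather than ruled out: the round area form $\omega_{S^2}$ inherited from the embedding $S^2 \subset \mathbb{R}^3$ is a nowhere vanishing $2$-form on a $2$-manifold, hence non-degenerate, and it is automatically closed for dimensional reasons (every $2$-form on a $2$-manifold is closed). Thus $(S^2,\omega_{S^2})$ is symplectic, completing the characterization. The only conceptually delicate step is the cohomological obstruction for even $n \geq 4$, but this is immediate given the previous proposition, so no real obstacle remains.
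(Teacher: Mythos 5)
Your argument is correct and is exactly the one the paper intends: it presents the proposition as an immediate consequence of the two preceding observations (even-dimensionality rules out odd $n$; non-vanishing of $H^{2}$ for compact symplectic manifolds rules out even $n\geq 4$), together with the earlier remark that in dimension $2$ a symplectic form is just an area form. You have simply written out the details the paper leaves implicit, so there is nothing to correct.
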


However, the question whether an arbitrary manifold it admits or not a symplectic
form is in general very difficult. 

For instance if $N$ is a closed oriented $3$\--manifold, in Friedl\--Vidussi \cite{FrVi} 
and Kutluhan\--Taubes~\cite{KuTa}, the authors study when a closed $4$\--manifold of the form $S^1 \times N$ 
admits a symplectic form, which turns out to imply that $N$ must fiber over the circle $S^1$ (for details
refer to the aforementioned papers).

\subsection{Symplectomorphisms} \label{can}

The natural maps between symplectic manifolds are the diffeomorphisms which preserve the symplectic
structure, they are called  \emph{canonical transformations}, \emph{symplectic diffeomorphisms}, or 
following Souriau~\cite{souriau1970}, \emph{symplectomorphisms}.

\begin{definition} \label{ed}
A \emph{symplectomorphism} $\varphi \colon (M_1,\omega_1) \to (M_2,\omega_2)$
between symplectic manifolds is a diffeomorphism $\varphi \colon M_1 \to M_2$ 
which satisfies $\varphi^*\omega_2=\omega_1.$ In this
case we say that \emph{$(M_1,\omega_1)$ and $(M_2,\omega_2)$ are symplectomorphic}.
\end{definition}

\medskip
Recall that the expression $\varphi^*\omega_2=\omega_1$ in Definition~\ref{ed} means that 
$(\omega_2)_x({\rm d}_x\varphi(u),{\rm d}_x\varphi(v))=(\omega_1)_x(u,v)$ for every point 
$x \in M_1$ and for every pair of tangent vectors $u,v \in {\rm T}_xM_1$.  That is, 
the symplectic area spanned by $u,v$ coincides with the symplectic area spanned by the
images ${\rm d}_x\varphi(v),{\rm d}_x\varphi(w)$, for every $x \in M$, and for 
every $u,v \in {\rm T}_xM$.

 \begin{remark}
 Roughly speaking one can view symplectomorphisms as diffeomorphisms
preserving the area enclosed by loops, or rather, the sum of the areas enclosed by their projections onto 
a collection of $2$\--dimensional planes. For instance, if 
$(M,\omega)=(\mathbb{R}^6,\, 
{\rm d}x_1 \wedge {\rm d}y_1+{\rm d}x_2 \wedge {\rm d}y_2+{\rm d}x_3 \wedge {\rm d}y_3),$ with coordinates 
$(x_1,y_1,x_2,y_2,x_3,y_3)$, then you would want to preserve the area (counted by $\omega$ with sign depending
on the orientation of the region inside) of the projection of any loop in $\mathbb{R}^6$ onto the
$(x_1,y_1)$, $(x_2,y_2)$ and $(x_3,y_3)$ planes. I learned how to think about symplectomorphisms
in this way from Helmut Hofer. 
\end{remark}

The \emph{symplectic volume} (or \emph{Liouville volume}) of a symplectic manifold of 
dimension $2n$ is
 \begin{eqnarray} \label{volm}
 {\rm vol}(M,\omega):=\frac{1}{n!}\varint_M\omega^n.
\end{eqnarray}
Of course, since symplectomorphisms preserve $\omega$, they preserve the symplectic
volume but the converse is false (we discuss this in Section~\ref{si}).

 Since the late twentieth century it is known that 
symplectic manifolds have no local invariants
except the dimension. This is a result due to Jean-Gaston Darboux (1842\--1917).

\begin{theorem}[Darboux~\cite{darboux}, 1882] \label{da}

Let $(M,\omega)$ be a symplectic manifold. 
Near  each point  $ p\in M$ one can find  coordinates
 $(x_{1},y_{1},\ldots,x_n,y_n)$ in which the symplectic form
 $\omega$ has the expression 
$
\omega=\sum_{i=1}^n{\rm d}x_i \wedge {\rm d}y_i. 
$
That is, any two symplectic manifolds $(M_1,\omega_1)$ and $(M_2,\omega_2)$
of the same dimension  are locally symplectomorphic near any choice of points
$p_1 \in M_1$ and $p_2 \in M_2$.
\end{theorem}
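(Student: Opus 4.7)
The plan is to use Moser's deformation trick to interpolate between the given symplectic form and the standard one, after first matching them at the point $p$ via a linear change of coordinates.

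First I would reduce to a local problem in $\R^{2n}$. Pick any smooth chart around $p$ sending $p\mapsto 0$, and pull $\omega$ back to a closed non\--degenerate $2$\--form (still denoted $\omega$) on a neighborhood $U$ of the origin. The standard form $\omega_0=\sum_{i=1}^n {\rm d}x_i\wedge {\rm d}y_i$ also lives on $U$. Next I would apply the linear Darboux theorem: the value $\omega_p$ is a non\--degenerate skew form on $\R^{2n}$, and by elementary linear algebra it admits a symplectic basis. Composing the chart with the linear map sending the standard basis to this symplectic basis, I may assume $\omega|_0=\omega_0|_0$ as bilinear forms on $T_0\R^{2n}$.

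Now comes the Moser trick. Consider the linear interpolation
\[
\omega_t:=(1-t)\omega_0+t\,\omega,\qquad t\in[0,1].
\]
Each $\omega_t$ is closed, and since $\omega_t|_0=\omega_0|_0$ is non\--degenerate, by continuity $\omega_t$ is non\--degenerate on some possibly smaller open ball $V\subset U$ centered at $0$, uniformly in $t\in[0,1]$. I would look for a time\--dependent vector field $X_t$ on $V$ whose flow $\varphi_t$ satisfies $\varphi_t^*\omega_t=\omega_0$; differentiating and using Cartan's magic formula together with ${\rm d}\omega_t=0$ reduces this to
\[
{\rm d}(\iota_{X_t}\omega_t)+(\omega-\omega_0)=0.
\]
Since $V$ is a star\--shaped neighborhood of the origin, the Poincar\'e lemma gives a smooth $1$\--form $\alpha$ on $V$ with ${\rm d}\alpha=\omega_0-\omega$; moreover by the usual homotopy operator I can arrange $\alpha_0=0$ at the origin. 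Non\--degeneracy of $\omega_t$ then allows me to solve $\iota_{X_t}\omega_t=\alpha$ uniquely for a smooth time\--dependent vector field $X_t$, and $X_t(0)=0$.

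Because $X_t$ vanishes at $0$, the flow $\varphi_t$ is defined for all $t\in[0,1]$ on some neighborhood $W\subset V$ of the origin and fixes $0$. By construction $\tfrac{{\rm d}}{{\rm d}t}(\varphi_t^*\omega_t)=0$, so $\varphi_1^*\omega=\varphi_1^*\omega_1=\varphi_0^*\omega_0=\omega_0$ on $W$. Composing our chart with $\varphi_1$ produces coordinates $(x_1,y_1,\ldots,x_n,y_n)$ near $p$ in which $\omega=\sum {\rm d}x_i\wedge {\rm d}y_i$, which also yields the local symplectomorphism between any two symplectic manifolds of the same dimension. The main obstacle is step three: one must simultaneously guarantee that $\omega_t$ stays non\--degenerate on a common neighborhood for $t\in[0,1]$ and that the primitive $\alpha$ can be chosen to vanish at the origin so that the Moser flow exists up to time $1$. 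Both issues are handled by shrinking $V$ and using the explicit Poincar\'e homotopy operator on a star\--shaped domain.
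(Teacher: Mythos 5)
Your argument is correct, and it is the standard modern proof of Darboux's theorem via the Moser deformation trick. The paper itself states Theorem~\ref{da} without proof, so there is no argument of the author's to compare against; note, however, that the key tool you use is exactly the ``Moser method'' the paper introduces in the subsection on Moser stability, so your proof is very much in the spirit of the surrounding text. All the delicate points are handled properly: the linear normalization $\omega|_0=\omega_0|_0$ guarantees that $\omega_t=(1-t)\omega_0+t\,\omega$ is nondegenerate on a common neighborhood for all $t\in[0,1]$ (by compactness of $[0,1]$ and openness of nondegeneracy), the explicit Poincar\'e homotopy operator on a star-shaped domain produces a primitive $\alpha$ of $\omega_0-\omega$ with $\alpha_0=0$, and hence $X_t(0)=0$, which ensures the flow exists up to time $1$ on a neighborhood of the origin. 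The only cosmetic quibble is in the last step: since $\varphi_1^*\omega=\omega_0$, the Darboux chart is the original chart composed with $\varphi_1^{-1}$ (equivalently, $\varphi_1$ is the coordinate change in the other direction); this does not affect the validity of the argument.
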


Theorem~\ref{da} gives a striking difference with Riemannian geometry, where
the curvature is a local invariant.  

\begin{remark}
It is important to note, however, that  there are local aspects of symplectic actions which
have only been understood recently (certainly much later than Darboux's result), but in theses cases we are not concerned with the local normal form of the symplectic form itself, but instead with that a geometric
object (a form, a vector field, an action, etc.) defined on the manifold. For instance in the case
of symplectic group actions which we will discuss later the local normal
form of symplectic Hamiltonian actions is due to 
Guillemin\--Marle\--Sternberg~\cite{gsst, marle}, and  in the general case of symplectic actions to Ortega\--Ratiu~\cite{ortegaratiu} and Benoist~\cite{benoist}, in a neighborhood of an orbit.
\end{remark}

In 1981 Alan Weinstein referred to symplectic geometry in \cite{Wesurvey} as 
``the more flexible geometry of canonical (in particular, area
preserving) transformations instead of the rigid geometry of Euclid; accordingly,
the conclusions of the geometrical arguments are often qualitative
rather than quantitative."  

Manifestations of ``rigidity" in symplectic 
geometry were discovered in the early days of modern symplectic geometry 
by Mikhael Gromov, Yakov Eliashberg, and others.

\begin{theorem}[Eliashberg\--Gromov~\cite{E1,E2,G1}] The group
of symplectomorphisms of a compact symplectic manifold is ${\rm C}^{0}$\--closed in the
group of symplectomorphisms.
\end{theorem}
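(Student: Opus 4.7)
The strategy I would use exploits symplectic capacities, specifically the Gromov width $\cgromov$: for an open set $U \subset M$, $\cgromov(U)$ is the supremum of $\pi r^2$ over $r > 0$ for which there is a symplectic embedding $B^{2n}(r) \hookrightarrow U$. By Gromov's nonsqueezing theorem, $\cgromov$ is a genuine symplectic invariant, it is monotonic under inclusions of open sets, and it satisfies the scaling $\cgromov(rU) = r^2 \cgromov(U)$ in linear charts. The plan is to show (i) a $C^0$-limit $\phi$ of symplectomorphisms $\phi_n$ must preserve $\cgromov$ on all open sets, and (ii) a $C^1$-diffeomorphism with that property must be a symplectomorphism.

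For step (i), suppose $\phi_n \to \phi$ in $C^0$ with $\phi$ a diffeomorphism of the compact manifold $M$. A standard general-topology argument then gives $\phi_n^{-1} \to \phi^{-1}$ in $C^0$ as well. Fix an open $U \subset M$ and a compact set $K \subset \phi(U)$. Since $\phi_n^{-1}(K)$ Hausdorff-converges to $\phi^{-1}(K) \Subset U$, for all large $n$ one has $\phi_n^{-1}(K) \subset U$, i.e. $K \subset \phi_n(U)$. Monotonicity and the fact that $\phi_n$ is a symplectomorphism give $\cgromov(V) \leq \cgromov(\phi_n(U)) = \cgromov(U)$ for any open $V \Subset K$; taking suprema yields $\cgromov(\phi(U)) \leq \cgromov(U)$. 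The reverse inequality follows by the same argument applied to $\phi^{-1}$ and the $\phi_n^{-1}$.

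For step (ii), work in Darboux coordinates near a point $p$ and near $\phi(p)$, so both symplectic forms are $\omega_0 = \sum {\rm d}x_i \wedge {\rm d}y_i$. For a bounded open $V \subset \R^{2n}$ with $0 \in V$, Taylor expansion gives
\[
\phi(p + rV) \;=\; \phi(p) + r\, {\rm d}\phi_p(V) + o(r),
\]
uniformly in $v \in V$. Combining this with the scaling $\cgromov(rV) = r^2 \cgromov(V)$ and the (standard) continuity of $\cgromov$ under small enlargements of relatively compact sets, the identity $\cgromov(\phi(p+rV)) = \cgromov(p+rV)$ proved in step (i) passes to the limit as $r \to 0$ to yield $\cgromov({\rm d}\phi_p(V)) = \cgromov(V)$. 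Since $V$ was arbitrary, ${\rm d}\phi_p$ preserves the Gromov width of every linear ellipsoid.

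The final ingredient is the linear algebra lemma: a linear isomorphism $A \in GL(2n,\R)$ preserving $\cgromov$ on every open ellipsoid centered at the origin satisfies $A^* \omega_0 = \pm\omega_0$. This is the heart of the proof and the main obstacle, and it rests on Gromov's nonsqueezing theorem together with the symplectic normal form for ellipsoids (which expresses $\cgromov(E)$ in terms of the smallest symplectic semi-axis of $E$); it is precisely where the rigidity in the statement enters. Granting this lemma, ${\rm d}\phi_p$ is symplectic at every $p$, and since $\phi$ is orientation-preserving (it is a limit of symplectomorphisms, which preserve the orientation defined by $\omega^n$), the sign is $+$, giving $\phi^*\omega = \omega$.
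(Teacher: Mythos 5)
The paper offers no proof of this theorem: it is quoted as a deep result of Eliashberg and Gromov with references only, so there is nothing in the text to compare your argument against. What you propose is the standard capacity-theoretic proof in the style of Ekeland--Hofer and Hofer--Zehnder. Steps (i) and (ii) are correct in outline (in step (ii) you should run the rescaling argument on ellipsoids, or at least star-shaped domains, so that the sandwich $(1-\epsilon)r\,{\rm d}\phi_p(V)\subset \phi(p+rV)-\phi(p)\subset(1+\epsilon)r\,{\rm d}\phi_p(V)$ makes sense; this costs nothing, since the linear lemma only needs ellipsoids), and you correctly identify that all of the rigidity is concentrated in the linear lemma, which you grant without proof. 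That lemma is indeed where Theorem~\ref{ns} enters, and your proposal is a proof only modulo it.

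The one step that genuinely fails as written is the last sentence. The linear lemma gives $({\rm d}\phi_p)^*\omega = \pm\,\omega$ at each $p$, with a sign that is locally constant and hence constant on the connected manifold $M$. But an anti-symplectic linear map $A$ satisfies $A^*\omega_0^{\,n} = (-1)^n\omega_0^{\,n}$, so for $n$ even it is orientation-\emph{preserving}: for instance $(x_1,y_1,x_2,y_2)\mapsto(y_1,x_1,y_2,x_2)$ on $(\R^4,\,{\rm d}x_1\wedge{\rm d}y_1+{\rm d}x_2\wedge{\rm d}y_2)$ is anti-symplectic with determinant $+1$. Your orientation argument therefore rules out the minus sign only when $n$ is odd. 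The standard repair is cohomological rather than orientation-theoretic: since $\phi_n\to\phi$ uniformly, $\phi$ is homotopic to $\phi_n$ for $n$ large, hence $\phi^*[\omega]=\phi_n^*[\omega]=[\omega]$ in ${\rm H}^2(M;\R)$; if $\phi^*\omega=-\omega$ this forces $2[\omega]=0$, which is impossible because $\int_M\omega^n\neq 0$ on a compact symplectic manifold. With that substitution the argument closes, modulo the linear lemma.
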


\begin{remark}
The group of symplectomorphisms of a manifold often has a complicated (but extremely interesting) structure, and many
basic questions about it remain open, see Leonid Polterovich's book~\cite{Po01}.  
Interest in the behavior of symplectic matrices may be found in the early days of symplectic geometry, in important work of Clark Robinson~\cite{Ro1, Ro2}. See also Arnold~\cite{Ar66} for work in a related direction.
\end{remark}

\subsection{Moser stability}

In 1965 J\"urgen Moser  published an influential article~\cite{Mo65} in which he showed that:

\begin{theorem}[Moser \cite{Mo65}] \label{mm}
 If $\omega$ and $\tau$ are volume forms on a compact connected oriented smooth manifold without boundary $M$ such that  $$\varint_M \omega=\varint_M \tau$$
then there exists a diffeomorphism $\psi \colon M \to M$ such that $\psi^*\tau=\omega$.
\end{theorem}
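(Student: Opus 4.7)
My plan is to use what is now known as the \emph{Moser trick}: interpolate between $\omega$ and $\tau$ by a path of volume forms, look for an isotopy that pulls each form back to $\omega$, and convert this into an ODE for a time-dependent vector field whose solvability follows from the equal-integral hypothesis.

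First I would define $\omega_t := (1-t)\omega + t\tau$ for $t \in [0,1]$. Because $\omega$ and $\tau$ both have positive integral over $M$ with respect to the chosen orientation, they pointwise define the same orientation, so $\omega_t$ is a nowhere vanishing top form, hence a volume form for each $t$. Next I would seek a smooth isotopy $\psi_t \colon M \to M$ with $\psi_0 = \mathrm{id}$ and $\psi_t^*\omega_t = \omega$ for all $t$; then $\psi := \psi_1$ satisfies $\psi^*\tau = \omega$.

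Differentiating the condition $\psi_t^*\omega_t = \omega$ in $t$ and using that $\psi_t^*$ is injective on forms, I would reduce to finding a time-dependent vector field $X_t$ generating $\psi_t$ such that
\[
\mathcal{L}_{X_t}\omega_t + \dot\omega_t = 0.
\]
Since $\omega_t$ is a top form, $d\omega_t = 0$ automatically, so Cartan's formula gives $\mathcal{L}_{X_t}\omega_t = d(\iota_{X_t}\omega_t)$, and moreover $\dot\omega_t = \tau - \omega$. So I need to solve
\[
d(\iota_{X_t}\omega_t) = \omega - \tau.
\]
Here is the key cohomological input: since $M$ is compact, connected, oriented and without boundary, integration gives an isomorphism $\mathrm{H}^{\dim M}(M;\mathbb{R}) \cong \mathbb{R}$, and by hypothesis $\int_M(\omega - \tau) = 0$. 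Thus $\omega - \tau$ is exact, say $\omega - \tau = d\alpha$ for some $(\dim M - 1)$-form $\alpha$. This is the step I expect to be the crux of the argument, since it is where the numerical hypothesis enters in an essential way.

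It remains to produce $X_t$ with $\iota_{X_t}\omega_t = \alpha$. Because $\omega_t$ is a nowhere vanishing top form, the map $X \mapsto \iota_X\omega_t$ is a pointwise linear isomorphism from $\mathrm{T}M$ onto $\Lambda^{\dim M - 1}\mathrm{T}^*M$, so $X_t$ exists uniquely and depends smoothly on $t$. Finally, since $M$ is compact, the time-dependent vector field $X_t$ integrates to a global isotopy $\psi_t$ on the full interval $t \in [0,1]$, and $\psi := \psi_1$ is the desired diffeomorphism with $\psi^*\tau = \omega$.
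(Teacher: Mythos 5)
Your proof is correct and is exactly the ``Moser method'' that the paper invokes for this theorem without spelling it out: interpolate by $\omega_t=(1-t)\omega+t\tau$, reduce via Cartan's formula to solving $d(\iota_{X_t}\omega_t)=\omega-\tau$, use the equal-integral hypothesis together with ${\rm H}^{\dim M}(M;\R)\cong\R$ to write $\omega-\tau=d\alpha$, and integrate the resulting time-dependent vector field on the compact manifold. The only point worth polishing is the justification that $\omega_t$ is nowhere vanishing: it is not automatic that both forms are positively oriented, but since their integrals are equal and a volume form on a compact manifold has nonzero integral, the two integrals have the same sign, so $\omega$ and $\tau$ induce the same orientation pointwise and the convex combination is indeed a volume form.
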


\begin{remark}
It follows that the total symplectic area of $S^2$ given by (\ref{sf}) completely
determines the symplectic form on $S^2$ (and $S^2$ is the only symplectic sphere
according to Propositon~\ref{rv}).
\end{remark}

An extension to fiber bundles of this result appears in~\cite{KeLe}. 

The proof of Moser's theorem uses in an essential way the compactness of $M$, but the method of proof, known as Moser's method and described below, may be generalized.  The following is the generalization (1979) to noncompact manifolds.

\begin{theorem}[Greene\--Shiohama \cite{GrSh79}]
 If $M$ is a noncompact connected smooth manifold and  $\omega$ and $\tau$ are 
 volume forms on $M$ such that 
 $$\varint_M \omega = \varint_M \tau \leq \infty$$
then there is a volume preserving diffeomorphism 
$\varphi \colon M \to M$
such that $\varphi^* \tau = \omega$ provided that every end $\epsilon$ of $M$ it holds that $\epsilon$ has finite volume with respect to $\omega$ if and only if has finite volume with respect to $\tau$.
\end{theorem}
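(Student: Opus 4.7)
The plan is to adapt Moser's deformation trick (the method underlying Theorem~\ref{mm}) to the noncompact setting by combining an exhaustion argument with a relative version of Moser's theorem on compact manifolds with boundary. The end hypothesis is precisely what allows the exhaustion to respect the volume matching.

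First I would set up the deformation. Put $\omega_t = (1-t)\omega + t\tau$, which is a volume form for all $t \in [0,1]$ since both are positive on the oriented manifold $M$. As in the proof of Theorem~\ref{mm}, one seeks a time-dependent vector field $X_t$ with flow $\varphi_t$ satisfying $\varphi_t^*\omega_t = \omega$; differentiating and using Cartan's formula (top-degree forms are automatically closed) reduces this to solving $\iota_{X_t}\omega_t = \alpha$ where ${\rm d}\alpha = \omega-\tau$. On the noncompact orientable $n$-manifold $M$ the top de Rham cohomology vanishes, so a primitive $\alpha$ always exists; the real issue, and the reason compactness mattered in Moser's argument, is that we need $X_t$ to be complete on $[0,1]$ so that $\varphi_1$ is a diffeomorphism of $M$.

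Next I would exhaust $M$ by compact codimension-zero submanifolds with boundary $K_1 \subset K_2 \subset \cdots$ chosen so that for large $i$ each connected component of $M\setminus {\rm int}(K_i)$ is a neighborhood of a single end of $M$. Using the hypothesis that an end has finite $\omega$-volume iff finite $\tau$-volume, I would perturb the boundaries $\partial K_i$ slightly (for finite-volume ends we can shrink towards the end to match the remaining volumes; for ends of infinite volume in both forms we can slide $\partial K_i$ outward or inward by a controlled amount since both $\int \omega$ and $\int \tau$ grow continuously past any finite threshold) to arrange
\[
\varint_{A_i}\omega \;=\; \varint_{A_i}\tau\qquad\text{on each annular piece }A_i = K_{i+1}\setminus{\rm int}(K_i),
\]
and similarly $\int_{K_1}\omega = \int_{K_1}\tau$.

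On each compact piece $A_i$ (and on $K_1$) I would apply a relative version of Moser's theorem: since the total volumes match on this connected compact manifold with boundary, $\omega-\tau$ is exact on $A_i$, and one can choose a primitive $\alpha_i$ that vanishes on $\partial A_i$ (by a relative de~Rham/Hodge argument with Dirichlet-type boundary conditions). Then the Moser vector field $X_t$ defined by $\iota_{X_t}\omega_t = \alpha_i$ vanishes along $\partial A_i$, so its time-$1$ flow $\psi_i$ is a diffeomorphism of $A_i$ fixing $\partial A_i$ pointwise and satisfying $\psi_i^*\tau = \omega$. Finally I would assemble the $\psi_i$ into a global diffeomorphism $\varphi\colon M\to M$ by setting $\varphi = \psi_i$ on $A_i$; boundary-identity ensures smoothness across the $\partial K_i$, and by construction $\varphi^*\tau = \omega$.

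The main obstacle is the relative Moser step: producing a primitive of $\omega-\tau$ on the compact manifold $A_i$ with boundary that vanishes on $\partial A_i$, so that the resulting vector field is integrable and its flow preserves $A_i$. A secondary delicate point is the volume-matching adjustment of the $\partial K_i$ in the presence of infinite-volume ends, where one must verify that the exhaustion can be chosen cofinal despite the need to slide boundaries to equalize partial volumes; this is where the biconditional hypothesis on ends is used in an essential way, because otherwise an infinite-$\omega$-volume end paired with a finite-$\tau$-volume end would force an impossible volume match at some stage.
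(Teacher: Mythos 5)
The paper does not actually prove this statement --- it only cites Greene--Shiohama --- so your argument has to stand on its own, and as written it has two genuine gaps. The first is the gluing step. A primitive $\alpha_i$ vanishing \emph{on} $\partial A_i$ gives a Moser flow fixing $\partial A_i$ pointwise, but that is far from enough to make the assembled map $\varphi$ smooth, or even $C^1$: at a point $p\in\partial K_{i+1}$ the condition $\psi^*\tau=\omega$ forces $\det({\rm d}_p\psi)=\omega_p/\tau_p$, so wherever $\omega\neq\tau$ on the boundary each $\psi$ must stretch the normal direction, and the normal derivatives (let alone the shear components and higher jets) coming from the $A_i$ side and the $A_{i+1}$ side have no reason to agree. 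You also cannot repair this by asking $\psi_i$ to be the identity \emph{near} $\partial A_i$, because that would force $\omega=\tau$ along $\partial A_i$. The standard remedy is a preliminary step your sketch omits: first build a diffeomorphism supported in collars of the $\partial K_i$ after which $\omega=\tau$ on a neighborhood of each $\partial K_i$ (keeping the per-piece volume equalities), and only then apply the relative Moser lemma with a primitive compactly supported in ${\rm int}(A_i)$, so that each $\psi_i$ is the identity near the boundary and the pieces glue smoothly.

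The second, more serious gap is the volume matching on pieces meeting an infinite-volume end. Your construction needs $\int_{C}\omega=\int_{C}\tau$ on every connected component $C$ of every $A_i$, since $\psi_i^*\tau=\omega$ together with $\psi_i(C)=C$ forces it. But take $M=\mathbb{R}^n$ with $\tau$ the standard volume form and $\omega=2\tau$: the hypotheses hold (one end, infinite volume for both forms) and the conclusion is true (a dilation works), yet $\int_{A}\omega=2\int_{A}\tau$ for every compact $A$, so no sliding of the $\partial K_i$ can ever achieve the required equalities and no diffeomorphism of a compact piece onto itself can pull $\tau$ back to $\omega$. This shows that the overall architecture --- a diffeomorphism preserving each piece of a fixed exhaustion --- cannot work in the infinite-volume case; the map must be allowed to move the exhaustion, pushing the accumulated volume discrepancy out toward the infinite-volume ends. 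This is precisely why Greene--Shiohama organize their proof as a limit of volume-adjusting embeddings of the $K_i$ into $M$ rather than as a gluing of boundary-fixing diffeomorphisms of annuli, and it is where the biconditional hypothesis on ends does its real work, in a way your sketch does not capture.
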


\begin{remark}
If the assumption on the ends does not hold then they prove that their result does not necessarily hold~\cite[Example in page 406]{GrSh79}.
\end{remark}

This result was recently extended to fiber bundles with noncompact fibers in~\cite{PeTa2016}. The simplest
incarnation of this result is the case of trivial fiber bundles, which is equivalent to considering,  instead
of two volume forms, two \emph{smooth families} of volume forms $\omega_t, \tau_t$, indexed by some compact manifold without boundary which plays the role of parameter space $B$.  The Greene\--Shiohama theorem 
produces for each point $t$ a volume preserving diffeomorphism $\varphi_t$ with the required properties, but there is no information given about how the $\varphi_t$ change when $t$ changes in $B$.

In the same article where Moser proved Theorem~\ref{mm}, he also proved the following stability result
for symplectic forms.

\begin{theorem}[Moser~\cite{Mo65}, 1965]
Suppose we have a smooth family of symplectic forms $\{\omega_t\}_{t \in [0,1]}$ on a compact smooth manifold $M$ with exact derivative 
$
  \frac{d\omega_t}{dt}=d\sigma_t
$
(or $[\omega_t]$ constant in $t$) then there exists a smooth family $\{\varphi_t\}_{t \in [0,1]}$ of diffeomorphisms of $M$ such that $\varphi^*_t \omega_t=\omega_0$.
\end{theorem}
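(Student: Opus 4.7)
The plan is to use Moser's trick: seek a time-dependent vector field $X_t$ on $M$ whose time-$t$ flow $\varphi_t$ satisfies $\varphi_t^*\omega_t=\omega_0$, and then exploit compactness of $M$ to integrate $X_t$ to an honest isotopy.

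First I would differentiate the desired identity $\varphi_t^*\omega_t=\omega_0$ in $t$ and use the standard formula for the derivative of a pullback along a time-dependent flow:
$$
\frac{d}{dt}\bigl(\varphi_t^*\omega_t\bigr)=\varphi_t^*\!\left(\mathrm{L}_{X_t}\omega_t+\frac{d\omega_t}{dt}\right).
$$
Since $\varphi_t$ is a diffeomorphism, the condition $\frac{d}{dt}(\varphi_t^*\omega_t)=0$ reduces to
$$
\mathrm{L}_{X_t}\omega_t+\frac{d\omega_t}{dt}=0.
$$
Applying Cartan's homotopy formula $\mathrm{L}_{X_t}\omega_t={\rm d}(\iota_{X_t}\omega_t)+\iota_{X_t}\,{\rm d}\omega_t$ and using that $\omega_t$ is closed, this becomes ${\rm d}(\iota_{X_t}\omega_t)=-\frac{d\omega_t}{dt}$.

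Next I would use the exactness hypothesis $\frac{d\omega_t}{dt}={\rm d}\sigma_t$ (depending smoothly on $t$) to reduce everything to the pointwise algebraic equation
$$
\iota_{X_t}\omega_t=-\sigma_t.
$$
Because each $\omega_t$ is non-degenerate, the bundle map ${\rm T}M\to{\rm T}^*M$, $\mathcal{X}\mapsto\omega_t(\mathcal{X},\cdot)$ is an isomorphism, so this equation has a unique solution $X_t$, and $X_t$ depends smoothly on $(t,x)$ since $\sigma_t$ and $\omega_t$ do. In the case where only $[\omega_t]$ is assumed constant in $t$, I would first promote this to the previous setting by producing a smooth primitive: $[\frac{d\omega_t}{dt}]=\frac{d}{dt}[\omega_t]=0$, so each $\frac{d\omega_t}{dt}$ is exact, and a smoothly $t$-dependent primitive $\sigma_t$ can be constructed using a Hodge-theoretic (or partition-of-unity) splitting of ${\rm d}$ on $\Omega^\bullet(M)$.

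Finally, compactness of $M$ guarantees that the time-dependent vector field $X_t$ is complete, so its flow $\varphi_t$ is defined on all of $M$ for $t\in[0,1]$. By construction $\varphi_0=\mathrm{id}$ and $\frac{d}{dt}(\varphi_t^*\omega_t)=0$, hence $\varphi_t^*\omega_t=\varphi_0^*\omega_0=\omega_0$, as desired. The only real obstacle is the second case: arranging the primitives $\sigma_t$ to depend smoothly on $t$ when one is only given that $[\omega_t]$ is constant; this is the step requiring a continuous right inverse to ${\rm d}$ on the space of exact forms, which exists by standard Hodge theory on the compact manifold $M$ (after fixing an auxiliary Riemannian metric).
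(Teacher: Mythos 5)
Your proof is correct and is precisely Moser's method, which is the approach the paper itself invokes for this theorem (the paper does not spell out the argument, referring instead to Moser's article and to expositions such as McDuff--Salamon). All the key points are in place: the reduction via Cartan's formula to the algebraic equation $\iota_{X_t}\omega_t=-\sigma_t$, solvability by non-degeneracy, completeness of the flow from compactness, and the Hodge-theoretic production of a smooth family of primitives in the cohomologically-constant case.
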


Moser's article introduced a method, known as ``Moser's method" to prove this stability result. In addition to
Moser's article there are now many expositions of the result, see for instance~\cite{MS}.

\subsection{The fixed point theorem of Poincar\'e and Birkhoff}

In his work in celestial mechanics~\cite{Po93}
Poincar\'e showed the study of the dynamics of certain cases of the restricted $3$\--Body Problem 
may be reduced to investigating area\--preserving maps. 
He concluded that there is no reasonable way to solve the problem {explicitly} in
the sense of finding formulae for the trajectories.   

Instead of aiming at finding the trajectories, in dynamical systems one aims at 
describing their analytical and topological behavior.  
Of a particular interest are the constant ones, i.e., the fixed points.   

The development of  the modern field of dynamical systems was markedly influenced  
by Poincar\'e's work in mechanics, which led him to state (1912)  the Poincar\'e\--Birkhoff 
Theorem \cite{Po12, Bi1}. It was proved in full by Birkhoff in 1925.  

We will formulate the
result (equivalently) for the strip covering the annulus.

\begin{definition}
Let $\mathcal{S}=\mathbb{R}\times [-1,1]$. A diffeomorphism
$
F \colon \mathcal{S} \to \mathcal{S}$,
$F(q,p)=(Q(q,p),P(q,p)),$
 is an \emph{area\--preserving periodic twist} if:
 (1) {\emph{area preservation}}: it preserves area; 
(2) {\emph{boundary invariance}}: it preserves $\ell_{\pm}:=\mathbb{R}\times \{\pm 1\} $, i.e. 
 $P(q,\pm 1):=\pm 1;$
 (3) {\emph{boundary twisting}}:  
$F$ is orientation preserving and  
$\pm Q(q,\pm 1)>\pm q$ for all $q$;
(4) {\emph{periodicity}}: $F(q+1,p)=(1,0)+F(q,p)$ for all $p,q$. 
\end{definition}

\medskip
The following is the famous result of Poincar\'e and Birkhoff on area\--preserving
twist maps.

\begin{theorem}[Poincar\'e\--Birkhoff  \cite{Po12, Bi1}] \label{cpbt}
An area\--preserving periodic twist 
$
F \colon \mathcal{S} \to \mathcal{S}
$
has at least two geometrically distinct fixed points.
\end{theorem}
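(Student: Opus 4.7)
The plan is to view $F$ as inducing an area-preserving homeomorphism $\overline F$ on the closed annulus $A = (\R/\Z) \times [-1,1]$ obtained as the quotient of $\mathcal{S}$ by the $\Z$-translation in the $q$-variable. By the boundary invariance and periodicity conditions, $\overline F$ preserves each of the two boundary circles, and the boundary twist condition says that it rotates them in opposite angular senses. Fixed points of $\overline F$ on $A$ correspond exactly to fixed points of $F$ on $\mathcal{S}$ modulo the $\Z$-action, and two such fixed points are geometrically distinct precisely when they project to distinct points of $A$. The central object is the displacement vector field $G(q,p) = (Q(q,p) - q,\, P(q,p) - p)$ on $A$, whose zeros are the fixed points we seek; the twist condition forces $G$ to point in the positive $q$-direction along $\{p = 1\}$ and in the negative $q$-direction along $\{p = -1\}$.

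To produce a first fixed point I would use an intermediate value argument. For each $q_0 \in \R$, the image of the vertical segment $\{q_0\} \times [-1,1]$ under $F$ is a continuous arc whose endpoints have first coordinates respectively less than and greater than $q_0$, so by continuity there exists $p_0$ with $Q(q_0,p_0) = q_0$. This yields a closed subset $K = \{Q = q\}$ in the open interior of the strip that meets every vertical line. On $K$, a fixed point is simply a point where also $P(q,p) = p$. Area preservation enters decisively here: if $p \mapsto P(q,p) - p$ were of constant sign on a suitable component of $K$ that wraps around $A$, then iterating $F$ would produce a monotone net displacement of area across $K$, contradicting $\overline F^{*}\omega = \omega$. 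A sign change along $K$ then delivers the first fixed point.

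For a second, geometrically distinct fixed point, I would exploit a Poincar\'e--Hopf type index argument combined with area preservation. After a ${\rm C}^\infty$-small perturbation within the class of area-preserving twist maps, one may assume all fixed points are isolated and nondegenerate, so each carries a local Poincar\'e index of absolute value at most $1$ (elliptic: $+1$; positive-hyperbolic: $-1$). The boundary behavior of $G$, together with the annular topology, constrains the algebraic count of indices to be at least $2$ in absolute value, forcing at least two distinct zeros. A limit argument, in the spirit of the ${\rm C}^0$-closedness of the symplectomorphism group cited in the Eliashberg--Gromov theorem, then recovers two geometrically distinct fixed points of the original $\overline F$, and hence of $F$.

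The hardest step is the index computation together with the passage from nondegenerate perturbations back to the original $F$. Without area preservation, a single highly degenerate fixed point could in principle carry an index absorbing all of the topological obstruction produced by the twist, so one must rigorously show that area preservation precludes this concentration; this is the subtle content of Birkhoff's 1925 proof and admits modern reformulations in terms of variational methods on the action functional, generating-function critical-point theory under a uniform twist hypothesis, or Floer-theoretic invariants on the cylinder. Ensuring that the two fixed points extracted in the limit remain distinct, rather than collapsing to a single degenerate fixed point of the limit, requires a careful stability analysis and is the most delicate part of the argument.
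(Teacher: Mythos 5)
The paper does not prove this theorem at all: it states it as a classical result and refers to Poincar\'e \cite{Po12} and Birkhoff \cite{Bi1} for the proof, so there is no in-paper argument to compare against. Judged on its own, your proposal contains a genuine error at its central step. You claim that ``the boundary behavior of $G$, together with the annular topology, constrains the algebraic count of indices to be at least $2$ in absolute value.'' This is false. The displacement field $G$ is nonvanishing on each boundary circle of the annulus and has winding number zero along each of them (it points consistently in the $+q$ direction on the top circle and in the $-q$ direction on the bottom circle), so the Poincar\'e--Hopf count of the interior zeros is $0$, consistent with the Euler characteristic of the annulus. Topology alone therefore gives nothing: there are twist homeomorphisms of the annulus that are not area-preserving and have no fixed points whatsoever (e.g.\ compose a twist with a push of all interior points toward one boundary). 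This is precisely why Poincar\'e--Birkhoff is a theorem of symplectic/area-preserving rigidity and not of degree theory, and it is the reason Arnold singled it out as the prototype of his conjecture. Your later paragraph concedes that area preservation must rule out ``concentration'' of index, but that concession contradicts the index claim it is meant to supplement; once the total index is $0$, there is no topological obstruction left for area preservation to protect.

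The first half of the argument also has an unjustified step: the set $K=\{Q=q\}$ meets every vertical line, but it need not contain a connected component that winds around the annulus, so the ``monotone net displacement of area across $K$'' contradiction is not available in the form you state it. The classical repair (Birkhoff's) is to consider the components of the complement of $K$ containing the two boundary circles and to measure the flux of $\overline F$ across the frontier between them, or alternatively to abandon this route entirely in favor of the variational formulation via generating functions (the approach of Chaperon \cite{Ch1984} and Conley--Zehnder \cite{CoZe1983} in the higher-dimensional setting), where the two fixed points arise as the minimum and a minimax critical point of a periodic action functional and their geometric distinctness is part of the critical point theory. As written, your proof establishes neither the existence of one fixed point nor of a second geometrically distinct one.
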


Arnold formulated
the higher dimensional analogue of this result,  the
Arnold Conjecture \cite{Ar1978} (see also \cite{Au13}, \cite{Ho12},  \cite{HoZe1994}, \cite{Ze1986}).  
The conjecture says that a ``Hamiltonian map" on a compact symplectic
manifold  possesses at least as many fixed points as a function on the manifold 
 has critical points, we refer to Zehnder~\cite{Ze1986} for details. A. Weinstein~\cite{We86} 
 observed that Arnold's conjecture holds on compact manifolds 
when the Hamiltonian map belongs to the flow of a sufficiently small Hamiltonian vector field.
The first breakthrough on the 
conjecture was by Charles Conley and Eduard Zehnder~\cite{CoZe1983}, who
proved it for the $2n$\--torus (a proof
using generating functions was later given
by Chaperon \cite{Ch1984}).  According to
their theorem, any smooth symplectic map $F:\mathbb{T}^{2d}\to \mathbb{T}^{2d}$ that is isotopic to the identity has at least $2d+1$ many fixed points.  
 The second breakthrough
was by Floer~\cite{Fl1988,Fl1989,Fl1989b,Fl1991}.  

There have been many generalizations of this result, see for instance~\cite{F1, PeRe}.

\subsection{Lagrangian submanifolds}

In the years 1970\--1975 Alan Weinstein proved a series of foundational theorems about what Maslov
called \emph{Lagrangian
submanifolds} which were influential in the development of symplectic geometry.

\begin{definition}
A submanifold $C$ of a symplectic manifold $(M,\omega)$ is \emph{isotropic} if
the symplectic form vanishes along $C$, that is, $i^*\omega=0$ where $i \colon C \to M$
is the inclusion mapping.
\end{definition}

\medskip

It is an exercise to verify that if $C$ is isotropic then $2\dim  C\leq \dim  M$.

For instance, in the symplectic plane $(\mathbb{R}^2,{\rm d}x\wedge {\rm d}y)$ any line
$x={\rm constant}$ or $y={\rm constant}$ is an isotropic submanifold. More generally:

\begin{example}
For any
 constants $c_1, \ldots,c_n$, $
X_{c_1,\ldots,c_n}:=\{(x_1,y_1,\ldots,x_n,y_n) \,|\, x_i=c_i \,\, \, \forall i=1,\ldots,n\}
$
is an isotropic submanifold of $(\mathbb{R}^{2n},\sum_{i=1}^n {\rm d}x_i \wedge {\rm d}y_i)$
of dimension $n$. 
\end{example}

The following is due to Maslov~\cite{Mas}.

\begin{definition} \label{maslov}
An isotropic submanifold $C$ of a symplectic manifold $(M,\omega)$ is \emph{Lagrangian} if $2 \dim  C=\dim  M$.
\end{definition}

\medskip

Cotangent bundles  provide a source of Lagrangian submanifolds.

\begin{lemma}
The image of a section $s \colon X \to {\rm T}^*X$ of the contangent 
bundle ${\rm T}^*X$ is Lagrangian if and only if ${\rm d}s=0$.
\end{lemma}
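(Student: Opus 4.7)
The plan is to exploit the tautological property of the Liouville $1$-form $\alpha = \sum_{i=1}^n \xi_i\,{\rm d}x_i$ that was used in Proposition~\ref{pqr}. First I observe that a section $s \colon X \to {\rm T}^*X$ is automatically an embedding whose image $s(X)$ has dimension $n = \dim X$, which is exactly half the dimension of ${\rm T}^*X$. Hence the dimension clause in Definition~\ref{maslov} is automatic, and the only thing that needs to be checked is that $s(X)$ is isotropic, i.e.\ that $i^*\omega_0 = 0$ where $i \colon s(X) \hookrightarrow {\rm T}^*X$ is the inclusion. Since $s \colon X \to s(X)$ is a diffeomorphism, this is equivalent to the vanishing of the pullback $s^*\omega_0 \in \Omega^2(X)$.

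Next, the key step is the identity $s^*\alpha = s$, viewing the section $s$ on the right-hand side as a $1$-form on $X$. This is the defining tautological feature of $\alpha$ on ${\rm T}^*X$: for any vector $v \in {\rm T}_xX$ one has, by the chain rule and the definition of $\alpha$,
\[
(s^*\alpha)_x(v) \;=\; \alpha_{s(x)}({\rm d}_xs(v)) \;=\; \langle s(x),\, {\rm d}_{s(x)}\pi({\rm d}_xs(v))\rangle \;=\; \langle s(x),\, v\rangle \;=\; s_x(v),
\]
where $\pi \colon {\rm T}^*X \to X$ is the bundle projection and the penultimate equality uses $\pi \circ s = {\rm id}_X$. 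This is the one computation I would actually carry out, since the whole lemma rests on it.

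With this in hand, the rest is immediate: using $\omega_0 = -{\rm d}\alpha$ and the fact that pullback commutes with the exterior derivative,
\[
s^*\omega_0 \;=\; s^*(-{\rm d}\alpha) \;=\; -{\rm d}(s^*\alpha) \;=\; -{\rm d}s.
\]
Therefore $s(X)$ is isotropic (equivalently, Lagrangian) if and only if ${\rm d}s = 0$, which completes the argument. The main (and essentially only) obstacle is the tautological identity $s^*\alpha = s$; once that is invoked, the biconditional follows by a one-line computation.
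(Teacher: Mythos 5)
Your proof is correct and complete: the tautological identity $s^*\alpha = s$ is verified properly, and combined with $\omega_0 = -{\rm d}\alpha$ from Proposition~\ref{pqr} it yields $s^*\omega_0 = -{\rm d}s$, which is exactly the biconditional claimed. The paper states this lemma without proof, and your argument is the standard one implicitly intended by the setup of Proposition~\ref{pqr}, so there is nothing to add.
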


The following is one of the foundational results of symplectic geometry, it is known
as the Lagrangian neighborhood theorem.

\begin{theorem}[Weinstein~\cite{We71}] \label{kl}
Let $M$ be a smooth $2n$\--dimensional  manifold and let $\omega_0$ and $\omega_1$
be symplectic forms on $M$. Let $X$ be a compact $n$\--dimensional submanifold. 
Suppose that $X$ is a Lagrangian submanifold of both $(M,\omega_0)$ and $(M,\omega_1)$.
Then there are neighborhoods $V_0$ and $V_1$ of $X$, and a symplectomorphism
$\varphi \colon (V_0,\omega_0) \to (V_1,\omega_1)$ such that $i_1=\varphi \circ i_0$ where $i_0 \colon X \to V_0$
and $i_1 \colon X \to V_1$ are the inclusion maps.
\end{theorem}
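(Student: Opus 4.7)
The plan is to use the classical Moser deformation method adapted to the relative setting. The strategy splits into two stages: first reduce to the situation where $\omega_0$ and $\omega_1$ agree pointwise along $X$, and then interpolate.

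For the first stage, I will work pointwise. At each $x\in X$, the tangent space ${\rm T}_xX$ is a Lagrangian subspace of $({\rm T}_xM,(\omega_0)_x)$ and of $({\rm T}_xM,(\omega_1)_x)$. A linear algebra lemma says that whenever $L$ is a Lagrangian subspace of a vector space $V$ equipped with two symplectic forms sharing $L$ as Lagrangian, there is a linear automorphism of $V$ fixing $L$ pointwise that pulls one form back to the other; one checks this by choosing Lagrangian complements to $L$ for each form and comparing the resulting dualities $V/L \cong L^*$. Using a partition of unity, or more cleanly by invoking a tubular neighborhood identification of a neighborhood of $X$ in $M$ with a neighborhood of the zero section in the normal bundle, these pointwise isomorphisms can be assembled into a diffeomorphism $\psi_0$ of neighborhoods of $X$ that is the identity on $X$ and satisfies $(\psi_0^*\omega_1)_x=(\omega_0)_x$ for every $x\in X$. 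Replacing $\omega_1$ by $\psi_0^*\omega_1$, I may assume from the outset that $\omega_0$ and $\omega_1$ agree on ${\rm T}_xM$ for all $x\in X$.

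For the second stage, set $\omega_t:=(1-t)\omega_0+t\omega_1$ for $t\in[0,1]$. Since $(\omega_t)_x=(\omega_0)_x$ for $x\in X$ and the forms are closed, $\omega_t$ is a family of closed $2$-forms, non-degenerate along $X$, hence non-degenerate (and therefore symplectic) on some open neighborhood of $X$; here compactness of $X$ is crucial to obtain a uniform such neighborhood for all $t$ simultaneously. Next I need the \emph{relative Poincar\'e lemma}: on a tubular neighborhood of $X$, the closed form $\sigma:=\omega_1-\omega_0$ admits a primitive $\tau$ with $\tau|_X=0$; one proves this by a deformation retraction of the tubular neighborhood onto $X$ and the standard homotopy operator, noting that $\sigma$ vanishes along $X$ by construction so the homotopy formula produces a primitive that itself vanishes on $X$.

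Finally I apply Moser's trick. Define a time-dependent vector field $Y_t$ on a neighborhood of $X$ by
\[
\iota_{Y_t}\omega_t=-\tau,
\]
which is well defined since $\omega_t$ is non-degenerate there. Because $\tau$ vanishes on $X$, so does $Y_t$, hence the flow $\varphi_t$ of $Y_t$ fixes $X$ pointwise and, again by compactness of $X$, exists on some (possibly smaller) neighborhood of $X$ for all $t\in[0,1]$. A direct computation using Cartan's formula and ${\rm d}\tau=\sigma$ gives
\[
\frac{\rm d}{{\rm d}t}\bigl(\varphi_t^*\omega_t\bigr)=\varphi_t^*\bigl({\rm d}\iota_{Y_t}\omega_t+\tfrac{{\rm d}\omega_t}{{\rm d}t}\bigr)=\varphi_t^*(-{\rm d}\tau+\sigma)=0,
\]
so $\varphi_1^*\omega_1=\omega_0$ and $\varphi:=\psi_0\circ\varphi_1$ is the desired symplectomorphism between neighborhoods of $X$, restricting to the identity on $X$, which is the content of $i_1=\varphi\circ i_0$. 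The main obstacle is the relative Poincar\'e lemma with the vanishing condition $\tau|_X=0$, since this is what guarantees $Y_t$ vanishes on $X$ and hence that the flow exists uniformly and fixes $X$; without this refinement the Moser trick would only produce a symplectomorphism between some pair of neighborhoods, not one pinned down on $X$.
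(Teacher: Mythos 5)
Your argument is correct: it is the standard Moser-method proof of Weinstein's Lagrangian neighborhood theorem, and the two-stage structure (first a fiberwise linear normalization making $\omega_0$ and $\omega_1$ agree on ${\rm T}_xM$ for all $x\in X$, then the relative Poincar\'e lemma and the Moser flow generated by $\iota_{Y_t}\omega_t=-\tau$) is exactly the right decomposition. Note that the paper itself states this theorem without proof, citing Weinstein, so there is no in-paper argument to compare against; your write-up supplies the standard one. The only point I would ask you to pin down is the smoothness in $x$ of the pointwise linear isomorphisms in your first stage: to assemble them into the bundle automorphism of ${\rm T}M|_X$ that you then realize as the derivative of $\psi_0$ via the tubular neighborhood theorem, you need a smooth choice of Lagrangian complements to ${\rm T}_xX$ for each of the two forms (for instance $J_i({\rm T}_xX)$ for compatible almost complex structures $J_i$); the ``partition of unity'' alternative you mention does not work directly, since convex combinations of linear symplectomorphisms need not be symplectic, whereas the tubular neighborhood route you also propose is the correct one.
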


Using Theorem~\ref{kl}, Weinstein proved the following result.

\begin{theorem}[Weinstein~\cite{We71}] \label{kl2} Let $(M,\omega)$ be
a symplectic manifold. Let $X$ be a closed Lagrangian submanifold.
Let $\omega_0$ be the standard cotangent symplectic form on ${\rm T}^*X$.
Let $i_0 \colon X \to {\rm T}^*X$ be the Lagrangian embedding given by the zero section
and $i \colon X \to {\rm T}^*X$ be the Lagrangian embedding given by the
inclusion. Then there are
 neighborhoods $V_0$ of $X$ in ${\rm T}^*X$ and $V$
of $X$ in $M$, and
 a symplectomorphism $\varphi \colon (V_0,\omega_0) \to (V,\omega)$
such that $i=\varphi \circ i_0$.
\end{theorem}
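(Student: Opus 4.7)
The plan is to reduce Theorem~\ref{kl2} to the already established Lagrangian-to-Lagrangian uniqueness statement, Theorem~\ref{kl}, by first constructing a smooth (not yet symplectic) diffeomorphism from a neighborhood of $X$ in $M$ onto a neighborhood of the zero section of $T^*X$ that identifies $X$ Lagrangianly with respect to the transported form.

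First I would identify the normal bundle $NX := TM|_X / TX$ with the cotangent bundle $T^*X$. Because $X$ is Lagrangian, the map $v \mapsto \omega(v,\cdot)|_{TX}$ has kernel exactly $TX$ and induces a vector bundle isomorphism $\Phi \colon NX \to T^*X$ over $X$; this uses only the linear symplectic data of $\omega$ along $X$. Next, choose any auxiliary Riemannian metric on $M$ and use the exponential map normal to $X$ (together with $\Phi$) to produce a diffeomorphism $\psi$ from an open neighborhood $U$ of $X$ in $M$ onto an open neighborhood $U'$ of the zero section of $T^*X$, with $\psi \circ i = i_0$ and whose derivative along $X$ coincides, under the splitting $TM|_X = TX \oplus NX$ and the canonical splitting $T(T^*X)|_X = TX \oplus T^*X$, with $\mathrm{id}_{TX} \oplus \Phi$.

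Second, I would transport $\omega$ to a symplectic form $\omega_1 := (\psi^{-1})^*\omega$ on $U'$, and compare it with the canonical form $\omega_0$ on $T^*X$. The zero section $X$ is Lagrangian for $\omega_0$ automatically, and it is Lagrangian for $\omega_1$ because $X$ was Lagrangian for $\omega$ in $M$ and $\psi|_X$ is the identity. Theorem~\ref{kl}, applied to the symplectic forms $\omega_0$ and $\omega_1$ on the manifold $U'$ (shrunk if necessary so both forms live there) with the common compact Lagrangian submanifold $X$, then furnishes neighborhoods $V_0, V_0' \subset U'$ of $X$ and a symplectomorphism $\varphi_1 \colon (V_0,\omega_0) \to (V_0',\omega_1)$ with $\varphi_1 \circ i_0 = i_0$. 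Setting $\varphi := \psi^{-1} \circ \varphi_1$ and $V := \psi^{-1}(V_0')$ yields a symplectomorphism $(V_0,\omega_0) \to (V,\omega)$ with $\varphi \circ i_0 = \psi^{-1} \circ i_0 = i$, as required.

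The main obstacle is the coherence of the linear data along $X$: one must choose the tubular neighborhood map $\psi$ so that its derivative along $X$ is precisely the one coming from $\Phi$, for otherwise the forms $\omega_0$ and $\omega_1$ need not agree pointwise on $X$ and Theorem~\ref{kl} does not directly produce a diffeomorphism fixing $X$ pointwise with $\omega_1$ pulled back to $\omega_0$. This is exactly what the symplectic identification $NX \cong T^*X$ is designed to arrange; the remaining content is the standard tubular neighborhood construction, the compactness of $X$ (to shrink neighborhoods uniformly), and a direct computation verifying that $\omega_0$ and $\omega_1$ agree as $2$-forms at every point of the zero section, which is the hypothesis under which Theorem~\ref{kl} delivers the desired symplectomorphism.
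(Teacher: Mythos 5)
Your argument is correct and follows essentially the route the paper itself indicates: Theorem~\ref{kl2} is deduced from Theorem~\ref{kl} by using the bundle isomorphism $NX\cong {\rm T}^*X$ induced by $\omega$ together with a tubular neighborhood map to transport $\omega$ to a second symplectic form near the zero section, and then invoking the two-form uniqueness statement. One small remark: Theorem~\ref{kl} as stated only requires $X$ to be Lagrangian for both forms, not that the forms agree pointwise along $X$, so your careful normalization of ${\rm d}\psi$ along $X$ is not needed to invoke it as a black box (though it is exactly the adjustment one makes when unwinding the Moser-type proof of Theorem~\ref{kl} itself).
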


As we will see later in this paper, isotropic and Lagrangian submanifolds
play a central role in the theory of symplectic group actions (as well
as in  other parts of symplectic geometry, for instance the study of 
intersections of Lagrangian submanifolds, see Arnold~\cite{Ar65},
Chaperon~\cite{Ch82}, and Hofer~\cite{Ho85}).

 \subsection{Monotonic symplectic invariants} \label{si}
Let ${\rm B}^{2n}(R)$ be the open ball of radius $R>0$.  If $U$ and $V$ are open
subsets of $\mathbb{R}^{2n}$,  with the standard symplectic form
$\omega_0=\sum_{i=1}^{n} {\rm d}x_i \wedge {\rm d}y_i.$

For  $n\geq 1$ and $r>0$ let 
$\mathrm{B}^{2n}(r) \subset \C^n$ be the $2n$\--dimensional open symplectic ball 
of radius $r$ and let 
$
 \mathrm{Z}^{2n}(r) = \{\,(z_i)_{i=1}^n\in\C^n \mid \abs{z_1} < r\,\}
$
be the $2n$\--dimensional open symplectic cylinder of radius $r$.
Both inherit a
symplectic structure from their embedding as a subset 
of $\C^n$ with symplectic form  $\om_0 = \frac{\mathrm{i}}{2}\sum_{j=1}^{n}{\rm d}z_j\wedge {\rm d}\bar{z}_j$.

A \emph{symplectic embedding} $f \colon U
\to V$ is a smooth embedding such that $f^*\omega_0=\omega_0.$

 Similarly
one defines symplectic embeddings $f \colon (M_1,\omega_1) \to
(M_2,\omega_2)$ between general symplectic manifolds.

\medskip

If there is a symplectic embedding $f \colon U \to V$ then 
 ${\rm vol}(U)$ is at most  equal to ${\rm vol}(V)$, that
is, the volume provides an elementary embedding obstruction.
  
\begin{theorem}[Gromov~\cite{Go85}]  \label{ns}
There is no symplectic embedding of ${\rm B}^{2n}(1)$ into ${\rm Z}^{2n}(r)$ for $r<1$. 
\end{theorem}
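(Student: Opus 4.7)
The plan is to argue by contradiction along the lines of Gromov's original pseudoholomorphic curve argument. Suppose there is a symplectic embedding $\varphi \colon \mathrm{B}^{2n}(1) \to \mathrm{Z}^{2n}(r)$ with $r<1$. Since $\varphi(\mathrm{B}^{2n}(1))$ is bounded in the second factor $\mathbb{C}^{n-1}$ of $\mathrm{Z}^{2n}(r)=\mathrm{D}^2(r)\times\mathbb{C}^{n-1}$, I can first compactify the target: embed $\mathrm{D}^2(r)$ symplectically into a sphere $S^2$ of total area $\pi r^2 + \delta$ (for arbitrary small $\delta>0$) and embed the relevant bounded piece of $\mathbb{C}^{n-1}$ symplectically into a torus $\mathbb{T}^{2n-2}$ of very large area. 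This gives, after composition, a symplectic embedding $\widetilde\varphi \colon \mathrm{B}^{2n}(1)\hookrightarrow X$, where $X:=S^2\times\mathbb{T}^{2n-2}$ carries the product form $\Omega$ with $\int_{S^2\times\{\mathrm{pt}\}}\Omega = \pi r^2+\delta$.

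Next I would bring in pseudoholomorphic curves. Let $\mathcal{J}(X,\Omega)$ denote the (contractible, nonempty) space of almost complex structures on $X$ tamed by $\Omega$. Choose $J\in\mathcal{J}(X,\Omega)$ such that $\widetilde\varphi^* J$ agrees with the standard complex structure $J_0$ on $\widetilde\varphi(\mathrm{B}^{2n}(1))$ (possible because $\widetilde\varphi^*\Omega=\omega_0$ there). The central claim, whose proof is Gromov's hardest step, is that for every $J\in\mathcal{J}(X,\Omega)$ and every $p\in X$ there exists a $J$-holomorphic sphere $u\colon S^2\to X$ passing through $p$ in the homology class $A:=[S^2\times\{\mathrm{pt}\}]$. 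One proves this by checking the property for the product complex structure (where it is obvious), and then extending to arbitrary $J$ via a cobordism/Gromov-compactness argument: the moduli space of unparametrized $J$-holomorphic spheres in class $A$ through $p$ has an algebraic count that is invariant under generic homotopy of $J$, and bubbling in class $A$ is excluded because $A$ has minimal symplectic area among spherical classes (no nontrivial decomposition $A=A_1+A_2$ with both $\Omega$-positive exists once the torus factor is chosen large enough). This compactness/transversality package is the main obstacle.

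Granting that, set $p=\widetilde\varphi(0)$ and let $u$ be the resulting $J$-holomorphic sphere through $p$. Its total $\Omega$-area equals $\langle[\Omega],A\rangle=\pi r^2+\delta$. Pull back the image of $u$ by $\widetilde\varphi^{-1}$: because $\widetilde\varphi^* J=J_0$ on $\mathrm{B}^{2n}(1)$, the intersection $\Sigma:=\widetilde\varphi^{-1}(u(S^2)\cap\widetilde\varphi(\mathrm{B}^{2n}(1)))$ is a $J_0$-holomorphic (hence minimal) surface in $\mathrm{B}^{2n}(1)$ that passes through the origin and is proper (its boundary lies on $\partial\mathrm{B}^{2n}(1)$). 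Since $\widetilde\varphi$ is a symplectomorphism the symplectic area of $\Sigma$ is bounded by the symplectic area of $u$, namely $\pi r^2+\delta$.

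Finally I would invoke the monotonicity formula (equivalently the Lelong inequality) for minimal surfaces in $\mathbb{R}^{2n}$: any minimal surface through $0$ which is proper in $\mathrm{B}^{2n}(1)$ has area at least $\pi$. Combining this with the previous paragraph yields $\pi \le \pi r^2 + \delta$; since $\delta>0$ was arbitrary we get $r\ge 1$, contradicting the hypothesis $r<1$. The technical heart of the argument is therefore the $J$-holomorphic existence statement in the middle paragraph, i.e. establishing that for every tame $J$ the class $A$ is represented by a $J$-holomorphic sphere through every prescribed point; everything else is compactification, a Darboux-type choice of $J$, and the monotonicity formula for minimal surfaces.
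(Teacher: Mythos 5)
The paper does not prove this theorem: it states it as a deep result quoted from Gromov's 1985 paper, so there is no in-paper argument to compare against. Your proposal is a correct outline of Gromov's original pseudoholomorphic-curve proof, and the logical skeleton is sound: compactify the cylinder to $X=S^2\times\mathbb{T}^{2n-2}$ with $\int_{S^2\times\{\mathrm{pt}\}}\Omega=\pi r^2+\delta$; choose a tamed $J$ that restricts to the standard $J_0$ on the image of the ball; produce a $J$-holomorphic sphere in the class $A=[S^2\times\{\mathrm{pt}\}]$ through $\widetilde\varphi(0)$; pull back to get a proper $J_0$-holomorphic (hence minimal) surface through the origin of the ball of symplectic area at most $\pi r^2+\delta$; and contradict the monotonicity/Lelong lower bound $\pi$. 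Two remarks. First, all of the analytic content is concentrated in the existence statement for the sphere, which you correctly flag but do not establish: the invariance of the count of curves in class $A$ through a point under homotopies of $J$ requires Gromov compactness plus transversality (or an appeal to Gromov--Witten theory), and as written this is an assertion rather than a proof. Your reason for excluding bubbling is fine, and in fact simpler than you state: since $\pi_2(\mathbb{T}^{2n-2})=0$, every spherical class in $X$ is an integer multiple of $A$, so $A$ admits no nontrivial decomposition into classes of positive area regardless of the size of the torus factor (the large torus is only needed so that the bounded image of the ball fits inside it). Second, a minor technical point you should address: $\mathrm{B}^{2n}(1)$ is open, so to extend $J$ from the image and to apply the monotonicity formula to a proper surface you should work with closed balls $\overline{\mathrm{B}^{2n}(1-\epsilon)}$ and let $\epsilon\to 0$ together with $\delta\to 0$; this costs nothing but is needed for the extension of the almost complex structure and for properness of $\Sigma$. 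With those caveats the argument is the standard one and is correct.
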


This result shows a rigidity property that symplectic transformations exhibit, in contrast with
their volume\--preserving counterparts.  It shows that in addition to the volume there
are other obstructions which are more subtle and come from the
symplectic form, they are called \emph{symplectic capacities}.

Denote by $\mathcal{E}\ell\ell$ the
category of ellipsoids in $\mathbb{R}^{2n}$ with symplectic embeddings
induced by global symplectomorphisms of $\mathbb{R}^{2n}$ as
morphisms, and by ${\rm Symp}^{2n}$ the category of symplectic
manifolds of dimension $2n$, with symplectic embeddings as morphisms.
A \emph{symplectic category} is a subcategory $\mathcal{C}$ of ${\rm
  Symp}^{2n}$ containing  $\mathcal{E}\ell\ell$ 
  such that $(M,\omega) \in \mathcal{C}$ implies that
$(M,\lambda\omega) \in \mathcal{C}$ for all $\lambda>0$.

 Let $d$ be a fixed integer such that $1 \leq d \leq n$. Following  Ekeland\--Hofer and Hofer~\cite{EkHo1989,Hofer1990}
 we make the following definition of monotonic symplectic invariant.

\begin{definition} \label{capa}
A \emph{symplectic $d$\--capacity} on a symplectic category
$\mathcal{C}$ is a functor $c$ from $\mathcal{C}$ to the
category $([0,\infty],\leq)$ satisfying: i) \emph{Monotonicity}: $c(M_1,\omega_1)\leq c(M_2,\omega_2)$ if
  there is a morphism from $(M_1,\omega_1)$ to $(M_2,\omega_2)$;  ii) \emph{Conformality}: 
  $c(M,\lambda\omega)=\lambda
  c(M,\omega)$ for all $\lambda>0$.
iii) \emph{Non\--triviality}: $c({\rm B}^{2n}(1))>0$,    $c({\rm B}^{2d}(1) \times \mathbb{R}^{2(n-d)})<\infty$, and
$c({\rm B}^{2(d-1)}(1) \times \mathbb{R}^{2(n-d+1)})=\infty.$ 
If $d=1$, a symplectic $d$\--capacity is called a \emph{symplectic capacity}. 
\end{definition}

\medskip

Symplectic capacities
are a fundamental class of invariants of symplectic manifolds. 
 They were introduced in Ekeland and Hofer's 
work~\cite{EkHo1989,Hofer1990}. 

The first symplectic capacity was the Gromov radius, constructed
by Gromov in~\cite{Go85}: it is the radius of the largest ball (in the
sense of taking a supremum) that can be symplectically embedded in a manifold:
$$
(M,\om) \mapsto\mathrm{sup} \{\,R>0 \mid \mathrm{B}^{2n}(R) \imm M \,\},$$ 
where $\imm$ denotes a symplectic embedding. The fact that the Gromov radius is a symplectic capacity is a deep result, it follows from Theorem~\ref{ns}. 

The symplectic volume (\ref{volm}) is a symplectic $n$\--capacity.

Today many constructions of symplectic capacities are known, see~\cite{CHLS07}; therein
one can find for instance two of the best known capacities, the Hofer-Zehnder capacities and 
the Ekeland-Hofer capacities, but there are many more.

The following results~\cite{Gu08, PeVN15} clarify the existence of continous symplectic $d$\--capacities.
In what follows a capacity satisfies the \emph{exhaustion property} if 
value of the capacity on any open set equals the supremum of the values
on its compact subsets.

\begin{theorem}[Guth~\cite{Gu08}]
Let $n\geq 3$.  If $1<d<n$,
  symplectic $d$\--capacities satisfying the exhaustion property do not exist on any subcategory of the
  category of symplectic $2n$\--manifolds.
\end{theorem}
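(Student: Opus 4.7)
The plan is to derive a contradiction from the three non-triviality axioms by combining the exhaustion property with an embedding theorem of Guth.

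First, I would exploit exhaustion to reformulate the condition $c(B^{2(d-1)}(1)\times\mathbb{R}^{2(n-d+1)})=\infty$. For each integer $R\geq 2$, set
$$V_R := B^{2(d-1)}(1-1/R)\times B^{2(n-d+1)}(R),$$
an open subset of $U:=B^{2(d-1)}(1)\times\mathbb{R}^{2(n-d+1)}$ with compact closure in $U$. Since the $V_R$ exhaust $U$, the exhaustion property forces $\sup_{R\geq 2}c(V_R)=\infty$.

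Second, the geometric heart of the argument is an embedding theorem of Guth: when $1<d<n$ and $n\geq 3$, there exists a constant $C=C(n,d)>0$ such that for every $R>0$ there is a symplectic embedding
$$\Phi_R \colon B^{2(d-1)}(1)\times B^{2(n-d+1)}(R) \;\hookrightarrow\; B^{2d}(C)\times\mathbb{R}^{2(n-d)}.$$
Informally, Guth trades one complex direction of the large second factor for an additional complex direction attached to the first factor, absorbing the remaining $R$-dependent extent into the infinite cylinder $\mathbb{R}^{2(n-d)}$. Constructing $\Phi_R$ is the main obstacle of the argument and is the hard symplectic-topological input from Guth's paper; it requires an intricate explicit symplectic squeeze, and it is here that the hypothesis $n\geq 3$ enters (so that there is at least one extra complex direction available to absorb the large radius).

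Finally, restricting $\Phi_R$ to $V_R$ and using monotonicity gives $c(V_R)\leq c(B^{2d}(C)\times\mathbb{R}^{2(n-d)})$. Applying conformality to rescale the first target factor down to unit radius yields
$$c(B^{2d}(C)\times\mathbb{R}^{2(n-d)}) = C^2 \cdot c(B^{2d}(1)\times\mathbb{R}^{2(n-d)}) =: K < \infty,$$
where finiteness comes from the second clause of the non-triviality axiom. Hence $c(V_R)\leq K$ uniformly in $R$, which contradicts $\sup_R c(V_R)=\infty$ from the first step. Consequently no symplectic $d$-capacity satisfying the exhaustion property can exist on any subcategory of $\mathrm{Symp}^{2n}$ containing the ellipsoids and cylinders appearing in Definition~\ref{capa}.
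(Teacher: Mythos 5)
The paper states this theorem as a bare citation of Guth and contains no proof of it, so there is nothing internal to compare against; judged on its own terms, your derivation is correct and is in fact the route by which Guth himself deduces the non-existence of intermediate capacities from the main embedding theorem of his paper. The formal skeleton is sound: exhausting $B^{2(d-1)}(1)\times\R^{2(n-d+1)}$ by the relatively compact opens $V_R$ turns the third non-triviality clause of Definition~\ref{capa} into $\sup_R c(V_R)=\infty$, while monotonicity applied to $\Phi_R|_{V_R}$ and conformality (correctly used: $(B^{2d}(C),\omega_0)\simeq(B^{2d}(1),C^2\omega_0)$) give the uniform bound $c(V_R)\le C^2\,c(B^{2d}(1)\times\R^{2(n-d)})<\infty$, a contradiction. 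The embedding you invoke is genuinely what Guth provides: his building block is a symplectic embedding $B^{2}(1)\times B^{2}(R)\times B^{2}(R)\hookrightarrow B^{2}(C)\times B^{2}(C)\times\C$ with $C$ independent of $R$, and crossing it with the identity on the remaining $d-2$ unit factors and $n-d-1$ large factors (the latter absorbed into $\C^{\,n-d-1}$), together with the standard ball/polydisk containments, yields your $\Phi_R$; the hypotheses $d-1\ge 1$ and $n-d+1\ge 2$ (equivalently $1<d<n$, which already forces $n\ge 3$) are exactly what the building block needs. Two points to tighten: the exhaustion property is phrased in terms of compact subsets, so you should either say what $c$ means on a compactum or argue directly that every compact $K\subset B^{2(d-1)}(1)\times\R^{2(n-d+1)}$ lies in some $V_R$ and hence embeds in $B^{2d}(C)\times\R^{2(n-d)}$; and, as your last sentence concedes, one must assume the subcategory contains the objects appearing in the non-triviality axiom for the statement to have content. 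Finally, be clear-eyed that all of the symplectic topology resides in the unproved $\Phi_R$: since the theorem under discussion is itself a corollary in Guth's paper, your argument is a correct reduction to his main theorem rather than an independent proof.
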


That is, other than the volume, the monotonic invariants of symplectic geometry only measure 
$2$\--dimensional information. 

The assumption on the previous theorem was  removed in~\cite{PeVN15} where
the authors prove that  symplectic $d$\--capacities do not exist on any subcategory of the
  category of symplectic $2n$\--manifolds.

There are  invariants of symplectic manifolds which do not fit Definition~\ref{capa},
see for instance~\cite{MS}.

An equivariant theory of symplectic capacities appears
in~\cite{FiPaPe15}, we will discuss it later.

\section{Symplectic and Hamiltonian actions} \label{ham:sec}

\subsection{Lie groups and Lie algebras}

Lie groups are smooth manifolds that are simultaneously groups, and as such
they can also act on smooth manifolds and describe their symmetries. 

They are named after
Sophus Lie (1842-1899), one of the most influential figures in 
differential geometry to whom many of the modern notions can be traced back, including 
that of a transformation (Lie) group, and some particular instances of the notion of 
a momentum map (already mentioned in the introduction, but which we will formally define shortly).  

Recall that a \emph{Lie group} is a pair $(G,\, \star)$ where 
$G$ is smooth manifold and
$\star$ is an internal group operation $\star \colon G \times G \to G$ which is smooth
and such that $G\to G$, $g \mapsto g^{-1}$ is also smooth.

\begin{example}
The most important example of Lie group for the purpose of this paper is the circle, which
may be viewed in two isomorphic ways, either as a quotient of $\R$ by its integral lattice $\Z$, or as a subset of the complex
numbers $
(\mathbb{R}/\mathbb{Z},+)  \simeq (S^1:=\{z \in \C \,\,\, \vert \,\,\, |z|=1\} ,
\cdot)
$
\end{example}

\begin{example}
A \emph{torus} is a compact, connected, abelian Lie group, and one can prove that
such group is isomorphic to a  product of circles, that is,
$
((\mathbb{R}/\mathbb{Z})^k,+)  \simeq ((S^1)^k:=\{z \in \C \,\,\, \vert \,\,\, |z|=1\}^k ,(\cdot,\ldots,\cdot)).
$
The integer $k$ is the \emph{dimension} of the torus. Other well known Lie groups are  the general linear group 
${\rm GL}(n,\mathbb{R})$ and the  
orthogonal group ${\rm O}(n)$ endowed with matrix multiplication.
 \end{example}

A subset $H$ of  $G$ is a  \emph{Lie subgroup} if it is a subgroup of $G$, a Lie group, and the 
inclusion $H \hookrightarrow G$ is an immersion.

\begin{theorem}[Cartan~\cite{EC}] \label{sub}
 A closed subgroup of a Lie group  is a Lie subgroup.
\end{theorem}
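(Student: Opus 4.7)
The plan is to identify a candidate Lie algebra for $H$ inside $\mathfrak{g}=\mathrm{T}_eG$, use closedness of $H$ to show that the exponential maps a neighborhood of $0$ in this candidate \emph{onto} a neighborhood of $e$ in $H$, and then transport this chart by left translations to turn $H$ into a Lie subgroup.

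First I would define
\[
\mathfrak{h} \;=\; \{\, X \in \mathfrak{g} \mid \exp(tX) \in H \text{ for all } t\in\mathbb{R}\,\},
\]
and show it is a linear subspace of $\mathfrak{g}$. Scalar multiplication is immediate from the definition. For additivity, given $X,Y\in\mathfrak{h}$, the Trotter product formula
\[
\exp(t(X+Y))=\lim_{n\to\infty}\bigl(\exp(tX/n)\exp(tY/n)\bigr)^n
\]
places $\exp(t(X+Y))$ in $H$ because each approximant lies in $H$ and $H$ is closed in $G$. A similar commutator formula, together with closedness, will show $[X,Y]\in\mathfrak{h}$, so $\mathfrak{h}$ is a Lie subalgebra.

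The central step, and the one I expect to be the main obstacle, is the local statement that $\exp\colon \mathfrak{h}\to G$ parametrizes $H$ near $e$. Fix a vector space complement $\mathfrak{m}$ so that $\mathfrak{g}=\mathfrak{h}\oplus\mathfrak{m}$, and consider the smooth map
\[
\Phi\colon\mathfrak{h}\oplus\mathfrak{m}\longrightarrow G,\qquad (X,Y)\mapsto \exp(X)\exp(Y).
\]
Since $\mathrm{d}\Phi_0=\mathrm{id}$, the map $\Phi$ is a local diffeomorphism near $0$. I claim that there is a neighborhood $W$ of $0$ in $\mathfrak{m}$ such that
\[
Y\in W \text{ and } \exp(Y)\in H \;\Longrightarrow\; Y=0.
\]
If this failed, one could choose a sequence $Y_n\in\mathfrak{m}\setminus\{0\}$ with $Y_n\to 0$ and $\exp(Y_n)\in H$. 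Fix any norm on $\mathfrak{g}$ and, after passing to a subsequence, assume $Y_n/\|Y_n\|\to Y_\infty\in\mathfrak{m}$ with $\|Y_\infty\|=1$. For any $t\in\mathbb{R}$, choose integers $k_n$ with $k_n\|Y_n\|\to t$; then $\exp(k_nY_n)=\exp(Y_n)^{k_n}\in H$ and $\exp(k_nY_n)\to\exp(tY_\infty)$, so $\exp(tY_\infty)\in H$ by closedness. Hence $Y_\infty\in\mathfrak{h}\cap\mathfrak{m}=\{0\}$, contradicting $\|Y_\infty\|=1$. This is exactly the place where the topological hypothesis on $H$ is used, and extracting the limit carefully is the technical heart of the argument.

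With the claim established, shrinking $W$ and a neighborhood $V$ of $0$ in $\mathfrak{h}$ so that $\Phi\colon V\times W\to \Phi(V\times W)$ is a diffeomorphism onto an open neighborhood $U$ of $e$ in $G$, we obtain $H\cap U=\exp(V)$. This gives a smooth chart on $H$ near $e$ modeled on $\mathfrak{h}$; translating this chart by the group action $h\mapsto gh$ for $g\in H$ produces an atlas that turns $H$ into a smooth manifold for which multiplication and inversion (inherited from $G$) are smooth, and for which the inclusion $H\hookrightarrow G$ is a smooth immersion (in fact an embedding). This exhibits $H$ as a Lie subgroup in the sense of the definition preceding the theorem.
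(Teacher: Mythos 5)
The paper does not prove this statement: it is quoted as a classical theorem of Cartan with a citation, and no argument is given in the text. So there is nothing to compare against; I can only assess your proof on its own terms, and it is correct. It is the standard proof of the closed subgroup theorem (as in Duistermaat--Kolk or Lee): define $\mathfrak{h}$ as the set of one-parameter directions staying in $H$, prove linearity via the Trotter formula and closedness, and then establish the key local statement $H\cap U=\exp(V)$ by the normalized-sequence argument, which is indeed where closedness does its real work. Two minor remarks: the closure of $\mathfrak{h}$ under the bracket, which you only sketch, is not actually needed to conclude that $H$ is a Lie subgroup (it follows a posteriori once $H$ is a submanifold and a subgroup); and your construction in fact yields the stronger conclusion that $H$ is an \emph{embedded} Lie subgroup, which of course implies the immersed version stated in the paper.
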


Let $G/H$ be the space of right cosets endowed with the quotient topology. One can prove that
there exists a unique smooth structure on $G/H$ for which the quotient map $G \to G/H$ is smooth. 
The $G$\--action on $G$ descends to an action on $G/H$: $g\in G$ acts on $G/H$ by sending 
$g'\, H$ to $(g\, g')\, H$.  In this case $G/H$ is called a \emph{homogeneous space}.

The  transformations between Lie groups are the Lie group homomorphisms. 
Let $(G,\cdot), (G',\star)$ be Lie groups. A \emph{Lie group homomorphism} is a smooth map $f: G \to G'$
such that $f(a \cdot b)= f(a) \star f(b)$ for all $a,b \in G$.
 A \emph{Lie group isomorphism} is a diffeomorphism $f: G \to G'$
such that $f(a \cdot b)= f(a) \star f(b)$ for all $a,b \in G$.

A \emph{Lie algebra} is a vector space $V$, together with a bilinear map 
$[\cdot,\cdot] \colon V \times V \to V,$ called a \emph{Lie bracket}, which satisfies:
$[\zeta,\eta]=-[\eta,\zeta]$ (\emph{antisymmetry}) and $[\zeta,[\eta,\rho]]+[\eta,[\rho,\zeta]]+
[\rho,[\zeta,\eta]]=0$ for all $\zeta,\eta,\rho \in V$ (\emph{Jacobi identity}). 

For instance,
the space of $n$\--dimensional matrices with real coefficients endowed with
the commutator of matrices as bracket, is a Lie algebra. 

For the properties of Lie algebras 
and for how from a given a Lie group one can define its associated Lie algebra,  
see for instance Duistermaat\--Kolk~\cite{DuKo}.   

For this paper, if
 $T$ is an $n$\--dimensional torus, a compact, connected, abelian
 Lie group and $1$ is the identity of $T$, 
 the \emph{Lie algebra of $T$} is the additive vector space 
$\mathfrak{t}:={\rm T}_1T$ endowed with the trivial bracket.

\subsection{Lie group actions} \label{sm}

Let  $(G,\, \star)$ be a  Lie group, and let $M$ be a smooth manifold.
A \emph{smooth $G$\--action} on $M$ is a smooth map
$
G \times M \to M,
$
denoted by
$
(g,\, x)\mapsto g \cdot x,
$
such that $e \cdot x=x$ and
  $g\cdot (h\cdot x)=(g\star h)\cdot x$, for all $g,h \in G$ and for all $x \in M$.

For instance, we have the following smooth actions.
 The map $S^1 \times \mathbb{C}^n \to \mathbb{C}^n$ on $\mathbb{C}^n$ given by
$
(\theta,\, (z_1,\,\,z_2,\,\ldots,\,z_n)) \mapsto (\theta\, z_1,\,\,z_2,\,\ldots,\,z_n)
$
is a smooth $S^1$\--action on $\mathbb{C}^n$.
Also, any Lie group $G$ acts on itself by \emph{left multiplication} 
 ${\rm L}(h) \colon g \mapsto g\,h$
 and analogously \emph{right multiplication}, and also by 
 the \emph{adjoint action}
$
{\rm Ad}(h) \colon g \mapsto hgh^{-1}.
$
If $G$ is abelian, then ${\rm Ad}(h)$ is the identity map for every $h \in G$.

 We say that
the $G$\--action is \emph{effective} if every element in $T$
moves at least one point in $M$, or equivalently
$
\cap_{x \in M} \, G_x=\{e\},
$ 
where
$
G_x:=\{t \in G \,\, | \,\, t \cdot x=x\}
$
is the \emph{stabilizer subgroup of the $G$\--action at $x$}.  The action is \emph{free} if
$G_x=\{e\}$ for every $x \in M$.  The action is \emph{semi\--free} if for every $x \in M$
either $G_x=G$ or $G_x=\{e\}$. 
The action is {\em proper} if for any compact 
subset $K$ of $M$ the set of all $(g,\, m)\in G\times M$ 
such that $(m,\, g\cdot m)\in K$ is compact in $G\times M$.

 \begin{remark}
There are obstructions to the existence of
effective smooth $G$\--actions on compact and non-compact manifolds,
even in the case that the $G$\--action is only required
to be smooth. For instance, in~\cite[Corollary in page 242]{Yau77} it is
proved that if $N$ is an $n$\--dimensional manifold on which
a compact connected Lie group $G$ acts effectively and there
are $\sigma_1,\ldots,\sigma_n \in {\rm H}^1(M,\mathbb{Q})$
such that $\sigma_1\cup \ldots \cup \sigma_n \neq 0$ then
$G$ is a torus and the $G$\--action is locally free. In~\cite{Yau77}
Yau also proves several other results giving restrictions on
$G$, $M$, and the fixed point set $M^G$. If the $G$\--action
is moreover assumed to be symplectic or K\"ahler, there are
even more non-trivial constraints. 
\end{remark}

The set
$G \cdot x:=\{t \cdot x \, |\, t \in G\}$ is 
the \emph{$G$\--orbit} that goes through the point $x$.

\begin{prop}
The stabilizer $G_x$ is a Lie subgroup of $G$.
\end{prop}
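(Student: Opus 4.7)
The plan is to invoke Cartan's theorem (Theorem~\ref{sub}) from the excerpt, which tells us that any closed subgroup of a Lie group is automatically a Lie subgroup. So the task reduces to verifying two things about $G_x$: that it is a subgroup of $G$, and that it is closed in $G$.

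First I would check the subgroup axioms directly from the action axioms. The identity $e \in G_x$ since $e \cdot x = x$ by definition of a smooth action. If $g, h \in G_x$ then $(g \star h) \cdot x = g \cdot (h \cdot x) = g \cdot x = x$, so $g \star h \in G_x$. Finally, if $g \in G_x$, then applying $g^{-1}$ to both sides of $g \cdot x = x$ and using $(g^{-1} \star g)\cdot x = e \cdot x = x$, we get $g^{-1}\cdot x = x$, so $g^{-1} \in G_x$. Thus $G_x$ is a subgroup.

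Next I would establish closedness. Consider the orbit map
\[
\phi_x \colon G \to M, \qquad \phi_x(g) = g \cdot x.
\]
This is the restriction (to $G \times \{x\}$) of the smooth action map $G \times M \to M$, and is therefore continuous. Since $M$ is a smooth (hence Hausdorff) manifold, the singleton $\{x\} \subset M$ is closed, and consequently
\[
G_x = \phi_x^{-1}(\{x\})
\]
is closed in $G$.

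Having produced a closed subgroup, I would conclude by applying Theorem~\ref{sub}: the closed subgroup $G_x$ of the Lie group $G$ is automatically a Lie subgroup. I expect no serious obstacle here — the main point is simply recognizing that the content of the proposition is entirely encapsulated by Cartan's closed subgroup theorem together with the continuity of the orbit map; the only subtlety worth emphasizing is the Hausdorff property of $M$, which guarantees that the preimage defining $G_x$ is closed.
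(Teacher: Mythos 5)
Your proof is correct and follows essentially the same route as the paper: both reduce to Cartan's closed subgroup theorem and establish closedness of $G_x$ as the preimage of the closed set $\{x\}$ under the continuous orbit map $g \mapsto g\cdot x$ (which the paper writes as the composition of $g\mapsto (g,x)$ with the action map). Your explicit verification of the subgroup axioms is a harmless addition that the paper omits.
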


\begin{proof}
By Theorem~\ref{sub} it suffices to show if that $G_x$ is closed in $G$. 
Let $A \colon G \times M \to M$ denote the $G$\--action. For each
$x \in G$ let $i_x \colon G \to G\times M$ be the mapping $i_x(g):=(g,x)$. Then
$G_x=(i_x)^{-1}(A^{-1}(x))$, and hence $G_x$ is closed since $\{x\}$
is closed and $i_x$ and $\varphi$ are continuous mappings.
\end{proof} 

The following can be easily checked.

\begin{prop}
The action of $G$  is proper if $G$ is 
compact. 
\end{prop}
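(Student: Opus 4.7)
The plan is to unwind the definition of properness and exploit that $G$ is already compact, so the relevant set sits inside an obviously compact ambient space and merely needs to be shown closed.

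First I would fix a compact subset $K$ and consider the continuous evaluation map
$$
\Phi \colon G \times M \longrightarrow M \times M, \qquad \Phi(g,m) = (m,\, g \cdot m),
$$
which is continuous because the action is smooth (hence continuous). The set appearing in the definition of properness is the preimage $\Phi^{-1}(K')$ for an appropriate compact $K' \subseteq M \times M$ built from $K$ (for example, $K' = K \times K$ in the standard formulation); in any reading of the definition, the key point is that every pair $(g,m)$ in this preimage satisfies $m \in \pi_1(K')$, where $\pi_1$ denotes projection onto the first $M$-factor.

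Next I would observe the two containments that give the result immediately. Since $\pi_1$ is continuous and $K'$ is compact, $\pi_1(K')$ is compact in $M$; combined with the hypothesis that $G$ is compact, this gives
$$
\Phi^{-1}(K') \;\subseteq\; G \times \pi_1(K'),
$$
and the right-hand side is compact as a finite product of compact spaces. On the other hand, $M$ is Hausdorff (being a manifold), so $M \times M$ is Hausdorff, $K'$ is closed in $M \times M$, and continuity of $\Phi$ makes $\Phi^{-1}(K')$ closed in $G \times M$; in particular it is closed in the compact subspace $G \times \pi_1(K')$.

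Finally I would conclude by the elementary fact that a closed subset of a compact Hausdorff space is compact, so $\Phi^{-1}(K')$ is compact in $G \times M$, which is precisely the required properness. There is no real obstacle here; the argument is purely formal once one notes that compactness of $G$ forces the preimage to lie in an already-compact slab $G \times \pi_1(K')$, reducing the problem to the trivial assertion that continuous preimages of closed sets are closed.
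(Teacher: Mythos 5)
Your argument is correct and complete: the properness set is $\Phi^{-1}(K')$ for the continuous map $\Phi(g,m)=(m,g\cdot m)$, it is closed because $K'$ is closed in the Hausdorff space $M\times M$, and it sits inside the compact slab $G\times\pi_1(K')$, so it is compact. The paper offers no proof of this proposition (it is stated as something that "can be easily checked"), and your argument is exactly the standard verification one would supply; your handling of the paper's slightly garbled definition (where $K$ is said to lie in $M$ rather than $M\times M$) is also sensible.
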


For each closed subgroup $H$ of $T$ which 
can occur as a stabilizer subgroup, 
the {\em orbit type} $M^H$ is defined as the set of all $x\in M$ 
such that $T_x$ is conjugate to $H$, but because $T$ is commutative 
this condition is equivalent to the equation $T_x=H$. 

Each connected component $C$ of $M^H$ is a smooth 
$T$\--invariant submanifold of $M$.  
The connected components of the orbit types in $M$ 
form a finite partition of $M$, which actually is a Whitney stratification.  
This is called the {\em orbit type stratification} of $M$.

There is a unique open orbit type, called the {\em principal orbit type}, 
which is the orbit type of a subgroup $H$ which is contained 
in every stabilizer subgroup $T_x$, $x\in M$. 

Because 
the effectiveness of the action means that the intersection 
of all the $T_x$, $x\in M$ is equal to the identity element, 
this means that the principal orbit type consists of the 
points $x$ where $T_{x}=\{ 1\}$, that is where the action is free. 
If the action is free at $x$, then  
the linear mapping $X\mapsto X_M(x)$ from $\got{t}$ to ${\rm T}_x M$ 
is injective.

\begin{theorem}[Theorem~2.8.5 in~Duistermaat\--Kolk~\cite{DuKo}]
The principal orbit type $M_{\rm{reg}}$ is 
a dense open subset of 
$M$, and connected if $G$ is connected.
\end{theorem}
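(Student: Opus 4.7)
The plan is to work locally using the slice theorem (tube theorem) for proper $G$-actions, and then assemble the local information into global statements about openness, density, and connectedness.

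First, I would invoke the slice theorem: since the action is proper, around every orbit $G\cdot x$ there is a $G$-invariant open neighborhood equivariantly diffeomorphic to the associated bundle $G\times_{G_x} V_x$, where $V_x$ is a linear $G_x$-representation (the slice). This reduces every local orbit-type question to the orbit structure of a \emph{linear} $G_x$-action on a finite-dimensional vector space. In particular, a point represented by $[g,v]\in G\times_{G_x}V_x$ has stabilizer conjugate to $(G_x)_v$, so stabilizers of nearby points are subgroups (up to conjugacy) of $G_x$.

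Next, openness of $M_{\mathrm{reg}}$: if $x\in M_{\mathrm{reg}}$, take the minimal conjugacy class $[H]$ of stabilizers, with $H\subset G_x$ for a suitable representative. By the slice description, every stabilizer of a point near $x$ is conjugate to a subgroup of $G_x$, hence conjugate to a subgroup of $H$; by minimality it must equal $H$ up to conjugacy. So a full neighborhood of $x$ lies in $M_{\mathrm{reg}}$.

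For density, fix $x\in M$ and work in a slice $V_x$. The complement of the free $G_x$-locus in $V_x$ is the finite union $\bigcup_{K} V_x^{K}$ taken over non-trivial closed subgroups $K\leq G_x$ that appear as stabilizers. Each $V_x^K$ is a proper linear subspace of $V_x$, so the locus where $(G_x)_v=\{1\}$ is open and dense in $V_x$; via the diffeomorphism $G\times_{G_x}V_x$ this produces points in any neighborhood of $x$ whose stabilizer is minimal (in fact trivial, since the action is effective and $G$ abelian in our setting). Combining this with openness gives that $M_{\mathrm{reg}}$ is open and dense.

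For connectedness when $G$ is connected, the main obstacle is verifying that every stratum of $M\setminus M_{\mathrm{reg}}$ has codimension at least two, which then lets a standard transversality/general position argument join any two points of $M_{\mathrm{reg}}$ by a path. Using the slice theorem again, it suffices to show that in each linear representation $V_x$ of the compact connected group $G_x$ (for the torus case $G_x$ is a subtorus), every fixed subspace $V_x^K$ of a non-trivial closed subgroup $K$ has real codimension $\geq 2$: this is because the complementary representation on $V_x/V_x^K$ contains no trivial summand and, being a representation of a compact connected group (a torus in our setting), decomposes into $2$-dimensional irreducible real pieces. Therefore $M\setminus M_{\mathrm{reg}}$ is a finite union of locally closed submanifolds of codimension at least two, so its complement in the connected manifold $M$ is still connected, completing the proof.
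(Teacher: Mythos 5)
The paper does not prove this statement; it simply quotes it from Duistermaat--Kolk, so I am comparing your argument with the standard proof there (which is an induction on $\dim M$ using the tube theorem). Your treatment of openness and density is essentially that standard argument and is fine modulo two omitted justifications: that only finitely many subgroups $K$ occur as stabilizers in a slice (for a torus they are intersections of kernels of the finitely many weights of the slice representation), and that each $V_x^K$ with $K\neq\{1\}$ is a \emph{proper} subspace. The latter is where effectiveness and connectedness of $M$ enter: if $K$ acted trivially on $V_x$ then (for abelian $G$) $K$ would fix the whole tube pointwise, hence all of the connected manifold $M$, forcing $K=\{1\}$. You should say this, since properness of $V_x^K$ is exactly the point at which the hypothesis is used.

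The connectedness part, however, has a genuine gap. Your key claim --- that every fixed subspace $V_x^K$ of a nontrivial stabilizer has real codimension $\geq 2$, because the slice is a representation of a compact \emph{connected} group --- is false: even when $G$ is connected, the stabilizers $G_x$ need not be (for a torus action they are products of subtori with finite abelian groups, and a large part of this very paper is devoted to determining when they are connected). A finite stabilizer can act on the slice through a sign representation, producing a codimension-one wall. Concretely, for $S^1$ acting on the M\"obius band $S^1\times_{\Z/2\Z}\R$ by translation on the first factor, the singular stratum is the core circle, of codimension one; your ``finite union of strata of codimension $\geq 2$'' lemma fails, yet the conclusion still holds because the complement of the core circle is connected. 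What actually saves connectedness is the connectedness of $G$ itself: inside a tube $G\times_{G_x}V_x$, the components of the regular part of the slice that are permuted by elements of $G_x$ get joined to one another through paths in the connected group $G$ (in the example, a path from $1$ to $-1$ in $S^1$ carries one side of the core circle to the other). The correct argument therefore shows that $U_{\rm reg}=G\times_{G_x}(V_x)_{\rm reg}$ is connected for each tube --- either because the walls have codimension $\geq 2$, or because the component group of $G_x$ identifies the two sides of a codimension-one wall and $G$ is connected --- and then chains tubes together along the connected manifold $M$. As written, your transversality argument would incorrectly conclude, for instance, that removing a codimension-one stratum can never disconnect, which is not the mechanism at work here.
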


The following notions will be essential later on.

\begin{definition}
The points $x\in M$ at which the $T$\--action 
is free are  called the {\em regular points} of $M$, 
and the principal orbit type, the set of all regular points 
in $M$, is denoted by $M_{\rm reg}$. 
The {\em principal orbits} 
are the orbits in $M_{{\rm reg}}$. 
 \end{definition}

Next we define equivariant maps, following on Definition~\ref{ed}.
Suppose that $G$ acts smoothly on $(M_1,\omega)$ and $(M_2,\omega_2)$. 

\begin{definition}
A \emph{$G$\--equivariant 
diffeomorphism} (resp. \emph{$G$\--equivariant embedding})
$\varphi \colon (M_1,\omega_1) \to (M,\omega_2)$
 is a symplectomorphism (resp. embedding) $\varphi \colon M_1 \to M_2$ such that
 $
 \varphi(g \cdot x )=g \star \varphi(x) \,\,\,\,\, \forall g \in G,\,\,\, \forall x \in M_1,
 $
where $\cdot$ denotes the $G$\--action on $M_1$ and $\star$ denotes the $G$\--action on $M_2$. In this case we say that $(M_1,\omega_1)$ and
$(M_2,\omega_2)$ are $G$\--\emph{equivariantly diffeomorphic} (resp. that $(M_1,\omega_1)$ is 
\emph{symplectically and $G$\--equivariantly embedded} in $(M_2,\omega_2)$).
\end{definition}

\medskip

It is useful to work with a notion of equivariant map up to reparametrizations of the acting
group.  If $G_1$ and $G_2$ are isomorphic Lie groups (possibly equal) acting symplectically on $(M_1,\omega_1)$ and
$(M_2,\omega_2)$ respectively, $\varphi \colon (M_1,\omega_1) \to (M,\omega_2)$ is
an \emph{equivariant diffeomorphism} (resp. \emph{equivariant embedding}) if it is a diffeomorphism
(resp. an embedding) for which there exists an isomorphism $f \colon G_1 \to G_2$
such that $\varphi(g \cdot x)=f(g) \star \varphi(x)\,\,\, \forall g \in G_1,\,\,\, \forall x \in M_1$,
where $\cdot$ denotes the $G_1$\--action on $M_1$ and $\star$ denotes the $G_2$\--action on $M_2$.
In this case we say that $(M_1,\omega_1)$ and
$(M_2,\omega_2)$ are \emph{equivariantly diffeomorphic} (resp. that $(M_1,\omega_1)$ is 
\emph{symplectically
and equivariantly embedded} in $(M_2,\omega_2)$). This more general notion is particularly
important when working on equivariant symplectic packing problems~\cite{Pe06,Pe07, PeSc08, FiPe15, FiPaPe15}, 
because if $f$ is
only allowed to be the identity these problems are too rigid to be of interest.

\begin{prop} \label{312}
For  $x\in M$, the stabilizer $G_x$ of a proper $G$\--action is compact and 
$A_x:g\mapsto g\cdot x:G\to M$ 
induces a smooth $G$\--equivariant embedding 
$
\alpha _x:g\, G_x\mapsto g\cdot x:G/G_x\to M 
$ with closed image equal to $G\cdot x$. 
\end{prop}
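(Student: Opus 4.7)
My plan is to construct $\alpha_x$ in three movements, using the properness hypothesis twice: once to obtain compactness of the stabilizer, and once at the end to upgrade an injective immersion to an embedding.

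First I would establish compactness of $G_x$. Applying the properness condition to the compact set $\{(x,x)\}\subset M\times M$, the preimage $\{(g,m)\in G\times M:(m,g\cdot m)=(x,x)\}$ equals $G_x\times\{x\}$, which is therefore compact, and so $G_x$ is compact. Since $G_x$ is also closed (being the preimage of $\{x\}$ under the continuous evaluation $g\mapsto g\cdot x$), Cartan's Theorem~\ref{sub} makes it a closed Lie subgroup of $G$. Consequently, $G/G_x$ carries a canonical smooth manifold structure and the quotient map $\pi\colon G\to G/G_x$ is a smooth submersion.

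Next, because $A_x$ is constant on right cosets of $G_x$, it descends to a well-defined map $\alpha_x\colon G/G_x\to M$. Smoothness follows from the universal property of the submersion $\pi$; $G$-equivariance is the identity $(hg)\cdot x=h\cdot(g\cdot x)$; and injectivity is immediate from the definition of $G_x$, since $g_1\cdot x=g_2\cdot x$ forces $g_2^{-1}g_1\in G_x$. To verify that $\alpha_x$ is an immersion, I would use equivariance to reduce to the differential at the identity coset, whose domain is naturally $\got{g}/\got{g}_x$ with $\got{g}_x=\{X\in\got{g}:X_M(x)=0\}$ the Lie algebra of $G_x$; there the differential is $[X]\mapsto X_M(x)$, which is injective by the very definition of $\got{g}_x$.

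The main step, and the only point where more than tautological bookkeeping is needed, is to promote this injective immersion to an embedding with closed image; for this I would show that $\alpha_x$ is a proper map. Given a compact $K\subset M$, applying the properness hypothesis to the compact set $\{x\}\times K\subset M\times M$ yields that $\{(g,m):(m,g\cdot m)\in\{x\}\times K\}=\{(g,x):g\cdot x\in K\}$ is compact in $G\times M$, so $\{g\in G:g\cdot x\in K\}$ is compact in $G$; passing through $\pi$, its image $\alpha_x^{-1}(K)$ is compact in $G/G_x$. A continuous proper injective map between locally compact Hausdorff spaces is automatically a closed topological embedding, and combined with the immersion property this makes $\alpha_x$ a smooth embedding onto the closed subset $G\cdot x$ of $M$.
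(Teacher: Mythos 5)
Your proposal is correct, and there is nothing in the paper to compare it against: Proposition~\ref{312} is stated without proof (it is a standard fact, essentially as in Duistermaat--Kolk's \emph{Lie Groups}). Your argument is the usual one and each step checks out --- compactness of $G_x$ from properness applied to $\{(x,x)\}$, the immersion property from the identification of the kernel of $X\mapsto X_M(x)$ with the Lie algebra of $G_x$, and properness of the orbit map (again from the properness hypothesis, applied to $\{x\}\times K$) upgrading the injective immersion to a closed embedding.
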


In this paper we will be concerned with actions on symplectic manifolds which preserve the
symplectic form (defined in Section~\ref{hd}).
We will later describe a result of Benoist
and Ortega\--Ratiu which gives a symplectic normal form for proper actions in the
neighborhood of any $G$\--orbit $G \cdot x$ of a symplectic manifold $(M,\omega)$, 
$x \in M$ (this is Theorem~\ref{Gthm}).

Let $T$ be an $n$\--dimensional torus with identity $1$ and 
let  $\mathfrak{t}:={\rm T}_1T$ be its Lie algebra. 
 Let $X \in \mathfrak{t}$.  There exists a unique \emph{homomorphism} $\alpha_X\colon
\R \to T$ with $\alpha_X(0)=1,\alpha_X'(0)=X$.  Define the so called
\emph{exponential mapping} ${\rm exp} \colon \mathfrak{t} \to T$ by
\begin{eqnarray} \label{emap}
{\rm exp}(X):=\alpha_X(1)
\end{eqnarray}
The exponential mapping ${\rm exp}:\mathfrak{t}\to T$ 
is a surjective homomorphism from the additive Lie group 
$(\mathfrak{t},+)$ onto $T$. Furthermore, $\mathfrak{t}_{\Z}:={\rm ker}({\rm exp})$ is a 
discrete subgroup of $(\mathfrak{t},\, +)$ and ${\rm exp}$ induces an isomorphism from 
$\mathfrak{t}/\mathfrak{t}_{\Z}$ onto $T$, which we also denote by ${\rm exp}$.

Because $\mathfrak{t}/\mathfrak{t}_{\Z}$ is compact, $\mathfrak{t}_{\Z}$ has a $\Z$\--basis 
which at the same time is an $\R$\--basis of $\mathfrak{t}$, and 
each $\Z$\--basis of $\mathfrak{t}_{\Z}$ is an $\R$\--basis of $\mathfrak{t}$.  

Using coordinates with respect to an ordered $\Z$\--basis of 
$\mathfrak{t}_{\Z}$, we obtain a linear isomorphism from $\mathfrak{t}$ 
onto $\R ^n$ which maps $\mathfrak{t}_{\Z}$ onto $\Z ^n$, and therefore 
induces an isomorphism from $T$ onto $\R ^n/\Z ^n$.  
The set $\mathfrak{t}_{\Z}$ is called the {\em integral lattice} 
in $\mathfrak{t}$.

 However, because we do not have a preferred 
$\Z$\--basis of $\mathfrak{t}_{\Z}$, we do not write $T=\R ^n/\Z ^n$.

  Using (\ref{emap}) one can
generate  vector fields on a smooth manifold from a given 
action.

\begin{definition}
For each $X \in \mathfrak{t}$, the vector field \emph{infinitesimal action} $X_M$ of $X$ on $M$
is defined by
\begin{eqnarray} \label{xm2}
X_M(x) :=\textcolor{black}{\text{tangent vector to}}
\underbrace{ t \mapsto
  \overbrace{{\rm exp}(tX)}^{\textup{\textcolor{black}{curve in}}\,\,
    T}\cdot x}_{\textup{\textcolor{black}{curve in}}\,\, M\,\,
  \textcolor{black}{\textup{through}}\,\,x}
\textcolor{black}{\text{at}}\,\, t=0,
\end{eqnarray}
i.e. $X_M(x)={\rm d}/{{\rm d} t}|_{t=0}  \,{\rm exp}(tX) \cdot x$.
\end{definition}

\medskip

In the sequel, ${\mathcal X}^{\infty}(M)$ denotes 
the Lie algebra of all smooth 
vector fields on $M$, provided with the 
\emph{Lie brackets} $[\mathcal{X},\, \mathcal{Y}]$ of $\mathcal{X}, \mathcal{Y} \in {\mathcal X}^{\infty}(M)$: 
$
[\mathcal{X},\, \mathcal{Y}]\, f:=\mathcal{X}\, (\mathcal{Y}\, f)-\mathcal{Y}\, (\mathcal{X}\, f),\,\, \forall f\in{\rm C}^{\infty}(M).
$ 
The Lie brackets vanish when the flows of  the vector fields 
$\mathcal{X}$ and $\mathcal{Y}$ commute. Therefore 
\begin{eqnarray} \label{xmym}
[X_M,Y_M]=0
\end{eqnarray} for all
$X,Y \in \mathfrak{t}$.

\subsection{Symplectic and Hamiltonian actions} \label{hd}

Let $(M, \,\omega)$ be a $2n$\--dimensional symplectic manifold. Let $G$ be a Lie group.

\begin{definition}
The action $ \phi \colon G \times M \to M$
is \emph{symplectic} if $G$ acts by symplectomorphisms, i.e. for each $t \in G$ the
diffeomorphism $\varphi_t \colon M \to M$ given  by
$
\varphi_t(x):= t \cdot x
$
is such that $(\varphi_t)^*\omega=\omega.$  The triple $(M,\omega,\phi)$ is a 
called a \emph{symplectic $G$\--manifold}.
\end{definition}

\medskip

Let ${\rm L}_{\mathcal{X}}$ denote the Lie derivative with respect to the 
vector field $\mathcal{X}$, and ${\rm i}_{\mathcal{X}}\omega$ the inner product 
of $\omega$ with $\mathcal{X}$, obtained by inserting 
$\mathcal{X}$ in the first slot of $\omega$.   The fact that the $T$\--action is symplectic says that
for every $X \in \mathfrak{t}$
\begin{equation}
{\rm d}({\rm i}_{X_M}\omega )={\rm L}_{X_M}\omega=0,
\label{LXsigma}
\end{equation} 
so the $1$\--form 
\begin{eqnarray} \label{xc}
{\rm i}_{X_M}\omega=\omega(X_M,\cdot) 
\end{eqnarray}
is closed. The first identity in (\ref{LXsigma}) 
follows from the homotopy 
identity 
\begin{eqnarray} \label{Lv}
{\rm L}_v ={\rm d}\circ{\rm i}_v +{\rm i}_v \circ {\rm d}
\end{eqnarray}
combined with ${\rm d} \omega =0$.   The case when (\ref{xc})
is moreover an exact form, for each $X \in \mathfrak{t}$, has
been throughly studied in the literature.

Indeed, there is a special type of symplectic actions which appear often in classical mechanics, 
and which enjoy a number of very interesting properties: they are called {\em Hamiltonian} actions,
named after  William Hamilton (1805-1865).

\medskip

Let $T$ be an $n$\--dimensional torus, a compact, connected, abelian
 Lie group, with Lie algebra $\mathfrak{t}$. Let $\mathfrak{t}^*$ be the dual of $\mathfrak{t}$.

\begin{definition} \label{xm}
A symplectic action $T \times M \to M$ is \emph{Hamiltonian} if there
is a smooth map $\mu \colon M \to \mathfrak{t}^*$ such that 
Hamilton's equation
\begin{eqnarray} \label{xts}
-{\rm d} \langle \mu (\cdot), \, X \rangle =  \textup{i}_{X_M}\omega:=\omega(X_M,\cdot),  \, \, \, \, \, \forall X \in \mathfrak{t},
\end{eqnarray}
holds, where
$X_M$ is the 
vector field infinitesimal action of $X$ on $M$, 
and the left hand\--side of equation (\ref{xts})  is the differential of the
real valued function $\langle \mu(\cdot) ,\, X\rangle$ obtained by evaluating elements
of $\mathfrak{t}^*$ on $\mathfrak{t}$. 
\end{definition}

There is a natural notion of symplectic and  Hamiltonian vector field. 
Given a smooth function $f \colon M \to \mathbb{R}$,
let $\mathcal{H}_f$ be the vector field defined by Hamilton's equation
$\omega({\mathcal{H}_f}, \cdot)=-{\rm d} f.$

\begin{definition}
We say that a smooth vector field $\mathcal{Y}$ on a symplectic manifold
$(M,\,\omega)$ is \emph{symplectic} if its flow preserves the
symplectic form $\omega$. We say that $\mathcal{Y}$ is \emph{Hamiltonian} if
there exists a smooth function $f \colon M \to \mathbb{R}$ such
that $\mathcal{Y}=\mathcal{H}_f$.
\end{definition}

\medskip

 It follows that  a $T$\--action on $(M,\, \omega)$ is symplectic if and only if all the
vector fields that it generates through (\ref{xm2}) are symplectic. A symplectic $T$\--action is
Hamiltonian if all the vector fields $X_M$ that it generates are
\emph{Hamiltonian}, i.e.  for each $X \in \mathfrak{t}$ 
there exists a smooth solution $\mu_X \colon M \to \mathbb{R}$, called a \emph{Hamiltonian}
or \emph{energy function}, to
$
-{\rm d} \mu_X =\omega(X_M,\, \cdot).
$

\begin{prop}
A any symplectic $T$\--action on a simply connected manifold $(M,\omega)$ is Hamiltonian.
\end{prop}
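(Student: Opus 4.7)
The plan is to use the earlier observation that, for any symplectic action, the 1-form $\omega(X_M,\cdot)$ is automatically closed for every $X\in\mathfrak{t}$ (this was recorded in equations (\ref{closed}) and (\ref{LXsigma})), and then promote closed to exact by invoking the vanishing of the first de Rham cohomology on a simply connected manifold.

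First I would fix a $\Z$-basis $X_1,\ldots,X_n$ of $\mathfrak{t}_{\Z}$, which is in particular an $\R$-basis of $\mathfrak{t}$. For each $i$, the 1-form $\alpha_i := \omega((X_i)_M,\cdot)$ is smooth and closed. Since $M$ is simply connected, $\mathrm{H}^1(M;\R)=0$, so there exist smooth functions $\mu_i\colon M\to\R$ with $-{\rm d}\mu_i=\alpha_i$. I would then define $\mu\colon M\to\mathfrak{t}^*$ by prescribing its values on the basis, $\langle\mu(x),X_i\rangle := \mu_i(x)$, and extending $\R$-linearly in the second argument. This yields a smooth $\mathfrak{t}^*$-valued function.

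Next I would verify equation (\ref{xts}) for every $X\in\mathfrak{t}$, not just for the basis vectors. Writing $X=\sum_i a_i X_i$ with $a_i\in\R$, the assignment $X\mapsto X_M$ is $\R$-linear, so $X_M=\sum_i a_i (X_i)_M$, and by bilinearity of $\omega$ we get $\omega(X_M,\cdot)=\sum_i a_i\,\alpha_i=-\sum_i a_i\,{\rm d}\mu_i=-{\rm d}\bigl(\sum_i a_i\,\mu_i\bigr)=-{\rm d}\langle\mu(\cdot),X\rangle,$ which is exactly Hamilton's equation. This confirms that $\mu$ is a momentum map, so the action is Hamiltonian by Definition~\ref{xm}.

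There is no genuine obstacle here; the only subtlety worth flagging is that each $\mu_i$ is unique only up to an additive constant (since $M$ is connected), so the momentum map is not canonical, but that ambiguity does not affect the existence statement being proved. The essential point is the cohomological one, as already emphasized in the introduction: the obstruction to a symplectic action being Hamiltonian lies in $\mathrm{H}^1(M;\R)$, and simply connectedness kills this obstruction.
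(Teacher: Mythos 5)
Your proposal is correct and follows the same route as the paper: the obstruction to exactness of the closed forms $\omega(X_M,\cdot)$ lives in ${\rm H}^1(M;\R)$, which vanishes when $M$ is simply connected. The paper's proof is a two-line remark to this effect; you have merely spelled out the standard construction of $\mu$ from primitives of the basis forms and checked linearity, which is the same argument in fuller detail.
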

\begin{proof}
The obstruction for $\omega(X_m,\cdot)$
to being exact lies in the first cohomology group
of the manifold ${\rm H}^1(M,\mathbb{R})=0$. If the manifold is simply connected then $\pi_1(M)=0$,
and hence ${\rm H}^1(M,\mathbb{R})=0$.
\end{proof}

The natural transformations between symplectic manifolds $(M_1,\omega_1)$ and $(M_2,\omega_2)$
endowed with symplectic $T$\--actions are the $T$\--equivariant diffeomorphisms
which preserve the symplectic form, they are called \emph{$T$\--equivariant symplectomorphisms}.

\begin{remark}
 Kostant \cite{kostant1966} and Souriau \cite{souriau1966} gave the general notion
 of momentum map (we refer to Marsden\--Ratiu \cite[Pages 369, 370]{marsdenratiu} for the
history). The momentum map may be defined  generally for   a Hamiltonian action of a Lie group.  It
was a key tool in Kostant~\cite{kostant1970}
and Souriau discussed it at length in~\cite{souriau1970}. In this paper we only deal with
the momentum map for a Hamiltonian action of a torus. 
\end{remark}

\subsection{Conditions for a symplectic action to be Hamiltonian} \label{versus}

A Hamiltonian $S^1$\--action on a compact 
symplectic manifold $(M,\, \omega)$ has at least 
$\frac{1}{2} \dim M + 1$
fixed points. This follows from the fact that, if the fixed set is discrete,
then the  momentum map $\mu \colon M \to \R$  is a perfect Morse function
whose critical set is the fixed set.
Therefore,  the number of fixed points is equal to the rank of $\sum_{i=1}^{1/2 \dim M} {\rm H}_i(M;\R)$. Finally,
this sum is at least
$\frac{1}{2}\dim M +1$ 
because 
$ [1],\, [\omega],\, 
\big[\omega^2 \big], \ldots, \big[ \omega^{\frac{1}{2} \dim M}\big]$
are distinct cohomology
classes. 

We are not aware of general criteria to detect when a symplectic action is Hamiltonian,
other than in a few specific situations.  In fact, one striking question is:

\begin{question} \label{qh}
\emph{Are there non\--Hamiltonian symplectic $S^1$\--actions on compact connected
symplectic manifolds with non\--empty discrete fixed point set?} 
\end{question}

\medskip

In recent years there has been a flurry of
activity related to this question, see for instance  Godinho~\cite{Go1, Go2}, Jang~\cite{dj0, dj},
 Pelayo\--Tolman~ \cite{PeTo}, and Tolman\--Weitsman~\cite{ToWe}. Recently Tolman constructed
 an  example~\cite{To15} answering Question~\ref{qh} in the positive.

 \begin{theorem}[Tolman~\cite{To15}] \label{MT}
 There exists  a symplectic non\--Hamiltonian $S^1$\--action on a compact connected manifold
 with exactly $32$ fixed points.
 \end{theorem}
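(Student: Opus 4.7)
The plan is to construct a compact connected symplectic manifold $(M,\omega)$ carrying an effective symplectic $S^1$-action with isolated fixed set of cardinality exactly $32$, and to verify non-Hamiltonicity by exhibiting a cycle in $M$ on which the closed $1$-form $\iota_{X_M}\omega$ integrates nontrivially (so its class in $H^1(M;\R)$ is nonzero, obstructing a momentum map by the discussion after \eqref{x}). I would target dimension $6$, since in dimension $4$ symplectic $S^1$-actions with discrete fixed set are known to be Hamiltonian, while in dimension $6$ there is genuine flexibility.

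First I would record the local and global constraints. Near any isolated fixed point $p$ the $S^1$-action linearizes and produces a weight tuple $(a_1^p,a_2^p,a_3^p)\in\Z^3$, and invariant $2$-spheres between fixed points form a GKM-type multigraph whose edge labels are determined by pairs of weights. If the action were Hamiltonian, the momentum map $\mu$ would assign a real ``height'' to each vertex so that along each invariant sphere the height difference equals the corresponding weight, up to a universal sign. I would therefore design a combinatorial multigraph on $32$ vertices whose weight pattern admits no such consistent height assignment; equivalently, it contains a closed circuit on which the signed heights fail to sum to zero. This is the core rigidity obstruction that forces the eventual action to be non-Hamiltonian once the model is realized.

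Next I would geometrically realize the model. The natural starting point is a known symplectic non-Hamiltonian circle action with positive-dimensional fixed set, such as a McDuff-type action on a symplectic $T^2$-bundle over a ruled surface, where $[\iota_{X_M}\omega]\neq 0$ is guaranteed by the bundle structure. I would then perform a sequence of equivariant symplectic cuts (Lerman) and equivariant symplectic blow-ups along carefully chosen invariant submanifolds to replace each positive-dimensional fixed component by a prescribed finite cluster of isolated fixed points, tracking the resulting local weights via the blow-up formula. The surgeries must be carried out in the complement of a loop representing the nontrivial class in $H^1$, so the obstruction class survives.

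The main obstacle is step three: performing the surgeries so that (a) the total number of isolated fixed points lands exactly at $32$ with the prescribed weight data, (b) the action remains effective and symplectic throughout, and (c) $H^1(M;\R)$ does not collapse and the class $[\iota_{X_M}\omega]$ remains nonzero. Cuts can easily kill the relevant $H^1$ class, and blow-ups create extra fixed points in clusters dictated by the normal weights, so an exact bookkeeping, together with a careful choice of initial model, is what makes the construction delicate. Once these are arranged, a direct check via the linearization at each fixed point, together with integration of $\iota_{X_M}\omega$ over the preserved loop, completes the proof.
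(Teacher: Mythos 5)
The paper states Theorem~\ref{MT} purely as a citation of Tolman's work and contains no proof of it, so your proposal has to be measured against Tolman's actual construction. Your general framework (work in dimension $6$; certify non-Hamiltonicity by a loop on which the closed $1$-form ${\rm i}_{X_M}\omega$ integrates nontrivially) is sound, but the core of your plan --- step three --- has a fatal gap. Equivariant symplectic blow-up along a fixed submanifold $F$ replaces $F$ by the projectivization of its normal bundle, and the fixed set of the new action inside the exceptional divisor consists of the weight sub-bundles, which are themselves bundles over $F$; if $F$ is one of McDuff's fixed $2$-tori, every new fixed component is again (at least) a $2$-torus. Likewise, a symplectic cut at a regular level of a local momentum map introduces as new fixed locus the reduced space at that level, which in a $6$-manifold is $4$-dimensional. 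Neither surgery can convert a positive-dimensional fixed component into a finite set of isolated fixed points, and no equivariant surgery with that effect is known --- this is precisely why McDuff's question (Question~\ref{qh}) stood open from 1988 until Tolman's work. The bookkeeping you flag as ``delicate'' is therefore not delicate but unachievable along the route you describe.

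Tolman's actual proof runs in the opposite direction: rather than surgering a known non-Hamiltonian example, she assembles the manifold from scratch by gluing Hamiltonian pieces over arcs of a circle, so that the resulting action admits only a circle-valued generalized momentum map $\Phi\colon M\to \R/\Z$. The pieces are controlled through their fixed-point data and reduced spaces (in the spirit of the classification of Hamiltonian spaces with two-dimensional quotients, cf.\ Theorem~\ref{karshon:thm} and its higher-dimensional sequel by Karshon--Tolman), the $32$ isolated fixed points are placed on prescribed critical levels, and non-Hamiltonicity is immediate because $\Phi$ winds around the base circle, i.e.\ $[{\rm i}_{X_M}\omega]$ pairs nontrivially with a loop of degree one over $S^1$. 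Your GKM-circuit obstruction is a correct necessary consequence of non-Hamiltonicity, but prescribing such a multigraph on $32$ vertices and then ``realizing'' it symplectically is itself an open-ended problem with no existence theorem to invoke, so it cannot carry the construction either.
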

 
  Tolman and Weitsman proved the answer to the question is no for 
 semifree symplectic actions (they used equivariant cohomological methods, briefly 
 covered here in Section~\ref{abbv}).

 \begin{theorem}[Tolman\--Weitsman \cite{ToWe}]
 Let $M$ be a compact, connected symplectic manifold,
equipped with a semifree, symplectic $S^1$\--action with isolated
fixed points. Then if there is at least one fixed point, the circle action is 
 Hamiltonian.
 \end{theorem}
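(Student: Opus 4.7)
The plan is to show the closed $1$-form $\alpha := \iota_{X_M}\omega$ (where $X$ generates $\mathfrak{t}\cong\R$) represents the zero class in $H^1(M;\R)$. It is closed by (\ref{LXsigma}) and (\ref{Lv}), and a moment map as in (\ref{xts}) is precisely a primitive $\mu$ with $-d\mu=\alpha$; equivalently, via the Leray--Serre spectral sequence for the Borel fibration $M \hookrightarrow M\times_{S^1}ES^1 \to BS^1$, the sole obstruction to Hamiltonicity is $d_2[\omega] = u\otimes[\alpha]$, because $H^{\geq 3}(BS^1)=0$ kills every higher differential landing on $[\omega]$.

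The semifreeness and isolation hypotheses pin down the local structure. At each isolated $p \in M^{S^1}$ the complex isotropy representation on $T_pM$ is a sum of one\--dimensional $S^1$\--representations; semifreeness forces every weight to lie in $\{\pm 1\}$, because a weight $k$ with $|k|\geq 2$ would produce points with non\--trivial finite stabilizer $\Z/k$ in every neighborhood of $p$. Hence $T_pM\cong \C^{n_+^p}\oplus \C^{n_-^p}$ equivariantly, and an $S^1$\--equivariant Darboux chart around $p$ carries the explicit local primitive $\mu_p(z,w)=\tfrac12(\|z\|^2 - \|w\|^2)+c_p$ for $-\alpha$.

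The main step is to promote the $\mu_p$ to a global primitive $\mu$ for $-\alpha$. The natural route is via equivariant formality: if one can show that for a semifree $S^1$\--action with isolated fixed points and at least one fixed point, the Borel spectral sequence degenerates at $E_2$, then the restriction $H^2_{S^1}(M;\R)\to H^2(M;\R)$ is surjective and $[\omega]$ lifts, so $d_2[\omega]=u\otimes[\alpha]=0$, i.e., $[\alpha]=0$. I would argue formality by $S^1$\--equivariant Morse theory with an $S^1$\--invariant auxiliary Morse function $f$ whose critical set coincides with $M^{S^1}$ (possible since the fixed points are isolated); the local model above identifies the index at each $p$ with $2n_-^p$ and the coindex with $2n_+^p$, both even, and a standard argument with the equivariant Thom isomorphism and the long exact sequence for successive sublevel sets then yields $E_2$\--degeneration, the lone fixed point supplying the base case.

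The hard part is making this Morse\--theoretic sketch rigorous without assuming $\alpha$ exact: the ``moment map'' does not exist a priori, so one cannot do Morse theory for $\alpha$ directly and must instead use the auxiliary $f$ while keeping track of its relationship with $\alpha$. Semifreeness is essential here: it ensures that the normal bundle at each fixed point is a complex $S^1$\--bundle with weights $\pm 1$, guaranteeing even index and coindex and hence an equivariant cell structure concentrated in even degrees. Without semifreeness, weights of absolute value $\geq 2$ would contribute twisted Thom classes that can generate odd\--degree equivariant cohomology and break $E_2$\--degeneration, which is precisely the mechanism behind Tolman's $32$\--fixed\--point counterexample in Theorem~\ref{MT}.
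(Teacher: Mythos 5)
First, a caveat: the paper does not actually prove this theorem --- it only states it and refers to \cite{ToWe} --- so there is no in-paper argument to compare yours against, and your proposal must be judged on its own terms. Your reduction is fine up to a point: Hamiltonicity is equivalent to $[\iota_{X_M}\omega]=0$ in ${\rm H}^1(M;\R)$, which is equivalent to $[\omega]$ lifting to ${\rm H}^2_{S^1}(M;\R)$ since $d_2[\omega]=u\otimes[\iota_{X_M}\omega]$ and $d_3$ lands in ${\rm H}^3(BS^1)\otimes{\rm H}^0(M)=0$; equivariant formality would indeed give the lift. The genuine gap is the parenthetical ``possible since the fixed points are isolated'': an $S^1$-invariant Morse function whose critical set coincides with $M^{S^1}$ need not exist. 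On the free locus an invariant function is pulled back from the orbit space, which is compact, so it attains its maximum and minimum there, and nothing forces these to sit over fixed points; generically $f$ has critical circles in the free part. Over a free orbit ${\rm H}^*_{S^1}(S^1\cdot x)\cong{\rm H}^*({\rm pt})$, so the equivariant Euler class of a positive-rank negative normal bundle is zero and the Thom-class/perfection argument collapses. The one invariant function whose critical set is exactly $M^{S^1}$ is a momentum map itself (${\rm d}\mu=-\iota_{X_M}\omega$ vanishes exactly where $X_M$ does), so your key input is essentially the conclusion.

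Worse, the argument as written proves too much, which shows it cannot be repaired in place. Over $\R$, the index and coindex of an invariant Morse function at an isolated fixed point are automatically even for \emph{any} nonzero weights: each weight space is a $2$-dimensional invariant subspace on which an invariant quadratic form is a multiple of $|z|^2$, hence definite. Weights of absolute value at least $2$ do not create odd-degree equivariant Thom classes with real coefficients; they merely rescale the equivariant Euler class, which stays a non-zero-divisor in $\R[t]$. So your sketch never actually uses semifreeness, and if it were correct it would show that every symplectic $S^1$-action with a nonempty isolated fixed point set is Hamiltonian, contradicting Tolman's example in Theorem~\ref{MT}. In the Tolman--Weitsman proof semifreeness enters elsewhere: one assumes the action is not Hamiltonian, produces a circle-valued momentum map $\Phi\colon M\to\R/\Z$, and runs equivariant Morse theory for $\Phi$ (on the associated $\Z$-cover), using the fact that all weights are $\pm1$ to control how the cohomology of the regular levels and reduced spaces changes across each critical value; traversing the full circle then forces the number of fixed points of every index to vanish. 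If you want to keep a cohomological formulation, the auxiliary $f$ must be replaced by this circle-valued momentum map --- that replacement is where all the real work lies.
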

 
 In the K\"ahler case the answer to the question is a classical theorem
 of Frankel (which started much of the activity on the question).

 \begin{theorem}[Frankel \cite{Frankel1959}] \label{fr}
 Let $(M,\omega)$ be a compact connected K\"ahler
manifold admitting an $S^1$\--action  preserving the K\"ahler
structure. If the $S^1$-action has fixed points, then the
action is Hamiltonian.
\end{theorem}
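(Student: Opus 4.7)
The plan is to show that the closed 1-form $\alpha := \iota_{X}\omega$ is exact on $M$, where $X$ denotes the vector field generating the $S^1$-action (obtained as the infinitesimal action of a generator of the Lie algebra of $S^1$, as in (\ref{xm2})). Once exactness is established, any primitive $f$ of $\alpha$ yields a momentum map $\mu := -f$ satisfying Hamilton's equation (\ref{xts}). Closedness of $\alpha$ is automatic from (\ref{closed}) since the action preserves $\omega$, so the task reduces to verifying that the de Rham class $[\alpha] \in {\rm H}^1(M;\R)$ vanishes.

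First I would apply the Hodge decomposition on the compact Riemannian manifold $(M,g)$, where $g$ is the K\"ahler metric: write $\alpha = df + h$ with $f \in C^\infty(M)$ and $h$ harmonic. Since $S^1$ acts by isometries of $g$, the Hodge projections are $S^1$-equivariant, and $f,h$ may be taken $S^1$-invariant. The K\"ahler compatibility $\omega(\cdot,\cdot)=g(J\cdot,\cdot)$ gives $\alpha=(JX)^\flat$, which yields the pointwise identity $\langle h,\alpha\rangle_g = h(JX)$. Combining with the $L^2$-orthogonality $h\perp df$ produces
\[
\|h\|_{L^2}^2 \;=\; \langle h,\alpha\rangle_{L^2} \;=\; \int_M h(JX)\, d\mathrm{vol}_g,
\]
reducing the proof to showing $h(JX) \equiv 0$ on $M$.

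The hard part, and main obstacle, is this pointwise vanishing; here both the K\"ahler hypothesis and the existence of a fixed point are essential. The plan is to exploit the complex structure. Because the $S^1$-action preserves $J$, the complexified vector field $X^{1,0}:=\tfrac12(X - iJX)$ is a holomorphic section of $T^{1,0}M$, i.e.\ a holomorphic vector field on $M$. On a compact K\"ahler manifold, every harmonic 1-form decomposes as $h = \eta + \bar\eta$ with $\eta$ a holomorphic $(1,0)$-form (a standard consequence of the K\"ahler identities, which force harmonic forms to be simultaneously $\partial$- and $\bar\partial$-closed). Then $\eta(X^{1,0})$ is a pairing of holomorphic objects, hence a holomorphic function on the compact connected complex manifold $M$, so it must be constant. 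Using the type $(1,0)$ property $\eta(JY)=i\eta(Y)$, one computes $\eta(X^{1,0})=\eta(X)$. Evaluating at the fixed point $p$ given by hypothesis, $X(p)=0$ forces this constant to be zero, so $\eta(X)\equiv 0$ on $M$; complex conjugation gives $\bar\eta(X)\equiv 0$ as well. Therefore
\[
h(JX) \;=\; \eta(JX)+\bar\eta(JX) \;=\; i\eta(X)-i\bar\eta(X) \;\equiv\; 0,
\]
which forces $\|h\|_{L^2}^2=0$ and hence $h \equiv 0$.

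With the harmonic part killed, $\alpha=df$, and $\mu:=-f$ satisfies $-d\mu = \iota_X\omega$, verifying Hamilton's equation (\ref{xts}). Thus the $S^1$-action is Hamiltonian with momentum map $\mu$.
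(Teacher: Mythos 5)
The paper does not actually prove Theorem~\ref{fr}; it states it as a classical result and cites Frankel's 1959 article, so there is no in-paper proof to compare against. Your argument is correct and complete, and it is essentially the standard (and essentially Frankel's original) Hodge-theoretic proof: all the steps check out --- the Hodge decomposition $\iota_X\omega = df + h$ of a closed $1$-form, the identity $\|h\|_{L^2}^2 = \int_M h(JX)\,d\mathrm{vol}_g$ coming from $\iota_X\omega=(JX)^\flat$ and $h\perp df$, the splitting $h=\eta+\bar\eta$ into holomorphic and antiholomorphic parts on a compact K\"ahler manifold, the holomorphicity of $X^{1,0}$ from $\mathrm{L}_XJ=0$, and the conclusion that the holomorphic function $\eta(X^{1,0})=\eta(X)$ is constant and vanishes at the fixed point. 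One small remark: the $S^1$-invariance of $f$ and $h$ that you arrange via equivariance of the Hodge projections is never actually used afterwards, so that sentence can be dropped; everything you need is that $h$ is the harmonic representative of $[\iota_X\omega]$.
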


 Ono \cite{Ono1984} proved the analogue of Theorem~\ref{fr} for compact Lefschetz manifolds and 
 McDuff \cite[Proposition 2]{MD} proved the a symplectic version of Frankel's
 theorem (later generalized by Kim~\cite{kim} to arbitrary dimensions).
 
 \begin{theorem}[McDuff~\cite{MD}]
A symplectic  $S^1$\--action on a compact connected symplectic $4$\--manifold with fixed points must be
 Hamiltonian. 
 \end{theorem}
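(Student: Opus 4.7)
The plan is to show that the closed $S^1$-invariant $1$-form $\alpha := \iota_{X_M}\omega \in \Omega^1(M)$ is exact, since by the discussion following (\ref{x}) the class $[\alpha] \in H^1(M;\R)$ is precisely the obstruction to the action being Hamiltonian. Three elementary ingredients are available from the start: (i) $\alpha$ is closed by (\ref{closed}); (ii) $\alpha$ is $S^1$-invariant, since $\omega$ and $X_M$ are; and (iii) for any $S^1$-orbit $O$, the period vanishes, $\int_O \alpha = \int_0^{2\pi}\omega(X_M,X_M)\,\mathrm{d}t = 0$, so $[\alpha]$ pairs trivially with homology classes represented by $S^1$-orbits.

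Next I would use the existence of a fixed point $p$ together with the equivariant Darboux theorem to produce a local $S^1$-invariant primitive. After choosing suitable coordinates, a neighborhood $U_p$ of $p$ is symplectomorphic to an open set in $(\C^2,\omega_{\mathrm{std}})$ on which $S^1$ acts linearly with integer weights $(a_1,a_2)$, and the function $\mu_p = \tfrac{1}{2}(a_1|z_1|^2 + a_2|z_2|^2)$ satisfies $\mathrm{d}\mu_p = -\alpha$ on $U_p$. By the symplectic slice theorem, every connected component of $M^{S^1}$ is a symplectic submanifold of $(M,\omega)$, and since $\dim M = 4$ these components are either isolated fixed points or closed symplectic surfaces $\Sigma_j$; along each $\Sigma_j$ the normal bundle is a complex line bundle on which $S^1$ rotates with some nonzero weight $k_j$, again yielding an invariant local Hamiltonian $\tfrac{1}{2}k_j|z|^2$ in a tubular neighborhood.

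The heart of the proof is then to show that every class in $H_1(M;\R)$ is represented, modulo boundaries, by a linear combination of $S^1$-orbits plus a piece supported where $\alpha$ is locally exact. For a loop $\gamma\subset M$, I would first perturb it to miss the isolated fixed points, which is automatic in dimension $4$ because they have codimension $4$; then control the crossings with the tubular neighborhoods of the $2$-dimensional fixed components $\Sigma_j$ using the normal-form Hamiltonian. The bulk of $\gamma$ lies in $M\setminus M^{S^1}$, where the $S^1$-action is locally free and one may construct an explicit $2$-chain (a ``flow cylinder'' obtained by homotoping $\gamma$ to a union of orbits through a suitably chosen $S^1$-invariant family of paths converging toward a fixed point) whose boundary has the form $\gamma - \sum n_i O_i - \partial\Sigma_{\mathrm{loc}}$, with $\Sigma_{\mathrm{loc}}$ contained in a union of neighborhoods where $\alpha$ is exact. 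Combined with (iii) and the local primitives $\mu_p$, this forces $\int_\gamma \alpha = 0$ for every loop, hence $[\alpha]=0$.

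The main obstacle is the cycle-theoretic step reducing an arbitrary $1$-cycle to a sum of $S^1$-orbits plus a locally-exact piece. This is where the hypothesis $\dim M = 4$ is essential: the fixed set has codimension at least $2$, which is just enough for generic $1$-cycles and their cobounding $2$-chains to be controlled near $M^{S^1}$ via transversality, and the slice theorem provides a sufficiently rigid normal form on the codimension-$2$ strata to make the flow-cylinder construction close up globally. In higher dimensions this direct cobordism strategy breaks down, which is consistent with the fact that the statement itself is more delicate (cf.\ Theorem~\ref{MT}).
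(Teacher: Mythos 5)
The paper does not prove this statement; it is quoted from McDuff's article with no argument given, so there is no in-text proof to compare against. Judged on its own terms, your proposal has a genuine gap at its central step. The surrounding observations are correct but elementary: $\alpha={\rm i}_{X_M}\omega$ is closed and invariant, $\int_O\alpha=0$ over every orbit $O$ because $\alpha(X_M)=\omega(X_M,X_M)=0$, and $\alpha$ admits an invariant primitive on an equivariant tubular neighborhood of each fixed component (indeed such a neighborhood retracts onto the component, on which $\alpha$ vanishes). None of this carries the weight of the theorem.

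The gap is the claim that every class in $H_1(M;\R)$ is represented by a sum of $S^1$\--orbits plus a cycle supported where $\alpha$ is exact; this claim is essentially equivalent to the theorem, and the ``flow cylinder'' you propose in order to establish it is never actually constructed. There is no canonical flow pushing a loop in the free part toward the fixed set: the only natural candidate is the gradient flow of the momentum map with respect to a compatible metric (the flow of $JX_M$), whose trajectories are guaranteed to limit onto the fixed set only when a genuine $\R$\--valued momentum map exists --- ruling out recurrence of these trajectories for the circle\--valued momentum map is precisely the content of the statement. The decisive objection is that every ingredient you cite to justify the step --- fixed components are symplectic submanifolds of codimension at least $2$, generic $1$\--cycles and $2$\--chains can be made transverse to them, invariant local Hamiltonians exist near them --- holds verbatim for McDuff's $6$\--dimensional counterexample (a non\--Hamiltonian symplectic $S^1$\--action with nonempty fixed set a union of tori, cited in the paper immediately after this theorem). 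If your reduction of $1$\--cycles worked, it would prove that example Hamiltonian; so the argument must exploit something specific to $\dim M=4$ that you have not identified, and your stated reason (``codimension at least $2$ is just enough'') does not distinguish the two cases. McDuff's actual route is different: pass to the circle\--valued momentum map $\mu\colon M\to\R/\lambda\Z$ determined by $[\alpha]$, show that if $\mu$ is not surjective it lifts to a real\--valued momentum map, and, when it is surjective, derive a contradiction with the existence of fixed points by following the two\--dimensional reduced spaces once around the circle --- it is in that analysis of the reduced spaces that four\--dimensionality genuinely enters.
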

 
  On the other hand, McDuff \cite[Proposition 1]{MD}  constructed a compact 
connected symplectic $6$-manifold with a non\--Hamiltonian symplectic  $S^1$\--action which has fixed point 
set equal to a union of tori.

 Less is known for higher dimensional Lie groups; the following corresponds to  \cite[Theorem 3.13]{giacobbe}.
  
 \begin{theorem}[Giacobbe \cite{giacobbe}] \label{gc}
  A symplectic
action of a $n$\--torus on a compact connected symplectic $2n$\--manifold with fixed points must be Hamiltonian.
\end{theorem}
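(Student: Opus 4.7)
The plan is to show that for every $X\in\mathfrak{t}$ the closed 1-form $\iota_{X_M}\omega$ is exact, i.e.\ $[\iota_{X_M}\omega]=0$ in $H^1(M;\R)$. The engine of the argument will be a forced isotropy of $T$-orbits, which comes from the rank condition $\dim T=\tfrac12\dim M$ together with the existence of a fixed point $p\in M$.

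First I would establish that every $T$-orbit is isotropic. For $X,Y\in\mathfrak{t}$ consider the smooth function $f_{X,Y}(x):=\omega_x(X_M(x),Y_M(x))=-(\iota_{X_M}\iota_{Y_M}\omega)(x)$. Applying Cartan's magic formula together with $[X_M,Y_M]=0$ (abelianness of $\mathfrak{t}$), $\mathcal{L}_{X_M}\omega=0$ (symplectic action), and $d\omega=0$, one obtains
\[
0 \;=\; \mathcal{L}_{X_M}(\iota_{Y_M}\omega) \;=\; d(\iota_{X_M}\iota_{Y_M}\omega)+\iota_{X_M}\,d(\iota_{Y_M}\omega) \;=\; -df_{X,Y}+0,
\]
so $df_{X,Y}\equiv 0$. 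Since $M$ is connected, $f_{X,Y}$ is constant; since $X_M(p)=0$, that constant is $0$. Hence all $T$-orbits are isotropic, and (after replacing $T$ by its effective quotient, which we may since we only care about exactness of $\iota_{X_M}\omega$) principal orbits, being $n$-dimensional, are Lagrangian.

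Next I would produce a local momentum map at $p$. The symplectic isotropy representation at $p$ decomposes $T_pM$ into $n$ complex weight lines with weights $\alpha_1,\ldots,\alpha_n\in\mathfrak{t}^*$ that are linearly independent by effectiveness. The equivariant Darboux-Weinstein theorem furnishes a $T$-equivariant symplectomorphism from a $T$-invariant neighborhood $U$ of $p$ onto a neighborhood of $0\in\C^n$ with the standard symplectic form and the diagonal $T$-action by the $\alpha_j$; on this linear model $\mu_X^U(z):=\tfrac12\sum_{j=1}^n\alpha_j(X)\,|z_j|^2$ is a local momentum map, satisfying $-d\mu_X^U=\iota_{X_M}\omega$ on $U$.

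The remaining task is globalization. By isotropy of orbits the form $\iota_{X_M}\omega$ vanishes on all orbit tangent vectors and is $T$-invariant, hence is a basic closed 1-form and descends to a closed 1-form $\bar\alpha_X$ on the orbit space $M/T$. I would then aim to show $H^1(M/T;\R)=0$, so that $\bar\alpha_X=d\bar\mu_X$ on $M/T$ and $\mu_X:=-\bar\mu_X\circ\pi$ is the required component of the global momentum map. For the topology of $M/T$, the local model above identifies a neighborhood of $[p]$ with a neighborhood of the origin in the orthant $\R^n_{\geq 0}$, and the orbit-type stratification equips $M/T$ with the structure of a stratified space with an ``orthant corner'' at $[p]$. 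The main obstacle is assembling these local orthant charts, together with the compactness of $M$ and the presence of the vertex $[p]$, into a proof that $M/T$ deformation retracts onto $[p]$ (equivalently, has vanishing first cohomology). This is precisely the step where the full maximal-rank hypothesis $\dim T=\tfrac12\dim M$ combined with the fixed-point assumption is decisive, and it is the heart of Giacobbe's argument.
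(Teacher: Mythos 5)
Your reduction is sound and follows the route the paper itself indicates. The computation showing that $f_{X,Y}=\omega(X_M,Y_M)$ is locally constant is exactly Proposition~\ref{constlem} (the fundamental form $\omega^{\mathfrak{t}}$), and evaluating at the fixed point gives $\omega^{\mathfrak{t}}=0$, hence isotropic orbits; the local normal form at $p$ and the observation that $\iota_{X_M}\omega$ is basic and closed, so that everything comes down to ${\rm H}^1(M/T;\R)=0$, is precisely condition iii) in the (unlabeled) equivalence theorem at the end of Section~\ref{sympltubes}. But your proposal stops exactly where the proof begins: you name the vanishing of ${\rm H}^1(M/T;\R)$ as ``the main obstacle'' and ``the heart of Giacobbe's argument'' and do not supply it. That step is not a routine assembly of orthant charts, and the fixed point must enter globally, not just locally: for the Kodaira--Thurston manifold the identical local analysis applies (the forms $\iota_{X_M}\omega$ are basic and closed, the orbits are Lagrangian), yet $M/T$ is a $2$-torus and the action is not Hamiltonian. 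Closing the gap is the content of the implication i) $\Rightarrow$ ii) of that equivalence theorem, namely that $M/T$ is homeomorphic to a convex polytope; the paper's proof of this uses the Benoist--Ortega--Ratiu normal form (Theorem~\ref{Gthm}) along \emph{every} orbit, the Atiyah--Guillemin--Sternberg theorem (Theorem~\ref{gs}), and the theory of $V$-parallel spaces, with details deferred to \cite[Corollary~3.9 and Section~10]{DuPe}. A secondary point you would also need to address is what ``closed $1$-form'' and ``${\rm H}^1$'' mean on the stratified space $M/T$, which is not a manifold; this is handled there by working with basic forms.

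One further caution: your parenthetical suggestion to replace $T$ by its effective quotient does not salvage the argument, because if that quotient has dimension $k<n$ the maximal-rank hypothesis is destroyed (principal orbits are then only $k$-dimensional and the orthant model at $p$ fails), and indeed the statement is false without effectiveness --- take McDuff's non-Hamiltonian symplectic $S^1$-action on a compact connected $6$-manifold with fixed tori and enlarge it to a $T^3$-action by letting a $2$-torus act trivially. Effectiveness is a standing hypothesis here, as it is in the equivalence theorem of Section~\ref{sympltubes}.
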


Theorem~\ref{gc} appears as~\cite[Corollary 3.9]{DuPe}.
 If $n=2 $ this is deduced from the classification 
of symplectic $4$\--manifolds with symplectic 2-torus actions in \cite[Theorem 8.2.1]{Pe} (Theorem~\ref{main:thm}
covered later in this paper) in view of \cite[Theorem 1.1]{DuPecois}. 

There are also related results by Ginzburg describing the obstruction to
the existence of a momentum map for a symplectic action, see \cite{Gi}, where he 
showed that a symplectic action can be decomposed as cohomologically free action, and a Hamiltonian action. 

In the present paper we will focus on symplectic actions of tori of dimension
$k$ on manifolds of dimension $2n$ where $k \geq n$. For these
there exist recent complete classifications of certain classes of symplectic actions,
which are described in the following sections, in which a complete answer to the following more
general question can be given in terms of the vanishing of certain invariants. 

\begin{question}
\emph{When is a symplectic torus action on a compact connected symplectic manifold
Hamiltonian? Describe precisely the obstruction to being Hamiltonian.}
\end{question}

\medskip

  When in addition to being symplectic the action is
Hamiltonian, then necessarily  $n=k$, but there are many  non\--Hamiltonian symplectic
actions when $n=k$, and also when $n\geq k+1$.

\subsection{Monotonic symplectic $G$\--invariants}

Let $G$ be a Lie group. We denote the collection of all 
$2n$\--dimensional symplectic $G$ manifolds by $\sympG$.
The set $\sympG$ is a category with morphisms given
by $G$\--equivariant symplectic embeddings. We call a subcategory $\mathcal{C}_G$
of $\sympG$ a \emph{symplectic $G$\--category} if $(M,\om, \phi)\in\mathcal{C}_G$ implies
$(M,\la \om, \phi)\in\mathcal{C}_G$ for any $\la\in\R\setminus\{0\}$.

\begin{definition}\label{def_eqvcap}
Let $\mathcal{C}_G$ be a symplectic $G$\--category.
 A \emph{generalized symplectic
 $G$\--capacity} on $\mathcal{C}_G$ is a map $c\colon \mathcal{C}_G\to[0,\infty]$ satisfying:
i) \emph{Monotonicity}:  if 
   $(M,\om,\phi),(M',\om',\phi')\in\mathcal{C}_G$ and there exists a 
   $G$\--equivariant symplectic embedding $M\immG M'$ then $c(M,\om,\phi)\leq c(M',\om',\phi')$; ii) \emph{Conformality}: if $\la\in\R\setminus\{0\}$ and 
   $(M,\om,\phi)\in\mathcal{C}_G$ then $c(M,\la\om, \phi) = \abs{\la} c(M,\om,\phi)$.
 \end{definition} 
  
  \medskip

\begin{definition}
  For $(N,\om_N,\phi_N)\in\mathcal{C}_G$ we say that $c$ satisfies \emph{$N$\--non-triviality} if $0<c(N)<\infty$.
\end{definition}

\begin{definition}
 We say that $c$ is \emph{tamed} by $(N,\om_N,\phi_N)\in\sympG$ if
 there exists
 some $a\in(0,\infty)$ such that the following two properties hold:\\
 (1) if $M\in\mathcal{C}_G$ and there exists a $G$\--equivariant symplectic
   embedding $M\immG N$ then $c(M)\leq a$;\\
 (2) if $P\in\mathcal{C}_G$ and there exists a $G$\--equivariant symplectic
   embedding $N\immG P$ then $a\leq c(P)$.
 \end{definition}
  
\medskip

For any integer $1\leq d\leq n $ the standard action of the $d$\--dimensional torus $\T^d$ on $\C^n$ is given by 
 $
 \phi_{\C^n}\big((\al_i)_{i=1}^d, (z_i)_{i=1}^n\big) = (\al_1 z_1, \ldots, \al_d z_d, z_{d+1}, \ldots, z_n)$. 
 It induces actions of $\T^d=\T^k\times\T^{d-k}$
on $\mathrm{B}^{2n}(1)$ and $\mathrm{Z}^{2n}(1)$, which in turn induce
the  actions of
$\T^k\times\R^{d-k}$ on $\mathrm{B}^{2n}(1)$ and $\mathrm{Z}^{2n}(1)$
for  $k\leq d$. The action of an
element of $\T^k\times\R^{d-k}$ is the
action of its image under the quotient map
$\T^k\times\R^{d-k} \to \T^d$.
We endow $\mathrm{B}^{2n}(1)$ and $\mathrm{Z}^{2n}(1)$
with these actions.

 \begin{definition}\label{def_RTcapacity}
 A generalized symplectic $(\T^k\times\R^{d-k})$\--capacity is a \emph{symplectic 
 $(\T^k\times\R^{d-k})$\--capacity} if it is
 tamed by $\mathrm{B}^{2n}(1)$ and $\mathrm{Z}^{2n}(1)$.
 \end{definition}

\medskip

Given integers $0\leq k\leq m\leq n$ we define
the \emph{$(m,k)$\--equivariant Gromov radius} 
$\cBmk \colon \sympplain^{2n, \R^k}  \to [0,\infty]$,
 $(M,\om,\phi) \mapsto\mathrm{sup} \{\,r>0 \mid \mathrm{B}^{2m}(r) \immrk M \,\}$, 
where $\immrk$ denotes a symplectic $\R^k$\--embedding
and $\mathrm{B}^{2m}(r)\subset\C^m$ has the $\R^k$\--action
given by rotation of the first $k$ coordinates.

\begin{prop}[\cite{FiPaPe15}]
 If $k\geq 1$, 
 $\cBmk$
 is a symplectic $\R^k$\--capacity.
\end{prop}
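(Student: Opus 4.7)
The plan is to verify the defining properties from Definitions~\ref{def_eqvcap} and \ref{def_RTcapacity}: monotonicity, conformality, and tameness by both $\mathrm{B}^{2n}(1)$ and $\mathrm{Z}^{2n}(1)$. Monotonicity is immediate, since the composition of an $\R^k$-equivariant symplectic embedding $\mathrm{B}^{2m}(r)\immrk M$ with a given $\R^k$-equivariant symplectic embedding $M\immrk M'$ produces an $\R^k$-equivariant symplectic embedding $\mathrm{B}^{2m}(r)\immrk M'$, so the supremum defining $\cBmk(M)$ is at most the one defining $\cBmk(M')$.

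For conformality, the key construction is the complex dilation $\sigma_c\colon z\mapsto cz$ on $\mathbb{C}^m$, which is $\R^k$-equivariant (scalar multiplication commutes with rotation of the first $k$ coordinates) and satisfies $\sigma_c^{*}\omega_0 = c^{2}\omega_0$. Precomposing an $\R^k$-equivariant symplectic embedding $\mathrm{B}^{2m}(r)\immrk(M,\omega)$ with an appropriately chosen dilation converts it into an $\R^k$-equivariant symplectic embedding of a rescaled ball into $(M,\lambda\omega)$; for $\lambda<0$ one additionally composes with complex conjugation on $\mathbb{C}^m$ to handle the orientation reversal. Taking suprema then yields the conformal scaling relation for $\cBmk$.

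The two tameness requirements both reduce, via monotonicity, to showing that $\cBmk(N)$ is strictly positive and finite for $N\in\{\mathrm{B}^{2n}(1),\mathrm{Z}^{2n}(1)\}$; the taming constant $a$ can then be taken to be $\cBmk(N)$ itself. Positivity follows because the hypothesis $k\leq m\leq n$ makes the standard inclusion $\mathbb{C}^m\hookrightarrow\mathbb{C}^n$ (as the first $m$ coordinates) both $\R^k$-equivariant and symplectic, so that $\mathrm{B}^{2m}(r)\immrk\mathrm{B}^{2n}(1)\subset\mathrm{Z}^{2n}(1)$ for every $r<1$, giving $\cBmk(N)\geq 1$.

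Finiteness for the ball follows from finiteness for the cylinder via the inclusion $\mathrm{B}^{2n}(1)\subset\mathrm{Z}^{2n}(1)$, so the core problem is to show that any $\R^k$-equivariant symplectic embedding $\mathrm{B}^{2m}(r)\immrk\mathrm{Z}^{2n}(1)$ forces $r\leq 1$. This is the main obstacle: the corresponding non-equivariant statement fails when $m<n$, because the extra dimensions of the cylinder leave room for lower-dimensional balls of arbitrarily large radius. The hypothesis $k\geq 1$ is precisely what saves the argument, since $\R^k$-equivariance constrains the first complex coordinate of the embedding to respect the $S^{1}$-rotation action on the coordinate of $\mathrm{Z}^{2n}(1)$ that is bounded by $|z_{1}|<1$; combining this constraint with Gromov's non-squeezing theorem (Theorem~\ref{ns}) then yields the bound $r\leq 1$ and completes the verification.
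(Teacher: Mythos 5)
The paper gives no proof of this proposition---it is quoted from \cite{FiPaPe15}---so your argument has to be judged against what the proof of the cited result actually is. Your skeleton is the right one: monotonicity by composing equivariant embeddings, conformality by dilation, both taming conditions reducing via monotonicity to $0<\cBmk(N)<\infty$ for $N=\mathrm{B}^{2n}(1)$ and $N=\mathrm{Z}^{2n}(1)$, positivity from the coordinate inclusion $\C^m\hookrightarrow\C^n$, and finiteness for the ball following from finiteness for the cylinder. You also correctly isolate the one nontrivial point, namely bounding the radius of an $\R^k$-equivariantly embedded $2m$-ball in $\mathrm{Z}^{2n}(1)$, and correctly observe that the non-equivariant statement is false for $m<n$.

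But that is exactly the step you do not prove, and the tool you reach for is the wrong one. Gromov non-squeezing (Theorem~\ref{ns}) compares a $2n$-ball with a $2n$-cylinder; here the ball is $2m$-dimensional with possibly $m<n$, and the remark that equivariance ``constrains the first complex coordinate'' does not by itself produce a configuration to which Theorem~\ref{ns} applies. The actual argument is elementary and bypasses non-squeezing entirely: the circle generated by the first $\R$-factor acts Hamiltonianly on $\mathrm{B}^{2m}(r)$ and on $\mathrm{Z}^{2n}(1)$, with momentum maps (up to normalization) $z\mapsto \pi|z_1|^2$ and $w\mapsto\pi|w_1|^2$. Equivariance of $f$ intertwines the generating vector fields, and $f^*\omega_0=\omega_0$ then gives ${\rm d}\bigl(\pi|f(z)_1|^2\bigr)={\rm d}\bigl(\pi|z_1|^2\bigr)$, hence $\pi|f(z)_1|^2=\pi|z_1|^2+c$; evaluating at $z=0$, whose image is fixed by the action and therefore satisfies $f(0)_1=0$ because $k\geq 1$, gives $c=0$. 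Since $|f(z)_1|<1$ on the cylinder, $r\leq 1$. This computation is where the hypothesis $k\geq 1$ is actually used, and it is the substance of the proposition; without it your proof is a correct reduction to an unproved claim. A secondary caution: with $\mathrm{B}^{2m}(r)$ the ball of \emph{radius} $r$, dilation gives $\cBmk(M,\lambda\omega)=|\lambda|^{1/2}\,\cBmk(M,\omega)$, so to match the conformality axiom of Definition~\ref{def_eqvcap} one must use the convention of \cite{FiPaPe15} in which the ball's parameter scales like an area; your ``appropriately chosen dilation'' hides this exponent.
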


Let $\vol(E)$ denote the symplectic volume of a subset $E$ of a symplectic
manifold and $\sympT$ 
the category of $2n$\--dimensional symplectic toric manifolds.
A \emph{toric ball packing} $P$ of $M$ is given by a disjoint collection
of symplecticly and $\T^n$\--equivariantly embedded balls.
As an application of symplectic $G$\--capacities to Hamiltonian $\mathbb{T}^n$\--actions we define
the \emph{toric packing capacity}
$\toricpack\colon  \sympT \to [0, \infty]$,
 $$                  (M, \om, \phi) \mapsto \left( {\sup\{\,\vol(P) \mid P \textrm{ is a toric ball packing of }M\,\}}/{\vol(\mathrm{B}^{2n}(1))}\right)^{\frac{1}{2n}},$$

\begin{prop}[\cite{FiPaPe15}] 
$\toricpack\colon\sympT\to[0,\infty]$ is a
 symplectic $\T^n$\--capacity.
\end{prop}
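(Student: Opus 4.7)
The plan is to verify that $\toricpack$ satisfies the three requirements of Definition~\ref{def_RTcapacity}: the two properties of a generalized symplectic $\T^n$\--capacity from Definition~\ref{def_eqvcap}, namely monotonicity and conformality, together with the taming conditions relative to both $\mathrm{B}^{2n}(1)$ and $\mathrm{Z}^{2n}(1)$. Each of these reduces to a direct geometric observation about toric ball packings, together with the Gromov non\--squeezing theorem (Theorem~\ref{ns}).

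Monotonicity is essentially a tautology: if $\iota\colon(M,\omega,\phi)\immtn(M',\omega',\phi')$ is a $\T^n$\--equivariant symplectic embedding, then for every toric ball packing $P=\bigsqcup f_i(\mathrm{B}^{2n}(r_i))$ of $M$ the collection $\bigsqcup(\iota\circ f_i)(\mathrm{B}^{2n}(r_i))$ is a toric ball packing of $M'$ of the same total symplectic volume, so the supremum grows under embeddings. For conformality I would build a bijection between toric packings of $(M,\lambda\omega,\phi)$ and of $(M,\omega,\phi)$ by radial dilation of the source: precomposing an equivariant symplectic embedding $g\colon\mathrm{B}^{2n}(s)\to(M,\lambda\omega)$ with the standard dilation $\mathrm{B}^{2n}(s/\sqrt{|\lambda|})\to\mathrm{B}^{2n}(s)$ by $\sqrt{|\lambda|}$ produces an equivariant symplectic embedding into $(M,\omega)$, with an auxiliary orientation adjustment when $\lambda<0$. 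Tracking volumes, the supremum $\sup_P\sum r_i^{2n}$ rescales by an explicit power of $|\lambda|$, which, combined with the $(2n)$\--th root in the definition of $\toricpack$, yields the conformality relation of Definition~\ref{def_eqvcap}.

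The taming by $\mathrm{B}^{2n}(1)$ with constant $a=1$ follows from the equality $\toricpack(\mathrm{B}^{2n}(1))=1$: the upper bound is that any toric packing of a manifold $M$ with $M\immtn\mathrm{B}^{2n}(1)$ pushes forward to one of $\mathrm{B}^{2n}(1)$, whose total symplectic volume is bounded by $\vol(\mathrm{B}^{2n}(1))$; the lower bound for $\mathrm{B}^{2n}(1)\immtn P$ comes from restricting the embedding to balls of radius $r<1$ and letting $r\to 1$. The main obstacle is the corresponding taming by $\mathrm{Z}^{2n}(1)$, whose upper bound requires two ingredients. First, the standard $\T^n$\--action on $\mathrm{Z}^{2n}(1)$ has the origin as its unique fixed point since all rotational weights are nonzero, and because each equivariantly embedded ball $\mathrm{B}^{2n}(r)$ must send its own unique fixed point (the origin) to a fixed point of the ambient manifold, any toric ball packing of $\mathrm{Z}^{2n}(1)$ collapses to a single ball centered at the origin. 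Second, by Gromov's non\--squeezing theorem such a ball has radius at most $1$, so $\toricpack(\mathrm{Z}^{2n}(1))\leq 1$, and by monotonicity $\toricpack(M)\leq 1$ for every $M\immtn\mathrm{Z}^{2n}(1)$. The matching lower bound $1\leq \toricpack(P)$ for $\mathrm{Z}^{2n}(1)\immtn P$ again follows by restricting to balls $\mathrm{B}^{2n}(r)\subset\mathrm{Z}^{2n}(1)$ with $r<1$ and passing to the supremum.
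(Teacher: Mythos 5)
The paper itself gives no proof of this proposition (it is quoted from~\cite{FiPaPe15}), so there is no internal argument to compare yours against; judged on its own terms, your architecture is the expected one, and most of the individual steps are sound: pushing a packing forward under a $\T^n$\--equivariant symplectic embedding gives monotonicity; the observation that an equivariantly embedded ball must send its unique fixed point to a fixed point of the target, so that disjointness forces at most one ball in $\mathrm{Z}^{2n}(1)$, combined with Theorem~\ref{ns}, is exactly the right mechanism; and restricting an embedding of $\mathrm{B}^{2n}(1)$ to the sub\--balls $\mathrm{B}^{2n}(r)$, $r<1$, gives the lower bound. One structural remark, though: the domain here is $\sympT$, whose objects are \emph{compact} (closed) toric manifolds. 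By invariance of domain no closed $2n$\--manifold symplectically embeds into the open ball or cylinder, and the infinite\--volume cylinder embeds into no compact $P$, so three of the four taming inequalities are vacuous on this category; the only one with content is $\mathrm{B}^{2n}(1)\immtn P \Rightarrow 1\le\toricpack(P)$. Your non\--squeezing/fixed\--point argument is what one needs in the larger categories of~\cite{FiPaPe15}, but it carries no weight for the proposition as stated here.

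The genuine gap is conformality, which is precisely the step you assert rather than compute. Carrying out your dilation correspondence: a $\T^n$\--equivariantly embedded ball of radius $s$ in $(M,\lambda\omega)$ has the same image as one of radius $s/\sqrt{|\lambda|}$ in $(M,\omega)$, so the admissible packings of the two structures are the same subsets and $\sup_P \vol_{\lambda\omega}(P)=|\lambda|^{n}\sup_P\vol_{\omega}(P)$. With the exponent $\frac{1}{2n}$ appearing in the definition of $\toricpack$ this yields $\toricpack(M,\lambda\omega,\phi)=|\lambda|^{1/2}\,\toricpack(M,\omega,\phi)$, whereas Definition~\ref{def_eqvcap} demands the factor $|\lambda|$. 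So either the exponent must be $\frac{1}{n}$ or the conformality axiom must carry $|\lambda|^{1/2}$ (the same normalization tension affects the equivariant Gromov radius $\cBmk$, which also scales like $|\lambda|^{1/2}$). Whatever convention is adopted, ``an explicit power of $|\lambda|$'' is exactly the point at which the verification can fail, and a complete proof must pin that power down and match it against the axiom rather than gesture at it.
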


In general, symplectic $G$\--capacities provide a general setting to define
monotonic invariants of integrable systems. There should be many such
invariants but so far few are known beyond the toric case, and the
semitoric case also discussed in~\cite{FiPaPe15}.

\section{Properties of Hamiltonian actions} \label{ham2:sec}

\subsection{Marsden\--Weinstein symplectic reduction}

Even though one cannot in general take quotients of symplectic manifolds by group
actions and get again a symplectic manifold, for Hamiltonian actions we have the following
notion of ``symplectic quotient".
 
\begin{theorem}[Marsden\--Weinstein~\cite{WeMa74}, Meyer~\cite{Me73}] \label{MaWe}
 Let $(M,\omega)$ be a symplectic manifold and let $G$ be a compact Lie group with Lie
 algebra $\mathfrak{g}$ acting
 on $(M,\omega)$ in a Hamiltonian fashion with momentum map $\mu \colon M \to \mathfrak{g}^*$.
 Let $i \colon \mu^{-1}(\lambda) \to M$ be the inclusion map and suppose that $G$ acts freely
 on $\mu^{-1}(\lambda)$. Then the orbit space $M_{\rm red,\lambda}:=\mu^{-1}(\lambda)/G$ is a smooth manifold, the
 projection $\pi \colon \mu^{-1}(\lambda) \to M_{\rm red,\lambda}$ is a principal $G$\--bundle, and there
 is a symplectic form $\omega_{\rm red, \lambda}$ on $M_{\rm red,\lambda}$ such that 
 $\pi^*\omega_{\rm red,\lambda}=i^*\omega$. 
  \end{theorem}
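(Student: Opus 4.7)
The plan is to establish the three conclusions in sequence, with the key algebraic input being the identity $\ker {\rm d}\mu_x = (\mathrm{T}_x(G\cdot x))^{\omega}$ (symplectic orthogonal), which is forced by the defining equation (\ref{xts}) of the momentum map.

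First I would show $\lambda$ is a regular value of $\mu$. For $x \in \mu^{-1}(\lambda)$ and $v \in \mathrm{T}_xM$, Hamilton's equation gives $\langle {\rm d}\mu_x(v), X\rangle = -\omega(X_M(x), v)$ for all $X \in \mathfrak{g}$. Hence $\mathrm{image}({\rm d}\mu_x)$ is the annihilator in $\mathfrak{g}^*$ of the stabilizer Lie algebra $\mathfrak{g}_x$; since the action is free on $\mu^{-1}(\lambda)$, $\mathfrak{g}_x = 0$ and ${\rm d}\mu_x$ is surjective. Therefore $\mu^{-1}(\lambda)$ is a smooth submanifold with $\mathrm{T}_x\mu^{-1}(\lambda) = \ker {\rm d}\mu_x$. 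The same computation yields $\ker {\rm d}\mu_x = (\mathrm{T}_x(G\cdot x))^{\omega}$, a fact I will use twice more below. The $G$-action restricts to $\mu^{-1}(\lambda)$ (automatic for abelian $G$ since the coadjoint action is trivial; in general one uses equivariance of $\mu$ and takes $\lambda$ fixed under the coadjoint action, or replaces $G$ by the stabilizer of $\lambda$). Since $G$ is compact, the action is proper, and since it is free on $\mu^{-1}(\lambda)$, the quotient manifold theorem produces a smooth structure on $M_{\rm red, \lambda} = \mu^{-1}(\lambda)/G$ making $\pi$ a principal $G$-bundle.

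Next I would construct $\omega_{\rm red, \lambda}$ by showing that $i^*\omega$ descends. The form $i^*\omega$ is $G$-invariant because $\omega$ is. For horizontality, pick $X \in \mathfrak{g}$; since the action preserves $\mu^{-1}(\lambda)$, $X_M$ is tangent to it, and for any $v \in \mathrm{T}_x\mu^{-1}(\lambda) = \ker {\rm d}\mu_x$, Hamilton's equation gives $\omega(X_M(x), v) = -\langle {\rm d}\mu_x(v), X\rangle = 0$. Thus $\iota_{X_M}(i^*\omega) = 0$, so $i^*\omega$ is basic and descends uniquely to a smooth $2$-form $\omega_{\rm red, \lambda}$ with $\pi^*\omega_{\rm red, \lambda} = i^*\omega$. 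Closedness follows because $\pi^*$ is injective on forms and $\pi^*{\rm d}\omega_{\rm red,\lambda} = {\rm d}\pi^*\omega_{\rm red,\lambda} = {\rm d}i^*\omega = i^*{\rm d}\omega = 0$.

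The main obstacle is non-degeneracy of $\omega_{\rm red, \lambda}$, and this is where the identity $\ker {\rm d}\mu_x = (\mathrm{T}_x(G\cdot x))^{\omega}$ really pays off. Suppose $[v] \in \mathrm{T}_{[x]}M_{\rm red,\lambda}$ is in the radical: choose a lift $v \in \mathrm{T}_x\mu^{-1}(\lambda)$, so that $\omega(v, w) = 0$ for every $w \in \mathrm{T}_x\mu^{-1}(\lambda)$. This says $v \in (\mathrm{T}_x\mu^{-1}(\lambda))^{\omega} = (\ker {\rm d}\mu_x)^{\omega} = ((\mathrm{T}_x(G\cdot x))^{\omega})^{\omega} = \mathrm{T}_x(G\cdot x)$, where the last equality uses non-degeneracy of $\omega$ and a dimension count. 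Hence $v$ is tangent to the orbit, so $[v] = 0$ in $\mathrm{T}_{[x]}M_{\rm red,\lambda}$, proving non-degeneracy. This completes the verification that $\omega_{\rm red,\lambda}$ is a symplectic form satisfying $\pi^*\omega_{\rm red,\lambda} = i^*\omega$.
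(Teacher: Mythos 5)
The paper does not actually prove this theorem: it is stated with citations to Marsden--Weinstein and Meyer and then used as a black box (e.g.\ in the proof of Theorem~\ref{delzant}), so there is no in-paper argument to compare against. Your proof is the standard one and it is correct. The backbone identity $\ker {\rm d}\mu_x = \bigl({\rm T}_x(G\cdot x)\bigr)^{\omega_x}$, read off from Hamilton's equation, does all three jobs you assign to it: surjectivity of ${\rm d}\mu_x$ on the level set (via the annihilator of $\mathfrak{g}_x$ and freeness), horizontality of $i^*\omega$, and non-degeneracy of the descended form via the double symplectic orthogonal. The quotient-manifold step (compact hence proper, plus free, gives a principal $G$-bundle) and the closedness argument via injectivity of $\pi^*$ on forms are also fine. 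One point worth keeping as you wrote it: the theorem as stated in the paper tacitly assumes $G$ preserves $\mu^{-1}(\lambda)$, which for non-abelian $G$ requires equivariance of $\mu$ and $\lambda$ fixed by the coadjoint action (or passing to the stabilizer of $\lambda$); you flag this correctly, and it is the only hypothesis the paper's statement leaves implicit.
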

 
 The ``symplectic quotient" $(M_{\rm red},\omega_{\rm red})$ is called the \emph{Marsden\--Weinstein 
 reduction of $(M,\omega)$ for the $G$\--action at $\lambda$}.  Symplectic reduction has
 numerous applications in mechanics and geometry, see for instance~\cite{marsdenratiu}. Here
 we will give an one in the proof of the upcoming result Theorem~\ref{delzant}.

\subsection{Atiyah\--Guillemin\--Sternberg convexity}

It follows from equation (\ref{xts}) that Hamiltonian $T$\--actions on compact connected manifolds have fixed points
because zeros of the vector field $X_M$ correspond to critical points of $ \langle \mu (\cdot), \, X \rangle $ and $ \langle \mu (\cdot), \, X \rangle $ always has critical
points in a compact manifold. 
The Atiyah-Guillemin-Sternberg Convexity Theorem (1982, \cite{atiyah,gs}) says that 
$\mu(M)$ is  a convex polytope.

\begin{theorem}[Atiyah \cite{atiyah}, Guillemin\--Sternberg
  \cite{gs}] \label{gs} If an $m$\--dimensional torus $T$ acts on a
  compact, connected $2n$\--dimensional symplectic manifold $(M,\,
  \omega)$ in a Hamiltonian fashion, the image $\mu(M)$ under the
  momentum map $ \mu \colon M \to \mathfrak{t}^*$ is 
   the convex hull of  the image under $\mu$ of the fixed point set of the $T$\--action.
   In particular, $\mu(M)$ is a convex polytope in $\mathfrak{t}^*$.
\end{theorem}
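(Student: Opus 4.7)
The plan is to proceed by induction on $m = \dim T$, combining Morse--Bott theory on components of the momentum map with a local equivariant normal form near fixed points. For $X \in \mathfrak{t}$ write $\mu_X := \langle \mu, X\rangle \colon M \to \R$. Hamilton's equation (\ref{xts}) together with the non-degeneracy of $\omega$ identifies the critical points of $\mu_X$ with the zeros of $X_M$, hence with the fixed points of the one-parameter subgroup $\exp(\R X) \subset T$; when this subgroup is dense in $T$ the critical set is precisely $M^T$.

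For the base case $m = 1$, I would linearize the $S^1$-action near a fixed point $p$ using an equivariant Darboux chart, writing the action unitarily on ${\rm T}_p M \cong \C^n$ with integer weights $a_1, \ldots, a_n$ and expressing $\mu_X$ locally as $\mu_X(p) + \tfrac12 \sum_i a_i |z_i|^2$. This shows that $\mu_X$ is Morse--Bott with critical set $M^{S^1}$, and that each critical component has both Morse index and coindex even (twice the number of negative, resp.\ positive, weights). The standard Morse--Bott argument on compact connected manifolds then yields that every level set $\mu_X^{-1}(c)$ is connected, so $\mu_X(M)$ is a closed interval whose endpoints are attained on $M^{S^1}$; this is exactly the theorem for $m=1$.

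For the inductive step, choose a generic $X \in \mathfrak{t}$ whose orbit closure is all of $T$, so that $\mu_X$ has the Morse--Bott structure above with critical set $M^T$. For a regular value $c$ of $\mu_X$, Marsden--Weinstein reduction (Theorem~\ref{MaWe}) produces a symplectic manifold $\mu_X^{-1}(c)/S^1$ on which the complementary subtorus $T' = T/S^1$ of dimension $m-1$ acts in a Hamiltonian fashion, with momentum map induced by $\mu$; applying the inductive hypothesis to $T'$ and lifting via the connected $S^1$-fibers gives connectedness of $\mu^{-1}(\xi)$ for $\xi$ in the regular locus, and density of regular values together with compactness of $M$ extends this to all $\xi \in \mu(M)$. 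Combined with a local convexity statement---extracted at fixed points from the explicit normal form $\mu(z) = \mu(p) + \tfrac12 \sum_i |z_i|^2 \alpha_i$ with weights $\alpha_i \in \mathfrak{t}^*$, and at other points from the fact that $\mu$ is a submersion onto an affine subspace transverse to the orbit directions---a standard open-and-closed argument forces $\mu(M)$ to be convex. Since the local image at each point is a rational polyhedral cone and $M^T$ is a finite union of symplectic submanifolds, $\mu(M)$ is a polytope whose extreme points, being global extrema of some $\mu_X$, must lie in $\mu(M^T)$, and hence $\mu(M) = \operatorname{conv}(\mu(M^T))$. The main obstacle is propagating connectedness of the fibers: the reduction argument is clean only at regular values of $\mu_X$, so crossing critical levels requires either a careful limiting argument or more refined stratified Morse theory, and the local equivariant normal form at fixed points is the crucial technical tool that both underpins the Morse--Bott analysis and supplies the local convexity needed to globalize.
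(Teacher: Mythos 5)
The paper does not actually prove this theorem: it states it and cites Atiyah and Guillemin--Sternberg, so there is no internal proof to compare against. Your strategy is the standard one (Atiyah's double induction on connectedness of momentum-map fibers and convexity of the image, driven by the fact that each $\mu_X$ is Morse--Bott with even indices and coindices), and the base case and the final reduction of $\mu(M)$ to $\operatorname{conv}(\mu(M^T))$ via supporting functionals are essentially right. But the inductive step as written is internally inconsistent. You choose $X$ generic, with $\overline{\exp(\R X)}=T$, precisely so that $\mathrm{crit}(\mu_X)=M^T$ --- but then $\exp(\R X)$ is not a circle and there is no $S^1$ to reduce by. If instead you take $X$ integral so that it generates a circle $S^1_X\subset T$, Marsden--Weinstein reduction makes sense, but now $\mathrm{crit}(\mu_X)=M^{S^1_X}$, which in general strictly contains $M^T$; moreover at a regular value $c$ the $S^1_X$-action on $\mu_X^{-1}(c)$ is only locally free, so Theorem~\ref{MaWe} (which assumes freeness) produces an orbifold rather than a manifold and the inductive hypothesis does not directly apply. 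Atiyah's argument avoids reduction altogether: the induction runs on the level sets $\bigcap_i\mu_{X_i}^{-1}(c_i)$ themselves, using the lemma that a Morse--Bott function with no critical submanifold of index or coindex $1$ on a compact connected manifold has connected level sets.

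Two further gaps. First, your local statement away from fixed points is false as stated: the image of ${\rm d}_x\mu$ is the annihilator of $\mathfrak{t}_x$ in $\mathfrak{t}^*$, so $\mu$ fails to be a submersion at every point with positive-dimensional stabilizer, and such points need not lie in $M^T$. Local convexity does hold there, but it requires the Guillemin--Marle--Sternberg normal form along the orbit (the local image is $\mu(x)$ plus the sum of a linear subspace and a rational polyhedral cone determined by the isotropy weights), not a transversality argument. Second, you explicitly defer the connectedness of fibers across critical levels, but that is not a technical afterthought --- it is the core of the theorem (it is Atiyah's connectivity theorem, which the paper records separately), and it is settled by the even-index Morse--Bott lemma rather than by a limiting argument from regular values. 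As it stands, the proposal is a correct road map with its hardest milestone left unbuilt.
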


The fixed point set in this theorem is given by a collection of symplectic submanifolds of $M$.

The polytope $\mu(M)$ is called the \emph{momentum polytope of $M$}. One precedent of this 
result appears in Kostant's article~\cite{kostant}. 

 Other convexity theorems were proven later by Birtea\--Ortega\--Ratiu~\cite{biorra09}, Kirwan \cite{Ki} (in the case of compact,
non\--abelian group actions), Benoist \cite{benoist}, and  Giacobbe \cite{giacobbe}, to name a few. 
Convexity in the case of Poisson
actions has been studied by Alekseev, Flaschka\--Ratiu, Ortega\--Ratiu
and Weinstein \cite{Alek, FlaRat, ORbook, Weinstein} among others.

Given a point $x \in \mu(M)$, its preimage $\mu^{-1}(x)$ is connected (this is known as Atiyah's
connectivity Theorem). Moreover, it is diffeomorphic to a torus of dimension $\ell$, where $\ell$ is the
dimension of the lowest dimensional face $F$ of $\mu(M)$  such that $x \in F$.

 The symplectic form $\omega$ must vanish
along each $\mu^{-1}(x)$, that is, $\mu^{-1}(x)$ is isotropic.

\subsection{Duistermaat\--Heckman theorems} \label{hans:sec}

Roughly at the same time as Atiyah, Guillemin\--Sternberg proved the convexity theorem,
Duistermaat and Heckman proved an influential result which we will describe next.
Let $(M,\omega)$ be a $2n$\--dimensional symplectic manifold.   Suppose that $T$ is
a torus acting on a symplectic manifold $(M,\omega)$ in a symplectic Hamiltonian fashion with momentum map $\mu \colon M \to \mathfrak{t}^*$.
Assume moreover that $\mu$ is \emph{proper}, that is, for every compact $K \subseteq \mathfrak{t}^*$, the preimage
$\mu^{-1}(K)$ is compact.

\begin{definition}
The \emph{Liouville measure} of a Borel subset
$U$ of $M$ is
$
m_{\omega}(U):=\varint_{U} \frac{\omega^n}{n!}.
$
The \emph{Duistermaat\--Heckman measure} $m_{\rm DH}$ on $\mathfrak{t}^*$
is the push\--forward measure $\mu_{*}m_{\omega}$ of $m_{\omega}$ by $\mu \colon M \to \mathfrak{t}^*$.
\end{definition}

\medskip

Let $\lambda$ be the Lebesgue measure in $\mathfrak{t}^* \simeq \R^m$.

\begin{theorem}[Duistermaat\--Heckman~\cite{DuHe, DuHe2}]
There is a function $f \colon \mathfrak{t}^* \to \R$ such that $f$ is a polynomial of degree 
at most $n-m$ on each component of regular values of $\mu$, and
$
m_{\omega}(U)=\varint_U f \, {\rm d}\lambda.
$
\end{theorem}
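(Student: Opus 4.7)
The plan is to identify $f(c)$, for $c$ a regular value of $\mu$, with the symplectic volume of the Marsden--Weinstein reduced space $M_c = \mu^{-1}(c)/T$, and then to exhibit the variation of $[\omega_c]\in H^2(M_c;\R)$ as an affine function of $c$ in a fixed connected component of regular values, so that the volume becomes a polynomial in $c$ of degree at most $n-m=\dim_\C M_c$.

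First I would set up a normal form near a regular level set. Fix a connected component $U$ of the set of regular values of $\mu$ and a basepoint $c_0\in U$. Since $c_0$ is regular and $\mu$ is proper, $P:=\mu^{-1}(c_0)$ is a compact $T$-invariant submanifold of $M$ on which $T$ acts locally freely (after quotienting by a possible finite common stabilizer, harmless for the argument), and $\mu$ is a submersion in a neighborhood of $P$. Pick a connection $1$-form $\theta$ on the principal $T$-bundle $\pi\colon P\to M_{c_0}:=P/T$, viewed as a $\mathfrak{t}$-valued $1$-form. Using equivariant tubular neighborhoods and the coisotropic embedding theorem, I would construct, for some open neighborhood $V\subset \mathfrak{t}^*$ of $c_0$, a $T$-equivariant diffeomorphism $\Phi\colon P\times V\to \mu^{-1}(V)\subset M$ such that $\mu\circ \Phi$ is the projection to $V$, and
\[
\Phi^*\omega \;=\; \pi^*\omega_{c_0} \;+\; d\langle c-c_0,\,\theta\rangle,
\]
where on the right $c$ denotes the $\mathfrak{t}^*$-coordinate. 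This is the standard Duistermaat--Heckman normal form; the key input is that $\omega$ restricted to $P$ is $\pi^*\omega_{c_0}$ by the reduction theorem, and that $\ker d\mu$ is symplectically orthogonal to the $T$-orbit directions.

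Next I would compute the push-forward. In the model $P\times V$, the Liouville density $\omega^n/n!$ may be expanded using the binomial theorem applied to $\pi^*\omega_{c_0}+d\langle c-c_0,\theta\rangle$; the only term that survives after integration over the $T$-fibres of $P\to M_{c_0}$ and gives a top form on $V$ is the one containing a full wedge of $dc$'s coming from the connection contribution. Using a $\Z$-basis of $\mathfrak{t}_{\Z}$ to identify $\mathfrak{t}^*\simeq\R^m$ (so $\lambda$ becomes Lebesgue measure) and integrating along the fibres of $\pi$, Fubini yields
\[
m_\omega(\mu^{-1}(W))\;=\;\int_{W}\Bigl(\int_{M_{c_0}}\frac{(\omega_{c_0}+\langle c-c_0,\Omega\rangle)^{n-m}}{(n-m)!}\Bigr)\,d\lambda(c)
\]
for every Borel $W\subset V$, where $\Omega=d\theta$ is the $\mathfrak{t}$-valued curvature $2$-form on $M_{c_0}$. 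Identifying $M_c$ with $M_{c_0}$ via $\Phi$, this is precisely $\int_W \mathrm{vol}(M_c,\omega_c)\,d\lambda(c)$, so $f(c):=\mathrm{vol}(M_c,\omega_c)$ is the desired density on $V$.

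Finally, the inner integral is a polynomial in $c$ of degree at most $n-m$: by Stokes, $\int_{M_{c_0}}$ depends only on the de Rham class of the integrand, and $[\omega_{c_0}+\langle c-c_0,\Omega\rangle]=[\omega_{c_0}]+\langle c-c_0,[\Omega]\rangle$ is an affine function of $c$ valued in $H^2(M_{c_0};\R)$, so its $(n-m)$th power integrates to a polynomial of degree $\le n-m$ in $c-c_0$. To globalise over the component $U$, I would note that the polynomial identity just established on $V$ propagates along $U$: two such polynomials arising from different basepoints agree on the overlap $V\cap V'$ as densities of the same measure, hence agree everywhere since they are real-analytic on the connected open set $U$. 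The main obstacle is establishing the normal form and in particular the identity $\Phi^*\omega=\pi^*\omega_{c_0}+d\langle c-c_0,\theta\rangle$; once that is in hand, the rest is a cohomological computation via Stokes' theorem.
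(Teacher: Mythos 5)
The paper does not actually prove this theorem: it states it and refers to the original articles of Duistermaat--Heckman, so there is no in-text argument to compare against. Your proposal reconstructs the original Duistermaat--Heckman argument essentially correctly: the equivariant normal form $\Phi^*\omega=\pi^*\omega_{c_0}+d\langle c-c_0,\theta\rangle$ near a regular level (via the coisotropic embedding theorem and an equivariant Moser argument), fibre integration to identify the density $f(c)$ with $\mathrm{vol}(M_c,\omega_c)$, and the observation that $[\omega_c]=[\omega_{c_0}]+\langle c-c_0,[\Omega]\rangle$ varies affinely so that $f$ is a polynomial of degree at most $n-m$ on each component of regular values. (Note that this last point is exactly Theorem~\ref{dh2} of the paper, so the two Duistermaat--Heckman statements are proved together by your normal form.) Two small points to tighten: first, the statement $m_\omega(U)=\int_U f\,d\lambda$ as printed should be read as an identity of measures on $\mathfrak{t}^*$, i.e.\ $m_{\rm DH}(W)=\int_W f\,d\lambda$ for Borel $W\subset\mathfrak{t}^*$, which is how you correctly interpret it; second, for merely locally free actions the reduced spaces are orbifolds, and you should say explicitly that the fibre integration and the Stokes argument go through in the orbifold category (your parenthetical about a finite common stabilizer does not quite cover the case where the stabilizer varies over $P$). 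With those caveats the argument is complete and is the standard proof.
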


\begin{definition}
The function $f$ is called called the \emph{Duistermaat-Heckman polynomial}.
\end{definition}

\medskip

In the case  of the symplectic\--toric manifold $(S^2,\omega,S^1)$, the Liouville
measure is $m_{\omega}([a,b])=2\pi (b-a)$ and the Duistermaat\--Heckman
polynomial is the characteristic function $2\pi \, \chi_{[a,b]}$.

If $T$ acts freely on $\mu^{-1}(0)$, then it acts freely on fibers $\mu^{-1}(t)$ for
which $t \in \mathfrak{t}^*$ is closed to $0$. Consider the Marsden\--Weinstein reduced space 
$M_t=\mu^{-1}(t)/T$ (see~\cite[Section~4.3]{am})  with the reduced symplectic form $\omega_t$ 
as in the proof  of Theorem~\ref{delzant}.

\begin{theorem}[Duistermaat\--Heckman~\cite{DuHe, DuHe2}] \label{dh2}
The cohomology class $[\omega_t]$ varies linearly in t.
\end{theorem}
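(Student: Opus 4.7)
The plan is to exploit that $\mu^{-1}(0)\to M_0$ is a principal $T$-bundle and to build an equivariant tubular normal form in which the reduced symplectic forms for nearby levels differ from $\omega_0$ only by the pullback of curvature terms, yielding linear dependence on $t$ at the cohomology level.

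First, I would use properness of $\mu$ together with the free $T$-action on $\mu^{-1}(0)$ to produce an open neighborhood $U\subset\mathfrak{t}^*$ of $0$ such that $T$ acts freely on $\mu^{-1}(t)$ for every $t\in U$, so that Marsden--Weinstein reduction (Theorem~\ref{MaWe}) yields a smooth family of reduced symplectic manifolds $(M_t,\omega_t)$. Since all the $\mu^{-1}(t)$ are principal $T$-bundles over $M_0$ (they will be identified by the tubular neighborhood constructed below), the underlying smooth manifolds $M_t$ will all be canonically identified with $M_0$, so that we may really compare $[\omega_t]$ with $[\omega_0]$ inside the fixed cohomology group $\mathrm{H}^2(M_0;\mathbb{R})$.

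Next I would build a $T$-equivariant symplectic model near $\mu^{-1}(0)$. Let $P=\mu^{-1}(0)$ and choose a principal $T$-connection $\alpha\in\Omega^1(P)\otimes\mathfrak{t}$. Using the $T$-equivariant tubular neighborhood theorem I would identify a $T$-invariant neighborhood $V$ of $P$ in $M$ with an open neighborhood of the zero section in $P\times U\subset P\times\mathfrak{t}^*$, equivariantly and carrying $\mu$ to projection onto the $\mathfrak{t}^*$-factor. A standard Moser-type argument (equivariant Darboux/coisotropic embedding) lets me choose the identification so that on $P\times U$ the symplectic form equals
\begin{equation*}
\omega = \pi_P^{*}\pi_{M_0}^{*}\omega_0 + \mathrm{d}\langle\alpha,t\rangle,
\end{equation*}
where $\pi_{M_0}\colon P\to M_0$ is the projection, $t$ denotes the $\mathfrak{t}^*$-coordinate, and $\langle\alpha,t\rangle$ is viewed as a $1$-form on $P\times U$ using the natural pairing. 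This is where one has to work carefully; verifying this normal form (and in particular that the momentum map really becomes projection to the $\mathfrak{t}^*$-factor) is the main technical obstacle, but it is the standard local model for a Hamiltonian action near a free level set.

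Finally, restricting to the level set $P\times\{t\}\cong\mu^{-1}(t)$, the term $\mathrm{d}\langle\alpha,t\rangle$ restricts to $\langle\mathrm{d}\alpha,t\rangle=\langle\Omega,t\rangle$, where $\Omega=\mathrm{d}\alpha$ is the $\mathfrak{t}$-valued curvature of $\alpha$ (the Lie bracket vanishes since $T$ is abelian). Since $\Omega$ is $T$-basic, it descends to a closed $\mathfrak{t}$-valued $2$-form $c$ on $M_0$, and unwinding the identification $M_t\cong M_0$ gives
\begin{equation*}
\omega_t = \omega_0 + \langle c,t\rangle
\end{equation*}
at the level of forms on $M_0$. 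Taking de Rham cohomology yields $[\omega_t]=[\omega_0]+\langle[c],t\rangle$, which is affine, hence linear in the variation $t$, as required. The class $[c]\in\mathrm{H}^2(M_0;\mathfrak{t})$ is intrinsic: it is the Chern class of the principal $T$-bundle $P\to M_0$, so the linear coefficient has a clean topological meaning.
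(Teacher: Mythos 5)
Your argument is correct, and it is essentially the original Duistermaat--Heckman proof: identify a $T$-invariant neighborhood of the free level set $P=\mu^{-1}(0)$ with $P\times U$ carrying the normal form $\pi_P^*\pi_{M_0}^*\omega_0+\mathrm{d}\langle\alpha,t\rangle$ (the equivariant coisotropic neighborhood theorem, which you rightly flag as the one technical input), restrict to $P\times\{t\}$ to pick up the basic curvature term $\langle\mathrm{d}\alpha,t\rangle$, and descend to get $[\omega_t]=[\omega_0]+\langle[c],t\rangle$ with $[c]$ the Chern class of $P\to M_0$. The paper itself states this theorem without proof, only citing the original articles, so there is no in-text argument to compare against; your proposal is the standard one and I see no gap beyond the normal-form step you have already identified as the place where care is required.
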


Theorem~\ref{dh2} does not hold for non-proper momentum
maps, see~\cite[Remark 4.5]{NON}.

\subsection{Atiyah\--Bott\--Berline\--Vergne localization} \label{abbv}

A useful tool in the study of properties of symplectic $S^1$\--actions has been
\emph{equivariant cohomology}, because it encodes well the fixed point set information.
Within symplectic geometry, equivariant cohomology is an active
area, and in this paper we do not touch on it but only give the very basic definition,
a foundational result, and an application to symplectic $S^1$\--actions. Although
equivariant cohomology may be defined generally we concentrate on the case
of $S^1$\--equivariant cohomology.

\begin{definition}
Let $S^1$ act on a smooth manifold $M$. The \emph{equivariant cohomology of $M$}
is
$
{\rm H}^*_{S^1}(M) :={\rm H}^*(M \times_{S^1} S^{\infty}).
$
\end{definition}

\begin{example}
If $x$ is a point then
${\rm H}^*_{S^1}(x,\Z) = {\rm H}^*(\mathbb{C}P^\infty,\, \Z) = \Z[t]$.
\end{example}

If $V$ is an equivariant vector bundle over $M$, 
then the \emph{equivariant Euler  class of } $V$ is the Euler class
of the vector bundle $V \times_{S^1} S^\infty$ over $M \times_{S^1} S^\infty$.
The \emph{equivariant Chern classes} of equivariant complex vector bundles
are defined analogously.

If $M$ is oriented and compact then 
the projection map $\pi \colon M \times_{S^1} S^\infty \to \mathbb{C}P^\infty$
induces a natural push\--forward map, denoted by  $\varint_M$:
$\pi_* \colon {\rm H}_{S^1}^{i}(M,\Z)
\to {\rm H}^{i - \dim M} (\mathbb{C}P^\infty,\, \Z).$
In particular $\pi_*(\alpha) = 0$ for all $\alpha \in {\rm H}^i_{S^1}(M;\Z)$ 
when $i < \dim M$.  For a component $F$ of the fixed point set $M^{S^1}$ we denote by
${\rm e}_{S^1}({\rm N}_F)$ the equivariant Euler class of the normal
bundle to $F$.

\begin{theorem}[Atiyah\--Bott~\cite{AB}, Berline\--Vergne~\cite{BV}] \label{ABBV}
Fix $\alpha \in {\rm H}_{S^1}^*(M,\Q)$. As elements of $\Q(t)$,
$$\varint_M \alpha = \sum_{F \subset M^{S^1}} \varint_F \frac{ \alpha|_F}{{\rm e}_{S^1}({\rm N}_F)},$$
where the sum is over all fixed components $F$.
\end{theorem}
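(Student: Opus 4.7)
The plan is to work in the Cartan model for $S^1$-equivariant cohomology, in which $H^*_{S^1}(M;\mathbb{Q})$ is represented by the complex of invariant polynomial forms $(\Omega^*(M)\otimes \mathbb{Q}[t])^{S^1}$ with differential $D = d - t\,\iota_{X}$, where $X = X_M$ is the vector field generated by the $S^1$-action. The integral $\int_M \alpha \in \mathbb{Q}[t]$ then denotes integration of the top differential-form part of an equivariantly closed representative of $\alpha$. The overall strategy is to show that $\alpha$ becomes equivariantly exact away from $M^{S^1}$, apply Stokes' theorem on the complement of small tubular neighborhoods of the fixed components, and evaluate the resulting boundary terms using the equivariant Thom form of the normal bundle.

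First I would produce an explicit equivariant primitive for $\alpha$ on $M\setminus M^{S^1}$. Choose an $S^1$-invariant Riemannian metric (averaging over $S^1$) and define the $1$-form $\theta = \langle X,\cdot\rangle/\langle X, X\rangle$ on $M\setminus M^{S^1}$; it is $S^1$-invariant and satisfies $\iota_X\theta = 1$, so $D\theta = d\theta - t$. After inverting $t$, the series
\[
\frac{\theta}{D\theta} \;=\; -\frac{\theta}{t}\sum_{k\ge 0}\Bigl(\frac{d\theta}{t}\Bigr)^{k}
\]
terminates (dimension reasons) and defines a global equivariant form on $M\setminus M^{S^1}$. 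Setting $\beta = -\alpha\,\theta/D\theta$ and using $D\alpha = 0$, a direct check gives $D\beta = \alpha$ on $M\setminus M^{S^1}$.

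Next I would apply Stokes' theorem. For each component $F\subset M^{S^1}$, pick an $S^1$-invariant tubular neighborhood $U_F$ of radius $\epsilon$ identified with the disk bundle of the normal bundle $N_F$, and let $M_\epsilon = M\setminus \bigcup_F U_F$. Since the $D\theta$ and $D(\mathrm{top\ form})$ parts recombine correctly, ordinary Stokes on the top-degree component yields
\[
\int_{M_\epsilon}\alpha \;=\; \int_{M_\epsilon} D\beta \;=\; -\sum_F \int_{\partial U_F}\beta,
\]
where $\partial U_F$ is oriented as the boundary of $U_F$. The left-hand side converges to $\int_M\alpha$ as $\epsilon \to 0$, so the proof reduces to the local computation $\lim_{\epsilon\to 0}\int_{\partial U_F}\beta = -\int_F \alpha|_F/e_{S^1}(N_F)$.

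The main obstacle is this last identification, which is the heart of the theorem. I would choose an $S^1$-invariant compatible almost complex structure on $N_F$ and split $N_F = \bigoplus_k L_k$ into isotypic complex line bundles, on which $S^1$ acts with nonzero weights $m_k$; then $e_{S^1}(N_F) = \prod_k\bigl(c_1(L_k) + m_k t\bigr)$. On each $L_k$ one computes directly, using a connection $1$-form whose curvature represents $c_1(L_k)$, that the fiber integral over the circle bundle of the local contribution of $\theta/D\theta$ produces exactly $1/(c_1(L_k)+m_k t)$. Multiplying the line bundle contributions and integrating the pull-back of $\alpha|_F$ over the base $F$ yields $\int_F \alpha|_F/e_{S^1}(N_F)$, and the sign matches the one from the Stokes boundary orientation. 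Summing over $F$ gives the claimed formula as an identity in $\mathbb{Q}(t)$.
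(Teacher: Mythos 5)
The paper does not prove this theorem: it is stated as a cited classical result of Atiyah--Bott and Berline--Vergne and then used as a black box in the proof of Proposition~\ref{Azero}, so there is no in-paper argument to compare against. Your proposal is a correct outline of the standard Berline--Vergne differential-geometric proof: the form $\theta=\langle X,\cdot\rangle/\langle X,X\rangle$ built from an averaged invariant metric satisfies $\iota_X\theta=1$, so $\theta/D\theta$ is a well-defined equivariant form on $M\setminus M^{S^1}$ after inverting $t$ (the geometric series terminates since $(d\theta)^k=0$ for $2k>\dim M$), and $D(\theta/D\theta)=1$ there because $D^2$ acts as $-t\,{\rm L}_X=0$ on invariant forms; Stokes applied to the top-degree component of $D\beta$ then localizes everything to the boundaries of the tubular neighborhoods. (For the record, Atiyah--Bott's own proof is different in flavor: it is a module-theoretic localization argument over ${\rm H}^*_{S^1}({\rm pt})=\Q[t]$ using the Gysin maps of the inclusions $F\hookrightarrow M$; your route is the Berline--Vergne one.)

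The only place where your sketch leaves genuine work is the final fiber integral $\lim_{\epsilon\to 0}\int_{\partial U_F}\beta$. Two cautions there. First, $\beta$ blows up like $|X|^{-1}$ near $F$, so the existence of the limit is part of what must be checked, not just its value. Second, the reduction ``split $N_F=\bigoplus_k L_k$ and multiply the line-bundle contributions'' is not literally a product decomposition: the sphere bundle $S(\bigoplus_k L_k)$ is not the product of the circle bundles $S(L_k)$, so the identity
\[
\int_{S(N_F)/F}\frac{\theta}{D\theta}\;=\;-\frac{1}{\prod_k\bigl(c_1(L_k)+m_k t\bigr)}
\]
requires either the explicit computation with a multi-angular form on the odd sphere bundle or an appeal to the equivariant Thom isomorphism / splitting principle. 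This is standard and is exactly the computation carried out in Berline--Getzler--Vergne, but as written it is asserted rather than proved, and it is the heart of the theorem. With that step filled in, the argument is complete.
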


Let $S^1$ act symplectically on a symplectic manifold  $(M,\, \omega)$, and 
let $J \colon {\rm T}M \to {\rm T}M$ be a compatible almost
complex structure.
If $p \in M^{S^1}$ is an isolated fixed point, then there are
well-defined non-zero integer weights 
$\xi_1,\,\ldots,\,\xi_n$ in the isotropy representation ${\rm T}_p M$ (repeated with multiplicity). 
Indeed, there exists an identification of  ${\rm T}_pM$ with $\mathbb{C}^n$,
where the $S^1$ action on $\mathbb{C}^n$ is
given by  $\lambda \cdot (z_1,\ldots,z_n)=
(\lambda^{\xi_1}z_1,\ldots,\lambda^{\xi_n}z_n);$ the
integers $\xi_1,\ldots, \xi_n$ are determined, up to permutation,
by the $S^1$\--action and the symplectic form.
The restriction of
the  $i^{th}$\--equivariant Chern class  $p$ is given by
$
{\rm c}_i(M)|_p = \sigma_i(\xi_1,\ldots,\xi_n) \, t^i
$
where $\sigma_i$ is the $i$'th elementary symmetric polynomial
and $t$ is the generator of ${\rm H}^2_{S^1}(p,\Z)$.
For example, ${\rm c}_1(M)|_p = \sum_{i=1}^n \xi_i t$
and  the equivariant Euler class 
of the tangent bundle at $p$
is given by
$
{\rm e}_{S^1}({\rm N}_p) ={\rm c}_n(M)|_p =  \left( \prod_{j=1}^n \xi_j \right) \, t^n.
$
Hence,
$$\varint_p \frac{{\rm c}_i(M)|_p}{{\rm e}_{S^1}({\rm N}_p)}= \frac{ \sigma_i(\xi_1,\ldots,\xi_n) }
{  \prod_{j=1}^n \xi_j } \, t^{i -n}.$$

We can naturally identify
 ${\rm c}_1(M)|_p$ with an integer 
 ${\rm c}_1(M)(p)$: the sum
 of the weights at $p$.  
 Let $\Lambda_p$ be the product of the weights (with multiplicity) in the isotropy representation 
${\rm T}_p M$ for all $p \in M^{S^1}$.

 \begin{definition}
 The mapping
 $
{\rm c}_1(M) \colon M^{S^1} \to \Z$ defined by $p \mapsto {\rm c}_1(M)(p) \in \Z
 $
is called the \emph{Chern class map} of $M$.
\end{definition}

\begin{prop}[\cite{PeTo}] \label{Azero}
Let $S^1$ act symplectically
on compact symplectic $2n$\--manifold $(M,\omega)$ with isolated fixed points.
If the range of  ${\rm c}_1(M) \colon M \to \Z$ contains at most $n$ elements, 
then 
$$
\sum_{\substack{p \in M^{S^1}\\ 
{\rm c}_1(M)(p)=k}} \! \!  \frac{1}{\,\Lambda_p} = 0.
$$
for every $k \in \Z$.
\end{prop}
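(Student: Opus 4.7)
The plan is to apply the Atiyah-Bott-Berline-Vergne localization formula (Theorem~\ref{ABBV}) to the powers $c_1(M)^j$ of the equivariant first Chern class for $j = 0, 1, \ldots, n-1$, then exploit the hypothesis that $c_1(M)$ takes at most $n$ distinct values via a Vandermonde argument.

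First, I would record the key local computation already set up in the text just before the proposition. Since $c_1(M) \in H^2_{S^1}(M;\Q)$, at each isolated fixed point $p$ the restriction is $c_1(M)|_p = c_1(M)(p)\, t$, while the equivariant Euler class of the normal bundle is $e_{S^1}(N_p) = \Lambda_p\, t^n$. Therefore
\[
\frac{c_1(M)^j|_p}{e_{S^1}(N_p)} \;=\; \frac{c_1(M)(p)^j}{\Lambda_p}\, t^{\,j-n}.
\]
For $0 \leq j \leq n-1$, the class $c_1(M)^j$ lies in $H^{2j}_{S^1}(M;\Q)$, which has degree strictly less than $\dim M = 2n$, so the push-forward $\int_M c_1(M)^j$ vanishes as noted in the text immediately before Theorem~\ref{ABBV}. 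Hence Theorem~\ref{ABBV} yields, for each such $j$,
\[
\sum_{p \in M^{S^1}} \frac{c_1(M)(p)^j}{\Lambda_p} \;=\; 0.
\]

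Next, I would group fixed points by the value of the Chern class map. Let $\{k_1,\ldots,k_m\}$ be the (distinct) elements of the range of $c_1(M)$, so $m \leq n$ by hypothesis, and set
\[
a_i \;:=\; \sum_{\substack{p \in M^{S^1}\\ c_1(M)(p)=k_i}} \frac{1}{\Lambda_p}.
\]
The relations above become $\sum_{i=1}^m k_i^{\,j}\, a_i = 0$ for $j=0,1,\ldots,n-1$. Taking only the first $m$ of these identities yields a linear system $V\mathbf{a} = 0$, where $V$ is the $m \times m$ Vandermonde matrix built from the distinct integers $k_1,\ldots,k_m$. Since $V$ is invertible, $a_i = 0$ for every $i$, which is exactly the claim for each $k$ in the range of $c_1(M)$; for $k$ outside the range the sum is empty and the statement is trivial.

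No step here is a genuine obstacle: the proposition reduces to a standard localization calculation followed by elementary linear algebra. The only subtle point, worth stating cleanly in the write-up, is why $\int_M c_1(M)^j = 0$ for $j<n$, which follows directly from the degree remark about the push-forward $\pi_*$ made just before Theorem~\ref{ABBV}. The hypothesis ``range of $c_1(M)$ has at most $n$ elements'' is precisely what is needed so that the number of unknowns $a_i$ does not exceed the number of available ABBV identities, thereby making the Vandermonde system square (or overdetermined) and forcing the unique homogeneous solution to be zero.
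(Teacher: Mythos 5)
Your proof is correct and follows essentially the same route as the paper: apply the Atiyah--Bott--Berline--Vergne formula to $1, {\rm c}_1(M), \dots, {\rm c}_1(M)^{\ell-1}$, observe that the push-forwards vanish for degree reasons since $\ell \leq n$, and invert the resulting Vandermonde system on the quantities $A_i$. The only cosmetic difference is that you state the localization identities for all $j \leq n-1$ before restricting to the first $\ell$ of them, whereas the paper works with $j < \ell$ from the start.
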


\begin{proof}
Let 
$\{{\rm c}_1(M)(p) \, \, | \,\, p \in M^{S^1}\}=
\{k_1,\, \ldots,\, k_{\ell}\}$ and define for $i \in \{1,\dots,\ell\}$,
$A_i:=\sum_{\substack{p \in M^{S^1}\\ 
{\rm c}_1(M)(p)=k_i}} \! \!  \frac{1}{\,\Lambda_p}.
$
Consider the $\ell \times \ell$
matrix $\mathcal{B}$  given by
$
\mathcal{B}_{ij}:= (k_i)^{j-1}$, where $1 \leq i,j \leq \ell$.
Since  $\ell \leq n$ by assumption, 
$
\varint_M {\rm c}_1(M)^j = 0
$
for all $j < \ell$. Applying Theorem~\ref{ABBV} to the elements
$1, \,{\rm c}_1(M), \dots, \,{\rm c}_1(M)^{\ell - 1}$
gives
a homogenous system of linear equations
$
\mathcal{B}\cdot (A_1,\,\ldots,\,A_{\ell})=(0,\,\ldots,\,0).
$
Since $\mathcal{B}$ is a Vandermonde matrix, we have that 
${\rm det}(\mathcal{B}(\ell)) \neq 0.$ 
Thus, it follows  that 
$
A_1= \dots =A_{\ell}\,=\,0.
$
\end{proof}

Let $X$ and $Y$ be sets and let $f\colon X \to Y$ be a map. 
We recall that
$f$ is \emph{somewhere injective} 
if 
there is a point $y\in Y$ such that $f^{-1}(\{y\})$
is the singleton.

\begin{theorem}[\cite{PeTo}]  \label{general}
Let $S^1$ act symplectically
on compact symplectic $2n$\--manifold $(M,\omega)$ with isolated fixed points.
If  the Chern class map is somewhere injective,
then the circle action has at least 
$n + 1$
fixed points.
\end{theorem}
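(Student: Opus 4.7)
My plan is to argue by contradiction, reducing directly to Proposition~\ref{Azero}. Suppose that the $S^1$\--action has at most $n$ fixed points. Since the range of the Chern class map ${\rm c}_1(M)\colon M^{S^1}\to\Z$ is the image of a set of cardinality at most $n$, this range contains at most $n$ elements, so the hypothesis of Proposition~\ref{Azero} is satisfied.

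Next I would use the somewhere injectivity hypothesis: by definition there exists a fixed point $p_0\in M^{S^1}$ such that
$$\{p\in M^{S^1}\ :\ {\rm c}_1(M)(p)={\rm c}_1(M)(p_0)\}=\{p_0\}.$$
Setting $k={\rm c}_1(M)(p_0)$ in the conclusion of Proposition~\ref{Azero}, the sum on the left collapses to a single term and yields
$$\frac{1}{\Lambda_{p_0}}=0.$$
Since $\Lambda_{p_0}$ is the product of the (nonzero integer) weights of the isotropy representation at the isolated fixed point $p_0$, we have $\Lambda_{p_0}\neq 0$, and this is the desired contradiction.

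Thus $M^{S^1}$ has at least $n+1$ elements. The main point here is not a delicate computation but rather the clean way the somewhere injectivity feeds into Proposition~\ref{Azero}: it ensures that one of the ``vanishing sums'' over a fiber of ${\rm c}_1(M)$ has only one term, which is impossible. The only substantive work has already been done in establishing Proposition~\ref{Azero} via the Atiyah\--Bott\--Berline\--Vergne localization formula together with the Vandermonde argument, so the theorem reduces to the short deduction above.
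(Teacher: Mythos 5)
Your proof is correct and is essentially the paper's argument: the paper phrases it as a direct contrapositive (somewhere injectivity gives a fiber of ${\rm c}_1(M)$ whose sum $\sum 1/\Lambda_p$ is the single nonzero term $1/\Lambda_{p_0}$, so by Proposition~\ref{Azero} the range of the Chern class map, and hence the fixed point set, has at least $n+1$ elements), while you run the same reasoning by contradiction. The substance is identical.
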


\begin{proof}
Since the Chern class map is somewhere injective
there is $k \in \Z$ such that
$\sum_{\substack{p \in M^{S^1}\\ 
{\rm c}_1(M)(p)=k}} \! \!  \frac{1}{\,\Lambda_p} \neq 0.$
By Proposition~\ref{Azero}, this
implies that the range of the Chern class map contains at least
$n + 1$ elements;
a fortiori,
the action has at least $n + 1$ fixed points.
\end{proof}

\subsection{Further Topics}

There exists an extensive theory of Hamiltonian actions and related topics,
 see for instance the books by Guillemin~\cite{guillemin}, Guillemin\--Sjamaar~\cite{gusj2005}, 
and Ortega\--Ratiu~\cite{ORbook}.

There are many influential works which we do not describe here
for two reasons, brevity being the main one, but also because
they are more advanced and more suitable for a survey
than a succinct invitation to the subject. These works include: 
Sjamaar\--Lerman~\cite{SjLe91} work on stratifications;
Kirwan's convexity theorem~\cite{Ki} (which
generalizes the Atiyah\--Guillemin\--Sternberg theorem to
the non abelian case); and
Lerman's symplectic cutting procedure~\cite{Le95} (a procedure to ``cut" a symplectic manifolds that has many
  applications in equivariant symplectic geometry and completely integrable systems).

\section{Examples} \label{examples2}

\subsection{Symplectic Hamiltonian actions}

The following is an example of a  Hamiltonian symplectic action.

\begin{example} \label{hs}
The first example of a Hamiltonian torus action is
$
(S^2, \, \omega=\textup{d}\theta \wedge\textup{d}h)
$
equipped with the
rotational circle action $\mathbb{R}/\mathbb{Z}$
about the vertical axis of $S^2$ (depicted in Figure~\ref{fg}).
This action has
momentum map  $\mu \colon S^2 \to {\rm Lie}(S^1)={\rm T}_1(S^1) \simeq \mathbb{R}$
equal to the height function 
$\mu(\theta, \,h)= h$, and in this case the 
momentum polytope image is the interval
$\Delta=[-1,\,1].$
Another example (which generalizes this one)  is
the $n$\--dimensional complex projective space equipped
with a $\lambda$\--multiple, $\lambda>0$, of the Fubini\--Study form
$(\mathbb{C}P^n,\, \lambda \cdot \omega_{\rm{FS}})$ and the
rotational $\mathbb{T}^n$\--action induced from the
rotational $\mathbb{T}^n$\--action on the $(2n+1)$\--dimensional
complex plane. This action is Hamiltonian, with
momentum map 
$$
\mu^{\mathbb{C}P^n,\lambda} \colon z=[z_0:z_1:\ldots,z_n] \mapsto 
\Big(\frac{\lambda |z_1|^2}{\sum_{i=0}^n|z_i|^2},\ldots,\frac{\lambda |z_n|^2}{\sum_{i=0}^n|z_i|^2} \Big).
$$
The associated momentum 
polytope is
$
\mu^{\mathbb{C}P^n,\lambda}(\mathbb{C}P^n)=\Delta=\textup{convex hull }\{0,\, \lambda {\rm e}_1,\ldots,\lambda {\rm e}_n\},
$
where ${\rm e}_1=(1,0,\ldots,0),\ldots,{\rm e}_n=(0,\ldots,0,1)$ are the canonical basis vectors of $\R^n$.
\end{example}

The category
of Hamiltonian actions, while large, does not include some simple examples of symplectic actions, for
instance free symplectic actions on compact manifolds, because Hamiltonian
actions on compact manifolds always have fixed points.

\subsection{Symplectic non Hamiltonian actions}

 In the following three examples, there does not exist a momentum map; they are examples
 of what we later call ``maximal symplectic actions'' (discussed in Section~\ref{pss}).

\begin{example}
\normalfont
The $4$\--torus
$
(\R/\Z)^2 \times (\R/\Z)^2
$
endowed with the
standard symplectic form, on which the $2$\--dimensional torus $T:=(\R/\Z)^2$
acts by multiplications on two of the copies of $\R/\Z$ inside
of $(\R/\Z)^4$, is symplectic manifold with symplectic orbits which are $2$\--tori. 
\end{example}

\begin{example}
\normalfont
\label{ex1}
Let $M:=S^2 \times (\R/\Z)^2$ be
endowed with the product symplectic form of the standard area form
on $(\R/\Z)^2$ and the standard area form on $S^2$.  
Let $T:=(\R/\Z)^2$ act on $M$ by translations on the right factor.  This is a free symplectic
action the orbits of which are symplectic $2$\--tori. \end{example}

\begin{example} \normalfont
 \label{ex2}
Let $P:=S^2 \times (\R/\Z)^2$ equipped with the product symplectic form of the standard symplectic (area) form
on  $S^2$ and the standard area form on the sphere $(\R/\Z)^2$. The $2$\--torus $T:=(\R/\Z)^2$ acts freely by translations on the right factor
of $P$. Let the finite group $\Z/2\,\Z$ act on $S^2$ by rotating
each point horizontally by $180$ degrees, and let  $\Z/2\,\Z$ act
on $(\R/\Z)^2$ by the antipodal action on the first circle $\R/\Z$. 
The diagonal action of $\Z/2\,\Z$ on $P$ is free. Therefore, the quotient space 
$
S^2 \times_{\Z/2\,\Z} (\R/\Z)^2
$ 
is a smooth manifold.
Let $M:=S^2 \times_{\Z/2\,\Z} (\R/\Z)^2$ be endowed 
with the symplectic form $\omega$ and $T$\--action inherited from
the ones given in the product $S^2 \times (\R/\Z)^2$, where $T=(\R/\Z)^2$. The action
of $T$ on $M$ is not free, and the $T$\--orbits are symplectic $2$\--dimensional tori.
Notice that the orbit space $M/T$ is $S^2/(\Z/ 2\, \Z)$, which is a 
smooth orbifold with two singular points of order $2$, the South and North poles of $S^2$ (this
orbifold will play an important role in the classification of maximal symplectic actions). 
\end{example}

The following two are examples of  what we later call coisotropic 
actions (discussed in Section~\ref{pss}). 

\begin{example} \normalfont \label{ktexample}
(Kodaira~\cite{kodaira} and Thurston~\cite{Th}) The first example of a (non\--Hamiltonian) symplectic torus action with coisotropic (in fact, Lagrangian) principal orbits is
the Kodaira variety \cite{kodaira} (also known as the Kodaira\--Thurston manifold \cite{Th}), which is a torus bundle over a torus
constructed as follows. Consider the product symplectic manifold
$
(\mathbb{R}^2 \times (\mathbb{R}/\mathbb{Z})^2,\,
 \textup{d}x_1 \wedge \textup{d}y_1 +
\textup{d}x_2 \wedge \textup{d}y_2),
$
where $(x_1,\,y_1) \in \R^2$ and $(x_2,\,y_2) \in (\R/\Z)^2$. Consider the action of 
$(j_1, \,j_2) \in \mathbb{Z}^2$ on $(\mathbb{R}/\mathbb{Z})^2$ by the matrix group
consisting of
$
\left
( \begin{array}{cc}
1 & j_2 \\
0 & 1  \\
\end{array} \right),
$
where $j_2 \in \mathbb{Z}$ (notice that $j_1$ does not appear intentionally in the matrix). The quotient of this symplectic manifold by  the diagonal action of
$\Z^2$ gives rise to a compact, connected, symplectic $4$\--manifold
\begin{eqnarray} \label{ktm}
({\rm KT},\omega):=(\mathbb{R}^2 \times_{\mathbb{Z}^2} (\mathbb{R}/\mathbb{Z})^2, \,\, \,
 \textup{d}x_1 \wedge \textup{d}y_1 +
\textup{d}x_2 \wedge \textup{d}y_2)
\end{eqnarray}
on which the $2$\--torus
$T:=\mathbb{R}/\mathbb{Z} \times \mathbb{R}/\mathbb{Z}$ acts symplectically and freely, where
the first circle acts on the $x_1$\--component, and the second 
circle acts on the $y_2$\--component (one can check that this action is
well defined). Because the $T$\--action is free, all the orbits are principal, and
because the orbits are obtained by keeping the $x_2$\--component and the
$y_1$\--component fixed, ${\rm d}x_2={\rm d}y_1=0,$ so the 
orbits are Lagrangian. 
\end{example}

The symplectic manifold in (\ref{ktm}) fits in  the third case 
in Kodaira \cite[Theorem~19]{kodaira}. Thurston 
rediscovered it \cite{Th}, and observed that there exists no K\"ahler structure on ${\rm KT}$
which is compatible with the symplectic form (by noticing that
the first Betti number $\rm{b}_1({\rm KT})$ is $3$).  
It follows that no other symplectic $2$\--torus action on ${\rm KT}$ is Hamiltonian because in
that case it would be toric and  $\rm{b}_1({\rm KT})$ would vanish since
toric varieties are simply connected (Proposition~\ref{danilov}).

\begin{figure}[htbp]
  \begin{center}
    \includegraphics[height=2.5cm, width=7.5cm]{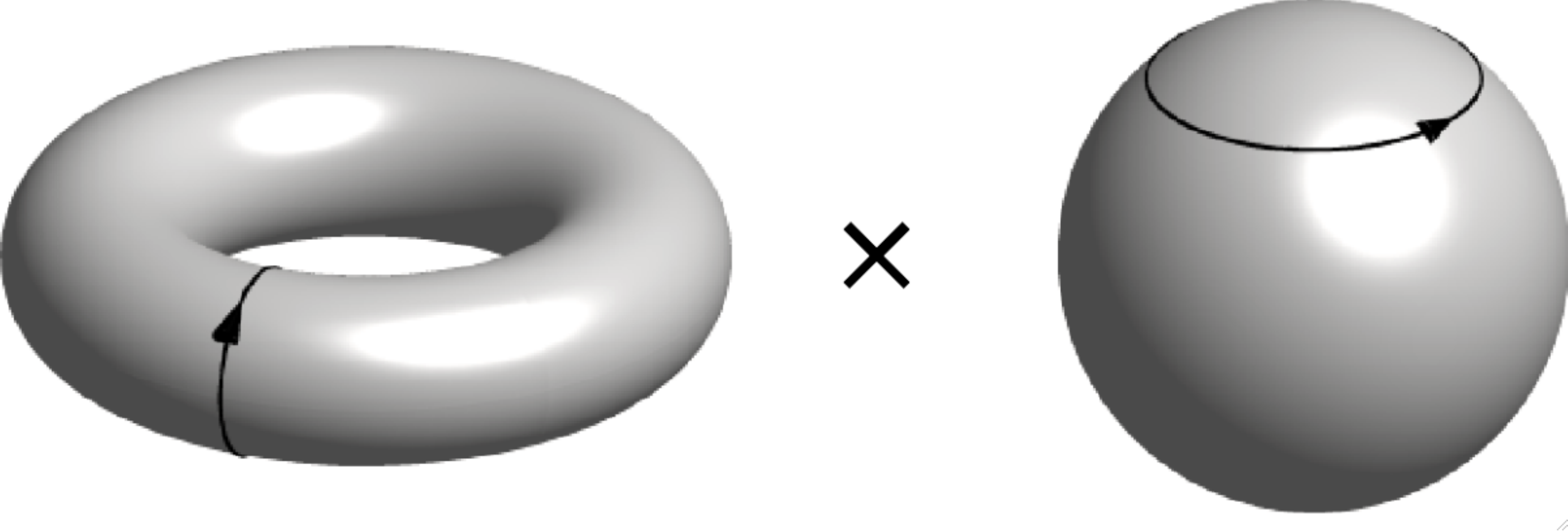}
    \caption{A symplectic  $2$\--torus action on $S^2 \times (\R/\Z)^2$.}
    \label{sph}
    \end{center}
\end{figure}

\begin{example}
\normalfont
This is an example of a  non\--Hamiltonian, non\--free symplectic $2$\--torus action on a compact, connected,
symplectic $4$\--manifold. Consider the compact symplectic $4$\--manifold
$
(M,\omega):=((\mathbb{R}/\mathbb{Z})^2 \times S^2, \, \, \, \textup{d}x  \wedge \textup{d}y 
+\textup{d}\theta  \wedge \textup{d}h ).
$
There is a natural action of the $2$\--torus
$T:=\mathbb{R}/\mathbb{Z} \times \mathbb{R}/\mathbb{Z}$ on expression $(M,\omega)$,
where the first circle of $T$ acts on the first circle $\R/\Z$ of the left factor of $M$,
and the right circle acts on $S^2$ by rotations (about the
vertical axis); see Figure~\ref{sph}. This $T$\--action is symplectic. 
However,  it is not a Hamiltonian action because
it does not have fixed points. It is also not
free, because the stabilizer subgroup of a point  $(p,\,q)$, where 
$q$ is the North or South pole of $S^2$, is a circle
subgroup.   In this case the principal orbits are the
products of the circle orbits of the left factor $(\R/\Z)^2$,
and the circle orbits of the right factor (all orbits of the right factor are circles but the
North and South poles, which are fixed points).  Because
these orbits are obtained by keeping the $y$\--coordinate
on the left factor constant, and the height on the right factor
constant, ${\rm d}y={\rm d}h=0,$ which implies that
the product form vanishes along the principal orbits, which are
Lagrangian, and hence coisotropic.
\end{example}

  \section{Classifications of Hamiltonian actions} \label{hck}
  
 \subsection{Classification of symplectic\--toric manifolds} \label{dp}

\subsubsection{Delzant polytopes} \label{dp}

Let $\Delta$ be an $n$\--dimensional convex polytope in $\mathfrak{t}^*$. 
Let $F$ be the set of all codimension one faces of $\Delta$. Let $V$ be the set of 
 vertices of $\Delta$. For every $v\in V$, we write 
 $
F_v=\{ f\in F\mid v\in f\},
$
that is, $F_v$ is the set of faces of $\Delta$ which contain the vertex $v$. 
Following Guillemin \cite[page~8]{guillemin} we define a very special type of polytope.

\begin{definition} \label{dpol}
We say that $\Delta$ is a {\em Delzant polytope} if: i) for each $f\in F$ there are $X_f\in \mathfrak{t}_{\Z}$ 
and $\lambda _f\in\R$ such that the hyperplane which contains 
$f$ has defining equation $\langle X_f,\,\xi\rangle +\lambda _f=0$, $\xi \in \mathfrak{t}^*$, and 
$\Delta$ is contained in the set of $\xi\in \mathfrak{t}^*$
such that $\langle X_f,\,\xi\rangle +\lambda _f\geq 0$;
ii) for every $v\in V$, $\{X_f \,\, | \,\, f \in F_v\}$ is
a $\Z$\--basis of $\mathfrak{t}_{\Z}$. 
\end{definition}

\medskip

The definition of a Delzant polytope implies the following.

\begin{lemma}
Let $\Delta$ be a Delzant polytope in $\mathfrak{t}^*$. Then for each $f \in F$
there exists $X_f \in \mathfrak{t}_{\mathbb{Z}}$ and $\lambda_f \in \mathbb{R}$
such that $\Delta =\{\xi\in \mathfrak{t}^*\mid 
\langle X_f,\,\xi\rangle +\lambda _f\geq 0\quad \forall f\in F\}.$ 
\end{lemma}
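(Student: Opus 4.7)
The plan is to establish the two set-theoretic inclusions
\[
\Delta \;\subseteq\; \bigcap_{f\in F}\bigl\{\xi\in\mathfrak{t}^*\mid \langle X_f,\xi\rangle + \lambda_f\geq 0\bigr\} \;\subseteq\; \Delta,
\]
with $X_f,\lambda_f$ supplied by part (i) of Definition~\ref{dpol}. The first inclusion is essentially a restatement of hypothesis (i): for each codimension-one face $f$, the defining data $(X_f,\lambda_f)$ are chosen precisely so that $\Delta$ sits on the nonnegative side of the hyperplane $\{\langle X_f,\cdot\rangle+\lambda_f=0\}$. So nothing needs to be proved there beyond invoking the definition.

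The real content is the reverse inclusion, and the plan is to deduce it from the classical Minkowski--Weyl theorem for convex polytopes: any bounded $n$-dimensional convex polytope in an $n$-dimensional real vector space equals the intersection of the closed half-spaces bounded by the affine hulls of its facets (codimension-one faces), where each half-space is chosen to contain the polytope. Under the convention that ``polytope'' means a bounded convex set (the convex hull of finitely many points), and since $\Delta$ is assumed $n$-dimensional, its codimension-one faces are exactly its facets, and each facet $f$ spans a unique supporting affine hyperplane, which by hypothesis (i) is $\{\langle X_f,\cdot\rangle+\lambda_f=0\}$. Thus the Minkowski--Weyl description of $\Delta$ matches termwise the intersection on the right-hand side.

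To be concrete in the write-up, I would proceed in two short steps: (a) fix, for each $f\in F$, the pair $(X_f,\lambda_f)\in\mathfrak{t}_{\Z}\times\R$ provided by Definition~\ref{dpol}(i), so that $\Delta$ is contained in the half-space $H_f^+:=\{\xi:\langle X_f,\xi\rangle+\lambda_f\geq 0\}$, and consequently $\Delta\subseteq \bigcap_{f\in F}H_f^+$; (b) for the converse, observe that since $\Delta$ is an $n$-dimensional compact convex polytope and the hyperplanes $\partial H_f^+$ are exactly the affine hulls of its facets, any point $\xi\notin\Delta$ can be separated from $\Delta$ by a supporting hyperplane at a facet, i.e.\ there exists $f\in F$ with $\langle X_f,\xi\rangle+\lambda_f<0$; hence $\xi\notin \bigcap_{f\in F}H_f^+$.

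Expected main obstacle: really there is none at the level of the paper, since the statement is a direct consequence of standard convex geometry once Definition~\ref{dpol}(i) is invoked. The only care needed is the convention that ``polytope'' refers to a bounded convex body, so that the Minkowski--Weyl equality (not just an inclusion into an unbounded polyhedron) is available; this is the implicit assumption of the Delzant setup. The integrality condition (ii) plays no role here and enters only in the symplectic-geometric consequences of the lemma.
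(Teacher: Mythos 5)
Your proposal is correct. The paper states this lemma without proof, as an immediate consequence of Definition~3.5, and your argument — one inclusion directly from condition (i), the reverse inclusion from the standard fact that a compact full-dimensional convex polytope is the intersection of the closed half-spaces supporting its facets, with the integrality condition (ii) playing no role — is exactly the standard justification being implicitly invoked.
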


\begin{cor}
For every $v \in V$, $\# (F_v)=n$. 
\end{cor}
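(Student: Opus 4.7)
The plan is to deduce $\#(F_v)=n$ directly from Definition~\ref{dpol}(ii): since $\{X_f : f\in F_v\}$ is asserted to be a $\mathbb{Z}$\--basis of $\mathfrak{t}_{\mathbb{Z}}$, and since $\mathfrak{t}_{\mathbb{Z}}$ is a lattice of rank $n$ (as noted in the discussion preceding the definition, because $T$ is an $n$\--torus and any $\mathbb{Z}$\--basis of $\mathfrak{t}_{\mathbb{Z}}$ is simultaneously an $\mathbb{R}$\--basis of $\mathfrak{t}\cong\mathbb{R}^n$), every such basis has exactly $n$ elements. Thus the set $\{X_f:f\in F_v\}$ has cardinality $n$, which already gives $\#(F_v)\geq n$.

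The remaining step is to prove that the assignment $f\mapsto X_f$ is injective on $F_v$, which upgrades the inequality to equality. Suppose for contradiction that $f_1,f_2\in F_v$ are distinct codimension\--one faces with $X_{f_1}=X_{f_2}=:X$. Their supporting hyperplanes $H_i=\{\xi\in\mathfrak{t}^*\mid\langle X,\xi\rangle+\lambda_{f_i}=0\}$ are then parallel, and both pass through the vertex $v$. Evaluating the defining equations at $v$ forces $\lambda_{f_1}=-\langle X,v\rangle=\lambda_{f_2}$, so $H_1=H_2$. I would then invoke the standard convex\--geometry fact that a codimension\--one face of a full\--dimensional convex polytope $\Delta\subset\mathfrak{t}^*$ is precisely the intersection $\Delta\cap H$ of $\Delta$ with its supporting hyperplane, and in particular two distinct facets cannot lie in the same hyperplane; this yields $f_1=f_2$, a contradiction.

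The only nontrivial input is this identification of facets with supporting hyperplanes, which is standard and uses only that $\Delta$ is $n$\--dimensional in $\mathfrak{t}^*$ (guaranteed by Definition~\ref{dpol} together with the basis condition, since $n$ linearly independent normals emanating from $v$ force $\Delta$ to be full\--dimensional near $v$). I do not anticipate any serious obstacle; the corollary is essentially an unpacking of the rank\--$n$ basis condition at each vertex, combined with a brief facet/hyperplane uniqueness check.
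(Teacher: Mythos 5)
Your proof is correct and follows the same route the paper intends: the corollary is an immediate unpacking of condition (ii) in Definition~\ref{dpol}, since a $\Z$\--basis of the rank\--$n$ lattice $\mathfrak{t}_{\Z}$ has exactly $n$ elements. The extra step you supply --- checking that $f\mapsto X_f$ is injective on $F_v$ via the uniqueness of the supporting hyperplane of a facet of the ($n$\--dimensional, as assumed in the paper's setup) polytope $\Delta$ --- is a legitimate and worthwhile detail that the paper leaves implicit, and it is argued correctly.
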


\begin{figure}[htbp]
  \begin{center}
    \includegraphics[width=6cm]{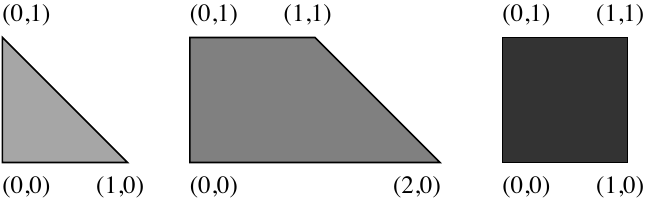}
    \caption{The two left most polygons  are Delzant. The right $3$\--polytope is
    not Delzant (there are four vertices meeting at the frontal vertex).}
    \label{fg}
  \end{center}
\end{figure}

 Delzant (1988, \cite{De}) proved that if the Hamiltonian action in the convexity theorem
 (Theorem~\ref{gs}) is effective and $m=n,$ 
 then $\mu(M)$ is a \emph{Delzant polytope}.

For any $z\in\C ^F$ and $f\in F$ we write $z(f):=z_f,$ 
which we view as the coordinate of the vector $z$ with the index $f$. 
Let $\pi \colon \R ^F \to \mathfrak{t}$ be the linear map
$
\pi (t):=\sum_{f\in F}\, t_f\, X_f.
$
Because, for any vertex 
$v$ of the Delzant polytope $\Delta$, the $X_f$ with $f\in F_v$ form a $\Z$\--basis 
of $\mathfrak{t}_{\Z}$ which is also an $\R$\--basis of $\mathfrak{t}$, 
we have $\pi (\Z ^F)=\mathfrak{t}_{\Z}$ and $\pi (\R ^F)=\mathfrak{t}$. 
It follows that:

\begin{prop}
The map $\pi$ induces a 
surjective Lie group homomorphism  $\pi ' \colon \R ^F/\Z ^F=(\R /\Z)^F \to 
\mathfrak{t}/\mathfrak{t}_{\Z},$ and hence a
surjective homomorphism 
${\rm exp}\circ\pi ' \colon \R ^F/\Z ^F \to T.$
\end{prop}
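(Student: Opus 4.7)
The plan is to verify that $\pi$, being linear, is an additive Lie group homomorphism, then descend it to the quotient using the preservation of lattices, and finally compose with the already-established surjection $\exp$.

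First, I would observe that $\pi \colon \R^F \to \mathfrak{t}$ is $\R$-linear by its very definition, hence in particular it is a smooth homomorphism of the additive Lie groups $(\R^F, +)$ and $(\mathfrak{t}, +)$. Writing $q_1 \colon \R^F \to \R^F/\Z^F$ and $q_2 \colon \mathfrak{t} \to \mathfrak{t}/\mathfrak{t}_{\Z}$ for the canonical projections, the composition $q_2 \circ \pi$ is a Lie group homomorphism whose kernel contains $\Z^F$: indeed, for $t \in \Z^F$ we have $\pi(t) = \sum_{f \in F} t_f X_f \in \mathfrak{t}_{\Z}$, since each $X_f$ lies in $\mathfrak{t}_{\Z}$ and $\mathfrak{t}_{\Z}$ is a subgroup. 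By the universal property of the quotient Lie group $\R^F/\Z^F$, there is a unique Lie group homomorphism $\pi' \colon \R^F/\Z^F \to \mathfrak{t}/\mathfrak{t}_{\Z}$ such that $\pi' \circ q_1 = q_2 \circ \pi$.

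Next I would establish surjectivity. The identity $\pi(\R^F) = \mathfrak{t}$, noted just before the proposition (and which follows from the fact that for any vertex $v$ the vectors $\{X_f\}_{f \in F_v}$ form an $\R$-basis of $\mathfrak{t}$), implies that $q_2 \circ \pi$ is surjective, and therefore $\pi'$ is surjective.

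Finally, composing with the surjective homomorphism $\exp \colon \mathfrak{t}/\mathfrak{t}_{\Z} \to T$ (established earlier in the paper, where in fact $\exp$ was shown to induce an isomorphism from $\mathfrak{t}/\mathfrak{t}_{\Z}$ onto $T$) yields the surjective Lie group homomorphism $\exp \circ \pi' \colon \R^F/\Z^F \to T$. There is essentially no obstacle here: the only point to be careful about is invoking the universal property for quotient Lie groups so that $\pi'$ is automatically smooth, rather than merely a set-theoretic map, but this is immediate since $q_1$ is a surjective submersion.
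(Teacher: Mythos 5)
Your proof is correct and follows essentially the same route the paper takes: the paper simply notes beforehand that $\pi(\Z^F)=\mathfrak{t}_{\Z}$ and $\pi(\R^F)=\mathfrak{t}$ (from the Delzant condition at any vertex) and lets the proposition "follow," which is exactly the descent-to-the-quotient and composition-with-$\exp$ argument you spell out. Your version just makes explicit the universal property and smoothness points that the paper leaves implicit.
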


Write ${\got n}:={\rm ker}\pi$ and 
\begin{eqnarray} \label{ntorus}
N={\rm ker}({\rm exp}\circ\pi '),
\end{eqnarray}
which is a compact abelian 
subgroup of  $\R ^F/\Z ^F$. 
Actually, $N$ is connected (see~\cite[Lemma 3.1]{DuPeTV}), and isomorphic 
to ${\got n}/{\got n}_{\Z}$, where 
${\got n}_{\Z}:={\got n}\cap\Z ^F$ is the integral lattice 
in ${\got n}$ of the torus 
$N$.

\subsubsection{Symplectic\--toric manifolds}

The following notion has been a source of  inspiration to many authors working on symplectic
and Hamiltonian group actions, as well as finite dimensional integrable Hamiltonian systems.

\begin{definition}
A \emph{symplectic toric manifold} is a compact connected 
symplectic manifold $(M, \omega)$ of dimension $2n$ endowed with an effective Hamiltonian 
action of a torus $T$ of dimension $n$. 
\end{definition}

\begin{figure}[htbp]
  \begin{center}
    \includegraphics[width=6cm]{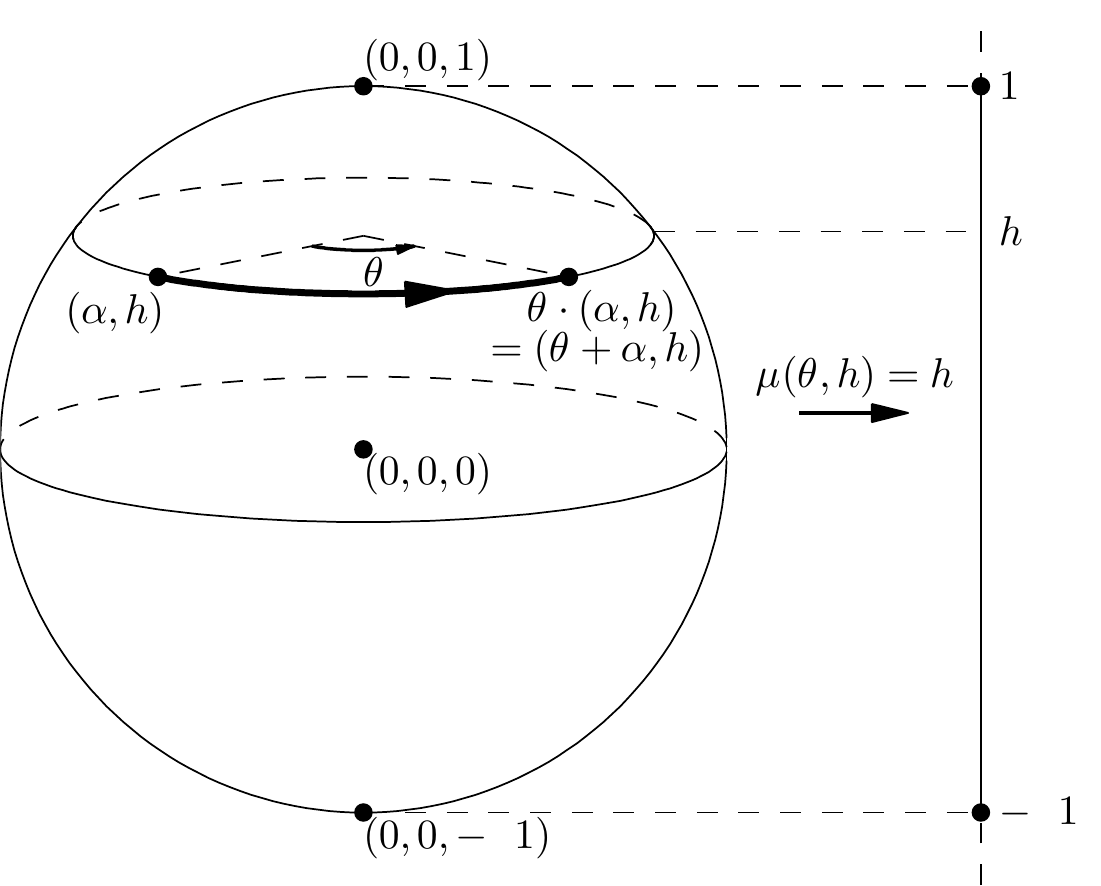}
    \caption{The simplest symplectic\--toric manifold: $S^2$
    endowed with the standard area form and rotational action of $S^1$ about the vertical axis.}
    \label{fg}
  \end{center}
\end{figure}

\medskip

  For instance, the effective $S^1$\--action by rotations about $z=0$ of $S^2$ (Figure~\ref{fg}) is symplectic and Hamiltonian, and
    hence $(S^2,\omega,S^1)$ is a symplectic\--toric manifold, where $\omega={\rm d}\theta \wedge {\rm d}h$ is
    the standard area form in spherical coordinates.  
      If $(x,y,z) \notin \{(0,\,0,\,-1),\, (0,\, 0,\, 1)\}$, then $T_{(x,y,z)}=\{e\}$, and 
    $T_{(0,\,0,\,-1)}=T_{(0,\,0,\,1)}=S^1$.  
     Identifying the dual Lie algebra of $S^1$ with $\mathbb{R}$ (by choosing a basis), 
    the momentum map is, in spherical coordinates, given by $\mu(\theta,\, z)=z$. The image of 
    $\mu$ is the closed interval $[-1,\,1]$, which  is
      convex hull of the
      image of the fixed point set $\{(0,\,0,\,-1),\, (0,\, 0,\, 1)\}$ as in Theorem~\ref{gs}. 
      
      According to 
       the following result by Thomas Delzant, the interval $[-1,1]$ completely
       characterizes the symplectic geometry of $(S^2,\omega,S^1)$ (see Figure~\ref{fg}).

\begin{theorem}[Delzant \cite{De}] \label{delzant}
Given any Delzant polytope $\Delta \subset \mathfrak{t}^*$, there exists a
  compact connected symplectic manifold $(M_{\Delta},\,
  \omega_{\Delta})$ with an effective Hamiltonian $T$\--action with
  momentum map $\mu_{\Delta} \colon M_{\Delta} \to \mathfrak{t}^*$ such that
  $\mu_{\Delta}(M_{\Delta})=\Delta$. Any symplectic toric manifold $(M,\omega)$ 
  is $T$\--equivariantly symplectomorphic to $(M_{\mu(M)},\omega_{\mu(M)})$.
   Two symplectic\--toric manifolds are $T$\--equivariantly
  symplectomorphic if and only if they have the same momentum map image,
  up to translations.  
 \end{theorem}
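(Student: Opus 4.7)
The plan is to prove existence and uniqueness separately, following the strategy pioneered by Delzant using symplectic reduction from a large linear space, together with a local-to-global gluing argument in the uniqueness direction.

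For the existence part, I would construct $M_\Delta$ as a Marsden--Weinstein quotient of $\mathbb{C}^F$, where $F$ is the set of codimension-one faces of $\Delta$. Equip $\mathbb{C}^F$ with the standard symplectic form and with the standard Hamiltonian action of $\mathbb{T}^F = \R^F/\Z^F$ by coordinate rotations; its momentum map is $\psi(z) = \sum_{f \in F} \pi |z_f|^2 \, e_f \in (\R^F)^\ast$. Let $N \subset \mathbb{T}^F$ be the subtorus from equation~(\ref{ntorus}) with Lie algebra $\got{n} = \ker \pi$, and let $\iota \colon \got{n} \hookrightarrow \R^F$ be the inclusion. The $N$-action on $\mathbb{C}^F$ is then Hamiltonian with momentum map $\mu_N = \iota^\ast \circ \psi$. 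I would then set $M_\Delta := \mu_N^{-1}(\iota^\ast \lambda)/N$, where $\lambda = (\lambda_f)_{f \in F}$ comes from Definition~\ref{dpol}. The first technical point is that the Delzant condition at every vertex (the $\Z$-basis property) ensures $N$ acts freely on $\mu_N^{-1}(\iota^\ast \lambda)$, so Theorem~\ref{MaWe} applies and $(M_\Delta, \omega_\Delta)$ is a smooth symplectic manifold. The residual torus $T \simeq \mathbb{T}^F/N$ then acts on $M_\Delta$ in a Hamiltonian fashion, and one checks by a direct computation using the defining inequalities $\langle X_f, \xi\rangle + \lambda_f \geq 0$ that the induced momentum map $\mu_\Delta$ has image exactly $\Delta$; compactness of $M_\Delta$ follows from properness of $\psi$ on $\mu_N^{-1}(\iota^\ast \lambda)$, and connectedness from connectedness of $\Delta$ together with Atiyah connectivity.

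For the uniqueness part, suppose $(M_1,\omega_1, T)$ and $(M_2,\omega_2, T)$ are two symplectic toric manifolds with $\mu_1(M_1) = \mu_2(M_2) = \Delta$. The strategy is to construct an equivariant symplectomorphism $\varphi \colon M_1 \to M_2$ intertwining the momentum maps. First I would verify that the combinatorics of $\Delta$ dictate the isotropy data: each vertex $v \in V$ corresponds to a fixed point $p^i_v \in M_i$, and the weights of the isotropy representation on $T_{p^i_v} M_i$ are determined (up to sign and labeling) by $\{X_f\}_{f \in F_v}$. Hence an equivariant linear symplectomorphism between the tangent spaces at $p^1_v$ and $p^2_v$ exists, and by the equivariant Darboux theorem extends to an equivariant symplectomorphism of invariant neighborhoods of these fixed points which intertwines $\mu_1$ and $\mu_2$ after a suitable translation. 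I would then extend this germ along strata of $\Delta$: on the preimages of open faces one has principal or sub-principal orbits, and Guillemin--Marle--Sternberg's equivariant local normal form identifies neighborhoods of each orbit up to the polytope data.

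The final and hardest step is gluing these local equivariant symplectomorphisms into a single global one. The approach is Moser's trick in equivariant form: on $M_1$, pull back $\omega_2$ via a $T$-equivariant diffeomorphism $\Phi$ satisfying $\mu_2 \circ \Phi = \mu_1$ (such a diffeomorphism exists because both quotients $M_i/T$ are identified with $\Delta$ and the reduced symplectic forms at every regular level coincide by Theorem~\ref{dh2} applied to both sides, since $[\omega_{t}]$ varies linearly and the endpoint data match). Then $\omega_1$ and $\Phi^\ast \omega_2$ are two cohomologous $T$-invariant symplectic forms on $M_1$ that agree on $T$-orbits and at fixed points. Interpolating $\omega_s = (1-s)\omega_1 + s \Phi^\ast \omega_2$, invariance and agreement of momentum maps give an exact equivariant primitive $\alpha_s$ with $i_{X_M} \alpha_s = 0$ for all $X \in \got{t}$, so the Moser vector field $Y_s$ defined by $i_{Y_s} \omega_s = -\alpha_s$ is $T$-invariant and its time-one flow gives the sought equivariant symplectomorphism. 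The main obstacle is precisely producing the equivariant primitive $\alpha_s$ that respects the momentum map, which is why the local normal form at fixed points is indispensable; translations of the polytope account for the ambiguity in the momentum map up to a constant, completing the classification.
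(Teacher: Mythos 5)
Your existence argument is essentially identical to the one the paper gives: Marsden--Weinstein reduction of $\C^F$ by the subtorus $N=\ker(\exp\circ\pi')$ at the level $\iota_{\got n}^*(\lambda)$, with the Delzant condition at the vertices guaranteeing regularity and freeness, and the residual $T\simeq(\R^F/\Z^F)/N$-action having momentum image $\Delta$. Note, however, that the paper proves \emph{only} this half of the theorem and refers to Delzant for uniqueness, so your uniqueness sketch is doing work the paper does not attempt; it is there that the gaps lie.

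Two concrete problems. First, the global $T$-equivariant diffeomorphism $\Phi$ with $\mu_2\circ\Phi=\mu_1$ that your Moser argument starts from is the actual crux of uniqueness, and your justification for its existence does not deliver it: Theorem~\ref{dh2} says nothing useful here, since for a toric manifold the reduced space at an interior regular value is a single point, so ``the reduced forms coincide'' is vacuous. What is really needed is to glue the Guillemin--Marle--Sternberg local identifications over $\Delta$; the obstruction is a degree-one \v{C}ech class on $\Delta$ with values in the sheaf of local automorphisms of the model covering the identity on the polytope (equivalently, of local smooth functions pulled back by $\mu$), and it vanishes because $\Delta$ is convex. This cohomological gluing step is the heart of Delzant's uniqueness proof and cannot be replaced by an appeal to Duistermaat--Heckman. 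Second, even granting $\Phi$, you interpolate $\omega_s=(1-s)\omega_1+s\Phi^*\omega_2$ and run Moser, but a convex combination of symplectic forms need not be nondegenerate, and you never verify it is. The saving fact is that $\sigma:=\omega_1-\Phi^*\omega_2$ satisfies ${\rm i}_{X_M}\sigma=0$ and ${\rm L}_{X_M}\sigma=0$ for all $X\in\got t$, hence is basic and admits a basic primitive by the Poincar\'e lemma on the convex set $\Delta$; one must use this (for instance by first arranging agreement of the forms near the non-free strata and then controlling the interpolation on the principal part) before the Moser flow is legitimate. As written, both steps are asserted rather than proved.
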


\begin{proof}
We will prove the existence part~\cite[pages~328, 329]{De} following~\cite{DuPeTV}: 
we are going to prove that for any Delzant polytope $\Delta$ 
there exists a  symplectic\--toric manifold $M_{\Delta}$ such that $\mu _{\Delta}(M_{\Delta})=\Delta$,
and which is obtained as the reduced phase space 
for a linear Hamiltonian action of the torus $N$ in (\ref{ntorus}) on a symplectic vector space 
$E$, at a value $\lambda _N$ of the momentum mapping of the 
Hamiltonian $N$\--action, where $E$, $N$ and $\lambda _N$ 
are determined by $\Delta$. 

On the complex vector space $\C ^F$ of all complex\--valued functions 
on $F$ we have the action of $\R ^F/\Z ^F$, where 
$t\in\R ^F/\Z ^F$ maps $z\in\C ^F$ to the element 
$t\cdot z\in\C ^F$ defined for $f \in F$ by
$
(t\cdot z)_f={\rm e}^{2\pi{\rm i}\, t_f}\, z_f
$ 
The infinitesimal action of $Y\in\R ^F={\rm Lie}(\R ^F/\Z ^F)$ 
is given by 
$
(Y\cdot z)_f=2\pi{\rm i}\, Y_f\,z_f,
$
which is a Hamiltonian vector field defined by the function 
$
z\mapsto\langle Y,\, \mu (z)\rangle 
=\sum_{f\in F} Y_f\, \frac{|z_f|^2}{2}=\sum_{f\in F}\, Y_f\, \frac{{x_f}^2+{y_f}^2}{2},
\label{mu} \nonumber
$
and with respect to the symplectic form 
$
\omega^{\C^F} :=\frac{{\rm i}}{4\pi}\, \sum_{f\in F}\,{\rm d} z_f
\wedge{\rm d}\overline{z}_f
=\frac{1}{2\pi}\,\sum_{f\in F}\,{\rm d} x_f\wedge{\rm d} y_f,
$
if $z_f=x_f+{\rm i}y_f$, with $x_f,\, y_f\in\R$. 
Since the right hand side of (\ref{mu}) depends linearly on $Y$, 
we  view $\mu (z)$ as an element of $(\R ^F)^*\simeq\R ^F$, 
with the coordinates
\begin{equation}
\mu (z)_f=|z_f|^2/2=({x_f}^2+{y_f}^2)/2, \quad f\in F.  
\label{muf}
\end{equation}
In other words, the action of $\R ^F/\Z ^F$ on $\C ^F$ is Hamiltonian
with respect to  $\omega^{\C^F}$ and with 
momentum map $\mu :\C ^F\to ({\rm Lie}(\R ^F/\Z ^F))^*$ 
given by (\ref{muf}).   It follows that the action of $N$  on 
$\C ^F$ is Hamiltonian with momentum map 
$
\mu _N:=\iota _{\got n}^*\circ\mu :\C ^F\to {\got n}^*,
$
where $\iota _{\got n}:{\got n}\to\R ^F$ denotes the identity 
viewed as a linear mapping from ${\got n}\subset\R ^F$ 
to $\R ^F$, and its transposed $\iota _{\got n}^*:(\R ^F)^*\to {\got n}^*$ 
 assigns to a linear form on $\R ^F$ its 
restriction to ${\got n}$.

Let $\lambda$ denote the element of $(\R ^F)^*\simeq\R ^F$ 
with the coordinates $\lambda _f$, $f\in F$.  
Write $\lambda _N=\iota _{\got n}^*(\lambda)$. 
It follows from Guillemin \cite[Theorem 1.6 and Theorem 1.4]{guillemin}  that 
$\lambda _N$ is a regular value of $\mu _N$. Hence 
$Z:={\mu _N}^{-1}(\{ \lambda _N\})$
 is a smooth 
submanifold of $\C ^F$, and that the action of $N$ on $Z$ 
is proper and free. As a consequence the $N$\--orbit space 
$M_{\Delta}:=Z/N$ is a smooth $2n$\--dimensional manifold such that 
the projection $p:Z\to M_{\Delta}$ exhibits $Z$ as a principal $N$\--bundle 
over $M_{\Delta}$ 
for the $N$\--action at $\lambda _N$, see Theorem~\ref{MaWe}). Moreover, there is a unique symplectic form 
$\omega _{\Delta}$ on $M_{\Delta}$ such that 
$p^*\omega _{\Delta}={\iota _Z}^*\omega^{\C^F},
$
where $\iota _Z$ is the identity viewed as a smooth mapping from 
$Z$ to $\C ^F$ 
($M_{\Delta}$ is  the 
Marsden\--Weinstein reduction~\cite[Section~4.3]{am} of  $(\C ^F,\,\omega^{\C^F} )$).  On the $N$\--orbit space $M_{\Delta}$, 
we still have the action of the 
torus $(\R ^F/\Z ^F)/N\simeq T$, with momentum mapping 
$\mu _{\Delta}:M\to \mathfrak{t}^*$ determined by 
$
\pi ^*\circ\mu _{\Delta} \circ p =(\mu -\lambda )|_Z. 
\label{muT}
$
The torus $T$ acts effectively on $M$ and $\mu _{\Delta}(M)=\Delta$, 
see Guillemin \cite[Theorem~1.7]{guillemin}, and therefore we have
constructed the symplectic\--toric manifold
$
(M_{\Delta},\omega_{\Delta})
$
with $T$\--action and momentum map $\mu_{\Delta} \colon M_{\Delta} \to {\got t}^*$ 
such that $\mu_{\Delta}(M)=\Delta$, from $\Delta$.
\end{proof}

There have been  generalizations of Theorem~\ref{delzant},
for instance to multiplicity\--free group actions by 
Woodward~\cite{Wo}, and to symplectic\--toric orbifolds by Lerman and Tolman~\cite{LeTo}.
An extension to noncompact symplectic manifolds was recently given by  Karshon and Lerman~\cite{LeKa15}.

Because any symplectic\--toric manifold is obtained by symplectic
reduction of $\C^F$, it admits a compatible 
$T$\--invariant K\"ahler metric.  Delzant \cite[Section~5]{De} observed that $\Delta$
gives rise to a fan, and that the symplectic toric manifold with Delzant polytope 
$\Delta$ is $T$\--equivariantly 
diffeomorphic to the {\em toric variety} $M^{{\rm toric}}$ 
defined by the fan. Here $M^{{\rm toric}}$ is 
a complex $n$\--dimensional complex analytic manifold, 
and the action of $T$ on $M^{{\rm toric}}$ 
has an extension to a complex analytic action on $M^{{\rm toric}}$ 
of the complexification $T_{\C}$ of $T$.  

A detailed study of the
relation between the symplectic\--toric manifold and $M^{{\rm toric}}$
appears in~\cite{DuPeTV}. 
The following proof illustrates the interplay between the symplectic and algebraic view
points in toric geometry.

\begin{prop}
Every  symplectic\--toric manifold is simply connected. 
\label{danilov}
\end{prop}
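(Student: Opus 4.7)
The plan is to apply Morse theory to a generic component of the momentum map. Let $\mu \colon M \to \mathfrak{t}^*$ be the momentum map, and choose $X \in \mathfrak{t}$ outside the countable union of rational hyperplanes of $\mathfrak{t}$, so that the one\--parameter subgroup $\{\exp(tX) : t \in \R\}$ is dense in $T$. I would study the smooth function $f_X := \langle \mu, X\rangle \colon M \to \R$ and show that it is a Morse function all of whose Morse indices are even.

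First I would identify the critical set of $f_X$. Hamilton's equation~(\ref{xts}) gives ${\rm d}f_X = -\omega(X_M,\cdot)$, so by nondegeneracy of $\omega$ the critical points of $f_X$ are exactly the zeros of the vector field $X_M$. Because $\exp(\R X)$ is dense in $T$, this zero set coincides with the fixed point set $M^T$, which by Theorem~\ref{gs} is a finite set mapping bijectively to the vertices of the Delzant polytope $\mu(M)$.

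Next I would compute the Hessian at a fixed point $p \in M^T$. A standard equivariant Darboux argument identifies a neighborhood of $p$ $T$\--equivariantly and symplectically with an open set in $(\C^n, \omega_{\rm std})$ on which $T$ acts linearly with some weights $\xi_1,\ldots,\xi_n \in \mathfrak{t}^*$. In these coordinates
\[
f_X(z) \;=\; f_X(p) \;+\; \tfrac{1}{2}\sum_{j=1}^n \langle \xi_j, X\rangle\,|z_j|^2,
\]
and by genericity of $X$ each $\langle \xi_j, X\rangle$ is nonzero. Hence $p$ is a nondegenerate critical point with Morse index $2\,\#\{j : \langle \xi_j, X\rangle < 0\}$, which is always even.

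Thus $f_X$ is a Morse function with only even indices, possessing a unique minimum (the vertex of $\mu(M)$ minimizing $X$), so standard Morse theory furnishes a CW decomposition of $M$ with a single $0$\--cell and no cells of odd dimension. Attaching $2$\--cells to a single point yields a wedge of $2$\--spheres, which is simply connected, and further (even\--dimensional) cells do not affect the fundamental group; hence $\pi_1(M) = 0$. The two substantive ingredients are the genericity of $X$ (straightforward) and the equivariant linearization of the $T$\--action at each fixed point; once these are in place the conclusion is essentially formal, so I expect no real obstacle beyond those standard facts.
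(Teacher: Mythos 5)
Your proof is correct, but it follows a genuinely different route from the paper. The paper's proof does not touch the momentum map directly: it invokes Delzant's observation that a symplectic toric manifold carries the structure of a toric variety associated to a complete fan, and then uses Danilov's argument that such a variety contains an open dense cell isomorphic to $\C^n$ whose complement is a subvariety of complex codimension one (hence real codimension two), so that every loop can be pushed into the cell and contracted there. You instead run the classical Atiyah/Guillemin--Sternberg Morse-theoretic argument on a generic component $f_X=\langle\mu,X\rangle$ of the momentum map: genericity of $X$ makes the critical set equal to $M^T$ (isolated by effectiveness and the local normal form), the equivariant linearization shows every index is even, and a CW structure with one $0$--cell and no odd cells forces $\pi_1(M)=0$. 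Your approach stays entirely inside the symplectic category and yields strictly more along the way (vanishing of all odd Betti numbers, freeness of $H_*$, the count of fixed points as $\sum b_{2i}$), at the cost of needing the equivariant Darboux normal form and the lacunary/connectedness argument for uniqueness of the minimum; the paper's approach is shorter once Delzant's identification with a toric variety is available, but it imports the algebro-geometric machinery of fans. Both are complete; the only points you should make explicit if writing this up are that isolatedness of the fixed points comes from effectiveness of the $n$--torus action on the $2n$--manifold (not from the convexity theorem itself), and that uniqueness of the minimum follows from connectedness of $M$ together with the absence of index-one critical points.
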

\begin{proof}
Every symplectic toric manifold may be provided with the 
structure of a toric variety defined by a complete fan, 
cf. Delzant \cite[Section~5]{De} and Guillemin \cite[Appendix 1]{guillemin}.
On the other hand, Danilov \cite[Theorem~9.1]{danilov} observed that such a 
toric variety is simply connected. The proof goes as follows:
the toric variety has an open cell which is isomorphic to 
the complex space $\C ^n$, whose complement is a complex subvariety 
of complex codimension one. Hence all loops may be deformed 
into the cell and contracted within the cell to a point. 
\end{proof}

\subsection{Log symplectic\--toric manifolds}

Recently there has been a generalization of symplectic\--toric geometry to 
a class of Poisson manifolds, called \emph{log\--symplectic manifolds}. Log\--symplectic manifolds are generically symplectic but degenerate along a normal crossing configuration of smooth hypersurfaces.  

Guillemin, Miranda, Pires and Scott initiated the study of log symplectic\--toric manifolds in 
their article~\cite{Guillemin3}. They proved the analogue of Delzant's theorem (Theorem~\ref{delzant}) in the case where the degeneracy locus  of the associated Poisson structure is a smooth hypersurface.  

Degeneracy loci for Poisson structures are often singular. In \cite{GuLiPeRa} the authors consider 
the mildest possible singularities (normal crossing hypersurfaces) and gave an
analogue of Theorem~\ref{delzant}. Next we informally state this result to give a flavor of the
ingredients involved (being precise would be beyond the scope of this paper). 

The notion of \emph{isomorphism} below generalizes
the classical notion taking into account the log symplectic structure.

\begin{theorem}[\cite{GuLiPeRa}] There is a one-to-one correspondence between isomorphism classes of oriented compact connected log symplectic\--toric $2n$\--manifolds and equivalence classes of pairs $(\Delta, M)$, where $\Delta$ is a compact convex log affine polytope of dimension $n$ satisfying the Delzant condition and $M \to \Delta$ is a principal $n$\--torus bundle over $\Delta$ with vanishing toric log obstruction class.
\end{theorem}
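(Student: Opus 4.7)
The plan is to mimic the strategy of the proof of Delzant's theorem (Theorem~\ref{delzant}), adapted to accommodate the log affine structure on the base and the extra principal bundle datum. I would construct explicit maps in both directions between isomorphism classes of log symplectic\--toric manifolds and equivalence classes of pairs $(\Delta,M)$, and then verify they are mutually inverse.

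For the direction from a log symplectic\--toric $2n$\--manifold $(W,\pi,\phi)$ to a pair $(\Delta,M)$, I would combine the Atiyah\--Guillemin\--Sternberg convexity framework (Theorem~\ref{gs}) with the log momentum machinery developed by Guillemin\--Miranda\--Pires\--Scott in~\cite{Guillemin3}. The image of a suitably defined log momentum map should be a compact convex log affine polytope $\Delta$ of dimension $n$, satisfying the Delzant condition at each vertex via the standard linearization of the $\mathbb{T}^n$\--action at the corresponding fixed points. The new ingredient compared with~\cite{Guillemin3} is that the degeneracy locus is a normal crossing configuration, so the polytope $\Delta$ acquires a log affine structure with corners whose strata match the codimensions of the local intersections. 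I would define the principal $n$\--torus bundle $M \to \Delta$ by associating to each point of $\Delta$ the $\mathbb{T}^n$\--orbit of a preimage under the momentum map; on the smooth symplectic locus this bundle is trivial (reflecting the classical Delzant result), but along the normal crossing hypersurfaces the monodromy yields nontrivial bundle data, whose well\--definedness is guaranteed by the vanishing of an obstruction class.

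For the reverse direction I would build the log symplectic\--toric manifold from $(\Delta,M)$ by a patching argument. Near each point of $\Delta$ I would use a local model: near smooth interior points, a trivial $\mathbb{T}^n$\--bundle crossed with a log symplectic neighborhood; near a codimension\--$k$ stratum of the normal crossing locus, the standard local log symplectic model attached to $\mathbb{T}^{n-k}$ fibers; near a vertex of $\Delta$, the standard Delzant local model from the proof of Theorem~\ref{delzant}, which in a chart reduces to $\mathbb{C}^n$ with its standard $\mathbb{T}^n$\--action, suitably twisted by the log structure. The principal bundle $M$ prescribes the transition data between these charts, and the vanishing of the toric log obstruction class is precisely the cocycle condition ensuring consistent gluing into a global oriented log symplectic\--toric manifold. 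Finally, one checks that equivalent pairs $(\Delta,M)\sim(\Delta',M')$ produce isomorphic manifolds, and that two non\--equivalent pairs cannot (since both $\Delta$ and $[M]$ can be recovered from the manifold by the forward construction).

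The main obstacle I expect is the analysis along the normal crossing hypersurfaces. In the classical Delzant correspondence the manifold is recovered uniquely from $\Delta$ alone, with no bundle datum, whereas here the bundle $M\to\Delta$ and its associated obstruction class exist precisely because the log structure forces nontrivial monodromy around each codimension\--one degeneracy component. Making this rigorous requires identifying the local automorphism sheaf of the log symplectic\--toric model so that the phrase ``toric log obstruction class'' can be pinned down as a specific \v{C}ech cohomology class of $\Delta$ with coefficients in that sheaf, and showing that its vanishing is exactly the obstruction to lifting the combinatorial data to a global manifold. A secondary technical point is verifying that the symplectic quotient construction from the proof of Theorem~\ref{delzant} carries through in the log setting to produce a compact Hausdorff space, which will require care along the strata where the log form degenerates.
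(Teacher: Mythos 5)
First, a point of comparison: the paper does not actually prove this theorem. It is stated explicitly as an informal summary of a result from \cite{GuLiPeRa}, with the remark that ``being precise would be beyond the scope of this paper''; none of the terms ``log affine polytope'', ``Delzant condition'' (in the log setting), or ``toric log obstruction class'' is defined in the survey, and no argument is given. So there is no proof in the paper against which your proposal can be checked, and your write-up is likewise a strategy outline rather than a proof: every genuinely hard step (defining the log momentum map, identifying the obstruction class as a specific cohomology class, carrying out the gluing) is deferred, as you yourself acknowledge in your final paragraph.

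Beyond that, two concrete points in your outline would fail as stated. First, you propose to get compactness and convexity of the momentum image by combining ``the Atiyah--Guillemin--Sternberg convexity framework (Theorem~\ref{gs})'' with the log momentum machinery. Theorem~\ref{gs} applies to a genuine $\mathfrak{t}^*$-valued momentum map on a symplectic manifold; on a log symplectic manifold the momentum map components acquire logarithmic singularities along the degeneracy hypersurfaces, so the image does not sit in $\mathfrak{t}^*$ but in an extended (tropicalized) target, and convexity there is a separate theorem (the survey cites \cite{GMPS} for exactly this), not a corollary of Theorem~\ref{gs}. Second, you define the bundle $M \to \Delta$ by ``associating to each point of $\Delta$ the $\mathbb{T}^n$-orbit of a preimage under the momentum map.'' That cannot produce a \emph{principal} $n$-torus bundle over all of $\Delta$: over the faces and vertices of $\Delta$ the stabilizers are positive-dimensional and the orbits collapse to lower-dimensional tori, exactly as in the classical Delzant picture. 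The bundle datum in the statement is an auxiliary principal bundle over the polytope (whose nontriviality is what distinguishes the log case from Theorem~\ref{delzant}, where $\Delta$ alone suffices), and your proposal does not supply a correct construction of it. Until the log momentum map, the log affine structure on $\Delta$, and the obstruction class are pinned down — which requires the actual framework of \cite{GuLiPeRa} and \cite{Guillemin3} — the proposal remains a plausible plan with these two substantive gaps.
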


Log\--symplectic geometry and its toric version are an active area of research 
which is  related to tropical geometry and the extended tropicalizations of toric varieties defined by  Kajiwara~\cite{MR2428356} and Payne~\cite{MR2511632}. 
 
    Convexity properties of Hamiltonian torus actions on log\--symplectic manifolds were studied in \cite{GMPS},
    where the authors prove a generalization of Theorem~\ref{gs}.

\subsection{Classification of Hamiltonian $S^1$\--spaces}

In addition to Delzant's classification (Theorem~\ref{delzant}) there have been other classifications
of Hamiltonian $G$\--actions on compact symplectic $2n$\--manifolds. In this section we outline the 
classification when $G=S^1$ and $n=2$ due to Karshon.

\begin{definition}
A \emph{Hamiltonian $S^1$\--space} is a compact connected 
symplectic $4$\--manifold equipped with an effective Hamiltonian $S^1$\--action 
\end{definition}

\medskip

Let $(M,\omega,S^1)$ be a Hamiltonian $S^1$\--space.
We associate it a labelled graph as follows. Let
$\mu \colon M \to \mathbb{R}$ be the momentum map of the $S^1$\--action.
For each component $\Sigma$ of the set of fixed points of the $S^1$\--action 
there is one vertex in
the graph, labelled by  $\mu(\Sigma) \in \mathbb{R}$.

If $\Sigma$ is a surface, the corresponding vertex has two
additional labels, one is the symplectic area of $\Sigma$ and the
other one is the genus of $\Sigma$.

Let $F_k$ be a subgroup of $k$ elements of $S^1$. For  every
connected component $C$ of the set of points fixed by $F_k$ there is an
edge in the graph, labeled by the integer $k>1$. The component $C$ is
a $2$\--sphere, which we call a \emph{$F_k$\--sphere}. The quotient
circle $S^1/F_k$ rotates it while fixing two points, and the two
vertices in the graph corresponding to the two fixed points are
connected in the graph by the edge corresponding to $C$.

  \begin{theorem}[Audin, Ahara, Hattori \cite{AH,  A1, A2}]
  Every Hamiltonian
$S^1$\--space is $S^1$\--equivariantly
diffeomorphic to a complex surface with a holomorphic $S^1$\--action
which is obtained from $\mathbb{CP}^2$, a Hirzebruch surface, or a
$\mathbb{CP}^1$\--bundle over a Riemann surface (with appropriate
circle actions), by a sequence of blow\--ups at the fixed points.
\end{theorem}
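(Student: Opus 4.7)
The plan is to use the momentum map $\mu\colon M\to\R$ as a perfect Morse--Bott function, reduce $M$ via equivariant blow-downs to a ``minimal" Hamiltonian $S^1$-space with no isolated interior fixed points, and then identify the minimal model with one of the three complex surfaces listed in the statement.

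First I would establish the local structure. By the equivariant symplectic slice theorem (a $4$-dimensional special case of Guillemin--Marle--Sternberg), each isolated fixed point $p$ has an $S^1$-invariant neighborhood $S^1$-equivariantly symplectomorphic to a ball in $(\C^2,\omega_0)$ on which $\lambda\in S^1$ acts by $(z_1,z_2)\mapsto(\lambda^{a}z_1,\lambda^{b}z_2)$ for a unique (up to order) pair of nonzero integers $(a,b)$ (called the isotropy weights at $p$), while each fixed surface $\Sigma\subset M^{S^1}$ has an invariant tubular neighborhood symplectomorphic to a disk bundle in a complex line bundle $L\to\Sigma$ with $S^1$ rotating the fiber by some nonzero weight $c$. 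The sign conventions single out which $p$ and which $\Sigma$ lie at local maxima or minima of $\mu$.

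Second, I would extract the combinatorial/topological skeleton. Since $\mu$ is Morse--Bott with even-index critical submanifolds, $M$ is simply connected whenever $\mu$ has only isolated minima, and in general its topology is assembled from the reduced spaces $M_c:=\mu^{-1}(c)/S^1$. In dimension four each regular $M_c$ is a closed surface whose genus is constant on each connected component of regular values; by Morse--Bott analysis, the genus is constant globally and equals the genus of the (unique) extremal fixed surface if there is one, and is $0$ otherwise. This yields the labelled graph recalled before the theorem statement, whose data determine the equivariant diffeomorphism type of $M$ (this is essentially Karshon's classification, which refines the argument here).

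Third, I would carry out the reduction. An isolated fixed point $p$ at an \emph{interior} critical value has weights of opposite sign; the equivariant slice model at $p$ is precisely the standard picture of an equivariant blow-up of a smooth $S^1$-invariant point in a $4$-manifold. I would therefore perform an equivariant symplectic blow-down at $p$, producing a new Hamiltonian $S^1$-space $(M',\omega',S^1)$ with one fewer interior isolated fixed point. Iterating, after finitely many steps one reaches a \emph{minimal} Hamiltonian $S^1$-space $M_{\min}$ whose only fixed components lie at the global extrema of $\mu$.

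Fourth, I would identify the minimal models. If both extrema of $\mu$ on $M_{\min}$ are isolated points, then the reduced spaces are $2$-spheres throughout and one shows directly $M_{\min}\cong\mathbb{CP}^2$ with a weighted linear $S^1$-action. If exactly one extremum is a fixed surface $\Sigma$, a Morse--Bott argument and the equivariant tubular neighborhood theorem give $M_{\min}$ as a $\mathbb{CP}^1$-bundle over $\Sigma$ (Hirzebruch surface when $\Sigma\cong S^2$, and a $\mathbb{CP}^1$-bundle over a higher genus Riemann surface otherwise). If both extrema are fixed surfaces, a similar argument shows the two surfaces are diffeomorphic and $M_{\min}$ is a $\mathbb{CP}^1$-bundle over their common base. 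In each case, the standard complex structure on the model makes the $S^1$-action holomorphic (linearly on $\mathbb{CP}^2$, fiberwise on the bundle).

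Finally, I would re-install the blow-ups in order of the interior critical values of $\mu$ on the original $M$, yielding the desired $S^1$-equivariant diffeomorphism to a sequence of equivariant blow-ups of one of the three minimal holomorphic models.

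The main obstacle, and the real content of the argument, is the third step: verifying that an equivariant blow-down at an interior isolated fixed point can always be performed so as to yield a smooth symplectic Hamiltonian $S^1$-manifold (compatibility of the local $\C^2$-model with the global structure, and control of what happens to the neighboring critical values of $\mu$). The identification in step four is then essentially dimensional and representation-theoretic.
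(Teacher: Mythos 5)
The paper does not actually prove this theorem; it is quoted from Audin and Ahara--Hattori, so there is no in-paper argument to compare against. Your overall strategy --- treat $\mu$ as a Morse--Bott function, perform equivariant blow-downs until a minimal model is reached, identify the minimal models, and reverse the process --- is indeed the strategy of the cited authors (and of Karshon's refinement), so the architecture is right.

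However, two concrete steps as you have written them would fail. First, in your third step you propose to ``blow down at an interior isolated fixed point $p$'' using only the local slice model. A blow-down is not performed at a point: it is performed on an $S^1$-invariant embedded $2$-sphere of self-intersection $-1$, which is a \emph{global} object containing two fixed points (the exceptional divisor of the putative blow-up). The local linear model $\C^2$ with weights $(a,-b)$ at an interior fixed point is not the local model of an exceptional divisor, so nothing in the slice theorem licenses a blow-down there. The real content is to locate invariant exceptional spheres among the gradient spheres of $\mu$ (whose self-intersection numbers are computed from the isotropy weights at their two poles), and this is exactly the part your sketch leaves unaddressed. Second, your termination criterion --- that the minimal model has fixed components only at the extrema of $\mu$ --- is false: $\mathbb{CP}^2$ with the action $[z_0:z_1:z_2]\mapsto[z_0:\lambda z_1:\lambda^2 z_2]$ is minimal yet has an interior isolated fixed point of weights $(+1,-1)$, and a Hirzebruch surface with a generic circle action has two interior isolated fixed points. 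Consequently the iteration cannot be run ``until no interior fixed points remain''; it must be run until no invariant exceptional sphere remains, and minimality has to be characterized combinatorially in terms of the weights. The same imprecision infects your fourth step: having both extrema isolated does not force $M_{\min}\cong\mathbb{CP}^2$ (witness $S^2\times S^2$ with a generic diagonal action, which also has isolated extrema); the identification of the minimal models requires the full weight data, not just the dichotomy isolated-versus-surface at the extrema.
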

 
Let $A$ and $B$ be connected components of the set of fixed
points. The $S^1$\--action extends to a holomorphic action of the
group $\mathbb{C}^{\times}$ of non\--zero complex numbers. Consider
the time flow given by the action of the subgroup $\{{\rm exp}(t)\,\, |\,\,t \in
\mathbb{R}\}$.  

\begin{definition}
We say that \emph{$A$ is greater than $B$} if there is
an orbit of the $\mathbb{C}^{\times}$\--action which at time
$t=\infty$ approaches a point in $A$ and at time $t=-\infty$
approaches a point in $B$. 
\end{definition}

\medskip

Take any of the complex surfaces with $S^1$\--actions considered by
Audin, Ahara and Hattori, and assign a real parameter to every
connected component of the fixed point set such that these
parameters are monotonic with respect to the partial ordering we have
just described. If the manifold contains two fixed surfaces we
assign a positive real number to each of them so that the
difference between the numbers is given by a formula involving the
previously chosen parameters. 

Karshon proved that
for every such a choice of parameters there exists an invariant
symplectic form and a momentum map on the complex surface such that
the values of the momentum map at the fixed points and the symplectic
areas of the fixed surfaces are equal to the chosen
parameters. Moreover, every two symplectic forms with this property
differ by an $S^1$\--equivariant diffeomorphism.  

\begin{theorem}[Karshon \cite{K}] \label{karshon:thm} If two 
  Hamiltonian $S^1$\--spaces have the same graph, then they are
   $S^1$\--equivariantly symplectomorphic. Moreover, every
  compact $4$\--dimensional Hamiltonian $S^1$\--space is $S^1$\--equivarianly symplectomorphic
  to one of the spaces listed in the paragraph above.
\end{theorem}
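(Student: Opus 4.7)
The plan is to use the momentum map $\mu\colon M\to \R$ as a Morse--Bott function and reconstruct the space slice by slice from its graph. Since the $S^1$\--action is Hamiltonian, $\mu$ is Morse--Bott with critical set equal to $M^{S^1}$; in dimension four each critical component is either an isolated fixed point or an embedded symplectic surface, and $\mu(M)=[a,b]$ by Theorem~\ref{gs}. These are exactly the vertices of the graph, labeled by their $\mu$\--values (and, in the surface case, by the area and genus). The edges record the $\Z_k$\--spheres, whose two endpoints correspond to the two fixed points of the quotient $S^1/F_k$\--rotation, and whose weight $k$ encodes the isotropy along the sphere.

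The first main step is to pin down the symplectic germ near each fixed component. Near an isolated fixed point $p$, the equivariant Darboux theorem produces $S^1$\--equivariant coordinates in which $\omega$ agrees with the linear symplectic form on $\C^2$ with the weights $(m,n)$ of the isotropy representation; these weights are exactly the labels on the edges of the graph at the vertex for $p$. Near a fixed surface $\Sigma$, the normal bundle is an equivariant complex line bundle, and by the equivariant symplectic tubular neighborhood theorem the germ of $\omega$ is determined by the area of $\Sigma$, its genus, and the Chern number of the normal bundle, all of which are either on the vertex label or recoverable from the adjacent edges. Thus the graph determines every local model in $M$.

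The second main step is a global assembly via equivariant Moser stability. Suppose $(M_0,\omega_0)$ and $(M_1,\omega_1)$ have the same graph. Using Audin--Ahara--Hattori one identifies them as $S^1$\--equivariantly diffeomorphic to a common complex surface, and the local analysis above lets one arrange, by $S^1$\--equivariant isotopies, that $\omega_0=\omega_1$ on invariant neighborhoods of every fixed component and that the two moment maps coincide. On each slab $\mu^{-1}([c,c'])$ between consecutive critical values, $S^1$ acts freely away from the boundary; the reductions $(M_t,\omega_t)$ are smooth symplectic 2\--orbifolds whose topology is fixed by the graph and whose cohomology classes $[\omega_t]$ vary linearly in $t$ by Theorem~\ref{dh2}, hence agree in $M_0$ and $M_1$. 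An equivariant Moser argument, applied to the convex combination $\omega_s=(1-s)\omega_0+s\omega_1$ on the slab and extended by the identity near the boundary, yields an $S^1$\--equivariant symplectomorphism on each slab, and these patch to a global one. Existence of a space realizing any graph satisfying the monotonicity condition follows by starting from one of the Audin--Ahara--Hattori models and performing equivariant symplectic blow\--ups at fixed points of the prescribed sizes.

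The main obstacle I anticipate is the matching across critical levels: the equivariant Moser argument runs smoothly on open slabs, but extending it across a singular fiber requires that the local normal form near that critical component be compatible with the reduced data on both sides. Handling this properly means showing that the germ of the Duistermaat--Heckman measure just above and just below a critical value is determined by the graph data at the corresponding vertex, so that the slab symplectomorphisms can be chosen to agree with the fixed local model on overlapping collars.
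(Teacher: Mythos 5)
The paper does not prove this theorem; it is quoted from Karshon's memoir \cite{K}, so there is no in-paper argument to compare yours against. Your outline (local normal forms at the fixed components, then a slab-by-slab Moser argument driven by Duistermaat--Heckman data) is a reasonable caricature of how such a uniqueness theorem could go, and several of its ingredients do appear in Karshon's work, but as written it has genuine gaps.

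First, the assertion that $S^1$ acts freely on the interior of a slab $\mu^{-1}([c,c'])$ is false in exactly the interesting cases: the interior of a $\Z_k$-sphere consists of \emph{regular} points of $\mu$ with stabilizer $F_k$, so the reduced spaces at regular levels are orbifolds with cone points recorded by the edge labels, and both the reduction and the Moser argument must be carried out in that category. Second, the proposed family $\omega_s=(1-s)\omega_0+s\omega_1$ on a four-dimensional slab need not consist of symplectic forms, since nondegeneracy of convex combinations fails in dimension at least four; the interpolation has to be routed through the two-dimensional reduced data (where convex combinations of area forms are harmless) together with the connection data of the Seifert fibration. Third, and most seriously, the step you yourself flag as ``the main obstacle'' --- crossing critical levels and patching the slab symplectomorphisms into a global equivariant one --- is precisely where the real content lies: knowing that the germ of the Duistermaat--Heckman measure near a critical value is determined by the graph does not by itself control the ambiguity in the slab symplectomorphisms, so there is no guarantee the pieces can be made to agree on overlapping collars. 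Karshon resolves the global problem by a different mechanism, essentially an induction on the number of interior fixed points via equivariant symplectic blow-down to a short list of minimal models, rather than a direct level-by-level gluing. Finally, identifying two spaces with the same graph with a \emph{common} Audin--Ahara--Hattori model already presupposes that the graph determines the equivariant diffeomorphism type, which is part of what must be proved. The proposal is a plausible plan with the right ingredients, but it is not yet a proof.
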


A generalization of this classification result  to higher dimensions has been recently obtained  by 
Karhson\--Tolman~\cite{kato}. The authors construct all possible 
Hamiltonian symplectic torus actions for which all the nonempty
reduced spaces are two dimensional (and not single points), the manifold is
connected and the momentum map is proper as a map to
a convex set.

The study of symplectic and Hamiltonian circle actions has been an active topic
of current research, see for instance McDuff\--Tolman~\cite{McTo2007}, where
they show many interesting properties, for instance that
 if the weights of a Hamiltonian $S^1$\--action on a compact symplectic symplectic manifold
 $(M,\omega)$ at the points at which the momentum map is a maximum 
 are sufficiently small, then the circle represents a nonzero element of 
 $\pi_1({\rm Ham}(M,\omega))$, where ${\rm Ham}(M,\omega)$ is the group of Hamiltonian
 symplectomorphisms of  $(M,\omega)$.
 
  In \cite{GoSa2015} Godinho and Sabatini  
 construct an algorithm to obtain linear relations among the weights at the fixed points which
 under certain conditions determines a family of vector spaces which contain the admissible 
 lattices of weights.
 
  Concerning symplectic $S^1$\--actions (not necessarily Hamiltonian),  see 
 Godinho's articles~\cite{Go1,Go2}.

\subsection{Hamiltonian $(S^1 \times \mathbb{R)}$\--actions and classification of symplectic\--semitoric manifolds}

Semitoric systems, also called symplectic semitoric manifolds, are a rich
 class of integrable systems which, in the case of compact phase space,
 take place on the Hamiltonian $S^1$\--spaces of the previous section.
  
  Let $(M,\omega)$ be a symplectic $4$\--dimensional manifold.
The \emph{Poisson brackets} of two real valued smooth functions $f$ and $g$ on $M$ 
are defined by
$
\{ f,\, g\} :=\omega (\mathcal{H}_f,\,\mathcal{H}_g).
$

\begin{definition}
An \emph{integrable system with two degrees of freedom} is a smooth map
$F=(f_1,f_2) \colon M \to \mathbb{R}^2$ such that $\{f_1,f_2\}=0$ and the Hamiltonian vector fields
$\mathcal{H}_{f_1},\mathcal{H}_{f_2}$ are linearly independent almost everywhere.
\end{definition}

\medskip

A theorem of Eliasson characterizes  the so called ``non\--degenerate"
 (the term ``non\--degenerate'' is a generalization of 
 ``Morse non\--degenerate" which is more involved to define~\cite{pevn11} here). The following
 is a particular instance of Eliasson's general theorem, of interest to us here.

\begin{theorem}[Eliasson~\cite{eliasson,eliasson-these}] \label{eli}
Let $F:=(f_1,f_2) \colon (M,\omega) \to \mathbb{R}^2$ be an integrable system with two degrees of
freedom all of the singularities
of which are non\--degenerate, and with no hyperbolic blocks. There exist local symplectic coordinates $(x_1,x_2,\xi_1,\xi_2)$
 about every non\--degenerate critical point $m$, in which $m=(0,0,0,0)$, and
$
 (F-F(m))\circ \varphi = g \circ (q_1,q_2),
 $ 
 where $\varphi=(x_1,x_2,\, \xi_1,\xi_2)^{-1}$
 and $g$ is a diffeomorphism from a small neighborhood of the origin
 in $\mathbb{R}^4$ into another such neighborhood, such that $g(0,0,0,0)=(0,0,0,0)$ and 
$(q_1,\,q_2)$ are, depending
on the rank of the critical point, as follows. If $m$ is a critical point of $F$ of rank zero, then
  $q_j$ is one of
  \begin{itemize}
  \item[{\rm (i)}] $q_1 = (x_1^2 + \xi_1^2)/2$ and $q_2 = (x_2^2 +
    \xi_2^2)/2$.
  \item[{\rm (ii)}] $q_1=x_1\xi_2 - x_2\xi_1$ and $q_2 =x_1\xi_1
    +x_2\xi_2$;   \end{itemize}
If $m$ is a critical point of $F$ of rank one, then 
 $q_1 = (x_1^2 + \xi_1^2)/2$ and $q_2 = \xi_2$.
\end{theorem}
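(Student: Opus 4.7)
The plan is to reduce the statement to a linear normal form at the tangent space, then upgrade to a smooth symplectic equivalence by using the symmetries generated by the quadratic model.

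First I would linearize. The Poisson commutation $\{f_1, f_2\} = 0$ implies that the Hessians $Q_i := \mathrm{d}^2(f_i - f_i(m))$ are commuting quadratic forms on the symplectic vector space $(\mathrm{T}_m M,\, \omega_m)$, spanning a two-dimensional abelian subalgebra of $\mathfrak{sp}(4, \R)$. The non-degeneracy hypothesis says precisely that this subalgebra is Cartan. Williamson's classification of Cartan subalgebras of $\mathfrak{sp}(4, \R)$ decomposes any such subalgebra into a sum of elementary blocks of three types (elliptic, hyperbolic, focus-focus) together with regular directions in the rank-one case. Discarding the hyperbolic type as assumed, the only possibilities are elliptic-elliptic (rank zero), focus-focus (rank zero), and elliptic-regular (rank one); a linear symplectomorphism of $\mathrm{T}_m M$ then carries $(Q_1, Q_2)$ to the model pair $(q_1, q_2)$ in each of the three cases.

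Second I would treat the rank-one case directly. Since $\mathrm{d}f_2(m) \neq 0$, Darboux-Carath\'eodory gives local symplectic coordinates in which $f_2 - f_2(m) = \xi_2$. The equation $\{f_1, \xi_2\} = 0$ then forces $\partial f_1/\partial x_2 = 0$, so $f_1$ restricts to a smooth $\xi_2$-family of functions on the $(x_1, \xi_1)$-plane with an elliptic Morse critical point at the origin. Averaging over the local $S^1$-action generated by the positive-definite Hessian (a parametric Morse lemma in the symplectic category) produces a symplectomorphism transforming $f_1$ into a smooth function of $q_1 = (x_1^2 + \xi_1^2)/2$ and $q_2 = \xi_2$, yielding the required diffeomorphism $g$.

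Third I would treat the rank-zero cases by exploiting the symmetries of the model. In the elliptic-elliptic case (i), $(q_1, q_2)$ generates a genuine Hamiltonian $\T^2$-action near $m$; averaging $(f_1, f_2)$ over this action yields a $\T^2$-invariant deformation of $(q_1, q_2)$, and the formal Birkhoff normal form with respect to this $\T^2$-symmetry, combined with a Moser-type homotopy argument and Borel's lemma to realize the formal series smoothly, straightens the pair into functions of $(q_1, q_2)$ up to the diffeomorphism $g$. The focus-focus case (ii) follows a parallel outline, using the compact $S^1$-action generated by $q_1 = x_1\xi_2 - x_2\xi_1$ (rotation in the two complex planes $x_1 + \mathrm{i} x_2$, $\xi_1 + \mathrm{i}\xi_2$) together with the non-compact $\R$-flow of $q_2 = x_1\xi_1 + x_2\xi_2$.

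The main obstacle is case (ii). The averaging step only removes the part of the perturbation that is non-invariant under the $S^1$-action; the non-compact direction spanned by $q_2$ offers no compact symmetry to average against, so the residual flat terms must be absorbed by a direct analysis of the vector fields tangent to the level sets of the normal form. Eliasson's resolution of this step is the delicate ingredient, and is considerably more involved than the essentially standard averaging arguments that suffice in cases (i) and (iii).
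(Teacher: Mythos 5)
First, a point of comparison: the paper does not prove this theorem at all. It is stated with attribution to Eliasson's thesis and article, and the surrounding text treats it as imported background (with the analytic precursors credited to R{\"u}{\ss}mann and Vey). So there is no in-paper argument to measure your proposal against; it can only be judged against the proof in the literature.

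Judged that way, your outline reproduces the accepted architecture faithfully: Williamson's classification of Cartan subalgebras of $\mathfrak{sp}(4,\mathbb{R})$ for the linear model, Darboux--Carath\'eodory plus a parametric one-degree-of-freedom elliptic lemma for the rank-one case, and torus symmetry plus Birkhoff normalization for the elliptic--elliptic case. But as a proof it has a genuine gap, and you have in fact located it yourself: the focus-focus case is not argued, only described as ``delicate'' and deferred to ``Eliasson's resolution.'' That is precisely where the known difficulty lives --- Eliasson's original treatment of the focus-focus block was incomplete, and a complete smooth proof was only supplied much later (V\~u Ng\d oc--Wacheux). The obstruction is exactly the one you name: after averaging over the $S^1$-action generated by $q_1$, there remains a flat, $S^1$-invariant error along the non-compact direction of $q_2$, and no compact group is available to average it away; one must instead solve a cohomological equation along the non-compact flow with uniform estimates near the singular fiber, which is the real analytic content of case (ii). A secondary soft spot is the elliptic--elliptic case: ``averaging $(f_1,f_2)$ over this action'' is not quite the right operation, since the $f_i$ are not a priori invariant under the model torus action and averaging them would change the system. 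The correct move is either to show that the joint flow of $F$ itself generates a $\mathbb{T}^2$-action near $m$ and invoke an equivariant Darboux theorem, or to normalize to all orders and then prove that the flat remainder can be removed --- and that removal, not the formal Birkhoff step, is where the theorem is actually proved.
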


The assumption of not having hyperbolic blocks is simply to reduce the complexity of the
statement of the theorem, but is not really needed to understand the discussion which
follows. 

\begin{remark}
The analytic case of Theorem~\ref{eli} is due to
R{\"u}{\ss}mann~\cite{russmann} for two degrees of freedom systems
and to Vey~\cite{vey} in any dimension.
\end{remark}

\begin{definition}
A \emph{semitoric system} $F:=(f_1,f_2) \colon M \to \mathbb{R}^2$ on a connected symplectic
$4$\--manifold $(M,\, \omega)$ is an integrable system with two degrees of freedom such that $f_1$  is the
  momentum map of a Hamiltonian $S^1$\--action,  $f_1$ is a proper map, and 
   the singularities of  $F$ are non\--degenerate, without hyperbolic blocks, and hence they
   are of the form given in Theorem~\ref{eli}.
 \end{definition}

\begin{remark}
In the definition above, $f_1$ gives rise to a Hamiltonian $S^1$\--action on $M$,
and $f_2$ gives rise to a Hamiltonian $\mathbb{R}$\--action on $M$; conceptually,
their flows, one after the other, produce a Hamiltonian $(S^1 \times \mathbb{R})$\--action;
details of the precise relation between $(S^1\times \mathbb{R})$\--actions and semitoric systems 
are spelled out in~\cite[Section~3]{FiPaPe15}.
\end{remark}

\begin{remark}
If $M$ is compact then $(M,\omega)$ endowed with the
  Hamiltonian $S^1$\--action with momentum map $f_1$ is  a Hamiltonian $S^1$\--space.  
 \end{remark}
 
 \begin{definition}
   Suppose that
  $F_1=(f^1_1,f^1_2) \colon (M_1,\omega_1) \to \R^2$ and 
   $F_2=(f^2_1,f^2_2) \colon (M_2,\omega_2) \to \R^2$ are semitoric systems. We say that they
  are
  \emph{isomorphic} if there exists a symplectomorphism $\varphi \colon
  (M_1,\omega_1) \to (M_2,\omega_2),$ and a smooth map $\phi \colon F_1(M_1)\to \mathbb{R}$ with
  $\partial_2 \phi \neq 0$, such that 
 $
\varphi^*f^2_1=  f^1_1$ and $\varphi^*f_2^2 =
  \phi(f^1_1,f^1_2).$
\end{definition}

\medskip

Semitoric systems are classified under the assumption that each singular fiber
contains at most one singular point of type (1.ii); these points are called
\emph{focus\--focus} (or \emph{nodal}, in algebraic geometry). The singular
fiber containing a focus\--focus point is a $2$\--torus pinched precisely
at the focus\--focus point (i.e. topologically a $2$\--sphere with the north
and south poles identified). Semitoric systems satisfying this condition are
called \emph{simple}.
 
\begin{theorem} [\cite{PeVN09,PeVN11}] \label{InAc}
 Simple semitoric systems $(M,\, \omega,\,  F:=(f_1,f_2))$ are determined, up to isomorphisms, by 
a convex polygon endowed with a collection of interior points, each of which is 
labelled by a tuple
$(k \in \mathbb{Z}, \, \sum^{\infty}_{i,j=1} a_{ij}X^iY^j).$
Here $\Delta$ is obtained from $F(M)$ by appropriately unfolding the singular affine structure induced
by $F$, $k$ encodes how twisted the singular Lagrangian fibration $F$ is between consecutive focus\--focus points  arranged according to the first component of their image in $\mathbb{R}^2$, 
and the Taylor series
 $\sum_{i,j=1}^{\infty} a_{ij}X^iY^j$ encodes the singular dynamics of the vector fields 
 $\mathcal{H}_{f_1},\mathcal{H}_{f_2}$. Conversely, given a polygon\footnote{this is really not any polygon, but a polygon
 of so called semitoric type, which generalize the notion of Delzant polygon (which was applicable
 to toric systems) to this more general context.} with interior points $p_1,\ldots,p_n$, and for each $p_{\ell}$ a label
$
(k \in \mathbb{Z}, \,\,\sum_{i,j=1}^{\infty} a_{ij}X^iY^j),
$ 
one can construct  $(M,\, \omega)$ and a semitoric system $F \colon M \to \mathbb{R}^2$ having this data as  invariants.
\end{theorem}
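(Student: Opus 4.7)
The plan is to follow the classical three-step pattern for classification by invariants: first, extract invariants from a system and show they are well-defined up to the stated equivalence; second, prove completeness, i.e.\ two systems with equal invariants are isomorphic; third, prove realization, i.e.\ any admissible tuple of invariants arises. The essential local tools are Duistermaat's global action--angle theorem for the regular Lagrangian fibration, Eliasson's normal form (Theorem~\ref{eli}) at critical points of elliptic type, and Vu Ngoc's earlier symplectic analysis of focus--focus singularities.

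For the first step I would start from a simple semitoric $F=(f_1,f_2)$ on $(M,\omega)$ and exploit that the set of regular values of $F$ carries a canonical integral affine structure from the Arnold--Liouville--Mineur theorem. The image $F(M)$ has two kinds of singular locus: a boundary coming from rank-one critical points, where $f_1$ generates an $S^1$-action locally modelled on a toric system (producing Delzant-type edges), and isolated focus--focus values in the interior, around which the affine structure has nontrivial monodromy. Cutting vertically upward or downward from each focus--focus value and developing the resulting simply connected region into $\R^2$ yields a convex polygon $\Delta$; the ambiguity in the choice of cut directions and of the initial affine chart is precisely the equivalence relation under which $\Delta$ is an invariant. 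The twisting index $k\in\Z$ at each focus--focus point is defined as the integer discrepancy between the preferred momentum map supplied by the Eliasson--Vu Ngoc local normal form and the one induced by the polygon, while the Taylor series $\sum a_{ij}X^iY^j$ is Vu Ngoc's formal symplectic invariant of the focus--focus germ, recording how the true action integrals around the pinched torus deviate from those of the linear focus--focus model.

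For the second step, suppose two simple semitoric systems share the same labelled polygon. Identical polygons produce identical integral affine structures on the regular bases, so Duistermaat's theorem on global action--angle coordinates gives a symplectic isomorphism of the regular Lagrangian fibrations; the twisting indices ensure that the action coordinates match across the cut lines. Around each focus--focus fiber, equality of the Taylor invariants forces a symplectic equivalence of the local singular fibrations by Vu Ngoc's uniqueness result for focus--focus germs, and around boundary critical points Eliasson's normal form together with an equivariant Darboux argument provides equivariant local isomorphisms. I would then patch these local isomorphisms via a Moser-type deformation argument over overlapping regions to obtain the required global semitoric isomorphism in the sense of the definition.

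For the third step I would reverse the construction: over the interior of the polygon minus the marked points, build a toric skeleton by a Delzant-style symplectic reduction; at each marked point plant a focus--focus singular fibration realizing the prescribed Taylor series using Vu Ngoc's realization theorem; and glue these pieces along the cut loci with transition monodromy determined by the twisting indices. The main obstacles I anticipate are, first, the deep analytic fact that the Taylor series really is a complete symplectic invariant of a focus--focus germ, which is the key input from Vu Ngoc's prior work and is where the hardest local analysis is hidden; and second, the global gluing when several focus--focus points are present simultaneously, where the monodromy, twisting indices, polygon equivalence, and compatibility across all cuts must be coordinated consistently---that combinatorial and symplectic bookkeeping is where most of the remaining technical difficulty concentrates.
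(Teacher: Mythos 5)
The paper itself gives no proof of this theorem: it is stated as a citation of \cite{PeVN09,PeVN11} and the text moves on immediately, so there is nothing internal to compare your argument against. Your three-step outline (well-definedness of the invariants, uniqueness via Duistermaat's action--angle theorem plus Eliasson's normal form plus the focus--focus Taylor-series germ invariant, and existence via gluing a toric skeleton to local focus--focus models) is a faithful summary of the strategy actually carried out in those two references, with uniqueness in \cite{PeVN09} and the realization/gluing construction in \cite{PeVN11}. The one point worth making explicit is that the full classification in the cited papers uses a \emph{height} (volume) invariant recording the symplectic volume of $\{f_1 < f_1(m)\}$ at each focus--focus point $m$; in the formulation given here that datum is absorbed into the positions of the marked interior points $p_1,\ldots,p_n$ of the polygon, so your sketch is consistent with the statement, but if you were to write the proof out you would need to verify that the placement of the marked points really does determine, and is determined by, that volume invariant under the cut-and-unfold equivalence.
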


In \cite{Pa} Palmer defined the moduli space of semitoric systems, which is an incomplete metric space,
and constructed its completion. In \cite{KaPaPe} the connectivity properties of this space were studied
using ${\rm SL}_2(\mathbb{Z})$ equations.

Four\--dimensional symplectic\--toric manifolds are a very particular case
of compact semitoric systems (in which the manifold is closed, and the only
invariant is the convex polygon). Every semitoric system takes place
on a Hamiltonian $S^1$\--space, and the relation has been made
completely explicitly recently.
We call \emph{Karshon graph} the labelled directed graph in Theorem~\ref{karshon:thm}.

\begin{theorem}[Hohloch\--Sabatini\--Sepe \cite{HoSaSe}]
Let  $F:=(f_1,f_2) \colon (M,\omega) \to \mathbb{R}^2$ be a simple semitoric system on a compact manifold 
with $m_f$ focus\--focus critical
points and underlying Hamiltonian $S^1$\--space $(M, \omega, f_1)$ with momentum map $f_1$. Then the 
associated convex polygon in Theorem~\ref{InAc} and $m_f$ determines
the Karshon graph, thus classifying $(M, \omega, S^1)$ up to $f_1$\--equivariant
symplectomorphisms.
\end{theorem}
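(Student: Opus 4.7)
The plan is to construct the Karshon graph of $(M, \omega, f_1)$ directly from the semitoric polygon $\Delta$ and the focus-focus count $m_f$, then invoke Theorem~\ref{karshon:thm}. Once the correct graph is produced, Karshon's classification gives uniqueness of the Hamiltonian $S^1$-space up to $f_1$-equivariant symplectomorphism, which is exactly the claim.

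First I would classify the $S^1$-fixed set via Eliasson's normal form (Theorem~\ref{eli}). Because $f_1$ generates the $S^1$-action, the fixed set coincides with the critical set of $f_1$. At an elliptic-elliptic point (rank $0$, type (i)) and at a focus-focus point (rank $0$, type (ii)) this critical set is the isolated point itself, so each contributes an isolated vertex to the Karshon graph. At a rank-$1$ critical point with normal form $q_1 = (x_1^2+\xi_1^2)/2$, $q_2 = \xi_2$, the critical set of $q_1$ is the $2$-dimensional submanifold $\{x_1 = \xi_1 = 0\}$, which globally assembles into a fixed surface. In the polygon: elliptic-elliptic points are the non-horizontal vertices of $\Delta$, focus-focus points are the $m_f$ marked interior points, and each fixed surface corresponds to a horizontal boundary edge of $\Delta$ (an edge on which $f_1$ is locally extremal).

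Next I would read off the Karshon labels. The height $f_1(\Sigma)$ of a fixed component is the first coordinate of the corresponding vertex, marked point, or horizontal edge. The symplectic area of a fixed surface equals the length of the associated horizontal edge, via the Duistermaat--Heckman formula (Section~\ref{hans:sec}) applied to the $S^1$-reduction across that edge. Moreover each fixed surface is a sphere: it is swept out by the rank-$1$ critical stratum of $F$ associated to the horizontal edge, and the map $f_2$ restricts to it with exactly two critical points (the elliptic-elliptic endpoints of the edge), so the genus is $0$. Finally, an $F_k$-sphere with $k > 1$ in Karshon's graph corresponds to a connected invariant $S^2 \subset M$ on which the $S^1$-stabilizer is the subgroup of order $k$; these come from the non-horizontal edges of $\Delta$ incident to elliptic-elliptic vertices, and $k$ is extracted from the primitive integer vector along such an edge, exactly as one does for the $S^1$-weights of a Delzant polygon.

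The main obstacle is the independence from the interior focus-focus data, namely each twisting integer $k_i \in \Z$ and Taylor series $\sum a_{ij} X^i Y^j$. These invariants encode the dynamics of $\mathcal{H}_{f_2}$ near the pinched torus fiber and the global affine monodromy of $F$, but they must be shown to be invisible to the $S^1$-equivariant topology. The key point is that at any focus-focus point the local model $q_1 = x_1\xi_2 - x_2\xi_1$ shows that the $S^1$-isotropy weights on the tangent space are $(1,-1)$, both coprime to every $k > 1$, so no $F_k$-edge emanates from a focus-focus vertex; consequently the Karshon contribution of each focus-focus point is only an isolated vertex whose single label is the first coordinate of the marked point. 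In the same vein, the $\mathrm{SL}_2(\Z)$-ambiguity in choosing a polygon representative (piecewise-linear cuts through focus-focus points) preserves horizontal edge lengths, first-coordinate projections of vertices and marked points, and primitive slopes of non-horizontal edges up to the $S^1$-invariant direction — precisely the data feeding the Karshon graph — so the extracted graph is well defined on the equivalence class of $\Delta$, completing the reconstruction.
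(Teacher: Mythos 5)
The paper itself offers no proof of this statement (it is quoted from Hohloch--Sabatini--Sepe), so your argument has to stand on its own. Its overall strategy is the right one and is essentially the published one: identify the $S^1$-fixed set from Eliasson's normal forms (Theorem~\ref{eli}), observe that a focus--focus point is an isolated fixed point with isotropy weights $(1,-1)$ and therefore contributes only a vertex labelled by its $f_1$-value --- which is why the twisting integers and Taylor series of Theorem~\ref{InAc} are invisible to the $S^1$-space --- read heights, areas and isotropy orders off the polygon, and invoke Theorem~\ref{karshon:thm}. The identification of fixed surfaces with the $f_1$-extremal (vertical) edges, the area computation via Duistermaat--Heckman, and the extraction of $k$ from the first component of a primitive edge vector are all sound.

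The gap is in the dictionary ``vertices of $\Delta$ $\leftrightarrow$ elliptic--elliptic points'' and in the claimed invariance under change of polygon representative. The homeomorphism straightening $F(M)$ onto $\Delta$ is affine only off the vertical cuts through the focus--focus values; across each cut the two sides differ by the monodromy shear $T^{\pm 1}$, $T=\left(\begin{smallmatrix}1&0\\1&1\end{smallmatrix}\right)$. Where a cut meets the top or bottom boundary, this shear generically \emph{creates} a corner of $\Delta$ whose preimage is a regular orbit rather than a fixed point (a fake corner), and it can \emph{flatten} a genuine elliptic--elliptic corner (a hidden corner). Consequently the set of vertices of $\Delta$, and the decomposition of $\partial\Delta$ into edges, are not preserved by the $m_f$ shears generating the polygon ambiguity --- contrary to your assertion that ``first-coordinate projections of vertices'' are invariant. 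As written, your recipe inserts up to $m_f$ spurious isolated fixed points into the graph and splits a single $\Z_k$-sphere into two whenever a fake corner lands on its image. To close the gap you must classify the corners of a chosen representative lying on the cut lines into Delzant, fake and hidden types via the lattice geometry of the two adjacent primitive edge vectors (testing the Delzant determinant condition before and after undoing the shear), recover from this the true elliptic--elliptic points and the true edge decomposition, and only then verify that the resulting graph is independent of the representative. That corner analysis is the main technical content of the cited proof and is absent from yours.
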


There has been recent work generalizing the convex polygon in Theorem~\ref{InAc} to higher
dimensional semitoric systems by Wacheux~\cite{wa1}. 

The Fomenko school has powerful and far reaching methods to study the topology of
singularities of integrable systems~\cite{bolsinov-fomenko-book}.

\section{Properties of symplectic actions} \label{pss}

\subsection{Fundamental form of a symplectic action} \label{ffg}
 
Let $T$ be a torus with Lie algebra $\mathfrak{t}$. Suppose that $T$ acts symplectically on a connected symplectic manifold $(M,\omega)$.

\begin{prop}
There is a unique antisymmetric bilinear form 
$\omega^{\mathfrak{t}}$ on $\mathfrak{t}$, which we call the ``fundamental form", such that 
\begin{eqnarray} 
\omega^{\mathfrak{t}}(X, Y)=\omega _x(X_M(x),\, Y_M(x)) \label{omegax}
\end{eqnarray}
for every $X,\, Y\in\mathfrak{t}$ and every $x\in M$.  
\label{constlem}
\end{prop}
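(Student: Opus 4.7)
The plan is to define $f:M\to\R$ by $f(x):=\omega_x(X_M(x),Y_M(x))$ and show that $f$ is constant on $M$; once this is established, the form $\omega^{\mathfrak{t}}(X,Y):=f(x)$ is well defined, and bilinearity and antisymmetry follow directly from the bilinearity and antisymmetry of $\omega_x$ together with the linearity of $X\mapsto X_M(x)$ from $\mathfrak{t}$ to ${\rm T}_xM$. Uniqueness is immediate from the defining equation (\ref{omegax}).

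The crux is therefore showing that $f$ is independent of $x$. Since $M$ is connected, it suffices to prove ${\rm d}f=0$. I would rewrite $f=\iota_{Y_M}(\iota_{X_M}\omega)$, a $0$\--form, and compute its differential via the Cartan homotopy formula (\ref{Lv}) applied to the flow of $Y_M$:
\begin{equation*}
{\rm L}_{Y_M}(\iota_{X_M}\omega)={\rm d}\,\iota_{Y_M}(\iota_{X_M}\omega)+\iota_{Y_M}\,{\rm d}(\iota_{X_M}\omega).
\end{equation*}
The second term on the right vanishes because the $T$\--action is symplectic: by (\ref{LXsigma}) the $1$\--form $\iota_{X_M}\omega$ is closed. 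Hence ${\rm d}f={\rm L}_{Y_M}(\iota_{X_M}\omega)$.

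Next I would use the general identity ${\rm L}_{Y_M}\iota_{X_M}=\iota_{[Y_M,X_M]}+\iota_{X_M}{\rm L}_{Y_M}$ applied to $\omega$. The first summand vanishes because $T$ is abelian, so by (\ref{xmym}) we have $[X_M,Y_M]=0$; the second summand vanishes because ${\rm L}_{Y_M}\omega=0$ by (\ref{LXsigma}), the $T$\--action being symplectic. Therefore ${\rm L}_{Y_M}(\iota_{X_M}\omega)=0$, so ${\rm d}f=0$ on the connected manifold $M$, and $f$ is constant. I do not anticipate serious obstacles: the only nontrivial input is the interplay between the closedness of $\iota_{X_M}\omega$ (symplectic action) and the vanishing of $[X_M,Y_M]$ (torus commutativity), both of which have been recorded earlier in the paper.
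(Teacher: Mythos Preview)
Your proof is correct and follows essentially the same approach as the paper's own argument: both compute ${\rm d}\big(\omega(X_M,Y_M)\big)$ using Cartan's homotopy formula together with the identity ${\rm L}_{\mathcal{Y}}\iota_{\mathcal{X}}=\iota_{[\mathcal{Y},\mathcal{X}]}+\iota_{\mathcal{X}}{\rm L}_{\mathcal{Y}}$, and then invoke ${\rm L}_{X_M}\omega=0$ (symplectic action) and $[X_M,Y_M]=0$ (commutativity of $T$) to conclude that this differential vanishes on the connected manifold $M$. The only cosmetic difference is that the paper first records the general identity $\iota_{[\mathcal{X},\mathcal{Y}]}\omega=-{\rm d}(\omega(\mathcal{X},\mathcal{Y}))$ for any two symplectic vector fields $\mathcal{X},\mathcal{Y}$ and then specializes, whereas you specialize to $X_M,Y_M$ from the outset.
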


\begin{proof}
Let $\mathcal{X}$ and $\mathcal{Y}$ be smooth 
vector fields on $M$ satisfying ${\rm L}_{\mathcal{X}}\omega =0$ and ${\rm L}_{\mathcal{Y}}\omega =0$. Then by the 
homotopy identity (\ref{Lv}) 
\begin{eqnarray} 
{\rm i}_{[\mathcal{X},\, \mathcal{Y}]}\omega ={\rm L}_{\mathcal{X}}({\rm i}_{\mathcal{Y}}\omega )
={\rm i}_{\mathcal{X}}({\rm d}({\rm i}_{\mathcal{Y}}\,\omega))+{\rm d}({\rm i}_{\mathcal{X}}({\rm i}_{\mathcal{Y}}\omega))=\, -{\rm d}(\omega (\mathcal{X},\, \mathcal{Y})). \label{XY}
\end{eqnarray} 
Here we have used that ${\rm L}_{\mathcal{X}}\omega =0$ in the first equality, 
the homotopy formula for the Lie derivative in the second equality, 
and the closedness of $\omega$, the homotopy identity and 
${\rm L}_v\omega=0$ in the third equality.  

Now take $\mathcal{X}=X_M,\,\mathcal{Y}=Y_M$ where $X,\, Y\in\mathfrak{t}$. Then from 
(\ref{xmym}) and (\ref{XY}) we have
$\mathcal{H}_{\omega(X_M,\, Y_M)}=0$ 
 and hence
${\rm d}(\omega(X_M,\, Y_M))=0,$ and the connectedness of $M$ implies
that  $x\mapsto\omega_x(X_M(x),\, Y_M(x))$ is constant. 
\end{proof}

\subsection{Benoist\--Ortega\--Ratiu symplectic normal form}
\label{sympltubes}

Let $(M,\,\omega)$ be a symplectic manifold endowed with a proper symplectic
action of a Lie group $G$.

Let $x \in M$, let $H:=G_x$,  let $\mathfrak{l}$ be the kernel of the fundamental
form $\omega^{\mathfrak{t}}$ (Proposition~\ref{constlem}),  let $\mathfrak{g}_M(x):={\rm T}_x(G\cdot x)$, and let 
$\alpha_x$ is as in Proposition~\ref{312}. In addition, let
$
\omega^{G/H}$ be the $G$\--invariant closed $2$\--form $(\alpha _x)^*\omega$
on $G/H$,  and 
let $\omega^W$ be the symplectic form  on 
$
W:=\mathfrak{g}_M(x)^{\omega _x}/(\mathfrak{g}_M(x)^{\omega _x}\cap\mathfrak{g}_M(x)),
$
defined as the restriction to $\mathfrak{g}_M(x)^{\omega _x}$ of 
$\omega _x$. 

The map  
$X+\mathfrak{h}\mapsto X_M(x)$ is a linear isomorphism from 
$\got{l}/\mathfrak{h}$ to $\mathfrak{g}_M(x)^{\omega _x}\cap\mathfrak{g}_M(x)$. 
The linearized action of $H$ on ${\rm T}_x M$ is symplectic and leaves 
$\mathfrak{g}_M(x)\simeq \mathfrak{g}/\mathfrak{h}$ invariant, acting on it 
via the adjoint representation. It also 
leaves $\mathfrak{g}_M(x)^{\omega_x}$ invariant and induces 
an action of $H$ on 
$(W,\,\omega^W)$ by symplectic linear transformations.  Let
$
E:=(\got{l}/\mathfrak{h})^*\times W,
$
on which $h\in H$ acts by sending $(\lambda ,\, w)$ 
to $((({\rm Ad}(h))^*)^{-1}(\lambda ),\, h\cdot w)$. 
Choose ${\rm Ad}H$\--invariant linear complements 
$\got{k}$ and $\got{c}$ of $\mathfrak{h}$ and $\got{l}$ in 
$\mathfrak{g}$, respectively.  
Let $X\mapsto X_{\got{l}}:\mathfrak{g}\to\got{l}$ and 
$X\mapsto X_{\mathfrak{h}} :\mathfrak{g}\to\mathfrak{h}$ denote the 
linear projection from $\mathfrak{g}$ onto $\got{l}$ and $\mathfrak{h}$ 
with kernel equal to $\got{c}$ and $\got{k}$, respectively.  
These projections are ${\rm Ad}H$\--equivariant.  
Define the smooth one\--form $\eta ^{\#}$ 
on $G\times E$ by 
$\eta ^{\#}_{(g,\, (\lambda ,\, w))}(({\rm d}_1{\rm L}_g)(X),\, 
(\delta\lambda ,\,\delta w))
:=\lambda (X_{\got{l}})+\frac{1}{2}\omega^W(w,\,\delta w+X_{\mathfrak{h}}\cdot w) \nonumber
$
for all $g\in G$, $\lambda\in (\got{l}/\mathfrak{h})^*$, $w\in W$, 
and all $X\in\mathfrak{g}$, $\delta\lambda\in 
(\got{l}/\mathfrak{h})^*$,  
$\delta w\in W$. 

Let $G\times _HE$ denote the orbit space of $G\times E$ 
for the proper and free action of $H$ on $G\times E$, 
where $h\in H$ acts on $G\times E$ by sending 
$(g,\, e)$ to $(g\, h^{-1},\, h\cdot e)$. 
The action of $G$ on $G\times _HE$ is induced 
by the translational action of
of $G$ on $G\times E$. 

Let $\pi :G\times _HE\to G/H$ 
be induced by  
$(g,\, e)\mapsto g:G\times E\to G$. Because $H$ acts on $E$ by means 
of linear transformations, this projection 
exhibits $G\times _HE$ as a $G$\--homogeneous vector bundle 
over the homogeneous space $G/H$, which fiber $E$ and 
structure group $H$.

\begin{prop}
If $\pi _H:G\times E\to G\times _HE$ denotes $H$\--orbit mapping, 
then there is a unique smooth one\--form $\eta$ on 
$G\times_HE$, such that $\eta ^{\#}={\pi _H}^*\,\eta$. 
\end{prop}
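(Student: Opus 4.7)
The plan is the standard descent criterion: to exhibit $\eta$ on $G\times_HE$ with $\pi_H^{*}\eta=\eta^{\#}$, I need to verify that $\eta^{\#}$ is \emph{$H$-basic}, i.e.\ $H$-invariant and horizontal with respect to the free and proper $H$-action on $G\times E$. Uniqueness is automatic, since $\pi_H$ is a surjective submersion and hence $\pi_H^{*}$ is injective on forms. Smoothness of the resulting $\eta$ is also automatic from smoothness of $\eta^{\#}$ and the local triviality of the principal $H$-bundle $\pi_H$.

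First I would check horizontality. The key preliminary observation is that $\mathfrak{h}\subseteq\mathfrak{l}$: the stabilizer algebra annihilates $\omega^{\mathfrak{t}}$ because $Y\in\mathfrak{h}$ implies $Y_M(x)=0$, hence in particular $Y_{\mathfrak{l}}=Y$ and $Y_{\mathfrak{h}}=Y$. For $Y\in\mathfrak{h}$, the fundamental vector field at $(g,(\lambda,w))$ is
\[
Z_Y=\bigl((\mathrm{d}_1 L_g)(-Y),\,(-(\mathrm{ad}\,Y)^{*}\lambda,\,Y\cdot w)\bigr),
\]
coming from $h\cdot(g,e)=(gh^{-1},h\cdot e)$. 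Plugging into the defining formula, the first term is $\lambda(-Y)$, which vanishes because $\lambda\in(\mathfrak{l}/\mathfrak{h})^{*}$ annihilates $\mathfrak{h}$; the second is $\tfrac{1}{2}\omega^W(w,\,Y\cdot w+(-Y)_{\mathfrak{h}}\cdot w)=\tfrac{1}{2}\omega^W(w,\,Y\cdot w-Y\cdot w)=0$.

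Next I would verify $H$-invariance. The action of $h\in H$ sends $(g,(\lambda,w))\mapsto(gh^{-1},h\cdot(\lambda,w))$, and using $g\exp(tX)h^{-1}=gh^{-1}\exp(t\,\mathrm{Ad}(h)X)$ the differential sends a tangent vector $((\mathrm{d}_1 L_g)X,(\delta\lambda,\delta w))$ to $((\mathrm{d}_1 L_{gh^{-1}})(\mathrm{Ad}(h)X),\,(h\cdot\delta\lambda,h\cdot\delta w))$. Substituting into $\eta^{\#}$, the first summand becomes $(h\cdot\lambda)((\mathrm{Ad}(h)X)_{\mathfrak{l}})$, which equals $\lambda(\mathrm{Ad}(h^{-1})\,\mathrm{Ad}(h)X_{\mathfrak{l}})=\lambda(X_{\mathfrak{l}})$ by $\mathrm{Ad}\,H$-equivariance of the projection onto $\mathfrak{l}$ together with the definition $h\cdot\lambda=(\mathrm{Ad}(h)^{*})^{-1}\lambda$. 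For the second summand, $\mathrm{Ad}\,H$-equivariance of the projection onto $\mathfrak{h}$, the intertwining identity $\mathrm{Ad}(h)(Y)\cdot(h\cdot w)=h\cdot(Y\cdot w)$ for the linear $H$-representation on $W$, and $H$-invariance of $\omega^W$ combine to give $\tfrac{1}{2}\omega^W(h\cdot w,\,h\cdot(\delta w+X_{\mathfrak{h}}\cdot w))=\tfrac{1}{2}\omega^W(w,\,\delta w+X_{\mathfrak{h}}\cdot w)$.

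Together the two computations show $\eta^{\#}$ is $H$-basic, and descent furnishes the unique smooth $\eta$. The only conceptual point that needs care is the containment $\mathfrak{h}\subseteq\mathfrak{l}$, which is what makes the projection $X\mapsto X_{\mathfrak{l}}$ interact correctly with the pairing $\lambda\in(\mathfrak{l}/\mathfrak{h})^{*}$; beyond that, the main obstacle is purely notational bookkeeping of the $\mathrm{Ad}$-equivariance and the sign conventions coming from the right $H$-action $(g,e)\mapsto(gh^{-1},h\cdot e)$.
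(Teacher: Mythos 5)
Your proof is correct: the paper states this proposition without proof (it is imported from the Benoist and Ortega--Ratiu construction), and the descent argument you give --- checking that $\eta^{\#}$ is horizontal and $H$-invariant, with uniqueness and smoothness following from $\pi_H$ being a surjective submersion exhibiting a principal $H$-bundle --- is exactly the standard argument the paper leaves implicit. Your identification of $\mathfrak{h}\subseteq\mathfrak{l}$ as the one substantive point, justified via $Y_M(x)=0$ for $Y\in\mathfrak{h}$, together with the $\mathrm{Ad}\,H$-equivariance of the projections $X\mapsto X_{\mathfrak{l}}$ and $X\mapsto X_{\mathfrak{h}}$, is precisely what makes both verifications go through.
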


Endow $G \times_H E$ with the $2$\--form $\pi^*\omega^{G/H}+{\rm d}\eta$. This
$2$\--form is symplectic.

The following is the local normal form
of Benoist \cite[Prop. 1.9]{benoist} and 
Ortega and Ratiu \cite{ortegaratiu} for a general proper 
symplectic Lie group action.

\begin{theorem}[Benoist~\cite{benoist}, Ortega\--Ratiu~\cite{ortegaratiu}]  \label{pds}
There is an open $H$\--invariant 
neighborhood $E_0$ of the origin in $E$, an open $G$\--invariant neighborhood 
$U$ of $x$ in $M$, and a $G$\--equivariant symplectomorphism
$\Phi \colon (G \times_H E,\pi^*\omega^{G/H}+{\rm d}\eta) \to (U,\omega)$ such
that $\Phi(H \cdot (1,0))=x$.
\label{Gthm}
\end{theorem}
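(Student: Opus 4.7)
The plan is to follow the classical three-step strategy used for all such equivariant symplectic normal forms (Darboux--Weinstein, Marle--Guillemin--Sternberg in the Hamiltonian case): build a smooth equivariant tubular model, check that two symplectic forms on this model agree along the zero section, and then deform one into the other via an equivariant relative Moser argument.

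First I would use the slice theorem for proper actions. Since the action is proper, the isotropy $H=G_x$ is compact (Proposition~\ref{312}), so one can average to obtain an $H$-invariant inner product on $T_xM$ and thereby an $H$-invariant linear complement $N$ of $\mathfrak{g}_M(x)$ in $T_xM$. Using the $H$-invariant splittings $\mathfrak{g}=\mathfrak{h}\oplus\mathfrak{k}$ and $\mathfrak{g}=\mathfrak{l}\oplus\mathfrak{c}$ fixed in the statement, together with the non-degenerate pairing $\omega_x$ restricted to $(\mathfrak{l}/\mathfrak{h})\times N$, one identifies $N$ as an $H$-representation with $E=(\mathfrak{l}/\mathfrak{h})^*\times W$. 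Exponentiating normal directions with an $H$-invariant metric produces a $G$-equivariant diffeomorphism $\Psi$ from an $H$-invariant neighborhood of $[1,0]$ in $G\times_H E$ onto a $G$-invariant neighborhood $U_0$ of $x\in M$, sending $[1,0]$ to $x$.

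Next, set $\omega_1:=\Psi^*\omega$ and compare with the model $\omega_0:=\pi^*\omega^{G/H}+{\rm d}\eta$. Both are closed and $G$-invariant. By the very definition of $\omega^{G/H}=(\alpha_x)^*\omega$ and the splitting that defines $W$, a direct computation of $\omega_1$ at points of the zero section $G/H$ shows it equals $\omega_0$ there on all pairs of tangent vectors: the $G/H$-component contributes $\omega^{G/H}$, the $E$-component contributes $\omega^W$ on $W$ together with the canonical pairing on $(\mathfrak{l}/\mathfrak{h})^*\times(\mathfrak{l}/\mathfrak{h})$, and the mixed terms are absorbed by the $\frac{1}{2}\omega^W(w,X_{\mathfrak{h}}\cdot w)$ and $\lambda(X_{\mathfrak{l}})$ terms in $\eta^{\#}$, which is precisely why $\eta^{\#}$ is written in that form. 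Hence both forms restrict to the same non-degenerate bilinear form on each $T_{[g,0]}(G\times_H E)$, and shrinking the neighborhood we may assume both $\omega_0$ and $\omega_1$ are symplectic and the linear interpolation $\omega_t:=(1-t)\omega_1+t\omega_0$ is non-degenerate on a $G$-invariant neighborhood of the zero section.

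Finally I would apply the $G$-equivariant relative Moser trick. Because $\omega_0-\omega_1$ is closed and vanishes along the zero section $G/H\subset G\times_H E$, a $G$-equivariant relative Poincar\'e lemma (apply the fiberwise radial homotopy on the vector bundle $G\times_H E\to G/H$ and average over the compact group $H$, which is automatic after descending) produces a $G$-invariant smooth $1$-form $\sigma$ with ${\rm d}\sigma=\omega_0-\omega_1$ and $\sigma|_{G/H}=0$. Define the time-dependent $G$-invariant vector field $X_t$ by ${\rm i}_{X_t}\omega_t=-\sigma$; then $X_t$ vanishes on $G/H$, so its flow is defined on a smaller $G$-invariant neighborhood $V$ of $G/H$ for all $t\in[0,1]$. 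The time-one map $\varphi_1\colon V\to G\times_H E$ is $G$-equivariant and satisfies $\varphi_1^*\omega_0=\omega_1$ by the usual Moser computation. Then $\Phi:=\Psi\circ\varphi_1^{-1}$, restricted to a suitable $E_0$, is the desired $G$-equivariant symplectomorphism, and $\Phi(H\cdot(1,0))=\Psi([1,0])=x$.

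The main obstacle is the matching calculation on the zero section: one must verify that the particular $1$-form $\eta^{\#}$ chosen in the statement reproduces, after passage to $G\times_H E$ and restriction to $G/H$, exactly the cross pairing between $\mathfrak{g}_M(x)$ and $N$ induced by $\omega_x$ under the identification $N\simeq E$. The rest (slice theorem, Moser) is standard once the action is proper and $H$ is compact, but getting the model form right---so that no extra correction is needed before Moser---is precisely the content of Benoist's and Ortega--Ratiu's construction and is what forces the explicit formula for $\eta^{\#}$.
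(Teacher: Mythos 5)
The paper does not actually prove this theorem: it states it and refers to Benoist \cite{benoist} and Ortega--Ratiu \cite{ortegaratiu} (with the Hamiltonian precursors of Marle and Guillemin--Sternberg), so there is no in-paper argument to compare against line by line. That said, your outline is exactly the strategy those references use: Witt--Artin decomposition of ${\rm T}_xM$ to identify the normal space to the orbit with $E=(\got{l}/\mathfrak{h})^*\times W$, a slice-theorem tube $\Psi$, agreement of $\Psi^*\omega$ with $\pi^*\omega^{G/H}+{\rm d}\eta$ along the zero section, and an equivariant relative Moser deformation. The structure is sound and I see no wrong step.

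Two points deserve more care than you give them. First, the zero-section matching is not merely "the main obstacle" to be flagged --- it is the entire content of the theorem, and it is not automatic for an arbitrary slice identification: you must choose the isomorphism $N\simeq E$ so that the $W$-summand lands inside $\mathfrak{g}_M(x)^{\omega_x}$ and the $(\got{l}/\mathfrak{h})^*$-summand pairs with $\mathfrak{g}_M(x)$ via $\omega_x$ precisely through $X\mapsto X_{\got{l}}$ and the canonical pairing; only then does ${\rm d}\eta^{\#}$ at the origin reproduce the off-diagonal blocks of $\omega_x$, and this linear-algebra verification (the Witt--Artin computation) should be written out rather than asserted. Second, in the Moser step the base $G/H$ need not be compact when $G$ is not, so the time-one flow of $X_t$ is not automatically defined on a neighborhood of the whole zero section; you need to invoke the $G$-invariance of $X_t$ (and the fact that $G/H$ is a single $G$-orbit) to translate a neighborhood of $[1,0]$ on which the flow exists around to a $G$-invariant one. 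Your remark that no averaging over the possibly non-compact $G$ is needed for the primitive $\sigma$ --- because the fiberwise radial homotopy operator already commutes with the $G$-action on $G\times_H E$ --- is correct and worth keeping.
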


For Hamiltonian symplectic actions, these local models 
had been obtained before by Marle \cite{marle} 
and Guillemin and Sternberg \cite[Section~41]{gsst}.

The fundamental form $\omega^{\mathfrak{t}}$ in Proposition~\ref{constlem} is an essential ingredient in the study
of symplectic actions. In the case of Hamiltonian actions, it takes a very particular
form as we see from the next result (but it will also turn out to be essential in the case
of more general symplectic actions).

\begin{theorem}
Let $(M,\omega)$ be a compact connected symplectic manifold endowed with an 
effective symplectic action of an $n$\--dimensional torus $T$ 
on $(M,\,\omega)$. Then the following are equivalent: 
{\rm i)} The action of $T$ has a fixed point in $M$; {\rm ii)} $\omega^{\mathfrak{t}}=0$ and 
$M/T$ is homeomorphic to a convex polytope; 
{\rm iii)} $\omega^{\mathfrak{t}}=0$ and ${\rm H}^1(M/T,\,\R )=0$; 
{\rm iv)} The action of $T$ is Hamiltonian. 
\end{theorem}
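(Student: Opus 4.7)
The plan is to establish the implications (iv)$\Rightarrow$(i), (iv)$\Rightarrow$(ii), (ii)$\Rightarrow$(iii), (iii)$\Rightarrow$(iv), and (i)$\Rightarrow$(iv), closing the cycle. The argument implicitly takes $\dim M = 2n$, which is forced by (i), (ii), or (iv) in combination with effectiveness. The vanishing of the fundamental form $\omega^{\mathfrak{t}}$ is the key intermediate datum, derivable from either (i) or (iv): for (i) with fixed point $x_0$, Proposition~\ref{constlem} gives $\omega^{\mathfrak{t}}(X,Y) = \omega_{x_0}(X_M(x_0), Y_M(x_0)) = 0$; for (iv), evaluating Hamilton's equation (\ref{xts}) on $Y_M$ yields $Y_M \langle\mu, X\rangle = -\omega^{\mathfrak{t}}(X, Y)$, and the left side vanishes because $\langle\mu, X\rangle$ is $T$\--invariant.

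The implications (iv)$\Rightarrow$(i) and (ii)$\Rightarrow$(iii) are immediate: Theorem~\ref{gs} puts the vertices of $\mu(M)$ at fixed points, and convex polytopes are contractible. For (iv)$\Rightarrow$(ii), the image $\mu(M)$ is a convex polytope by Theorem~\ref{gs}, and the induced continuous surjection $\bar\mu \colon M/T \to \mu(M)$ is a bijection because the principal orbits are Lagrangian (they are isotropic by $\omega^{\mathfrak{t}} = 0$ and of maximal dimension since $\dim T = \tfrac12 \dim M$), combined with Atiyah's connectivity of $\mu$\--fibers; compactness then makes $\bar\mu$ a homeomorphism onto the Delzant polytope.

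For (iii)$\Rightarrow$(iv), fix $X \in \mathfrak{t}$ and consider the $1$\--form $\alpha_X := \iota_{X_M}\omega$. It is closed by (\ref{closed}), $T$\--invariant (because $\omega$ is and $X_M$ commutes with each $Y_M$), and vanishes on orbit tangent vectors since $\alpha_X(Y_M) = \omega^{\mathfrak{t}}(X,Y) = 0$. Hence $\alpha_X$ descends to a closed $1$\--form $\bar\alpha_X$ on the orbit space $M/T$, viewed as a stratified space. The hypothesis $H^1(M/T;\mathbb{R}) = 0$ produces a primitive $\bar\mu_X \in C^\infty(M/T)$, and $\mu_X := \bar\mu_X \circ \pi$ satisfies $d\mu_X = -\alpha_X$ on $M$. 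Linearity in $X$ assembles the $\mu_X$ into a momentum map $\mu \colon M \to \mathfrak{t}^*$.

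Finally, (i)$\Rightarrow$(iv) combines the two ingredients just used: (i) yields $\omega^{\mathfrak{t}} = 0$, and the Benoist\--Ortega\--Ratiu normal form (Theorem~\ref{pds}) at $x_0$ presents a neighborhood of $x_0$ as a linear Hamiltonian $T$\--action on $\mathbb{C}^n$, whose orbit-space image is a corner of $\mathbb{R}_{\geq 0}^n$. Patching these corner charts along $M$ shows $M/T$ deformation\--retracts onto $[x_0]$, hence $H^1(M/T;\mathbb{R}) = 0$, and the previous step applies. The main obstacle is the descent in (iii)$\Rightarrow$(iv): one must interpret $M/T$ as a space on which closed $1$\--forms and their cohomology are well-defined, and identify the cohomology of basic $1$\--forms on $M$ with $H^1(M/T;\mathbb{R})$ at the singular strata. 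This requires the Sjamaar\--Lerman framework of stratified symplectic spaces (or equivalently the Ortega\--Ratiu orbit-space differential calculus), which provides the smooth-form machinery on $M/T$ needed for the argument.
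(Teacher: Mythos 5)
Your overall architecture --- the cycle of implications with the vanishing of $\omega^{\mathfrak{t}}$ as the pivot, and a descent-to-$M/T$ argument for (iii)$\Rightarrow$(iv) --- matches the route the paper points to, namely \cite[Corollary 3.9]{DuPe}; the paper itself gives no proof, only listing the ingredients (Theorem~\ref{gs}, Proposition~\ref{constlem}, Theorem~\ref{pds}, and the theory of $V$--parallel spaces). Your easy implications, the two derivations of $\omega^{\mathfrak{t}}=0$, and (iv)$\Rightarrow$(ii) are fine. One side remark on the statement: $\dim M=2n$ is \emph{not} forced by (i) or (iv) together with effectiveness (an effective Hamiltonian $S^1$--action on a compact $4$--manifold satisfies both with $n=1$); it is a standing hypothesis inherited from the setting of \cite{DuPe}.

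The genuine gap is in (i)$\Rightarrow$(iv), at the sentence ``patching these corner charts along $M$ shows $M/T$ deformation-retracts onto $[x_0]$.'' The normal form gives a corner chart only near the fixed orbit; at every other point the local model of $M/T$ for a coisotropic action is an open piece of $(\mathfrak{t}/\mathfrak{t}_x)^*\times\R_{\geq 0}^m$, and these local models look the same whether or not a fixed point exists anywhere. The global topology is not determined by patching local charts: for the Kodaira--Thurston manifold the same kind of charts assemble into $M/T\cong(\R/\Z)^2$, and in general $M/T\cong\Delta\times(N/P)$ with a nontrivial torus factor. Showing that the presence of a fixed point kills that factor is exactly the local-to-global convexity step (the Tietze--Nakajima--type theorem for $V$--parallel spaces in \cite{DuPe}), which your patching assertion silently assumes; as written, the step would ``prove'' contractibility of $M/T$ even in the non-Hamiltonian examples. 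A repair within the paper's toolkit: a fixed point gives $T_{x_0}=T$, so the Hamiltonian subtorus $T_{\rm h}$ of Proposition~\ref{hc} (the product of all stabilizers) equals $T$ and acts Hamiltonianly --- but the Hamiltonian-ness of $T_{\rm h}$ is itself established in \cite{DuPe} by the same orbit-space machinery, so the structure theory cannot really be bypassed. Relatedly, in (iii)$\Rightarrow$(iv) the identification of closed basic one-forms modulo exact ones with ${\rm H}^1(M/T;\R)$ across the singular strata is where this machinery does real work; your appeal to Sjamaar--Lerman is a plausible placeholder but not quite apt, since their stratified-space calculus is built for quotients of actions already known to be Hamiltonian, whereas the paper's source handles this with the $\mathfrak{l}^*$--parallel structure on $M/T$.
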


The proof of Theorem~\ref{pds} uses Theorem~\ref{gs}, Proposition~\ref{constlem}, 
Theorem~\ref{pds},  and the theory of $V$\--parallel
spaces (\cite[Section~10]{DuPe}),  see \cite[Corollary 3.9]{DuPe}. 

\subsection{Symplectic orbit types}

While Hamiltonian actions of maximal dimension appear as symmetries in 
many integrable systems in mechanics, non\--Hamiltonian actions also occur in physics, 
see eg. Novikov~\cite{novikov}. 

In the remaining of this paper we will give classifications of 
symplectic actions of tori (in the spirit of the Delzant classification Theorem~\ref{delzant}) in two cases: 
maximal symplectic actions, and coisotropic actions.  

We already described Hamiltonian actions 
in the previous sections in the case of $\dim M = 2\dim T$, and these are a special case of coisotropic actions, as we will see.

\begin{definition}
A submanifold 
$C$ of the symplectic manifold  $(M,\omega)$ is \emph{symplectic} if the restriction $\omega|_C:=i^*\omega$ 
of the symplectic form $\omega$ to $C$, where $i \colon C \to M$ is the inclusion map, 
is a symplectic form. \end{definition}
\medskip

\begin{definition}
A \emph{maximal symplectic action} is a symplectic action on compact symplectic manifold
endowed with an effective symplectic  action of a torus $T$ with some symplectic $T$\--orbit 
of maximal dimension $\dim T$.
\end{definition}

\begin{lemma} \label{plo}
Let $(M,\omega)$ be a symplectic manifold endowed with a symplectic $T$\--action.
If there is a symplectic $\dim T$\--orbit then every $T$\--orbit is symplectic and $\dim T$\--dimensional.
\end{lemma}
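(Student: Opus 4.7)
The plan is to leverage the fundamental form $\omega^{\mathfrak{t}}$ from Proposition~\ref{constlem}, which, being independent of the base point, transports non-degeneracy from the given symplectic orbit to every other orbit.

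First I would unpack what it means for the $T$-orbit $T\cdot x$ to be symplectic of dimension $\dim T$. The tangent space $T_x(T\cdot x)$ is always the image of the infinitesimal action map $\rho_x\colon \mathfrak{t}\to T_xM$, $X\mapsto X_M(x)$. Since the orbit has dimension $\dim T$, the map $\rho_x$ is a linear isomorphism onto $T_x(T\cdot x)$. The symplecticity of the orbit says that the pullback $\omega_x\big|_{T_x(T\cdot x)\times T_x(T\cdot x)}$ is non-degenerate. Pulling this back through $\rho_x$ and invoking the identity $\omega^{\mathfrak{t}}(X,Y)=\omega_x(X_M(x),Y_M(x))$ of Proposition~\ref{constlem}, I conclude that the fundamental form $\omega^{\mathfrak{t}}$ on $\mathfrak{t}$ is a non-degenerate antisymmetric bilinear form.

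Next I would fix an arbitrary point $y\in M$ and consider the infinitesimal action map $\rho_y\colon \mathfrak{t}\to T_yM$. Its kernel is precisely the Lie algebra $\mathfrak{t}_y$ of the stabilizer $T_y$. If $X\in \mathfrak{t}_y$, then $X_M(y)=0$, so for every $Y\in\mathfrak{t}$,
\[
\omega^{\mathfrak{t}}(X,Y)=\omega_y(X_M(y),Y_M(y))=0.
\]
The non-degeneracy of $\omega^{\mathfrak{t}}$ established in the first step then forces $X=0$. Hence $\mathfrak{t}_y=\{0\}$, the stabilizer $T_y$ is discrete, and the orbit $T\cdot y$ has dimension $\dim T$. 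Consequently $\rho_y$ is a linear isomorphism from $\mathfrak{t}$ onto $T_y(T\cdot y)$.

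Finally, to see that $T\cdot y$ is symplectic, I would transport $\omega^{\mathfrak{t}}$ back through this isomorphism: for any $u,v\in T_y(T\cdot y)$ with $u=X_M(y)$, $v=Y_M(y)$,
\[
\omega_y(u,v)=\omega^{\mathfrak{t}}(X,Y),
\]
so the restriction of $\omega$ to $T_y(T\cdot y)$ is identified with $\omega^{\mathfrak{t}}$ and is therefore non-degenerate. Thus $T\cdot y$ is a symplectic submanifold of dimension $\dim T$. No step poses a serious obstacle here; the only subtle point is the clean use of Proposition~\ref{constlem} to turn a pointwise hypothesis into a global one, which is exactly the content of the fundamental form being an invariant of the action rather than of a chosen orbit.
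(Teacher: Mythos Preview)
Your proof is correct and follows exactly the paper's approach: the paper simply observes that the hypothesis forces the point-independent fundamental form $\omega^{\mathfrak{t}}$ to be non-degenerate (equivalently $\ker\omega^{\mathfrak{t}}=0$), and you have spelled out in detail the consequences of this that the paper leaves implicit.
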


\begin{proof}
Assume that there is a symplectic $\dim T$\--orbit. Since the fundamental form  
$\omega^{\mathfrak{t}}$ (Proposition~\ref{constlem}) is point independent,
it is non\--degenerate. Hence ${\rm ker}(\omega^{\mathfrak{t}})=0$. 
\end{proof}

\begin{prop}
A maximal symplectic action does not admit a momentum map, and hence it is
not Hamiltonian.
\end{prop}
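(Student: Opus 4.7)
The plan is to argue by contradiction, exploiting the fact that under the maximality hypothesis every orbit has full dimension, while a momentum map on a compact manifold forces the existence of fixed points.

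First I would assume, toward a contradiction, that there exists a smooth momentum map $\mu \colon M \to \mathfrak{t}^*$ satisfying Hamilton's equation
\[
-\mathrm{d}\langle \mu, X \rangle = \omega(X_M, \cdot) \qquad \forall X \in \mathfrak{t}.
\]
Pick any nonzero $X \in \mathfrak{t}$ and consider the smooth function $f_X := \langle \mu, X \rangle \colon M \to \mathbb{R}$. Since $M$ is compact, $f_X$ attains its maximum (or minimum) at some point $x_0 \in M$, at which $\mathrm{d}_{x_0} f_X = 0$. Substituting into Hamilton's equation gives $\omega_{x_0}(X_M(x_0), v) = 0$ for every $v \in \mathrm{T}_{x_0} M$, and the non\--degeneracy of $\omega$ then forces $X_M(x_0) = 0$.

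Next I would invoke Lemma~\ref{plo}: under the maximality hypothesis, every $T$\--orbit is $(\dim T)$\--dimensional. This means that at every point $x \in M$ the infinitesimal action map
\[
\mathfrak{t} \longrightarrow \mathrm{T}_x(T \cdot x), \qquad Y \longmapsto Y_M(x),
\]
is surjective onto a space of dimension $\dim T$, and is therefore a linear isomorphism. In particular $X_M(x_0) = 0$ implies $X = 0$, contradicting our choice of $X$. Hence no momentum map can exist, and the action cannot be Hamiltonian.

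I do not expect a main obstacle here: the argument is essentially a one\--line contradiction between the ``every orbit has full dimension'' consequence of Lemma~\ref{plo} and the standard fact that the components $\langle \mu, X \rangle$ must have critical points on a compact manifold. The only thing to be mindful of is to phrase the isomorphism $\mathfrak{t} \to \mathrm{T}_x(T \cdot x)$ correctly, noting that the equality of dimensions together with surjectivity (which is automatic from the definition of an orbit's tangent space) yields injectivity.
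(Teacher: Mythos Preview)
Your proof is correct. The paper takes a slightly different, more conceptual route: it simply invokes the standard fact that the $T$\--orbits of a Hamiltonian torus action are isotropic submanifolds, and observes that a positive\--dimensional orbit cannot be simultaneously isotropic and symplectic. Your argument avoids citing this isotropy fact and instead unpacks one concrete consequence of Hamiltonianity on a compact manifold---that each component $\langle\mu,X\rangle$ has a critical point, forcing $X_M$ to vanish somewhere---and then contradicts Lemma~\ref{plo}, which guarantees every orbit has full dimension. The two approaches are close cousins (both ultimately rest on compactness), but the paper's is a one\--liner once the isotropy of Hamiltonian orbits is taken as known, while yours is more self\--contained and makes explicit exactly where compactness is used.
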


\begin{proof}
The $T$\--orbits of a Hamiltonian action are isotropic submanifolds, and hence not
symplectic as it is the case for maximal symplectic actions.
\end{proof}

Recall that if
$V$ is a subspace of a symplectic vector space $(W, \, \sigma)$, its
\emph{symplectic orthogonal complement}
$
V^{\sigma}
$
consists of the vectors
$w \in W$ such that $\sigma(w,\,v)=0$ for all $v \in V$. 

\begin{definition}
A submanifold $C$ of a symplectic manifold $(M,\omega)$ is  \emph{coisotropic} if 
for every $x \in C$ we have that $(\textup{T}_xC)^{\omega_x} \subseteq \textup{T}_xC$.
\end{definition}

\begin{prop}
If $C$ is a coisotropic $k$\--dimensional submanifold of a $2n$\--dimensional 
symplectic manifold $(M,\omega)$ then $k \geq n$.
\end{prop}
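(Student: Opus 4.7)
The plan is to reduce the statement to a purely linear-algebraic fact at a single tangent space, using non-degeneracy of $\omega_x$. Fix any $x \in C$ and let $V := \mathrm{T}_xC \subseteq \mathrm{T}_xM$; then $V$ is a $k$-dimensional subspace of the $2n$-dimensional symplectic vector space $(\mathrm{T}_xM, \omega_x)$, and the coisotropic hypothesis says precisely that $V^{\omega_x} \subseteq V$.

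The key ingredient I would invoke is the standard dimension formula for symplectic orthogonal complements: for any subspace $V$ of a symplectic vector space $(W,\sigma)$ of dimension $2n$, one has $\dim V + \dim V^{\sigma} = 2n$. This is a one-line consequence of the non-degeneracy of $\sigma$, since the linear map $W \to V^*$ defined by $w \mapsto \sigma(w,\cdot)|_V$ is surjective with kernel exactly $V^{\sigma}$, giving $\dim W - \dim V^{\sigma} = \dim V^* = \dim V$. I would state this as a brief lemma (or simply cite it as elementary) and then apply it at $x$.

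Combining the two facts, $\dim V^{\omega_x} = 2n - k$ by the dimension formula, while the inclusion $V^{\omega_x} \subseteq V$ forces $\dim V^{\omega_x} \leq \dim V = k$. Therefore $2n - k \leq k$, which rearranges to $k \geq n$, completing the proof.

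There is no real obstacle here: the whole content is pointwise linear algebra, and the coisotropic condition has been set up precisely so that the dimension inequality falls out immediately. The only thing to be careful about is making sure the dimension formula $\dim V + \dim V^{\omega_x} = 2n$ is stated (or recalled) before it is used, since the paper has not explicitly proved it earlier in this section.
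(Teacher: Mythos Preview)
Your proof is correct and follows essentially the same route as the paper: both use the dimension formula $\dim V + \dim V^{\omega_x} = 2n$ together with the inclusion $V^{\omega_x} \subseteq V$ to obtain $2n - k \leq k$. The only difference is that you spell out the justification of the dimension formula, whereas the paper simply uses it without comment.
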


\begin{proof}
If $C$ is a coisotropic submanifold of dimension $k$, 
then 
$
2n-k=\dim ({\rm T}_x C)^{\omega _x}\leq
\dim ({\rm T}_x C)=k
$
shows that $k\geq n$. 
\end{proof}

The submanifold $C$ has the minimal dimension $n$ 
if and only if $({\rm T}_x C)^{\omega _x}={\rm T}_x C,$
if and only if $C$ is a Lagrangian submanifold of $M$ (Definition~\ref{maslov}).

\medskip

A Lagrangian submanifold is simultaneously a special case of coisotropic and isotropic submanifold. 
In fact, a submanifold is Lagrangian if and only if it is isotropic and coisotropic. Hence a symplectic torus action
for which one can show that it has a Lagrangian orbit
falls into the category of coisotropic actions. Therefore
it also includes Hamiltonian actions of $n$\--dimensional tori 
which are described in Section~\ref{ham:sec}, because the preimage of any point in the
interior of the momentum polytope is a Lagrangian orbit.

\begin{definition}
Let $T$ be a torus. A symplectic $T$\--action on a compact connected symplectic manifold $(M,\omega)$
endowed with a symplectic  $T$\--action with coisotropic orbits is called a \emph{coisotropic action}.  
\end{definition}

\medskip

The following is a consequence of Theorem~\ref{pds}.  In the following recall that $\mathfrak{l}$ denotes the kernel of the fundamental
form $\omega^{\mathfrak{t}}$ (Proposition~\ref{ffg}).

\begin{lemma}
Let $(M,\,\omega )$ be a compact connected symplectic manifold, 
and $T$ a torus which acts effectively and 
symplectically on $(M,\,\omega)$. Suppose that there exists a coisotropic principal $T$\--orbit. 
Then every coisotropic $T$\--orbit is a principal orbit and   
$\dim M=\dim T+ \dim \got{l}$. 
\label{coisotropicorbitlem}
\end{lemma}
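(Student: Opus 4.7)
The plan is to combine a dimension count driven by the fundamental form $\omega^{\mathfrak{t}}$ from Proposition~\ref{constlem} with the Benoist--Ortega--Ratiu normal form (Theorem~\ref{Gthm}). Fix $y\in M$ and let $\mathfrak{t}_y$ denote the Lie algebra of the stabilizer $T_y$. The map $X\mapsto X_M(y)$ identifies $\mathfrak{t}/\mathfrak{t}_y$ with ${\rm T}_y(T\cdot y)$, so $\dim(T\cdot y)=\dim T-\dim \mathfrak{t}_y$. Since $X\in\mathfrak{t}_y$ forces $X_M(y)=0$, Proposition~\ref{constlem} yields $\mathfrak{t}_y\subseteq\mathfrak{l}$; and by the same proposition, $Y_M(y)\in ({\rm T}_y(T\cdot y))^{\omega_y}$ iff $\omega^{\mathfrak{t}}(Y,X)=0$ for every $X\in\mathfrak{t}$, iff $Y\in\mathfrak{l}$. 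Therefore
\[
{\rm T}_y(T\cdot y)\cap ({\rm T}_y(T\cdot y))^{\omega_y}=\{X_M(y):X\in\mathfrak{l}\},
\]
which has dimension $\dim\mathfrak{l}-\dim\mathfrak{t}_y$.

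I would first apply this at a principal point $x_0$, where $\mathfrak{t}_{x_0}=0$ by effectiveness. Coisotropy of $T\cdot x_0$ means $({\rm T}_{x_0}(T\cdot x_0))^{\omega_{x_0}}\subseteq {\rm T}_{x_0}(T\cdot x_0)$, forcing the intersection above to coincide with the symplectic orthogonal; comparing dimensions gives $\dim\mathfrak{l}=\dim M-\dim T$, i.e.\ $\dim M=\dim T+\dim\mathfrak{l}$. Running the same comparison at an arbitrary coisotropic orbit $T\cdot y$ yields
\[
\dim\mathfrak{l}-\dim\mathfrak{t}_y=\dim M-\dim T+\dim\mathfrak{t}_y=\dim\mathfrak{l}+\dim\mathfrak{t}_y,
\]
so $\dim\mathfrak{t}_y=0$ and $T_y$ is a finite subgroup of $T$.

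The delicate point is upgrading ``$T_y$ finite'' to ``$T_y=\{1\}$''; for this I would invoke Theorem~\ref{Gthm}. A neighborhood of $T\cdot y$ in $M$ is $T$-equivariantly symplectomorphic to $T\times_{T_y}E$ with $E=(\mathfrak{l}/\mathfrak{t}_y)^{*}\times W=\mathfrak{l}^{*}\times W$. Counting dimensions, $\dim E=\dim M-\dim T=\dim\mathfrak{l}$, hence $W=0$ and $E=\mathfrak{l}^{*}$. Because $T$ is abelian, ${\rm Ad}(h)$ is the identity for every $h\in T_y$, so the linearized $T_y$-action on $\mathfrak{l}^{*}$ is trivial; consequently $T\times_{T_y}\mathfrak{l}^{*}\cong (T/T_y)\times\mathfrak{l}^{*}$, and a direct check shows that every $T$-orbit in this tube has stabilizer exactly $T_y$. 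If $T_y$ were nontrivial, no point in this open neighborhood would lie in $M_{\rm reg}$, contradicting the density of the principal stratum. Therefore $T_y=\{1\}$ and $T\cdot y$ is principal. The main obstacle is precisely this last step: the dimension count alone cannot exclude a nontrivial finite stabilizer, and the normal form (together with abelianness of $T$) is what makes the orbit type locally constant and forces the contradiction with density.
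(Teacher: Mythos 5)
Your proof is correct and its decisive step is the same as the paper's: apply the Benoist--Ortega--Ratiu normal form at a coisotropic orbit, observe $W=0$ and that abelianness makes $H=T_y$ act trivially on $E=(\mathfrak{l}/\mathfrak{h})^*$, so all stabilizers in the tube equal $T_y$, and then density of $M_{\rm reg}$ forces $T_y=\{1\}$. The only difference is cosmetic: the paper gets $W=0$ immediately from the definition of coisotropic and handles a possibly positive-dimensional $H$ in the same stroke, so your preliminary fundamental-form dimension count (showing $T_y$ finite and $\dim M=\dim T+\dim\mathfrak{l}$ first) is sound but not needed.
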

\begin{proof}
We use Theorem \ref{Gthm} with $G=T$. Since $T$ is abelian, the adjoint action of 
$H=T_x$ 
on $\got{t}$ is trivial, which implies that the coadjoint 
action of $H$ on the component $(\got{l}/\mathfrak{h})^*$ is trivial.  
Let  $T\cdot x$ be a  coisotropic orbit.
Then $W$  is 
zero. This implies that the action of $H$ on $E=(\got{l}/\mathfrak{h})^*$ 
is trivial, and 
$T\times_HE=T\times_H(\got{l}/\mathfrak{h})^*$ is 
$T$\--equivariantly isomorphic to 
$(T/H)\times (\got{l}/\mathfrak{h})^*$, 
where $T$ acts by left multiplications on
the first factor. It follows that in the model 
all stabilizer subgroups are equal to $H$, and therefore 
$T_y=H$ for all $y$ in the 
$T$\--invariant open neighborhood $U$ of $x$ in $M$. 
Since $M_{\rm reg}$ is dense in $M$, 
there are $y\in U$ such that $T_y=\{ 1\}$, and therefore
$T_x=H=\{ 1\}$, so the orbit $T\cdot x$ is principal. 
The statement $\dim M=\dim T+\dim \got{l}$ also follows. 
\end{proof}

A similar argument to that in the proof of Lemma~\ref{coisotropicorbitlem} using 
Theorem~\ref{Gthm} (this time using 
the formula for the symplectic form in Theorem~\ref{pds}) shows:

\begin{prop}
There exists a coisotropic orbit if and only if
every principal orbit is coisotropic. 
\end{prop}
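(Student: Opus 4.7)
\medskip\noindent\textbf{Proof plan.} The forward direction is immediate: principal orbits exist (indeed $M_{\rm reg}$ is a dense open subset by Theorem~2.8.5 of Duistermaat--Kolk cited earlier), so if every principal orbit is coisotropic then at least one coisotropic orbit exists. The content is in the reverse implication, and the plan is to exploit the Benoist--Ortega--Ratiu local model (Theorem~\ref{Gthm}) at a coisotropic point, followed by a dimension count using the constancy of the fundamental form $\omega^{\mathfrak{t}}$ (Proposition~\ref{constlem}).

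First I would fix a coisotropic orbit $T\cdot x$ and argue that it must in fact be principal. Coisotropicity means $\mathfrak{g}_M(x)^{\omega _x}\subseteq\mathfrak{g}_M(x)$, so $\mathfrak{g}_M(x)^{\omega_x}\cap \mathfrak{g}_M(x)=\mathfrak{g}_M(x)^{\omega_x}$, which forces the symplectic slice $W=\mathfrak{g}_M(x)^{\omega_x}/(\mathfrak{g}_M(x)^{\omega_x}\cap\mathfrak{g}_M(x))$ to vanish. Setting $H:=T_x$, Theorem~\ref{Gthm} then gives a $T$-equivariant symplectomorphism from an open neighborhood of $x$ onto $T\times_HE$ with $E=(\got{l}/\mathfrak{h})^*$. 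Because $T$ is abelian its adjoint action on $\mathfrak{t}$, and hence the induced coadjoint action of $H$ on $(\got{l}/\mathfrak{h})^*$, is trivial; therefore $T\times_HE\simeq (T/H)\times(\got{l}/\mathfrak{h})^*$ with $T$ acting only by left multiplication on the first factor. Every point of this model has stabilizer equal to $H$, so the same holds throughout a $T$-invariant neighborhood $U$ of $x$. Since $M_{{\rm reg}}$ is dense, $U\cap M_{{\rm reg}}$ is nonempty, and any point there has trivial stabilizer; hence $H=\{1\}$ and $T\cdot x$ is principal. Consequently Lemma~\ref{coisotropicorbitlem} applies and yields $\dim M=\dim T+\dim\got{l}$.

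Next I would transfer coisotropicity from $x$ to every other principal orbit by a dimension count that only uses the pointwise identity (\ref{omegax}). Fix any principal $y\in M_{{\rm reg}}$, so $T_y=\{1\}$ and $X\mapsto X_M(y)$ is injective on $\mathfrak{t}$. Then $\dim\mathfrak{g}_M(y)=\dim T$, and by the previous step
\[
\dim\mathfrak{g}_M(y)^{\omega_y}=\dim M-\dim T=\dim\got{l}.
\]
By (\ref{omegax}), a vector $X_M(y)$ lies in $\mathfrak{g}_M(y)^{\omega_y}$ if and only if $\omega^{\mathfrak{t}}(X,Y)=0$ for all $Y\in\mathfrak{t}$, i.e.\ if and only if $X\in\got{l}$. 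Injectivity of $X\mapsto X_M(y)$ therefore gives $\dim(\mathfrak{g}_M(y)^{\omega_y}\cap\mathfrak{g}_M(y))=\dim\got{l}$, matching the dimension of $\mathfrak{g}_M(y)^{\omega_y}$. Hence $\mathfrak{g}_M(y)^{\omega_y}\subseteq\mathfrak{g}_M(y)$, so $T\cdot y$ is coisotropic, completing the proof.

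\medskip\noindent\textbf{Main obstacle.} The only nontrivial point is the passage from ``coisotropic'' to ``principal and coisotropic'' at the chosen base point $x$; this is where one must invoke Theorem~\ref{Gthm}, use that the abelian hypothesis makes the $H$-action on $(\got{l}/\mathfrak{h})^*$ trivial, and apply density of $M_{{\rm reg}}$. Once $x$ is known to be principal, Lemma~\ref{coisotropicorbitlem} provides the dimension formula $\dim M=\dim T+\dim\got{l}$, after which the constancy of $\omega^{\mathfrak{t}}$ propagates coisotropicity to every principal orbit by a purely linear-algebra count.
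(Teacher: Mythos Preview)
Your proof is correct. The first half (showing that a coisotropic orbit is automatically principal by collapsing $W$ in the Benoist--Ortega--Ratiu model and using density of $M_{\rm reg}$) is exactly what the paper does in Lemma~\ref{coisotropicorbitlem}, so there is no difference there.

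For the second half, however, you take a cleaner route than the one the paper indicates. The paper's one-line sketch says to rerun the local-model argument at a principal point $y$, \emph{this time using the explicit symplectic form} $\pi^*\omega^{G/H}+{\rm d}\eta$ from Theorem~\ref{pds}, and then read off coisotropicity from that formula. You bypass this computation entirely: once $\dim M=\dim T+\dim\got{l}$ is known, you observe that $\mathfrak{g}_M(y)^{\omega_y}\cap\mathfrak{g}_M(y)=\{X_M(y):X\in\got{l}\}$ by the constancy of $\omega^{\mathfrak{t}}$ (Proposition~\ref{constlem}), match dimensions, and conclude $\mathfrak{g}_M(y)^{\omega_y}\subset\mathfrak{g}_M(y)$. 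This is more elementary --- it uses only linear algebra and the globally constant fundamental form rather than the detailed structure of $\eta$ --- and it makes transparent \emph{why} the dimension identity $\dim M=\dim T+\dim\got{l}$ is exactly the obstruction. The paper's approach, by contrast, has the advantage of giving a single uniform argument (local model at any point, with the symplectic form) without needing to first extract the dimension formula as a separate step.
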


\begin{prop} The following hold:

{\rm (1)} The fundamental form 
$\omega ^{\mathfrak{t}}$ vanishes identically  
if and only if $\mathfrak{l}:={\rm ker}(\omega^{\mathfrak{t}})=\mathfrak{t}$  
if and only if some $T$\--orbit is isotropic 
if and only if  
every $T$\--orbit is isotropic.

{\rm (2)} Every principal orbit is Lagrangian 
if and only if some principal orbit 
is Lagrangian 
if and only if $\dim M=2\dim T$ and $\omega^\mathfrak{t}=0$. 
\label{isotropicorbitlem}
\end{prop}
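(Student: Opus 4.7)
The plan is to reduce everything to the point-independence of the fundamental form given by Proposition~\ref{constlem}, together with the definitions of isotropic and Lagrangian submanifolds. The main calculation is essentially a single identification: the tangent space to a $T$-orbit $T\cdot x$ at $x$ is $\mathfrak{t}_M(x):=\{X_M(x) : X \in \mathfrak{t}\}$, and formula (\ref{omegax}) then translates orbit-wise geometric conditions directly into conditions on $\omega^{\mathfrak{t}}$ as a bilinear form on $\mathfrak{t}$.

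For part (1), I would first note that $T\cdot x$ is isotropic precisely when $\omega_x(X_M(x), Y_M(x))=0$ for all $X,Y \in \mathfrak{t}$. By (\ref{omegax}) this is the condition $\omega^{\mathfrak{t}}(X,Y)=0$ for all $X,Y$, which is point-independent. Hence the statements "some $T$-orbit is isotropic", "every $T$-orbit is isotropic", and "$\omega^{\mathfrak{t}}\equiv 0$" are mutually equivalent. Since $\omega^{\mathfrak{t}}$ is a bilinear form on $\mathfrak{t}$, vanishing identically is the same as having $\mathfrak{l}=\mathrm{ker}(\omega^{\mathfrak{t}})=\mathfrak{t}$, which closes the loop of equivalences.

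For part (2), I would exploit that being Lagrangian is equivalent to being isotropic and having dimension equal to $\tfrac{1}{2}\dim M$. Since the $T$-action is effective and a principal orbit has trivial stabilizer, the map $X \mapsto X_M(x)$ is injective at any regular $x$, so every principal orbit has dimension exactly $\dim T$. Thus if some principal orbit $T\cdot x_0$ is Lagrangian, then $\dim T = \tfrac{1}{2}\dim M$ (a global equality, so every principal orbit has the right dimension) and $T\cdot x_0$ is isotropic, so by part (1) every orbit — in particular every principal orbit — is isotropic, hence Lagrangian. This simultaneously yields the implication "some principal orbit Lagrangian $\Rightarrow$ every principal orbit Lagrangian" and the forward direction of the remaining equivalence. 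Conversely, if $\dim M=2\dim T$ and $\omega^{\mathfrak{t}}=0$, then part (1) makes every principal orbit isotropic, and each such orbit has dimension $\dim T=\tfrac{1}{2}\dim M$, so it is Lagrangian.

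There is no serious obstacle: the only nontrivial ingredient is the point-independence in Proposition~\ref{constlem}, after which each equivalence is a one-line unpacking. The mildest piece of bookkeeping is checking that one does not need to worry about $T_x$ in the principal case — this is handled by the definition of principal orbit together with effectiveness, which forces $\dim(T\cdot x)=\dim T$ at every regular point.
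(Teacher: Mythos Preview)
Your proposal is correct and follows essentially the same argument as the paper's own proof: both reduce part (1) to the point-independence of $\omega^{\mathfrak{t}}$ from Proposition~\ref{constlem}, and handle part (2) by combining part (1) with the observation that principal orbits have dimension exactly $\dim T$. The paper's write-up is terser but the logical content is identical.
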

\begin{proof}
The equivalence of $\omega ^{\mathfrak{t}}=0$ and 
$\mathfrak{l}=\mathfrak{t}$  is immediate. The
equivalence between $\omega^{\mathfrak{t}}=0$ and the isotropy 
of some $T$\--orbit follows from Proposition~\ref{constlem}.

If $x\in M_{{\rm reg}}$ and $T\cdot x$ is a Lagrangian
submanifold of $(M,\,\omega )$, then 
$\dim M=2\dim (T\cdot x)=2\dim T$, and 
$\omega^\mathfrak{t}=0$ follows in view of the first statement 
in the proposition. Conversely, if ~$\dim M=2\dim T$ and $\omega^\mathfrak{t}=0$, 
then every orbit is isotropic and for every 
$x\in M_{{\rm reg}}$ we have 
$\dim M=2\dim T=2\dim (T\cdot x)$, 
which implies that $T\cdot x$ is a Lagrangian submanifold 
of $(M,\,\omega)$. 
\end{proof}

In Guillemin and Sternberg \cite{multfree} we find the following notion.

\begin{definition}
A symplectic manifold with a 
Hamiltonian action of an arbitrary compact Lie group is called 
a {\em multiplicity\--free space} if the Poisson brackets of any pair of 
invariant smooth functions vanish. 
\end{definition}

\medskip

There is a relationship between coisotropic actions and multiplicity\--free spaces.

\begin{prop}
For a torus $T$ acting
on a closed connected symplectic manifold $(M,\omega)$,  the principal 
orbits are coisotropic if and only if $(M,\omega)$ is a multiplicity free space. 
\end{prop}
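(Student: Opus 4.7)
The plan is to translate both sides of the equivalence into infinitesimal conditions at a point of a principal orbit and observe that they coincide. The key bridge is that for a $T$-invariant smooth function $f$, the identity $X_M f = 0$ gives ${\rm d}f(X_M) = 0$ for all $X \in \mathfrak{t}$, so by the defining relation $\omega(\mathcal{H}_f, \cdot) = -{\rm d}f$ the Hamiltonian vector field $\mathcal{H}_f(x)$ lies in the symplectic orthogonal $(\mathrm{T}_x(T\cdot x))^{\omega_x}$ at every point $x$. Hence the Poisson bracket of any two invariant functions evaluates to
\[
\{f,g\}(x)=\omega_x(\mathcal{H}_f(x),\mathcal{H}_g(x)),
\]
with both vectors in $(\mathrm{T}_x(T\cdot x))^{\omega_x}$.

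First I would handle the easier implication: if principal orbits are coisotropic, then at a regular point $x$ we have $(\mathrm{T}_x(T\cdot x))^{\omega_x}\subseteq \mathrm{T}_x(T\cdot x)$, so $\mathcal{H}_f(x)$ and $\mathcal{H}_g(x)$ are both tangent to the orbit, and since $\mathcal{H}_f(x)\in(\mathrm{T}_x(T\cdot x))^{\omega_x}$ the bracket $\{f,g\}(x)$ vanishes. Because $M_{\rm reg}$ is dense in $M$ (Duistermaat--Kolk, quoted in the paper) and $\{f,g\}$ is continuous, $\{f,g\}\equiv 0$ on all of $M$, proving multiplicity-freeness.

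For the converse, the main task is to show that at a regular point $x$ the subspace $\{\mathcal{H}_f(x)\mid f\in C^\infty(M)^T\}$ is \emph{all} of $(\mathrm{T}_x(T\cdot x))^{\omega_x}$. Here I would invoke the Benoist--Ortega--Ratiu local normal form (Theorem~\ref{Gthm}) with $H=T_x=\{1\}$: a neighbourhood of $x$ is $T$-equivariantly symplectomorphic to $T\times E$ with $E=(\mathfrak{l}/\mathfrak{h})^\ast\times W=\mathfrak{l}^\ast\times W$, so $T$-invariant smooth functions are exactly arbitrary smooth functions of the $E$-coordinates. Their differentials at $x$ fill the entire annihilator of $\mathrm{T}_x(T\cdot x)$ in $\mathrm{T}^\ast_x M$, which under the symplectic isomorphism $v\mapsto\omega_x(v,\cdot)$ is $(\mathrm{T}_x(T\cdot x))^{\omega_x}$; multiplying by invariant bump functions extends these to global invariants.

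Given this surjectivity, multiplicity-freeness translates directly into the vanishing of $\omega_x$ on $(\mathrm{T}_x(T\cdot x))^{\omega_x}$, i.e.\ this subspace is isotropic. Dualizing once, its symplectic orthogonal, which is $\mathrm{T}_x(T\cdot x)$, contains $(\mathrm{T}_x(T\cdot x))^{\omega_x}$, so the principal orbit through $x$ is coisotropic. The main obstacle is the density/extension step in the middle paragraph: ensuring that locally defined invariant functions with prescribed differentials at $x$ can be promoted to globally defined $T$-invariant smooth functions. This is handled by multiplying by a $T$-invariant cutoff supported in a tube around $T\cdot x$, which exists because the action is proper (indeed compact) and admits invariant tubular neighborhoods.
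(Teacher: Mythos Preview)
Your proof is correct and follows essentially the same route as the paper's: both establish the key identity $\{\mathcal{H}_f(x)\mid f\in C^\infty(M)^T\}=(\mathrm{T}_x(T\cdot x))^{\omega_x}$ at regular points, then read off each implication and use density of $M_{\rm reg}$. The only difference is that you invoke the Benoist--Ortega--Ratiu normal form (plus an invariant cutoff) to prove this surjectivity, whereas the paper uses the lighter observation that $M_{\rm reg}\to M_{\rm reg}/T$ is a fibration, so $\mathfrak{t}_M(x)$ is the common kernel of the $\mathrm{d}f(x)$ for invariant $f$, hence these differentials span the full annihilator and their Hamiltonian vectors span $\mathfrak{t}_M(x)^{\omega_x}$.
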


\begin{proof}
Let $f,\, g$ be in the set of $T$\--invariant smooth functions 
and let $x\in M_{\rm{reg}}$.   We will use use the notation 
$\mathfrak{t}_M(x):={\rm T}_x(T\cdot x).$
 Since $M_{\rm{reg}}$ is fibered by 
the $T$\--orbits, $\mathfrak{t}_M(x)$ is the common kernel of the 
${\rm d}f(x)$, where $f\in{\rm C}^{\infty}(M)^T$. Because $-{\rm d} f={\rm i}_{\mathcal{H}_f}\omega,$
it follows that $\mathfrak{t}_M(x)^{\omega _x}$ is the 
set of all $\mathcal{H}_f(x)$, $f\in{\rm C}^{\infty}(M)^T$.  So if the principal $T$\--orbits are coisotropic,  we
have that
$\mathcal{H}_f(x),\,\mathcal{H}_g(x) \in \mathfrak{t}_M(x)^{\omega _x}\cap\mathfrak{t}_M(x).$
 It follows that
$$
\{ f,\, g\}(x) :=\omega_x(\underbrace{\mathcal{H}_f(x)}_{\in  \mathfrak{t}_M(x)^{\omega _x}},\,\underbrace{\mathcal{H}_g(x)}_{\in \mathfrak{t}_M(x)})=0
$$
Since the principal orbit type $M_{\rm{reg}}$ is dense 
in $M$, we have that $\{ f,\, g\}\equiv 0$ for all 
$f,\, g\in{\rm C}^{\infty}(M)^T$ if the principal orbits 
are coisotropic. Conversely, if we have that $\{ f,\, g\} \equiv 0$ for all 
$f,\, g\in{\rm C}^{\infty}(M)^T$, then  $\mathfrak{t}_M(x)^{\omega _x}
\subset (\mathfrak{t}_M(x)^{\omega _x})^{\omega _x}=\mathfrak{t}_M(x)$
 for every  $x\in M_{\rm{reg}},$ i.e. $T\cdot x$ is coisotropic. 
\end{proof}

\subsection{Stabilizer subgroups}

 The general problem we want to treat is: 

\begin{problem}
Let $(M,\omega)$ be a closed connected symplectic manifold endowed with an effective
symplectic $T$\--action. Let $x \in M$. Characterize when $T_x$ is connected and when
it is discrete.
\end{problem}

We will prove the following result:

\begin{theorem} \label{abm}
Let $(M,\omega)$ be a symplectic manifold endowed with an effective symplectic $T$\--action and
let $x \in M$.
\begin{itemize}
\item[{\rm (i)}]
If the $T$\--action is coisotropic, then $T_x$ is connected.
\item[{\rm (ii)}]
If the $T$\--action is maximal symplectic, then $T_x$ is finite.
\end{itemize}
\end{theorem}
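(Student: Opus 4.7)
The plan for both parts is to apply the Benoist--Ortega--Ratiu slice theorem (Theorem~\ref{Gthm}) at $x$ with $H := T_x$, yielding a $T$-equivariant local model $T \times_H E$ with $E = (\mathfrak{l}/\mathfrak{h})^* \times W$. Because $T$ is abelian, the adjoint action of $H$ on $\mathfrak{t}$ is trivial, so $H$ acts trivially on the factor $(\mathfrak{l}/\mathfrak{h})^*$, and all of the isotropy data at $x$ is concentrated in the linear symplectic $H$-action on the symplectic slice $W$.

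Part (ii) then falls out immediately from Lemma~\ref{plo}: every $T$-orbit is $\dim T$-dimensional, so the orbit-stabilizer relation forces $\dim T_x = 0$; since $T_x$ is closed in the compact group $T$ it is compact, and a compact $0$-dimensional Lie group is finite.

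For part (i), I first combine Lemma~\ref{coisotropicorbitlem} (which gives $\dim M = \dim T + \dim \mathfrak{l}$ under the coisotropic hypothesis) with the dimension count
$$ \dim(T \times_H E) = \dim T + \dim \mathfrak{l} + \dim W - 2 \dim H $$
for the local model to read off $\dim W = 2 \dim H$. Next I argue that $H$ must act effectively on $W$: an $h \in H$ fixing every $w \in W$ would act trivially on all of $E$ (since it already fixes $(\mathfrak{l}/\mathfrak{h})^*$) and hence trivially on the whole model $T \times_H E$, violating effectiveness of the ambient $T$-action.

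The conclusion then rests on a short representation-theoretic lemma: any compact abelian Lie group $H$ acting effectively and symplectically on a symplectic vector space of dimension $2\dim H$ is a torus. Fixing an $H$-invariant compatible complex structure decomposes $W = \bigoplus_{i=1}^{\dim H} L_i$ into complex lines with characters $\chi_i : H \to S^1$, and effectiveness says the combined homomorphism $\chi = (\chi_1, \ldots, \chi_{\dim H}) : H \to (S^1)^{\dim H}$ is injective. Since source and target have the same dimension, $\chi(H)$ is a clopen subgroup of the connected group $(S^1)^{\dim H}$, hence all of it; thus $H \cong (S^1)^{\dim H}$ is connected. The main obstacle I anticipate is carefully tracking the identifications in Theorem~\ref{Gthm}, so that effectiveness of $T$ on $M$ really does reduce to effectiveness of $H$ on $W$ of the claimed dimension; once this is pinned down the rest is a dimension count and a standard character decomposition.
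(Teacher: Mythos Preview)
Your proposal is correct and follows essentially the same route as the paper. For (ii) you invoke Lemma~\ref{plo} exactly as the paper does, and for (i) your argument is the paper's proof of Lemma~\ref{above2}: both use Theorem~\ref{Gthm} and Lemma~\ref{coisotropicorbitlem} to obtain $\dim W = 2\dim H$, then decompose $W$ into $H$\--invariant complex lines and conclude that the resulting injective homomorphism $H \to (S^1)^{\dim H}$ is an isomorphism by a dimension count. The only cosmetic difference is that the paper phrases the injectivity step via the existence of regular points in the local model (so that $h\cdot w=w$ for some $w\in W_{\rm reg}$ forces $h=1$) rather than via your ``$h$ acts trivially on an open set of a connected $M$'' argument; both are valid and yield the same conclusion.
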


Let $\mathfrak{t}_x$ denote the Lie algebra of the stabilizer subgroup $T_x$ of the $T$\--action at $x$, 
which consists of  
of the  $X\in\mathfrak{t}$ such that $X_M(x)=0$. That is,
$\mathfrak{t}_x$ is the kernel of the linear mapping
$\alpha _x:X\mapsto X_M(x)$ from $\mathfrak{t}$ to ${\rm T}_x\! M$. 
 
\begin{lemma}[\cite{Pe}]
Let $(M,\omega)$ be a symplectic manifold endowed with a maximal symplectic $T$\--action. 
The stabilizer  $T_x$ is a finite abelian group for every $x \in M$.
\end{lemma}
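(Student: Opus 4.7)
The plan is to reduce everything to a dimension count via Lemma \ref{plo}. Since the action is maximal symplectic by hypothesis, there exists some symplectic $T$-orbit of dimension $\dim T$; Lemma \ref{plo} then upgrades this to the statement that \emph{every} $T$-orbit is symplectic of dimension $\dim T$. In particular, for any $x \in M$, the orbit $T \cdot x$ is diffeomorphic to $T/T_x$ and has dimension $\dim T$, forcing $\dim T_x = 0$.

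Next I would invoke the general structure of stabilizers: the orbit map $g \mapsto g \cdot x$ factors through a $T$-equivariant embedding of $T/T_x$ (this is Proposition \ref{312} with $G = T$), so in particular $T_x$ is a closed subgroup of $T$, and by Theorem \ref{sub} it is a Lie subgroup. Being a closed subgroup of the compact group $T$, the stabilizer $T_x$ is itself compact. A compact zero-dimensional Lie group is finite, which gives the finiteness claim.

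Finally, $T_x$ is a subgroup of the abelian group $T$, hence abelian. The main potential obstacle is trivial here because the hard work has already been packaged into Lemma \ref{plo} (which established that the fundamental form $\omega^{\mathfrak{t}}$ is nondegenerate and therefore the linear map $\alpha_x \colon \mathfrak{t} \to \mathrm{T}_xM$, $X \mapsto X_M(x)$, is injective for every $x$). Injectivity of $\alpha_x$ is precisely the statement $\mathfrak{t}_x = \ker \alpha_x = 0$, which is the infinitesimal form of $\dim T_x = 0$; this is really the only content beyond the standard compact-Lie-group arguments.
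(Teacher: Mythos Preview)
Your proof is correct and follows essentially the same approach as the paper. The paper phrases the key step as $\mathfrak{t}_x \subset \ker\omega^{\mathfrak{t}} = 0$ (the inclusion holds because $X \in \mathfrak{t}_x$ forces $X_M(x)=0$, whence $\omega^{\mathfrak{t}}(X,Y)=\omega_x(0,Y_M(x))=0$ for all $Y$), whereas you deduce $\dim T_x = 0$ from the orbit having full dimension via Lemma~\ref{plo}; these are equivalent, and you even spell out the infinitesimal version at the end.
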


\begin{proof}
 Since 
$\mathfrak{t}_x \subset \textup{ker}\, \omega^{\mathfrak{t}}$, by
Lemma~\ref{plo} we have that
$\mathfrak{t}_x$ is trivial. 
\end{proof}

Since every $T_x$ is finite,
it follows from  the tube theorem of Koszul (cf. \cite{koszul} or \cite[Theorem~2.4.1]{DuKo})
and the compactness of $M$ that there exists only 
finitely many different  stabilizer subgroups. 

The following  is statement (1) (a) in 
\cite[Lemma 6.7]{benoist}. It is a consequence of Theorem~\ref{pds}.
We use Theorem \ref{Gthm} with $G=T$, with $H$ acting
trivially on the factor $(\got{l}/\mathfrak{h})^*$ in 
$E=(\got{l}/\mathfrak{h})^*\times W$. 
Recall that $t\in T$ acts on $T\times _HE$ by 
sending $H\cdot (t',\, e)$ to $H\cdot (t\, t',\, e)$. 
When $t=h\in H$, 
$
H\cdot (h\, t',\, e)=H\cdot (h\, t'\, h^{-1},\, h\cdot e)=
H\cdot (t',\, h\cdot e)
$
since $T$ is abelian, and the action 
of $H$ on $T\times _HE$ is represented by the 
linear symplectic action 
of $H$ on $W$.  

\begin{lemma}[Benoist~\cite{benoist}]  \label{above2}
Let $(M,\omega)$ be a compact symplectic manifold endowed with a coisotropic
$T$\--action. For every $x\in M$, 
the stabilizer $T_x$ is connected. 
\end{lemma}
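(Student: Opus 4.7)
The strategy is to apply the Benoist--Ortega--Ratiu local normal form (Theorem~\ref{Gthm}) with $G=T$ and $H=T_x$, producing a $T$-equivariant symplectomorphism from an open neighborhood of $H\cdot(1,0)$ in $T\times_H E$ onto an open neighborhood $U\ni x$ in $M$, where $E=(\mathfrak{l}/\mathfrak{h})^{*}\times W$, the subgroup $H$ acts trivially on the $(\mathfrak{l}/\mathfrak{h})^{*}$-factor and linearly and symplectically on the symplectic vector space $W$. Since the normal form is a $T$-equivariant diffeomorphism, the $T$-stabilizer in $M$ at any point of $U$ equals the $T$-stabilizer of the corresponding model point; because $T$ is abelian and the action of $h\in H\subset T$ sends $H\cdot(t',e)$ to $H\cdot(t',h\cdot e)$, this stabilizer is precisely the $H$-stabilizer of the $W$-component. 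The plan is then to show $H$ is connected by analyzing this representation of $H$ on $W$.

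The first step is to compute $\dim W$. By Lemma~\ref{coisotropicorbitlem} one has $\dim M=\dim T+\dim\mathfrak{l}$, while the normal form gives
\[
\dim M=\dim(T\times_H E)=(\dim T-\dim H)+(\dim\mathfrak{l}-\dim H)+\dim W,
\]
so $\dim W=2\dim H$.

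The second step is a representation-theoretic observation. Choose an $H$-invariant compatible complex structure on $W$, available by averaging since $H$ is compact; then $H$ acts unitarily on $W\cong\mathbb{C}^{k}$, where $k:=\dim H$, and, being abelian, the representation splits as $W=L_1\oplus\cdots\oplus L_k$ with $H$ acting on $L_i$ via a character $\chi_i\colon H\to S^{1}$. For $w=\sum w_i\in W$, one has $h\in H_w$ iff $\chi_i(h)w_i=w_i$ for each $i$. Since $M_{\mathrm{reg}}$ is dense in $U$, there exists some $w\in W$ with $H_w=\{1\}$, which forces $\bigcap_{i=1}^{k}\ker\chi_i=\{1\}$; equivalently, the homomorphism $\Phi\colon H\to(S^{1})^{k}$, $h\mapsto(\chi_1(h),\ldots,\chi_k(h))$, is injective.

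To conclude, $\Phi$ is a continuous injective homomorphism between compact Lie groups, hence a homeomorphism onto its closed image. Since $\dim H=k=\dim(S^{1})^{k}$, the identity component $\Phi(H^{0})$ is a connected closed subgroup of $(S^{1})^{k}$ of full dimension, and must therefore equal all of $(S^{1})^{k}$; injectivity of $\Phi$ then yields $H=H^{0}$, which is the desired connectedness. The main obstacle is establishing the dimensional identity $\dim W=2\dim H$, where the coisotropic hypothesis enters through Lemma~\ref{coisotropicorbitlem}; once this is in hand, the remainder is a standard argument about effective symplectic representations of compact abelian Lie groups of maximal rank.
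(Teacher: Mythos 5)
Your proposal is correct and follows essentially the same route as the paper: the dimension count $\dim W=2\dim H$ via Lemma~\ref{coisotropicorbitlem} and the normal form, the splitting of the symplectic $H$-representation on $W$ into $\dim H$ invariant planes (your complex lines with characters are the paper's real $2$-planes with maps into ${\rm SO}(E_j,\beta_j)\cong S^1$), injectivity from the existence of a free point, and the conclusion that an injective homomorphism of the $k$-dimensional compact group $H$ into the $k$-torus forces $H$ to be connected. The only difference is cosmetic (unitary/character language versus real special-orthogonal language), and your final step is, if anything, spelled out a bit more carefully than in the paper.
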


\begin{proof}
Since
$\dim M=(\dim T+\dim (\got{l}/\mathfrak{h})+\dim W)-\dim H 
$
and because the assumption that the principal orbits are 
coisotropic implies that $\dim M=\dim T+\dim \got{l}$, 
see Lemma \ref{coisotropicorbitlem}, 
it follows that $\dim W=2 \dim H$. 

Write $m=\dim H$.  
The action of  $H$ 
by means of symplectic linear transformations on 
$(W,\,\omega^W)$ leads to a direct sum decomposition 
of $W$ into $m$ pairwise $\omega ^W$\--orthogonal 
two\--dimensional $H$\--invariant linear subspaces 
$E_j$, $1\leq j\leq m$.  For $h\in H$ and every $1\leq j\leq m$, let $\iota _j(h)$ denote 
the restriction to $E_j\subset W\simeq \{ 0\}\times W
\subset (\got{l}/\mathfrak{h})^*\times W$ of the action of $h$ on $E$. 
Note that ${\rm det} \iota _j(h)=1$, because 
$\iota _j(h)$ preserves the restriction to $E_j\times E_j$ of 
$\omega^W$, which is an area form on $E_j$.  

Averaging any inner product 
in each $E_j$ over $H$, we obtain an $H$\--invariant inner 
product $\beta _j$ on $E_j$, and $\iota _j$ is a homomorphism of Lie 
groups from $H$ to ${\rm SO}(E_j,\,\beta _j)$, the group of 
linear transformations of 
$E_j$ which preserve both $\beta _j$ 
and the orientation. If $h\in H$ and $w\in W_{{\rm reg}}$, then 
$
h\cdot w=\sum _{j=1}^m\,\iota _j(h)\, w_j
$ where $w=\sum_{j=1}^m\, w_j,\quad w_j\in E_j$. Therefore $\iota _j(h)\, w_j=w_j$ for all $1\leq j\leq m$ 
implies that $h=1$. Hence  the map
$
\iota \colon H\to \prod_{j=1}^m\,{\rm SO}(E_j,\,\beta _j)
$
defined by
$
i(h)=(\iota _1(h),\,\ldots ,\,\iota _m(h))
$
is a Lie group isomorphism, so $H$ is connected. 
\end{proof}

It follows from Lemma~\ref{above2} that $T_x$ is a subtorus of $T$.

\section{Classifications of symplectic actions} \label{thecl}

\subsection{Maximal symplectic actions} \label{max}

This section describes the invariants of maximal symplectic actions. 
Using these invariants we will construct a model to which $(M,\omega)$ is $T$\--equivariantly symplectomorphic.  
Let $(M,\omega)$ be a compact connected symplectic manifold endowed with a maximal symplectic
$T$\--action of a torus $T$.

\subsubsection{Orbit space}

We denote by 
$
\pi \colon M \to M/T
$ 
the canonical projection $\pi(x):=T \cdot x$. The orbit space $M/T$ is endowed with the maximal topology for which $\pi$ is continuous (which is a Hausdorff topology). Since $M$ is compact and connected, $M/T$ is compact and
 connected. 
By the tube theorem (see for instance~\cite[Theorem~2.4.1]{DuKo}) if $x \in M$ there is a $T$\--invariant open neighborhood $U_x$ of 
 $T \cdot x$ and a $T$\--equivariant diffeomorphism  $\Phi_x \colon U_x \to T \times_{T_x} D_x,$
 where $D_x$ is an open disk centered at the origin in
 $\C^{k/2}$, $k:=\dim M-\dim T$. 
 In order to form the quotient, $h \in T_x$ acts on $T \times D_x$ by sending $(g,x)$ to $(gh^{-1},h\cdot x)$, where
 $T_x$ acts by linear transformations on $D_x$. The action of $T$ on $T \times_{T_x} D_x$ is induced by the translational action of $T$ on
the left factor of $T \times D_x$. 
The $T$\--equivariant diffeomorphism $\Phi_x$ induces a homeomorphism  
$D_x/T_x \to \pi(U_x)$, which we compose with the projection $D_x \to D_x/T_x$ to
get a map $\phi_x\colon D_x \to \pi(U_x)$. 
The proof of the following is routine.

\begin{prop}  \label{pl}
The collection $\{(\pi(U_x), \, D_x, \, \phi_x, \, T_x)\}_{x \in M}$ 
is an  atlas for  $M/T$.   
\end{prop}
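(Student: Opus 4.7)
The plan is to verify that $\{(\pi(U_x), D_x, \phi_x, T_x)\}_{x \in M}$ satisfies the defining properties of an orbifold atlas on $M/T$: namely, that the $\pi(U_x)$ cover $M/T$, that each $\phi_x$ presents $\pi(U_x)$ as the quotient of a smooth manifold $D_x$ by a finite group $T_x$ acting effectively, and that the charts are compatible on overlaps. The decisive input is Theorem~\ref{abm}(ii), which guarantees (since the action is maximal symplectic) that every $T_x$ is a \emph{finite} abelian group; without this the quotient $M/T$ would not carry an orbifold structure at all and the proposition would fail.

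First I would show that each $\phi_x$ is a legitimate uniformizing chart. The $T$\--equivariant diffeomorphism $\Phi_x \colon U_x \to T \times_{T_x} D_x$ descends to a homeomorphism of orbit spaces $U_x/T \xrightarrow{\sim} (T \times_{T_x} D_x)/T$, and the right hand side is canonically identified with $D_x/T_x$ via $[g,v]\mapsto T_x\cdot v$, since every $T$\--orbit on $T \times_{T_x} D_x$ meets the slice $\{1\}\times D_x$ in exactly one $T_x$\--orbit. Composing with the quotient $D_x \to D_x/T_x$ produces $\phi_x$, exhibiting $\pi(U_x)$ as the quotient of $D_x$ by the finite group $T_x$. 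Effectiveness of the $T_x$\--action on $D_x$ follows from effectiveness of the ambient $T$\--action: if $h\in T_x$ acted trivially on $D_x$, then via $\Phi_x$ the element $h$ would fix every point of $U_x$, and in particular would fix points of $M_{\rm reg}\cap U_x$ (nonempty by density of the principal stratum), contradicting $T_{y}=\{1\}$ for $y\in M_{\rm reg}$. The covering property is immediate, since $\pi(x)\in \pi(U_x)$ for every $x\in M$.

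The main and most delicate step, which I expect to be the principal obstacle, is the compatibility of charts. Given $x,y\in M$ with $p\in \pi(U_x)\cap\pi(U_y)$, the plan is to pick preimages $z\in U_x$ and $z'\in U_y$ with $\pi(z)=\pi(z')=p$, write $z'=g\cdot z$ for some $g\in T$, and then use the tube theorem to identify small $T_z$\--invariant neighborhoods of $0$ in $D_x$ and in $D_y$ via a $T_z$\--equivariant open embedding (this uses crucially that $T$ is abelian, so $T_z \subseteq T_x\cap T_y$ as subgroups and the adjoint action is trivial, so no conjugation bookkeeping is required). The resulting diagram provides the transition data in the orbifold sense: an open embedding $D_z'\hookrightarrow D_x$ equivariant with respect to the inclusion $T_z\hookrightarrow T_x$, and likewise into $D_y$, both covering the same open subset of $\pi(U_x)\cap\pi(U_y)$. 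The argument is ``routine'' only in the sense that each ingredient (tube theorem, finiteness of stabilizers, abelian hypothesis) has already been developed; the verification that the embeddings exist and intertwine the stabilizer inclusions is a direct diagram chase once the slice models of Proposition~\ref{312} and Theorem~\ref{Gthm} are invoked.
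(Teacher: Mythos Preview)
Your proposal is correct and is precisely the routine verification the paper alludes to (the paper offers no proof beyond the remark ``the proof of the following is routine''). The three ingredients you isolate---finiteness of $T_x$ from Theorem~\ref{abm}(ii), effectiveness of the $T_x$\--action on $D_x$ via density of $M_{\rm reg}$, and compatibility via a slice at a point of the overlap together with the abelian hypothesis forcing $T_z\subseteq T_x\cap T_y$---are exactly what is needed.
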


\subsubsection{Flat connection}

Consider the symplectic form on $\C^m$ 
$
\omega ^{\C ^m}
:=\frac{1}{2{\rm i}}\sum_{j=1}^m\,
{\rm d}\overline{z^j}\wedge
{\rm d} z^j. 
$
We identify each tangent space to $T$ 
with $\mathfrak{t}$ and each tangent space of a vector space with the 
vector space itself. 
 The translational
action of $T$ on $T \times \C^m$ descends to an action of $T$ on $T \times_{T_x} \C^m$. 
Since the fundamental form (Proposition~\ref{constlem}) $\omega^{\mathfrak{t}} \colon \mathfrak{t} \times \mathfrak{t} \to \R$
is non\--degenerate, it determines a unique symplectic
form $\omega^T$ on $T$. The product symplectic form on $T \times \C^m$, denoted
by $\omega^{\mathfrak{t}}\oplus \omega^{\mathbb{C}^m}$ is defined pointwise at
$(t, \, z)$ and a pair of vectors $((X,\,u),\, (X',\,u'))$ by
$\omega ^{\mathfrak{t}}(X,\, X')+\omega^{\C ^m}(u,\,u').
$
The  form $\omega^T \oplus \omega^{\C^m}$ descends to a symplectic form on $T \times_{T_x}E_x$. 
Theorem~\ref{pds} gives us the following in the maximal symplectic case.

\begin{lemma} \label{tubetheorem2}
There is an open $\mathbb{T}^m$\--invariant 
neighborhood $E$ of $0$ in $\C ^m$,  
an open $T$\--invariant neighborhood $V_x$ of $x$ in $M$,
and a $T$\--equivariant symplectomorphism 
$
\Lambda_x \colon (T \times_{T_x}E,\omega^{\mathfrak{t}}\oplus \omega^{\mathbb{C}^m}) \to (V_x,\omega)
$
such that
$\Lambda_x([1,\, 0]_{T_x})=x$.
\end{lemma}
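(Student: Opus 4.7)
The plan is to derive this lemma by specializing the Benoist\--Ortega\--Ratiu local normal form (Theorem~\ref{pds}) to the maximal symplectic case, where the general model collapses dramatically. Apply Theorem~\ref{pds} with $G=T$ and $H=T_x$. Because the action is maximal symplectic, Lemma~\ref{plo} gives that the fundamental form $\omega^{\mathfrak{t}}$ is non\--degenerate, so $\mathfrak{l}:=\ker(\omega^{\mathfrak{t}})=0$. The Lie algebra $\mathfrak{h}=\mathfrak{t}_x$ of the stabilizer satisfies $\mathfrak{h}\subset\mathfrak{l}$ (if $X_M(x)=0$ then $\omega^{\mathfrak{t}}(X,Y)=\omega_x(X_M(x),Y_M(x))=0$ for every $Y\in\mathfrak{t}$), so $\mathfrak{h}=0$ and $T_x$ is discrete, hence finite by compactness of $M$. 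Consequently $(\mathfrak{l}/\mathfrak{h})^*=0$, and in the general model $E=(\mathfrak{l}/\mathfrak{h})^*\times W$ the first factor drops out, leaving $E=W$.

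Next I would pin down $W$. Non\--degeneracy of $\omega^{\mathfrak{t}}$ means that $\mathfrak{g}_M(x)\cong\mathfrak{t}$ is a symplectic subspace of $({\rm T}_xM,\omega_x)$, hence $\mathfrak{g}_M(x)^{\omega_x}\cap\mathfrak{g}_M(x)=0$ and $W=\mathfrak{g}_M(x)^{\omega_x}$ is a complementary symplectic subspace of dimension $\dim M-\dim T=2m$. The finite abelian group $T_x$ acts linearly and symplectically on $W$; averaging any compatible complex structure produces a $T_x$\--invariant one, and simultaneous diagonalization of the resulting unitary $T_x$\--representation yields a symplectic linear isomorphism $(W,\omega^W)\cong(\C^m,\omega^{\C^m})$ in which $T_x$ embeds as a subgroup of the standard diagonal $\mathbb{T}^m$\--action. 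In particular every $\mathbb{T}^m$\--invariant open neighborhood of $0\in\C^m$ is automatically $T_x$\--invariant, so the quotient $T\times_{T_x}E$ makes sense for any such $E$.

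Next I would identify the model symplectic form. With $\mathfrak{h}=0$ the one\--form $\eta^{\#}$ on $T\times W$ collapses to $\eta^{\#}_{(g,w)}(({\rm d}_1{\rm L}_g)(X),\delta w)=\tfrac{1}{2}\omega^W(w,\delta w)$, which is the pullback to $T\times W$ of the standard primitive $\tfrac{1}{2}\omega^W(w,{\rm d}w)$ of $\omega^W$; hence ${\rm d}\eta=\omega^W$. On the other hand $\omega^{T/T_x}=(\alpha_x)^*\omega$ is the $T$\--invariant $2$\--form on the torus $T/T_x$ whose value at the identity coset equals $\omega^{\mathfrak{t}}$, and its pullback under the finite cover $T\to T/T_x$ is the translation invariant form $\omega^T$ on $T$ determined by $\omega^{\mathfrak{t}}$. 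Consequently the model form $\pi^*\omega^{T/T_x}+{\rm d}\eta$, pulled back along $T\times W\to T\times_{T_x}W$, equals $\omega^T\oplus\omega^W$, which after the identification $W\cong\C^m$ is precisely the form $\omega^{\mathfrak{t}}\oplus\omega^{\C^m}$ appearing in the lemma statement.

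Finally I would invoke Theorem~\ref{pds} to produce a $T$\--equivariant symplectomorphism from a $T$\--invariant neighborhood of $[1,0]_{T_x}$ in $(T\times_{T_x}W,\omega^{\mathfrak{t}}\oplus\omega^W)$ onto a $T$\--invariant neighborhood $V_x$ of $x$ in $(M,\omega)$ sending $[1,0]_{T_x}$ to $x$, and then shrink the $W$\--factor to any $\mathbb{T}^m$\--invariant open neighborhood $E\subset\C^m$ of $0$ (for example a small polydisc) contained in the neighborhood $E_0$ provided by that theorem. The only genuinely new step over Theorem~\ref{pds} is the $T_x$\--equivariant symplectic identification of $W$ with $(\C^m,\omega^{\C^m})$; all remaining verifications are routine consequences of $\mathfrak{l}=0$.
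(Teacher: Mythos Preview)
Your proposal is correct and follows exactly the route the paper indicates: the paper simply states that ``Theorem~\ref{pds} gives us the following in the maximal symplectic case'' without further argument, and you have supplied precisely the specialization that makes this work (namely $\mathfrak{l}=0$, $\mathfrak{h}=0$, $E=W\cong\C^m$, and the collapse of $\pi^*\omega^{G/H}+{\rm d}\eta$ to the product form). Your treatment is in fact more detailed than what the paper provides.
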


Lemma~\ref{tubetheorem2} implies the following essential result.

\begin{prop}
The collection $\Omega:=\{\Omega_x\}_{x \in M}$
where $\Omega_x:=({\rm T}_x(T \cdot x))^{\omega_x},$ is a smooth distribution on $M$ and 
$\pi \colon M \to M/T$ is a smooth principal $T$\--bundle
of which $\Omega$ is a $T$\--invariant flat connection.
\end{prop}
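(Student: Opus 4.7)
The plan is to deduce all four claims—smoothness of $\Omega$, the principal $T$-bundle structure, the $T$-invariance of $\Omega$, and flatness—from the local normal form of Lemma~\ref{tubetheorem2} together with the constancy of the fundamental form. First, I would observe that $\Omega$ is a well-defined distribution of rank $\dim M - \dim T$: by Lemma~\ref{plo} every orbit $T\cdot x$ is symplectic of dimension $\dim T$, so ${\rm T}_x(T\cdot x)$ is a symplectic subspace of $({\rm T}_xM,\omega_x)$ and its symplectic orthogonal $\Omega_x$ is a direct complement giving ${\rm T}_xM = {\rm T}_x(T\cdot x)\oplus\Omega_x$.

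The main step would be to pull everything back through $\Lambda_x \colon (T \times_{T_x} E,\ \omega^{\mathfrak{t}}\oplus\omega^{\mathbb{C}^m}) \to (V_x,\omega)$ from Lemma~\ref{tubetheorem2}. On the cover $T \times E$ the orbit tangent directions are $\mathfrak{t}\oplus\{0\}$ and, because the symplectic form splits, their $\omega$-orthogonal is $\{0\}\oplus\mathbb{C}^m$. Hence $\Omega$ pulls back to the tangent distribution of the $E$-factor, which is manifestly smooth, $T$-invariant (the $T$-action is translation on the first factor), and $T_x$-invariant (since $T_x$ acts only on $E$), so it descends to a smooth $T$-invariant distribution on $V_x$. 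Because $T_x$ is finite by Theorem~\ref{abm}(ii), the same local model exhibits $\pi \colon V_x \to \pi(V_x)$ as a principal $T$-bundle in the orbifold sense provided by Proposition~\ref{pl}, and $\Omega$ sits as a horizontal complement to the vertical distribution.

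Finally, flatness of $\Omega$ viewed as a connection is equivalent to involutivity. In the pulled-back local picture $\Omega$ becomes $\{0\}\oplus T\mathbb{C}^m$, which is the tangent bundle of the slices $\{t\}\times E$ and therefore trivially involutive; patching gives global involutivity, so the curvature of $\Omega$ vanishes. The main obstacle is that $M/T$ is in general only an orbifold (the stabilizers $T_x$ are finite but can be nontrivial, producing singular points as in Example~\ref{ex2}), so ``principal $T$-bundle'' must be read in the orbifold category determined by the atlas of Proposition~\ref{pl}; verifying that the local trivializations $T\times_{T_x} E$ glue consistently across the strata of the orbit-type stratification is the point that needs the most care.
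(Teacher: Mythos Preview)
Your proposal is correct and follows exactly the route the paper indicates: the paper's ``proof'' is the single sentence ``Lemma~\ref{tubetheorem2} implies the following essential result,'' and you have accurately unpacked this by computing $\Omega$ on the product model $T\times_{T_x}E$, using the symplectic splitting $\omega^{\mathfrak{t}}\oplus\omega^{\mathbb{C}^m}$ to identify it with the $E$-factor, and reading off smoothness, $T$-invariance, involutivity, and the (orbifold) principal bundle structure. Your closing caveat that ``principal $T$-bundle'' must be understood in the orbifold sense is also on target and matches how the paper proceeds immediately afterward.
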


Let $\psi \colon \widetilde{M/T} \to M/T$ be the universal cover of 
$M/T$ based at $p_0=\pi(x_0)$. Let
$\mathcal{I}_x$ be the maximal integral manifold of $\Omega$. The inclusion  $i_x \colon \mathcal{I}_x \to M$
is an injective immersion and $\pi \circ i_x \colon \mathcal{I}_x \to M/T$
is an orbifold covering. Since $\widetilde{M/T}$ covers any covering of $M/T$, 
 it covers $\mathcal{I}_x$, which is a manifold. Because a covering
of a smooth manifold is  a smooth manifold, $\widetilde{M/T}$ is a smooth manifold.
Readers unfamiliar with orbifolds may  consult \cite[Section~9]{Pe}.

\subsubsection{Monodromy}

The universal cover $\widetilde{M/T}$ is a smooth manifold. Let $\pi^{\textup{orb}}_1(M/T, \, p_0)$
be the orbifold fundamental group, based at the same point as the universal cover.
The mapping $\pi^{\textup{orb}}_1(M/T, \, p_0) \times \widetilde{M/T} \to 
\widetilde{M/T}$ given by $([\lambda], \,[\gamma]) \mapsto [\gamma \, \lambda]$
is a smooth action of 
$\pi^{\textup{orb}}_1(M/T, \,p_0)$ on
$\widetilde{M/T}$, 
which is transitive on each fiber $\widetilde{M/T}_p$ of $\psi \colon \widetilde{M/T} \to M/T$. 
For any loop $\gamma \colon [0,\,1]\to M/T$ in  $M/T$
such that $\gamma(0)=p_0$, denote 
by $\lambda_{\gamma} \colon [0,\,1]\to M$ its unique horizontal lift with respect to the 
connection $\Omega$  such that 
$\lambda_{\gamma}(0)=x_0$,
where by horizontal we mean that ${\rm d}\lambda_{\gamma}(t)/{\rm d}t\in 
\Omega_{\lambda_{\gamma(t)}}$ for every $t \in [0,\, 1]$.
$\Omega$ is an orbifold flat connection, 
which means that $\lambda_{\gamma}(1)=\lambda_{\delta}(1)$ if $\delta$ is 
homotopy equivalent 
to $\gamma$ in the space of all orbifold paths in the orbit space
$M/T$ which start at $p_0$ and end at the given end point 
$p=\gamma(1)$. Therefore there exists unique group homomorphism 
$\mu \colon \pi^{\textup{orb}}_1(M/T, \, p_0) \to 
T$ such that $\lambda_{\gamma}(1)=\mu([\gamma]) \cdot x_0$. 

\begin{definition}
The homomorphism
$\mu$ does not depend
on the choice of the base point $x_0 \in M$; we call it the \emph{monodromy homomorphism of $\Omega$.}
\end{definition}

\subsubsection{Case $\dim T=\dim M-2$}

The case $\dim T=\dim M-2$ is the only case in which can give a complete classification,
thanks to the following.

\begin{lemma}[Thurston] \label{TT}
Given a positive integer $g$ and an $n$\--tuple $(o_k)_{k=1}^n$, $o_i \le o_{i+1}$
 of positive integers, 
there exists a compact, connected, boundaryless, orientable smooth orbisurface
$\mathcal{O}$ with underlying topological
space a compact, connected
surface of genus $g$ and $n$ cone points of respective orders $o_1,\dots,o_n$.
Secondly, let $\mathcal{O}, \, \mathcal{O}'$ be compact, connected, boundaryless, orientable smooth
orbisurfaces. Then $\mathcal{O}$ is diffeomorphic to $\mathcal{O}'$ if and only if the genera of their underlying surfaces are the same, and their 
associated increasingly ordered $n$\--tuples of orders of cone points are equal.
\end{lemma}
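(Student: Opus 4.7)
\medskip\noindent\textbf{Proof plan.} For the existence part, my plan is to start from a smooth compact connected boundaryless orientable surface $\Sigma_g$ of genus $g$ (produced, for instance, by the classical polygon-identification construction or by taking the connected sum of $g$ tori), and to choose $n$ pairwise distinct points $p_1,\ldots,p_n\in\Sigma_g$. I then equip $\Sigma_g$ with an orbifold atlas as follows: on $\Sigma_g\setminus\{p_1,\ldots,p_n\}$, I use the smooth charts already present; near each $p_i$, I pick a small oriented disk $U_i\subset\Sigma_g$ around $p_i$ and declare the orbifold chart at $p_i$ to be the quotient map $\widetilde U_i\to \widetilde U_i/(\Z/o_i\Z)\cong U_i$, where $\widetilde U_i$ is an open disk in $\C$ and $\Z/o_i\Z$ acts by rotation by $2\pi/o_i$. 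The compatibility with the smooth charts away from $p_i$ is immediate since the quotient map is a diffeomorphism on the complement of the origin after we identify $\widetilde U_i/(\Z/o_i\Z)$ with $U_i$ via $z\mapsto z^{o_i}$. This produces a compact, connected, boundaryless, orientable smooth orbisurface with the prescribed invariants.

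For the uniqueness part, suppose $\mathcal{O}$ and $\mathcal{O}'$ are compact, connected, boundaryless, orientable smooth orbisurfaces and $\varphi\colon\mathcal{O}\to\mathcal{O}'$ is an orbifold diffeomorphism. First, the underlying topological spaces $|\mathcal{O}|$ and $|\mathcal{O}'|$ are homeomorphic via $|\varphi|$, so by the classical topological classification of closed orientable surfaces they have the same genus. Second, the isotropy group at any point is a diffeomorphism invariant of the orbifold structure, so $\varphi$ carries cone points to cone points and preserves their orders; hence the ordered $n$\--tuples of orders coincide. For the converse, given that genera and ordered tuples of orders agree, I would first produce a diffeomorphism $f\colon|\mathcal{O}|\to|\mathcal{O}'|$ of the underlying surfaces (classification of surfaces), and then, using the mapping class group of $|\mathcal{O}'|$ acting on configurations of $n$ ordered marked points on a genus $g$ surface (which is transitive because any two such configurations are related by an ambient isotopy), I would postcompose $f$ with a self\--diffeomorphism of $|\mathcal{O}'|$ to arrange that $f$ carries the $i$\--th cone point of $\mathcal{O}$ to the $i$\--th cone point of $\mathcal{O}'$, matching orders.

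Finally, I would upgrade this homeomorphism of underlying spaces to a genuine orbifold diffeomorphism. Away from cone points, $f$ is already a diffeomorphism between smooth open subsurfaces. Near a matched pair $(p_i,p_i')$ of cone points of the same order $o_i$, both orbifold charts are isomorphic to the rotation action of $\Z/o_i\Z$ on a disk in $\C$; since any two orientation\--preserving rotation actions of $\Z/o_i\Z$ on a disk are conjugate by an equivariant diffeomorphism, I can modify $f$ in a small neighborhood of each $p_i$ so that it lifts to an equivariant diffeomorphism of uniformizing charts. This local adjustment can be arranged compatibly using a partition\--of\--unity argument in the orbifold sense, yielding the desired orbifold diffeomorphism $\mathcal{O}\to\mathcal{O}'$.

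\medskip\noindent\textbf{Main obstacle.} The delicate step is the last one: promoting a homeomorphism that matches cone points with matching orders to a genuine smooth orbifold diffeomorphism. This requires a careful local normal form at each cone point (all orientation\--preserving cyclic rotation actions of a fixed order on a disk are equivariantly diffeomorphic) together with a smoothing/interpolation argument to glue the local equivariant diffeomorphisms to the global smooth diffeomorphism on the complement of the cone points. The rest of the argument is essentially the classical topological classification of closed orientable surfaces together with transitivity of the mapping class group on ordered configurations of marked points.
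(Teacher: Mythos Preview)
The paper does not supply a proof of this lemma: it is stated as a result attributed to Thurston and then used (the sentence immediately following the lemma begins ``According to Lemma~\ref{TT}\ldots''). So there is no paper proof to compare your argument against.

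On its own merits, your sketch is the standard and correct approach. The existence part is exactly how one builds such an orbisurface: take a closed genus-$g$ surface, pick $n$ points, and install the local model $\C/(\Z/o_i\Z)$ at each via $z\mapsto z^{o_i}$. For the classification, the forward direction is immediate since isotropy groups are orbifold-diffeomorphism invariants and the underlying space is a homeomorphism invariant. For the converse, your three-step plan (classify the underlying surface, use transitivity of the diffeomorphism group on ordered point configurations to match cone points of equal order, then locally normalize near each cone point) is the right outline. One small clarification: what you need near a cone point is not really a statement about conjugacy of rotation actions, but the simpler fact that the germ of an orientable $2$-orbifold at a cone point of order $o$ is uniquely determined (it is $D^2/(\Z/o\Z)$), so any smooth map of underlying spaces sending $p_i$ to $p_i'$ can be isotoped near $p_i$ to one that lifts equivariantly. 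With that phrasing the ``main obstacle'' you flag essentially disappears in dimension two.
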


According to Lemma~\ref{TT},  $\textup{sig}(\mathcal{O}):=(g; \, \vec{o})$ topologically
classifies $\mathcal{O}$. We call  this tuple the \emph{Fuchsian signature of $\mathcal{O}$}.
If $(\mathcal{O},\, \omega)$
is a symplectic orbisurface, $\varint_{\mathcal{O}} \omega$
is the \emph{total symplectic area of $(\mathcal{O}, \,
\sigma)$}. 

It follows from Lemma~\ref{TT} and  the orbifold Moser's theorem \cite[Theorem~3.3]{MW}, 
that in the maximal symplectic case with $\dim  T=\dim  M-2$ the Fuchsian signature and
the symplectic area of $M/T$ determine $M/T$ up to symplectomorphisms.

\medskip

Let $(g; \, \vec{o}) \in \Z^{1+m}$ be the Fuchsian signature
of  $M/T$; let $\{\gamma_k\}_{k=1}^m$ be a basis of small loops around the cone points
$p_1,\ldots,p_n$ of $M/T$, viewed as an orbifold; let $\{\alpha_1, \, \beta_1,\ldots, \alpha_g,\beta_g\}$ be a symplectic basis of a \emph{free} subgroup 
$F$ of  
$
{\rm H}_1^{\textup{orb}}(M/T,
\, \Z)\!=\!\langle 
\{\alpha_i,\, \beta_i\}_{i=1}^g,  \, \{\gamma_k\}_{k=1}^m \, \, | \, \,
\sum_{k=1}^m \gamma_k=0, \, \, o_k \, \gamma_k=0, \, \, 1 \le k \le m
\rangle
$
whose direct sum with the \emph{torsion} subgroup of ${\rm H}_1^{\textup{orb}}(M/T,\mathbb{Z})$ is ${\rm H}_1^{\textup{orb}}(M/T, \, \Z)$.

Let $\mu_{\textup{h}}$ be the homomomorphism induced on homology by
$\mu$.

\begin{definition}
The \emph{monodromy invariant of the triple $(M, \omega, \, T)$} is the  
$\mathcal{G}_{(g; \, \vec{o})}$\--orbit given by
$
 \mathcal{G}_{(g, \, \vec{o})} \cdot ((\mu_{\textup{h}}(\alpha_i), \, \mu_{\textup{h}}(\beta_i))_{i=1}^{g}, \, (\mu_{\textup{h}}(\gamma_k))_{k=1}^m)  \in T^{2g+m}_{(g;\, \vec{o})}/\mathcal{G}_{(g, \, \vec{o})}.
 $ \end{definition}
 
 \medskip
Even though the monodromy invariant depends on choices, one
can show that it is  well\--defined.

\subsubsection{Classification}

The $T$\--action on $\widetilde{M/T} \times_{\Gamma}T$ 
 is the $T$\-- action inherited 
 from the action of $T$ by translations on the right factor of $\widetilde{M/T} \times T$.

One can show that there exists a unique $2$\--form $\nu$ on  $M/T$ such that
$\pi^* \nu|_{\Omega_x}=\omega|_{\Omega_x}$ for every $x \in M$.
Moreover, $\nu$ is a symplectic form. Therefore $(M/T,\, \nu)$
is a compact, connected, symplectic orbifold.  The symplectic form on $\widetilde{M/T}$ is the pullback by the 
covering map $\widetilde{M/T} \to M/T$ of  $\nu$ and the symplectic form on 
$T$ is the unique $T$\--invariant symplectic form determined by $\omega^{\mathfrak{t}}$.
The symplectic form on $\widetilde{M/T} \times T$ is the product symplectic form.

Let $\pi^{\textup{orb}}_1(M/T)$ act on  $\widetilde{M/T} 
\times T$ by the diagonal action $x \, (y,\,t)=(x \star y^{-1},\,\mu(x) \cdot t)$, where 
$\star \colon \pi^{\textup{orb}}_1(M/T) \times \widetilde{M/T} \to \widetilde{M/T}$ denotes the natural
action of $\pi^{\textup{orb}}_1(M/T)$ on $\widetilde{M/T}$. 

The symplectic form on $\widetilde{M/T}\times_{\pi^{\textup{orb}}_1(M/T)}T$ 
is induced on the quotient by the product form. The following is the model of maximal
symplectic actions. The $T$\--action on $\widetilde{M/T} \times_{\pi^{\textup{orb}}_1(M/T)}T$ is inherited from
the $T$\--action on the right factor of $\widetilde{M/T} \times T$.

\begin{theorem}[\cite{Pe}] \label{t3}
Let $(M,\omega)$ be a compact connected symplectic manifold endowed with
 a maximal symplectic $T$\--action. Then 
$M$ is $T$\--equivariantly symplectomorphic to  $\widetilde{M/T} \times_{\pi^{\textup{orb}}_1(M/T)}T$.
\end{theorem}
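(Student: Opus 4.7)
The plan is to construct an explicit $T$-equivariant symplectomorphism from $\widetilde{M/T}\times_{\pi_1^{\mathrm{orb}}(M/T)}T$ onto $M$ using parallel transport along the flat connection $\Omega$, and then verify the required compatibilities with $\omega$ and the $T$-action. Fix a basepoint $x_0\in M$ with $p_0=\pi(x_0)\in M/T$. For any orbifold path $\gamma\colon[0,1]\to M/T$ with $\gamma(0)=p_0$, flatness of $\Omega$ ensures that there is a unique horizontal lift $\lambda_\gamma\colon[0,1]\to M$ with $\lambda_\gamma(0)=x_0$, and the endpoint $\lambda_\gamma(1)$ depends only on the homotopy class $[\gamma]\in\widetilde{M/T}$. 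I would then define
\begin{equation*}
\Phi\colon\widetilde{M/T}\times T\to M,\qquad \Phi([\gamma],t):=t\cdot\lambda_\gamma(1).
\end{equation*}

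The next step is to check that $\Phi$ descends to the quotient by the diagonal action of $\Gamma:=\pi_1^{\mathrm{orb}}(M/T,p_0)$. For $\alpha\in\Gamma$, the path $\gamma\star\alpha^{-1}$ has horizontal lift whose endpoint differs from $\lambda_\gamma(1)$ precisely by the action of $\mu(\alpha)\in T$, by the very definition of the monodromy homomorphism. Hence $\Phi([\gamma]\star\alpha^{-1},\mu(\alpha)\cdot t)=\mu(\alpha)\cdot t\cdot\mu(\alpha)^{-1}\cdot\lambda_\gamma(1)=\Phi([\gamma],t)$ since $T$ is abelian, so $\Phi$ factors through the orbit space $\widetilde{M/T}\times_\Gamma T$. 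By construction $\Phi$ is $T$-equivariant with respect to translation on the second factor and the $T$-action on $M$.

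To see that $\Phi$ is a local diffeomorphism, use Lemma~\ref{tubetheorem2}: near $x_0$, the local model $T\times_{T_{x_0}}E$ splits the tangent space into the vertical part $\mathfrak{t}$ and the horizontal part $\Omega_{x_0}$, and horizontal lifts in this model are precisely the slices $\{1\}\times E$. It follows that $\mathrm{d}\Phi$ identifies the tangent to $\widetilde{M/T}$ with $\Omega$ and the tangent to $T$ with the tangent to the orbit, which together span $\mathrm{T}M$; a parallel computation at every point shows $\Phi$ is everywhere a local diffeomorphism on the quotient. For surjectivity, given any $y\in M$, join $\pi(y)$ to $p_0$ by an orbifold path $\gamma$, lift horizontally, and observe $y$ lies in the $T$-orbit of $\lambda_\gamma(1)$; injectivity on $\widetilde{M/T}\times_\Gamma T$ follows because two elements with the same image differ by an element of $\Gamma$ (paths with the same endpoint after horizontal lift differ by monodromy). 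Compactness of $M$, together with properness of the $T$-action, promotes the local-diffeomorphism/bijection to a diffeomorphism.

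For the symplectic compatibility, equip $\widetilde{M/T}\times T$ with $\widetilde{\psi}^*\nu\oplus\omega^T$, where $\nu$ is the reduced orbifold symplectic form on $M/T$ characterized by $\pi^*\nu|_\Omega=\omega|_\Omega$, and $\omega^T$ is the translation-invariant form on $T$ induced by $\omega^{\mathfrak{t}}$. At any point $\Phi([\gamma],t)$, the tangent space decomposes as $\Omega\oplus\mathrm{T}(T\cdot x)$: these subspaces are symplectic orthogonal by the very definition of $\Omega$, the vertical-vertical part of $\omega$ agrees with $\omega^{\mathfrak{t}}$ by Proposition~\ref{constlem}, and the horizontal-horizontal part pulls back to $\nu$ by definition. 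Hence $\Phi^*\omega$ equals the product form, which is $\Gamma$-invariant and so descends to $\widetilde{M/T}\times_\Gamma T$. The main obstacle, in my view, is the careful bookkeeping needed to make all of the above rigorous in the orbifold setting at non-principal orbits: one must ensure that the atlas of Proposition~\ref{pl}, the orbifold fundamental group, and the horizontal lifts through orbifold charts interact consistently, but Lemma~\ref{tubetheorem2} provides the necessary local smooth models to carry the argument through uniformly.
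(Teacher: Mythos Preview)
Your proposal is correct and follows essentially the same route as the paper's proof: both define $\Phi([\gamma],t)=t\cdot\lambda_\gamma(1)$ via horizontal lifting for the flat connection $\Omega$, check that $\Phi$ is $\Gamma$-invariant via the monodromy homomorphism, and then verify that the induced map is a $T$-equivariant symplectomorphism using the splitting ${\rm T}_xM=\Omega_x\oplus {\rm T}_x(T\cdot x)$. The paper's proof is considerably terser (it simply asserts that $\Phi$ is a smooth covering and that the induced map is a diffeomorphism), so your more detailed verification of the local-diffeomorphism property via Lemma~\ref{tubetheorem2} and of the symplectic compatibility via Proposition~\ref{constlem} and the definition of $\nu$ is a faithful expansion rather than a different argument.
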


\begin{proof}
For any homotopy class 
$[\gamma]\in \widetilde{M/T}$ and $t\in T$, define  
$
\Phi([\gamma],\, t) := t\cdot\lambda_{\gamma}(1) \in M. 
$
The assignment $([\gamma],\,t) \mapsto \Phi([\gamma],\,t)$ defines a 
smooth covering $\Phi \colon \widetilde{M/T} \times T\to M$ between smooth manifolds. Let
$[\delta]\in\pi^{\textup{orb}}_1(M/T, \, p_0)$ act on $\widetilde{M/T} \times T$ by sending 
the pair $([\gamma],\, t)$ to
$([\gamma \, \delta^{-1}],\,  \mu([\delta])\, t)$. One can show that this action is free, and hence the associated bundle
$\widetilde{M/T}\times_{\pi^{\textup{orb}}_1(M/T, \, p_0)} T$
is a smooth manifold.  The mapping $\Phi$ induces a diffeomorphism $\phi$ from 
 $\widetilde{M/T}\times_{\pi^{\textup{orb}}_1(M/T, \, p_0)} T$ onto $M$.
By definition, $\phi$ intertwines the action of $T$ by translations on 
the right factor of 
$\widetilde{M/T} \times_{\pi^{\textup{orb}}_1(M/T, \, p_0)} T$ with the action of $T$ on $M$.
It follows
from the definition of the symplectic form 
on $\widetilde{M/T} \times_{\pi^{\textup{orb}}_1(M/T, \, p_0)} T$
that  $\phi$ is a $T$\--equivariant symplectomorphism.
\end{proof}

\begin{theorem}[\cite{Pe}] \label{t4}
Compact connected symplectic $2n$\--dimensional manifolds $(M,\omega)$ endowed
with a maximal symplectic $T$\--action with $\dim T=\dim M-2$ are classified up to $T$\--equivariant symplectomorphisms by: {\rm 1)} fundamental form $\omega^{\mathfrak{t}} \colon \mathfrak{t} \times \mathfrak{t} \to
\mathbb{R}$;  {\rm 2)}  Fuchsian signature $(g;\, \vec{o})$ of $M/T$;  
{\rm 3)}  symplectic area $\lambda$ of $M/T$;   
{\rm 4)}  monodromy of the connection $\Omega$ of  orthocomplements to the $T$\--orbits.

Moreover, for any list {\rm 1)\--4)} there exists a compact, connected
symplectic manifold with an effective symplectic $T$\--action of a torus $T$ of dimension $2n-2$ 
with $(2n-2)$\--dimensional 
symplectic $T$\--orbits 
whose list of invariants is precisely this one.
\end{theorem}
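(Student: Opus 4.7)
The plan is to use Theorem~\ref{t3} as the structural backbone: it already expresses any $(M,\omega)$ with a maximal symplectic $T$-action as the associated bundle $\widetilde{M/T}\times_{\pi_1^{\textup{orb}}(M/T)} T$, so the classification reduces to showing (a) the four listed invariants determine this twisted product up to $T$-equivariant symplectomorphism, and (b) for every admissible tuple, a symplectic manifold realizing these invariants can be constructed.

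For the uniqueness direction, suppose $(M_1,\omega_1)$ and $(M_2,\omega_2)$ share all four invariants. The fundamental form $\omega^{\mathfrak{t}}$ fixes the invariant symplectic form on the fiber $T$. Because $\dim T=\dim M-2$, the orbit spaces $M_i/T$ are symplectic orbisurfaces; by Lemma~\ref{TT} the Fuchsian signature $(g;\vec{o})$ determines them as smooth orbifolds, and the orbifold Moser theorem then determines $(M_i/T,\nu_i)$ up to symplectomorphism from the total area $\lambda$. Pulling back by such a symplectomorphism, identify the universal covers $\widetilde{M_i/T}$ and orbifold fundamental groups. The remaining freedom is the monodromy $\mu_i\colon \pi_1^{\textup{orb}}(M_i/T)\to T$; by hypothesis, the images on the chosen homology generators lie in the same $\mathcal{G}_{(g;\vec{o})}$-orbit, so after composing with an orbifold symplectomorphism inducing a suitable automorphism of $\pi_1^{\textup{orb}}$ (and possibly translating the base point $x_0\in M$, which translates $\mu$ by an inner element of $T$), we arrange $\mu_1=\mu_2$. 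The induced map between the two associated-bundle models is then the desired $T$-equivariant symplectomorphism.

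For existence, given a tuple $(\omega^{\mathfrak{t}},(g;\vec{o}),\lambda,\mathsf{m})$: first use Lemma~\ref{TT} to build an orbisurface $\mathcal{O}$ of signature $(g;\vec{o})$, equipped with any symplectic form $\nu$ of area $\lambda$ (choices are identified by orbifold Moser); then equip $T$ with the invariant symplectic form $\omega^T$ coming from $\omega^{\mathfrak{t}}$; choose representatives $((a_i,b_i)_{i=1}^g,(c_k)_{k=1}^m)\in T^{2g+m}_{(g;\vec{o})}$ of $\mathsf{m}$ and define $\mu\colon \pi_1^{\textup{orb}}(\mathcal{O})\to T$ by sending the standard generators to these elements. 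The conditions $o_k c_k=0$ (and $\sum_k c_k=0$ in the appropriate splitting) are exactly what make this assignment respect the defining relations in $\mathrm{H}_1^{\textup{orb}}$. Form $M:=\widetilde{\mathcal{O}}\times_{\pi_1^{\textup{orb}}(\mathcal{O}),\,\mu} T$ with the symplectic form descended from the product of $\nu$ on $\widetilde{\mathcal{O}}$ and $\omega^T$ on $T$, and the $T$-action by right translation on the fiber. Running through the definitions of the four invariants on this model verifies that $(M,\omega)$ realizes the prescribed data.

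The main obstacle is the bookkeeping around the monodromy invariant: one must identify $\mathcal{G}_{(g;\vec{o})}$ precisely as the image in $\textup{Aut}(\mathrm{H}_1^{\textup{orb}}(M/T;\mathbb{Z}))$ of the group of orientation-preserving symplectic orbifold diffeomorphisms of $(M/T,\nu)$, combined with the $T$-translational ambiguity coming from the base point $x_0$, and then show both that the $\mathcal{G}_{(g;\vec{o})}$-orbit is independent of the auxiliary choices (symplectic basis of the free part, splitting of torsion, base point in $M$) and, conversely, that two homomorphisms whose evaluations lie in the same orbit can be intertwined by an equivariant symplectomorphism of the models. Granting this identification, together with the symplectic normal form of Lemma~\ref{tubetheorem2} near orbits with nontrivial stabilizer (to guarantee smoothness of $M$ across the cone points of $M/T$), the theorem follows.
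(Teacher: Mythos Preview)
The paper does not actually prove Theorem~\ref{t4}; it is stated with a citation to \cite{Pe} and followed only by remarks on which data can occur. Your outline is exactly the argument that the paper's exposition is steering toward: Theorem~\ref{t3} supplies the model $\widetilde{M/T}\times_{\pi_1^{\textup{orb}}(M/T)}T$, Lemma~\ref{TT} together with the orbifold Moser theorem handles invariants 2) and 3), the fundamental form fixes the fiber, and the monodromy invariant pins down the twisting. This is the approach in \cite{Pe}, so your proposal is essentially correct and on the same track.

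Two small corrections are worth making. First, the phrase ``translates $\mu$ by an inner element of $T$'' is empty: $T$ is abelian, so moving $x_0$ within its $T$-orbit leaves $\mu$ unchanged, and moving $p_0$ in $M/T$ conjugates $\pi_1^{\textup{orb}}$, which becomes invisible after passing to $\mathrm{H}_1^{\textup{orb}}$. The genuine ambiguity absorbed by $\mathcal{G}_{(g;\vec{o})}$ comes from orbifold diffeomorphisms of $M/T$ acting on the chosen generating set, not from base-point changes. Second, in the existence step, smoothness of $\widetilde{\mathcal{O}}\times_{\mu}T$ needs the diagonal $\pi_1^{\textup{orb}}$-action to be \emph{free}; since $\gamma_k$ has a fixed point in $\widetilde{\mathcal{O}}$, this forces $\mu(\gamma_k)$ to have order \emph{exactly} $o_k$, not merely dividing $o_k$. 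This is precisely the constraint hidden in the notation $T^{2g+m}_{(g;\vec{o})}$ and is what makes the local picture around a cone point match Lemma~\ref{tubetheorem2}; your closing sentence gestures at this but does not state the condition.
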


 {Theorem~\ref{t4} is extension of Theorem~\ref{delzant} to a class of symplectic actions
 which are never Hamiltonian. The first part of Theorem~\ref{t4} is uniqueness. The last part is an existence
result for which we have not provided details for simplicity; this would amount to say which
form $\omega^{\mathfrak{t}}$, etc. can appear. We shall say, however, that any antisymmetric
bilinear form can appear, essentially all tuples as in 2) (with very few exceptions), and any
$\lambda>0$ can appear in 3). Similarly for 4). Readers may consult \cite{Pe} for the precise list.

\subsection{Coisotropic actions} \label{prev} 

This section gives  invariants of coisotropic actions. Using these invariants
we construct a model of $(M,\omega)$ and the $T$\--action. Let $(M,\omega)$
be a compact connected symplectic manifold endowed with a coisotropic $T$\--action
of a torus $T$.

\subsubsection{Hamiltonian subaction}

We construct the maximal subtorus $T_{{\rm h}}$ of $T$ which acts
in a Hamiltonian fashion on $(M,\omega)$, and for which accordingly there is an associated
momentum map $\mu$ and an associated polytope $\Delta$ as described in
Section~\ref{ham:sec}. 

The group $T_{{\rm h}}$ is the product of all the different stabilizer subgroups of $T$, which 
is a subtorus of $T$.

Let $x \in M$ and let $m=\dim T_x$. Let  $K$  be a complementary subtorus of 
the subtorus $H:=T_x$ in $T$.  For $t\in T$, let $t_x$ and $t_K$ 
be the unique elements in $T_x$ and $K$, respectively, such that 
$t=t_x\, t_K.$ Recall that
 $\mathfrak{l}=\textup{ker}(\omega^{\mathfrak{t}})$, where $\omega^{\mathfrak{t}}$ is the
 fundamental form in Proposition~\ref{constlem}.   Let $X\mapsto X_{\got{l}}$ be a linear 
projection from $\got{t}$ onto $\got{l}$.

The $H$\--invariant inner product $\beta _j$ on $E_j$, 
introduced in the proof of Lemma \ref{above2},  
is unique, if we also require that the symplectic 
inner product of any orthonormal basis with respect to 
$\omega^W$ is equal to $\pm 1$. In turn this leads to 
the existence of a unique complex structure on 
$E_j$ such that, for any unit vector 
$e_j$ in $(E_j,\,\beta _j)$, we have that 
$e_j$, ${\rm i}\, e_j$ is an orthonormal basis 
in $(E_j,\,\beta _j )$ and $\omega^W(e_j,\,{\rm i}\, e_j)=1$. 
This leads to an identification of $E_j$ with $\C$, 
which is unique up to multiplication by an 
element of $S^1 :=\{ z\in\C\mid |z|=1\}$. 
 
In turn this leads to an identification of $W$ with $\C^ m$, 
with the symplectic form $\omega^W$ defined by 
$
\omega^{\C ^m}
= \frac{1}{2 {\rm i}}\sum_{j=1}^m\,{\rm d}\overline{z^j}\wedge
{\rm d}\! z^j.$  The element $c\in\T ^m$ acts 
on $\C ^m$ component wise
by $(c\cdot z)^j=c^j\, z^j$. There is a unique Lie group isomorphism 
$\iota :H\to\T ^m$ such that 
$h\in H$ acts on $W=\C ^m$ by sending 
$z\in\C ^m$ to $\iota (h)\cdot z$. 
The identification of $W$ with $\C ^m$ is unique up to 
a permutation of the coordinates and the action of an 
element of $\T ^m$. 

Let $T$ act on $K \times (\mathfrak{l}/\mathfrak{t}_x)^* \times \mathbb{C}^m$ 
by 
$
t \cdot (k,\,\lambda ,\, z)=(t_K\, k,\,\lambda ,\, \iota (t_x)\cdot z). \nonumber
$
endowed with the symplectic form given at a point $(k,\,\lambda ,\, z)$ and pair of vectors
$((X,\,\delta\lambda ,\,\delta z),\, (X',\,\delta '\lambda ,\,\delta 'z))$ by
$
\omega^{\got{t}}(X,\, X')
+\delta\lambda (X'_{\got{l}})-\delta '\lambda (X_{\got{l}})
+\omega^{\C ^m}(\delta z,\,\delta 'z). 
$

\begin{lemma} \label{pt}
There is an open $\T ^m$\--invariant 
neighborhood $V$ of $(0,0)$ in
$(\mathfrak{l}/\mathfrak{t}_x)^*\times \C ^m$,  an open $T$\--invariant neighborhood $U_x$ of $x$ in $M$,
and 
a $T$\--equivariant symplectomorphism
\begin{eqnarray} \label{nm}
\Phi \colon K\times V \to U_x
\end{eqnarray}
such that $\Phi (1,\, 0)=x$.\end{lemma}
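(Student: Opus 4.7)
The plan is to specialise the general Benoist--Ortega--Ratiu normal form (Theorem~\ref{Gthm}) to the coisotropic setting, exploiting the simplifications recorded in Lemma~\ref{coisotropicorbitlem} and Lemma~\ref{above2}. First I would apply Theorem~\ref{Gthm} with $G = T$ and $H = T_x$, obtaining a $T$-invariant open neighborhood of $x$ that is $T$-equivariantly symplectomorphic to the model $T \times_H E$ equipped with the symplectic form $\pi^* \omega^{T/H} + \mathrm{d}\eta$, where $E = (\mathfrak{l}/\mathfrak{h})^* \times W$. By Lemma~\ref{above2}, $H$ is a connected subgroup of the torus $T$, hence a subtorus; since the integer lattice $\mathfrak{h}_{\mathbb{Z}}$ is a primitive sublattice of $\mathfrak{t}_{\mathbb{Z}}$, one may pick a complementary subtorus $K \subset T$, giving a Lie group isomorphism $T \simeq H \times K$ and the decomposition $t = t_x \, t_K$ appearing in the statement.

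Next I would use this splitting to rewrite $T \times_H E$ as $K \times E$ as $T$-spaces. Every $H$-orbit in $T \times E$ has a unique representative of the form $(1,k,e)$ with $k \in K$, producing a $T$-equivariant diffeomorphism $K \times E \to T \times_H E$. Under this map, the $T$-action $t \cdot [(t',e)] = [(t t', e)]$ is translated, after moving the $H$-component to the second slot via the equivalence relation, into $t \cdot (k,e) = (t_K k, \, t_x \cdot e)$. Since $T$ is abelian, the coadjoint action of $H$ on $(\mathfrak{l}/\mathfrak{h})^*$ is trivial, while on $W$ the construction in the proof of Lemma~\ref{above2} identifies $W$ symplectically with $\mathbb{C}^m$ (with the standard form $\omega^{\mathbb{C}^m}$) and provides a Lie group isomorphism $\iota \colon H \to \mathbb{T}^m$ so that $h \in H$ acts componentwise on $\mathbb{C}^m$ by $\iota(h)$. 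Using $\mathfrak{h} = \mathfrak{t}_x$ then yields exactly the model $K \times (\mathfrak{l}/\mathfrak{t}_x)^* \times \mathbb{C}^m$ with the stated $T$-action, with $V$ taken to be a small $\mathbb{T}^m$-invariant neighborhood of the origin corresponding to the $E_0$ of Theorem~\ref{Gthm}.

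Finally, I would verify that the symplectic form $\pi^* \omega^{T/H} + \mathrm{d}\eta$ transforms to the one written in the statement. The projection $T \times_H E \to T/H$ composed with the splitting gives the projection $K \times V \to K$; choosing the complement $\mathfrak{k}$ of $\mathfrak{h}$ in $\mathfrak{t}$ provided by the splitting, the left-invariant form $\omega^{T/H}$ is determined by $\omega^{\mathfrak{t}}$ restricted to $\mathfrak{k}$, and pulling back reproduces the term $\omega^{\mathfrak{t}}(X, X')$ evaluated on the full $\mathfrak{t}$-components (the $\mathfrak{h}$-parts drop out of $\omega^{\mathfrak{t}}$ because $\mathfrak{h} \subset \mathfrak{l} = \ker \omega^{\mathfrak{t}}$ by Lemma~\ref{coisotropicorbitlem}). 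Differentiating the defining expression for $\eta^{\#}$ at a point of the form $(1,(\lambda, z))$ and pairing two tangent vectors $(X, \delta\lambda, \delta z)$ and $(X', \delta'\lambda, \delta' z)$, the $\lambda(X_{\mathfrak{l}})$ summand contributes the skew term $\delta\lambda(X'_{\mathfrak{l}}) - \delta'\lambda(X_{\mathfrak{l}})$, while the quadratic $\omega^W$-summand contributes $\omega^{\mathbb{C}^m}(\delta z, \delta' z)$ together with terms involving $X_{\mathfrak{h}} \cdot w$ that are absorbed into a $T$-equivariant shift of the identification (so one may arrange them to vanish on the nose along the chosen section).

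The principal obstacle will be the last step: tracking the pullback of $\pi^*\omega^{T/H} + \mathrm{d}\eta$ through the identifications $T \simeq H \times K$ and $W \simeq \mathbb{C}^m$ precisely enough to see that no additional cross terms between $\mathfrak{k}$, $(\mathfrak{l}/\mathfrak{t}_x)^*$, and $\mathbb{C}^m$ survive, so that the model form reduces exactly to $\omega^{\mathfrak{t}}(X,X') + \delta\lambda(X'_{\mathfrak{l}}) - \delta'\lambda(X_{\mathfrak{l}}) + \omega^{\mathbb{C}^m}(\delta z, \delta' z)$. All other ingredients are structural consequences of Theorem~\ref{Gthm}, Lemma~\ref{coisotropicorbitlem} and Lemma~\ref{above2}.
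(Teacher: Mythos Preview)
Your proposal is correct and follows precisely the route the paper intends: the paper does not give a separate proof of Lemma~\ref{pt} but presents it as the specialization of the Benoist--Ortega--Ratiu normal form (Theorem~\ref{Gthm}) to the coisotropic case, after setting up the complementary subtorus $K$, the identification $W\simeq\C^m$ via the isomorphism $\iota:H\to\T^m$ from the proof of Lemma~\ref{above2}, and writing out the resulting $T$\--action and symplectic form on $K\times(\mathfrak{l}/\mathfrak{t}_x)^*\times\C^m$. Your steps (apply Theorem~\ref{Gthm}, split $T\simeq H\times K$ using connectedness of $H$, rewrite $T\times_HE$ as $K\times E$, and verify that $\pi^*\omega^{T/H}+{\rm d}\eta$ becomes the displayed form) are exactly the content implicit in the paragraphs preceding the lemma, and your identification of the symplectic\--form bookkeeping as the only nontrivial verification is accurate.
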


 \begin{prop} \label{hc}
 The product of  all  stabilizers
is a subtorus of $T$, denoted by $T_{{\rm h}}$, and it acts on $M$ in a Hamiltonian fashion.
Furthermore,any complementary subtorus $T_{\rm{f}}$ to $T_{{\rm h}}$ in $T$ act freely on $M$.
\end{prop}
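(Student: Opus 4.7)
The plan is to split the proof into three parts: showing $T_{\rm h}$ is a subtorus, showing the $T_{\rm h}$-action is Hamiltonian, and showing that a complementary subtorus $T_{\rm f}$ acts freely.

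For the first part, I would invoke Lemma~\ref{above2} to conclude that each stabilizer $T_x$ is connected, hence a subtorus of $T$. The tube theorem combined with the compactness of $M$ ensures that only finitely many distinct stabilizer subgroups occur in $M$. Since the product of a finite collection of subtori of $T$ is again a subtorus---its Lie algebra is simply the sum of the individual rational subalgebras, so it remains rational with respect to the integral lattice $\mathfrak{t}_{\mathbb{Z}}$---the product $T_{\rm h}$ is a subtorus of $T$.

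For the second and most delicate part, I would first establish the crucial inclusion $\mathfrak{t}_{\rm h} \subseteq \mathfrak{l} := \ker\omega^{\mathfrak{t}}$. Indeed, for $X \in \mathfrak{t}_x$ one has $X_M(x) = 0$, and Proposition~\ref{constlem} then gives $\omega^{\mathfrak{t}}(X, Y) = \omega_x(X_M(x), Y_M(x)) = 0$ for every $Y \in \mathfrak{t}$, so $\mathfrak{t}_x \subseteq \mathfrak{l}$; summing over $x$ yields $\mathfrak{t}_{\rm h} \subseteq \mathfrak{l}$. Hence the fundamental form vanishes on $\mathfrak{t}_{\rm h}$, which is the necessary linear-algebraic prerequisite for being Hamiltonian. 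To construct a momentum map for the $T_{\rm h}$-action I would appeal to the local normal form of Lemma~\ref{pt}: the action of the stabilizer $T_x$ on the slice $\mathbb{C}^m$ is, after identification via $\iota$, the standard linear $\mathbb{T}^m$-action, which admits the standard momentum map $z \mapsto \tfrac{1}{2}(|z_1|^2, \ldots, |z_m|^2)$. Pulled back by $\iota^*$ and restricted to $\mathfrak{t}_{\rm h}$, this yields a local momentum map $\mu_x \colon U_x \to \mathfrak{t}_{\rm h}^*$ on a $T$-invariant neighborhood of each orbit $T\cdot x$. The main obstacle is patching these local momentum maps into a global momentum map $\mu \colon M \to \mathfrak{t}_{\rm h}^*$, which amounts to showing that for every $X \in \mathfrak{t}_{\rm h}$ the closed $1$-form $\iota_{X_M}\omega$ represents the zero class in $H^1(M,\mathbb{R})$. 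I would address this by using the $T$-invariance of the forms involved together with the finiteness of orbit types to check that the \v{C}ech cocycle computed from $\{\mu_x\}$ on overlaps is a coboundary.

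For the last part, since $T/T_{\rm h}$ is itself a torus, the exact sequence $1 \to T_{\rm h} \to T \to T/T_{\rm h} \to 1$ admits a splitting at the level of integral lattices, so we may choose a complementary subtorus $T_{\rm f}$ with $T_{\rm h} \cap T_{\rm f} = \{1\}$. For any $x \in M$ the stabilizer $T_x$ lies inside $T_{\rm h}$ by the very definition of $T_{\rm h}$, hence $T_x \cap T_{\rm f} \subseteq T_{\rm h} \cap T_{\rm f} = \{1\}$, which shows that $T_{\rm f}$ acts freely. The hard part I anticipate is the cohomological obstruction in the Hamiltonian step, since local Hamiltonians always exist near fixed points of $X_M$ and the entire content of the statement lies in gluing them to a globally defined object.
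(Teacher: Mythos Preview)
Your first and third parts are essentially the paper's own arguments. For finiteness of stabilizers the paper actually chooses to extract it from Lemma~\ref{pt} (the local model has exactly $2^m$ distinct stabilizers in each chart, and compactness finishes it) rather than from the tube theorem, but this is a cosmetic difference that the paper itself remarks on. Your reason that the product of finitely many subtori is a subtorus via rationality of the sum of Lie algebras is correct; the paper simply says ``compact and connected subgroup of $T$'', which is quicker. The freeness of $T_{\rm f}$ is identical to the paper's one-line argument $T_x\cap T_{\rm f}\subset T_{\rm h}\cap T_{\rm f}=\{1\}$.

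The Hamiltonian step is where there is a genuine gap. You correctly isolate the obstruction (the class of $\iota_{X_M}\omega$ in $H^1(M;\R)$ for $X\in\mathfrak{t}_{\rm h}$) and correctly note that local momentum maps exist in the charts of Lemma~\ref{pt}. But your proposed resolution---``use $T$-invariance together with finiteness of orbit types to check that the \v{C}ech cocycle on overlaps is a coboundary''---is not an argument; those two ingredients by themselves do not force the cocycle to be trivial, and this sentence is exactly where the entire content of the claim lives. The paper does not prove this step either: it refers to \cite[Corollary~3.11]{DuPe}. The actual mechanism there, which the present paper sketches in the subsequent subsections, is structural rather than \v{C}ech-theoretic: one shows that the basic closed $\mathfrak{l}^*$-valued one-form $\widehat{\omega}$ turns $M/T$ into an $\mathfrak{l}^*$-parallel space isomorphic to $\Delta\times(N/P)$ with $N=(\mathfrak{l}/\mathfrak{t}_{\rm h})^*$, and the $T_{\rm h}$-momentum map is then \emph{defined} as the projection onto the contractible factor $\Delta$ (equation~\eqref{mumu}). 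The point is that for $X\in\mathfrak{t}_{\rm h}$ the descended form $\widehat{\omega}(X)$ has no component along $N/P$, so it lives on the polytope factor and is automatically exact. Your outline does not reach this structure, and without it the patching you propose has no reason to succeed.
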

 
\begin{proof}
As a consequence of Lemma~\ref{pt} the stabilizer subgroup of the $T$\--action on $K\times (\mathfrak{l}/\mathfrak{t}_x)^*\times \C ^m$ at 
$(k,\,\lambda ,\, z)$ is
$
\Big\{t_x \in T_x\,\,|\,\, \iota (t_x)^j=1\,\, \forall j \,\,\, {\rm such\,\, that}\,\, z^j\neq 0\Big\}.
$
In view of  (\ref{nm}) there are  $2^m$ different stabilizer subgroups $T_y$, $y\in U$. 
Since $M$ is compact there 
are only finitely many different stabilizer subgroups of $T$. The product of all the different stabilizer subgroups 
is a subtorus of $T$ because  the product of 
finitely many subtori is a compact and connected subgroup of 
$T$, and therefore it is a subtorus of $T$.  

See~\cite[Corollary~3.11]{DuPe} for the second claim. 

The torus $T_{{\rm f}}$ acts freely 
on $M$, because if $x\in M$, then $T_x\subset T_{{\rm h}}$, hence 
$T_x\cap T_{\rm f}\subset T_{{\rm h}}
\cap T_{{\rm f}}=\{ 1\}$, which proves (iii).
\end{proof}

The 
tube theorem of Koszul~\cite{koszul} or \cite[Theorem~2.4.1]{DuKo} implies that there
is a finite number  of stabilizer subgroups, and hence (i) in Proposition~\ref{hc}, but we wanted to derive this from 
Lemma~\ref{pt} because it is an essential result in the study of coisotropic actions.

\begin{prop} \label{pra}
$T_{{\rm h}}$ is 
the unique maximal stabilizer subgroup of $T$.  
\end{prop}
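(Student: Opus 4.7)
The plan is to exhibit $T_{\mathrm{h}}$ itself as a stabilizer subgroup, from which the conclusion is immediate since every stabilizer is, by construction, contained in the product $T_{\mathrm{h}}$ of all stabilizers. So the task reduces to producing a point $x \in M$ with $T_x = T_{\mathrm{h}}$.

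First I would invoke Proposition~\ref{hc} to get that the subtorus $T_{\mathrm{h}}$ acts on the compact connected symplectic manifold $(M,\omega)$ in a Hamiltonian fashion, and let $\mu_{\mathrm{h}} \colon M \to \mathfrak{t}_{\mathrm{h}}^*$ be a momentum map for this subaction. Since $M$ is compact, for any $X \in \mathfrak{t}_{\mathrm{h}}$ the smooth function $\langle \mu_{\mathrm{h}}(\cdot),X\rangle$ attains a maximum on $M$, producing a critical point; by Hamilton's equation $-\mathrm{d}\langle \mu_{\mathrm{h}},X\rangle = \omega(X_M,\cdot)$ together with non-degeneracy of $\omega$, any critical point of $\langle \mu_{\mathrm{h}},X\rangle$ for every $X \in \mathfrak{t}_{\mathrm{h}}$ is a point where every generator $X_M$ vanishes, i.e., a $T_{\mathrm{h}}$-fixed point. (Equivalently one may cite Theorem~\ref{gs}, noting that vertices of the momentum polytope are images of fixed points.) Thus there exists $x \in M$ with $T_{\mathrm{h}} \subseteq T_x$.

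Second, by the very definition of $T_{\mathrm{h}}$ as the product of all distinct stabilizer subgroups of the $T$-action, we have $T_x \subseteq T_{\mathrm{h}}$ for every $x \in M$. Combining the two inclusions at the chosen fixed point gives $T_x = T_{\mathrm{h}}$, so $T_{\mathrm{h}}$ is itself a stabilizer. Since every stabilizer $T_y$ satisfies $T_y \subseteq T_{\mathrm{h}} = T_x$, this stabilizer is maximal, and it is the unique maximal one because any stabilizer containing all others must equal $T_{\mathrm{h}}$.

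The main obstacle is the existence of $T_{\mathrm{h}}$-fixed points; once that is in hand, the double inclusion argument is automatic. The existence itself is standard for Hamiltonian torus actions on compact symplectic manifolds, but it is worth noting that it relies crucially on $T_{\mathrm{h}}$ being Hamiltonian (Proposition~\ref{hc}) rather than merely symplectic — the ambient $T$-action need not have any fixed points at all, as the non-Hamiltonian examples in Section~\ref{examples2} illustrate.
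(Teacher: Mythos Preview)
Your proof is correct and follows essentially the same route as the paper: use Proposition~\ref{hc} to know $T_{\mathrm{h}}$ acts Hamiltonianly, invoke the existence of a fixed point for this Hamiltonian subaction to obtain $T_{\mathrm{h}}\subseteq T_x$, and combine with the automatic inclusion $T_x\subseteq T_{\mathrm{h}}$ from the definition of $T_{\mathrm{h}}$. The only minor wrinkle is that your first argument for the fixed point (maximizing $\langle\mu_{\mathrm{h}},X\rangle$ for each $X$ separately) does not by itself yield a \emph{common} critical point for all $X$; but your parenthetical appeal to Theorem~\ref{gs} (vertices of the momentum polytope are images of fixed points) closes that gap cleanly, and indeed the paper simply cites the general fact that Hamiltonian torus actions on compact manifolds have fixed points.
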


\begin{proof}
Since any Hamiltonian torus action has fixed points, it follows 
from Proposition~\ref{hc} that there exist $x\in M$ such that 
$T_{{\rm h}}\subset T_x$, hence $T_{{\rm h}}=T_x$ because 
the definition of $T_{{\rm h}}$  implies that 
$T_x\subset T_{{\rm h}}$ 
for every $x\in M$.
\end{proof}
 
Let 
\begin{eqnarray} \label{mom}
\mu :M\to\Delta\subset 
{\mathfrak{t}_{{\rm h}}}^*
\end{eqnarray}
 be  the momentum map of 
$T_{{\rm h}}$\--action. 
Its fixed points are the $x\in M$ such that $\mu (x)$ is a vertex of 
the Delzant polytope $\Delta$.

\subsubsection{Orbit space} \label{361}

{By Leibniz identity
for the Lie derivative one can show that for each $X\in\mathfrak{l}$,
$
\widehat{\omega}(X) := \, -{\rm i}_{X_M}\omega
$ 
is a closed 
basic one\--form on $M$ and $X\mapsto\widehat{\omega} (X)_x$ is an $\mathfrak{l}^*$\--valued 
linear form on ${\rm T}_x M$, which we denote by 
$\widehat{\omega}_x$. Hence $x\mapsto\widehat{\omega}_x$ 
is a basic closed $\mathfrak{l}^*$\--valued one\--form on $M$, which we 
denote by $\widehat{\omega}$ and
$
\widehat\omega _x(v)(X)
=\omega _x(v,\, X_M(x)),
\quad x\in M,\; v\in{\rm T}_x M,\; X\in\mathfrak{l}.
$

In Lemma~\ref{pt}
with $x\in M_{\rm{reg}}$, 
where $\mathfrak{t}_x=\{ 0\}$ and $m=0$, 
at each point  $\widehat{\omega}$ is
$(\delta t,\,\delta\lambda )\mapsto\delta\lambda :
\mathfrak{t}\times\mathfrak{l}^*\to\mathfrak{l}^*$.

  \begin{prop} 
For every $p\in (M/T)_{\rm{reg}}$ the induced  map
$
\widehat{\omega}_p: {\rm T}_p(M/T)_{\rm{reg}}\to\mathfrak{l}^*
$
is a linear isomorphism. 
\end{prop}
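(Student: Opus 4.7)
The plan is to reduce everything to the local normal form of Lemma~\ref{pt} at a regular point and compute $\widehat\omega$ directly in those coordinates. First, I would check that the source and target of $\widehat\omega_p$ have the same dimension. Since the action is coisotropic, Lemma~\ref{coisotropicorbitlem} gives $\dim M=\dim T+\dim\mathfrak{l}$, so
\[
\dim {\rm T}_p(M/T)_{\rm{reg}}=\dim M-\dim T=\dim\mathfrak{l}=\dim\mathfrak{l}^*.
\]
It therefore suffices to establish injectivity (equivalently, surjectivity).

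Next I would pick $x\in M_{\rm{reg}}$ with $\pi(x)=p$ and invoke Lemma~\ref{pt}. At a regular point $T_x=\{1\}$, so $H=\{1\}$, $m=0$, and $K=T$; the model becomes $T\times V$ with $V\subset\mathfrak{l}^*$ open, the $T$\--action is left translation on the first factor, and the symplectic form is
\[
\omega\bigl((X,\delta\lambda),(X',\delta'\lambda)\bigr)=\omega^{\mathfrak{t}}(X,X')+\delta\lambda(X'_{\mathfrak{l}})-\delta'\lambda(X_{\mathfrak{l}}).
\]
In these coordinates, for $Y\in\mathfrak{l}$ the fundamental vector field is $Y_M(k,\lambda)=(Y,0)$, and because $Y\in\ker(\omega^{\mathfrak{t}})$ we get
\[
\widehat\omega_{(k,\lambda)}(\delta t,\delta\lambda)(Y)=\omega\bigl((\delta t,\delta\lambda),(Y,0)\bigr)=\omega^{\mathfrak{t}}(\delta t,Y)+\delta\lambda(Y)=\delta\lambda(Y).
\]
Hence $\widehat\omega_x$ is exactly the projection $(\delta t,\delta\lambda)\mapsto\delta\lambda$ from $\mathfrak{t}\times\mathfrak{l}^*$ onto $\mathfrak{l}^*$, confirming the parenthetical remark in the excerpt.

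Finally, because $\widehat\omega$ is basic it descends along $\pi$. The differential $d_x\pi:{\rm T}_xM\to{\rm T}_p(M/T)_{\rm{reg}}$ in the model is $(\delta t,\delta\lambda)\mapsto\delta\lambda$, with kernel $\mathfrak{t}\times\{0\}$, which matches the kernel of $\widehat\omega_x$. The induced map $\widehat\omega_p$ on the quotient is therefore identified with the identity on $\mathfrak{l}^*$, in particular a linear isomorphism. I do not foresee any real obstacle: the only minor point to be careful about is that the identification ${\rm T}_p(M/T)_{\rm{reg}}\simeq\mathfrak{l}^*$ coming from $d_x\pi$ depends on the choice of $x\in\pi^{-1}(p)$, but $\widehat\omega$ being basic and $T$\--invariant guarantees that the conclusion---being an isomorphism---is independent of that choice.
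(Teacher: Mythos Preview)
Your proposal is correct and follows essentially the same approach as the paper: the paper itself does not give a separate proof of this proposition, but the sentence immediately preceding it already records the key computation---that in the local model of Lemma~\ref{pt} at a regular point (so $\mathfrak{t}_x=0$, $m=0$), $\widehat\omega$ is the projection $(\delta t,\delta\lambda)\mapsto\delta\lambda$---and your argument is a careful fleshing-out of exactly that remark, together with the routine dimension count from Lemma~\ref{coisotropicorbitlem}.
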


Therefore $\zeta \in \mathfrak{l}^*$ acts on $p \in (M/T)_{\rm{reg}}$ by traveling for time
$1$ from $p$ in the direction that $\zeta$ points to. We denote the arrival point
by $p+\zeta$. This action is 
not  defined on $(M/T)\setminus (M/T)_{\rm{reg}}$, it is only defined
in the directions of vectors which as linear forms vanish on the stabilizer subgroup
of the preimage under $\pi \colon M \to M/T$. 

Since $T_{{\rm h}}$ is the maximal stabilizer
subgroup (Proposition~\ref{pra}), and for each $x$, $T_x \subset T_{{\rm h}}$, the additive subgroup
\begin{eqnarray} \label{N}
N:=(\mathfrak{l}/\mathfrak{t}_{{\rm h}})^*
\end{eqnarray}
viewed as the set of linear forms on
$\mathfrak{l}$ which vanish on the Lie algebra $\mathfrak{t}_{{\rm h}}$ of $T_{\rm h}$, is the maximal subgroup
of $\mathfrak{l}^*$ which  acts on $M/T$. 
This turns $M/T$ into  
a {\em $\mathfrak{l}^*$\--parallel space}, intuitively a space modeled on $\mathfrak{l}^*$.
In~\cite[Section ~11]{DuPe} it is proved that they are  isomorphic to  the product of a closed convex set and a torus. 
In the case of $M/T$, the convex polytope is Delzant (Definition~\ref{dpol}), and equal to $\Delta$ in (\ref{mom}). 

\begin{prop}
 If $P$ is the {period lattice} of a $N$\--action 
on  $M/T$, the quotient Lie group $N/P$ is a torus, and $M/T$ is isomorphic to $\Delta \times (N/P)$.
\end{prop}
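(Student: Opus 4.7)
The plan is to recognize $M/T$ as an instance of the $\mathfrak{l}^*$-parallel spaces classified in \cite[Section~11]{DuPe} and to identify the two factors of the product decomposition with $\Delta$ and with a torus $N/P$.

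First I would exhibit a natural $T$-invariant map $\bar\mu : M/T \to \Delta$. Since $T$ is abelian, the Hamiltonian $T_{\rm h}$-momentum map $\mu : M \to \mathfrak{t}_{\rm h}^*$ from (\ref{mom}) is invariant under the full $T$-action, so it descends to a continuous map onto the Delzant polytope $\bar\mu : M/T \to \Delta$ (with $\Delta = \mu(M)$ by Theorem~\ref{gs}). Next, using Proposition~\ref{hc}, the complementary subtorus $T_{\rm f}$ acts freely on $M$, and the $N$-action on $M/T$ of Section~\ref{361}, defined via the isomorphism $\widehat{\omega}_p$, is precisely the residual action coming from $T_{\rm f}$ after taking the $T$-quotient; it therefore preserves each fibre of $\bar\mu$ and acts on each such fibre.

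Second, I would show that $\bar\mu$ is the ``convex projection'' and that the fibres are the $N$-orbits. On the regular stratum $(M/T)_{\rm reg}$, $\widehat\omega_p$ identifies the tangent space with $\mathfrak{l}^*$, and the subspace tangent to the $N$-orbit corresponds exactly to $(\mathfrak{l}/\mathfrak{t}_{\rm h})^* = \ker(\mathfrak{l}^* \to \mathfrak{t}_{\rm h}^*)$, so the $N$-orbits lie in the fibres of $\bar\mu$ and fill up their tangent spaces. At singular points, the local normal form of Lemma~\ref{pt} together with the fact that $T_x \subset T_{\rm h}$ for every $x$ (Proposition~\ref{pra}) shows that the ``missing'' directions in $\mathfrak{l}^*$ at boundary strata are exactly the directions conormal to the face of $\Delta$ that $\bar\mu$ maps into, which is precisely what is required for $(M/T,\, \bar\mu)$ to be a $\mathfrak{l}^*$-parallel space over the Delzant polytope $\Delta$ in the sense of \cite[Section~11]{DuPe}.

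Third, I would invoke the structure theorem for $V$-parallel spaces from \cite[Section~11]{DuPe}: any compact $V$-parallel space whose maximal subgroup acting globally is a closed subgroup $N \subset V$ is isomorphic to the product of its convex base with the quotient $N/P$, where $P \subset N$ is the isotropy subgroup (period lattice) of a point of an interior fibre. Since $M/T$ is compact and connected and each generic fibre of $\bar\mu$ is a compact $N$-homogeneous space, $P$ must be a discrete cocompact subgroup of the vector space $N$, hence a full-rank lattice, so $N/P$ is a torus. Combining with the identification of the base $\Delta$ gives $M/T \cong \Delta \times (N/P)$.

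The main obstacle will be the second step: verifying in detail that the partial $\mathfrak{l}^*$-action on $M/T$ has exactly the ``folding'' behaviour at each singular stratum that is demanded by the definition of $V$-parallel space, so that the codimension-$k$ faces of $\Delta$ correspond precisely to points where the stabilizer in $\mathfrak{l}^*$ of the action jumps by a $k$-dimensional subspace of $\mathfrak{t}_{\rm h}$. This is a careful stratum-by-stratum check using the symplectic slice model of Lemma~\ref{pt} and the Delzant condition on $\Delta$; once this is in place, the remainder of the argument is a direct application of the parallel-space classification.
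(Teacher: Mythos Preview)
Your overall approach matches the paper's: both recognize $M/T$ as an $\mathfrak{l}^*$-parallel space and invoke the structure theorem from \cite[Section~11]{DuPe} to obtain the product decomposition $\Delta \times (N/P)$. The paper does not give an independent proof of the proposition; it simply records it as the output of that structure theorem, together with the remark that ``an analysis of the singularities of $M/T$ allows one to define the structure of $\mathfrak{l}^*$-parallel space,'' which is precisely the stratum-by-stratum check via Lemma~\ref{pt} that you outline in your second and third steps.

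There is, however, a genuine error in your first paragraph. The $N$-action on $M/T$ is \emph{not} ``the residual action coming from $T_{\rm f}$ after taking the $T$-quotient.'' Since $T_{\rm f}$ is a subtorus of $T$, its action on $M$ descends to the \emph{trivial} action on $M/T$. The $N$-action arises in an entirely different way: it is the translation action defined by the flat affine structure that the closed $\mathfrak{l}^*$-valued one-form $\widehat{\omega}$ places on $(M/T)_{\rm reg}$, via the isomorphisms $\widehat{\omega}_p : {\rm T}_p(M/T)_{\rm reg} \to \mathfrak{l}^*$. This action is transverse to the (now trivial) $T$-direction, not inherited from it. You appear to use the correct picture in your second paragraph, so this may be a slip, but as written the clause ``it therefore preserves each fibre of $\bar\mu$'' is not justified by the reasoning you give. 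The correct justification is the one you supply later: $N = (\mathfrak{l}/\mathfrak{t}_{\rm h})^* = \ker(\mathfrak{l}^* \to \mathfrak{t}_{\rm h}^*)$ consists precisely of the affine directions annihilated by the differential of $\bar\mu$. Once this misstatement is removed, your plan is sound and coincides with the paper's.
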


An analysis of the singularities of $M/T$ allows one to define the structure of
$\mathfrak{l}^*$\--parallel space.   Any $\xi\in\mathfrak{l}^*$ may be viewed as a constant vector 
field on $(M/T)_{\rm{reg}}$.

\subsubsection{Singular connection}

If $\mathfrak{t}_{{\rm f}}$ denotes the Lie algebra of 
$T_{{\rm f}}$ in Propositon~\ref{hc}, then 
$\mathfrak{t}=\mathfrak{t}_{{\rm h}}\oplus\mathfrak{t}_{{\rm f}}$. 
Each linear form on ${\mathfrak{t}_{{\rm h}}}^*$ has a unique extension 
to a linear form on $\mathfrak{l}$ which is  zero 
on $\mathfrak{t}_{{\rm f}}$. This leads to 
an isomorphism of ${\mathfrak{t}_{{\rm h}}}^*$ 
with the subspace 
$(\mathfrak{l}/\mathfrak{l}\cap\mathfrak{t}_{{\rm f}})^*$ of 
$\mathfrak{l}^*$. This isomorphism depends on the choice of 
$T_{{\rm f}}$. Since
$\mathfrak{l}=\mathfrak{t}_{{\rm h}}\oplus 
(\mathfrak{l}\cap\mathfrak{t}_{{\rm f}})$, $
\mathfrak{l}^*=(\mathfrak{l}/\mathfrak{l}\cap\mathfrak{t}_{{\rm f}})^*\oplus 
(\mathfrak{l}/\mathfrak{t}_{{\rm h}})^*$. Let
\begin{equation}
\mu :M\to \Delta\subset 
(\mathfrak{l}/\mathfrak{l}\cap\mathfrak{t}_{{\rm f}})^*
\simeq {\mathfrak{t}_{{\rm h}}}^*
 \label{mumu}
\end{equation}
be
$\pi :M\to M/T$ followed by the projection 
$M/T\simeq\Delta\times (N/P) \to \Delta$. 
The map $\mu :M\to 
{\mathfrak{t}_{{\rm h}}}^*$ is a momentum mapping for the 
Hamiltonian $T_{{\rm h}}$\--action on $M$ in Proposition~\ref{hc} and coincides with (\ref{mom}).
The composite of $\pi :M\to M/T$ with the projection from 
$M/T\simeq\Delta\times (N/P) \to N/P$ is torus\--valued 
generalization of the $S^1$\--momentum map of McDuff~\cite{MD}.

\begin{definition}
 $L_{\zeta}  \in \mathcal{X}^{\infty}(M_{\textup{reg}})$ is a {\em lift of $\zeta$}
if  ${\rm d}_x\pi (L_{\zeta}(x))=\zeta$ for all $x\in M_{\rm{reg}}$.
\end{definition}
 
 \medskip

The word ``lift" is used above the sense $\zeta \in \mathfrak{l}^*$ is as a constant vector field on $(M/T)_{{\rm reg}}$ of which $L_{\zeta}$ is a lift.  
Linear assignments of lifts $\zeta \in \mathfrak{l}^* \mapsto L_{\zeta}$ depending linearly on $\zeta$
and connections for the principal torus bundle $M_{\rm{reg}} \to M_{\rm{reg}}/T$
are equivalent.  
The essential ingredient for construction of the model of $(M,\omega)$ with $T$\--action 
is the existence of the following connection:

\begin{theorem} \label{ppp}
 There is an antisymmetric bilinear map $c:N\times N\to\got{l}$ 
satisfying
$
\zeta (c(\zeta ',\,\zeta ''))
+\zeta '(c(\zeta '',\,\zeta ))
+\zeta ''(c(\zeta ,\,\zeta '))=0
$
for every $\zeta ,\,\zeta ',\,\zeta ''\in 
N$, and  a connection
\begin{eqnarray} \label{con}
\zeta \in \mathfrak{l}^* \mapsto L_{\zeta} \in \mathcal{X}^{\infty}(M_{\textup{reg}}),
\end{eqnarray}
 whose Lie brackets satisfy 
  \begin{eqnarray} \label{chern}
[L_{\zeta}, \,L_{\eta}]=c(\zeta,\,\eta)_M,\,\,\,\,\,\, \forall \zeta,\,\eta \in N,
\end{eqnarray}
and $[L_{\zeta}, \,L_{\eta}]=0$ otherwise,
as well as
$\omega _x(L_{\zeta}(x),\, L_{\eta }(x))
=-\mu (x)(c_{{\rm h}}(\zeta,\eta))\,\,\,\,\,\,\,  \forall \zeta ,\,\eta \in N,\,\,\, \forall x \in M,
$
where $\mu$ is the momentum map of the $T_{\rm h}$ action in {\rm (\ref{mumu})}, 
$c_{{\rm h}}(\zeta ,\eta)$ denote
the $\mathfrak{t}_{{\rm h}}$\--component of $c(\zeta ,\,\eta)$ 
in $\mathfrak{l}=\mathfrak{t}_{{\rm h}}
\oplus (\mathfrak{l}\cap\mathfrak{t}_{{\rm f}})$.  
 and 
$\omega( L_{\zeta},\, L_{\eta})=0$ otherwise.
\end{theorem}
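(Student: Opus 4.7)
The plan is to construct $L_\zeta$ as the unique lift of the constant vector field $\zeta$ on $(M/T)_{\rm reg}$ pinned down by a natural $\omega$-pairing condition, and then read off $c$ from its curvature. Fix a linear complement $\mathfrak{c}$ of $\mathfrak{l}$ in $\mathfrak{t}$ with associated projection $X\mapsto X_\mathfrak{l}$. For $\zeta\in\mathfrak{l}^*$ I would define $L_\zeta\in\mathcal{X}^\infty(M_{\rm reg})$ as the unique vector field satisfying $\pi_*L_\zeta=\zeta$ and $\omega_x(L_\zeta(x),X_M(x))=\zeta(X_\mathfrak{l})$ for every $X\in\mathfrak{t}$ and $x\in M_{\rm reg}$. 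Since at regular points the map $X\mapsto X_M(x)$ is injective, together with $\pi_*L_\zeta=\zeta$ this determines $L_\zeta(x)$ uniquely. Existence and smoothness follow from the Benoist--Ortega--Ratiu normal form (Lemma~\ref{pt}): in the coordinates $(k,\lambda)$ on $K\times(\mathfrak{l}/\mathfrak{t}_x)^*$, a direct computation with the explicit symplectic form shows that $\partial/\partial\lambda|_\zeta$ already satisfies the prescription. $T$-invariance of $L_\zeta$ is automatic from the $T$-invariance of $\omega$, of $X_M$, and of the defining equations.

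Because $\pi_*L_\zeta=\zeta$ is constant on the parallel space $(M/T)_{\rm reg}\cong\mathrm{int}(\Delta)\times(N/P)$, the Lie bracket $[L_\zeta,L_\eta]$ projects to zero, hence is vertical of the form $c(\zeta,\eta)_M$ for a unique antisymmetric bilinear map $c\colon\mathfrak{l}^*\times\mathfrak{l}^*\to\mathfrak{t}$. That $c$ takes values in $\mathfrak{l}$ I would verify by testing with $\widehat\omega$: applying $\iota_{X_M}\omega$ to the bracket for $X\in\mathfrak{c}$ and using the local-model computation shows the $\mathfrak{c}$-component of $c(\zeta,\eta)$ vanishes. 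The Jacobi identity for vector field brackets, combined with $[L_\zeta,X_M]=0$ (which follows from $T$-invariance of $L_\zeta$), reduces after pairing with the normalization $\omega(L_\zeta,X_M)=\zeta(X_\mathfrak{l})$ to the cocycle identity $\zeta(c(\zeta',\zeta''))+\zeta'(c(\zeta'',\zeta))+\zeta''(c(\zeta,\zeta'))=0$.

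For the symplectic pairing, I would use the chain of identities ${\rm L}_{L_\zeta}(\iota_{L_\eta}\omega)=\iota_{[L_\zeta,L_\eta]}\omega+\iota_{L_\eta}({\rm d}\iota_{L_\zeta}\omega)$ together with Hamilton's equation $-{\rm d}\langle\mu,X\rangle=\iota_{X_M}\omega$ for $X\in\mathfrak{t}_h$ to evaluate $\omega(L_\zeta,L_\eta)$. This expresses the pairing as a combination of $\mu$-dependent and constant terms; identifying coefficients yields $\omega(L_\zeta,L_\eta)=-\mu(c_h(\zeta,\eta))$ for $\zeta,\eta\in N$. Vanishing in the remaining directions follows because the normalization forces lifts in the $\mathfrak{t}_h^*$-directions to pair trivially with other horizontal lifts, reflecting that those directions come from Hamiltonian vector fields of $\mu$-coordinates. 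The main obstacle is establishing smoothness and global well-definedness of $L_\zeta$ together with the precise vanishing of $c$ outside $N\times N$; this is where the interplay between the Hamiltonian structure of the $T_h$-subaction (encoding $\mu$) and the free $T_f$-subaction (providing the $\mathfrak{t}_f$-vertical directions) becomes essential.
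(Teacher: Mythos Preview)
Your proposal has a genuine gap at its very first step: the two conditions you impose, $\pi_*L_\zeta=\zeta$ and $\omega_x(L_\zeta(x),X_M(x))=\zeta(X_{\mathfrak l})$ for all $X\in\mathfrak{t}$, do \emph{not} determine $L_\zeta(x)$ uniquely. Recall that the identification of $T_{\pi(x)}(M/T)_{\rm reg}$ with $\mathfrak{l}^*$ is precisely via $\widehat\omega$, so the condition $\pi_*L_\zeta=\zeta$ already says $\omega_x(L_\zeta(x),X_M(x))=\zeta(X)$ for all $X\in\mathfrak{l}$, which is nothing other than the restriction of your pairing condition to $\mathfrak{l}$. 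Hence your two conditions together are just the single family of linear equations $\omega_x(L_\zeta(x),\,\cdot\,)|_{\mathfrak{t}_M(x)}=\zeta\circ(\cdot)_{\mathfrak l}$, whose solution set is an affine translate of $\mathfrak{t}_M(x)^{\omega_x}$. Since principal orbits are coisotropic, $\mathfrak{t}_M(x)^{\omega_x}\subset\mathfrak{t}_M(x)$ has dimension $\dim\mathfrak{l}>0$. Concretely, in the local model $K\times\mathfrak{l}^*$ (regular point, $m=0$) with the symplectic form of Lemma~\ref{pt}, the tangent vectors $(Y,\zeta)$ with $Y\in\mathfrak{l}=\ker\omega^{\mathfrak t}$ all satisfy your prescription, because $\omega^{\mathfrak t}(Y,X')=0$ for every $X'\in\mathfrak t$.

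This ambiguity is exactly the heart of the matter. The paper does not prove Theorem~\ref{ppp} here; it refers to \cite[Proposition~5.5]{DuPe} and explicitly flags the construction of $L_\zeta$ as ``the most involved part'' of that paper. The point is that one must make a \emph{global} choice within the $\mathfrak{l}$-parameter family of local lifts so that the Lie brackets are not merely vertical but equal to $c(\zeta,\eta)_M$ for a \emph{constant} bilinear $c$, and so that the extra vanishing statements ($[L_\zeta,L_\eta]=0$ and $\omega(L_\zeta,L_\eta)=0$ outside $N\times N$) hold. Your local normalization does not enforce any of this. A secondary issue: your derivation of the cocycle identity via Jacobi is vacuous as written, since $[L_\zeta,X_M]=0$ makes the cyclic sum of $[L_\zeta,c(\zeta',\zeta'')_M]$ identically zero; the identity instead emerges from $\mathrm{d}\omega=0$ evaluated on the correctly constructed lifts, which again presupposes that construction.
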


The construction of (\ref{con}) is the most involved part of~\cite{DuPe} (Proposition 5.5 therein). It is a singular
connection which blows up at $M \setminus M_{\textup{reg}}$. The map $c$ has a  geometric interpretation, which we discuss next.

\subsubsection{Chern class}

There is an isomorphism 
$
(M/T)_{\rm{reg}}\simeq 
\Delta ^{\rm{int}}\times (N/P), 
$
induced by the isomorphism  $M/T \simeq \Delta \times (N/P)$.
Any connection for $T$\--bundle  $M_{\rm{reg}} \to M_{\rm{reg}}/T$ has 
a {\em curvature form}, a smooth $\mathfrak{t}$\--valued 
two\--form on $M_{\rm{reg}}/T$. Its cohomology class of 
this curvature form is an element of
${\rm H}^2(M_{\rm{reg}}/T,\,\mathfrak{t})$, which is independent of the 
choice of the connection.  The $N$\--action on $M/T$ leaves
$M_{\rm{reg}}/T\simeq (M/T)_{\rm{reg}}$ 
invariant, with orbits isomorphic to $N/P$. 

The 
pull\--back to the $N$\--orbits defines an isomorphism 
$
{\rm H}^2(M_{\rm{reg}}/T,\,\mathfrak{t}) \to {\rm H}^2(N/P,\,\mathfrak{t}), 
$
which is identified 
with $(\Lambda ^2N^*)\otimes\mathfrak{t}$ (this observation goes back to  \'Elie Cartan).

It follows from the construction of the connection (\ref{con}) 
 that  
$c \colon N \times N \to \mathfrak{l}$, viewed as an element in
$
c\in (\Lambda ^2N^*)\otimes\mathfrak{l}\subset(\Lambda ^2N^*)\otimes\mathfrak{t}
$ 
equals the negative of the pull\--back to an $N$\--orbit of the 
cohomology class of the curvature form. Hence
$c:N\times N\to\mathfrak{l}$ in (\ref{chern}) is independent of  $T_{\rm{f}}$.  
The Chern class $\mathcal{C}$ of the principal $T$\--bundle $\pi :M_{\rm{reg}}\to M_{\rm{reg}}/T$
is an element of ${\rm H}^2(M_{\rm{reg}}/T,\, T_{\Z})$.
It is known that the image 
of $\mathcal{C}$ in ${\rm H}^2(M_{\rm{reg}}/T,\,\mathfrak{t})$ 
under the coefficient homomorphism 
${\rm H}^2(M_{\rm{reg}}/T,\, T_{\Z})
\to{\rm H}^2(M_{\rm{reg}}/T,\,\mathfrak{t})$ 
is equal to the negative of the cohomology 
class of the curvature form of any connection in the principal 
$T$\--bundle, and hence we have the following.

\begin{prop}
The map $c \colon N \times N \to \mathfrak{l}$ represents the Chern class $\mathcal{C}$.
\end{prop}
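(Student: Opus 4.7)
The plan is to identify $c$ with the curvature form of the connection defined by $\zeta \mapsto L_{\zeta}$ on the principal $T$\--bundle $\pi \colon M_{\rm{reg}} \to M_{\rm{reg}}/T$, and then invoke the well\--known relation between the Chern class and the curvature class under the coefficient homomorphism. The statement is more of a culmination of the identifications already set up in the paragraphs preceding it than a theorem requiring new input, so the proof is essentially a bookkeeping of signs and identifications.

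First, I would let $\theta$ denote the $\mathfrak{t}$\--valued connection one\--form on $M_{\rm{reg}}$ dual to the decomposition ${\rm T}M_{\rm{reg}} = \text{span}\{X_M : X\in\mathfrak{t}\} \oplus \text{span}\{L_{\zeta} : \zeta\in\mathfrak{l}^*\}$, characterized by $\theta(X_M)=X$ for all $X \in \mathfrak{t}$ and $\theta(L_{\zeta})=0$ for all $\zeta\in\mathfrak{l}^*$. The dimension count $\dim M = \dim T + \dim \mathfrak{l}$ from Lemma~\ref{coisotropicorbitlem} ensures this really defines a connection. Since $L_{\zeta}$ and $L_{\eta}$ lie in the horizontal distribution, the structure equation $d\theta(L_{\zeta},L_{\eta}) = L_{\zeta}\,\theta(L_{\eta}) - L_{\eta}\,\theta(L_{\zeta}) - \theta([L_{\zeta},L_{\eta}])$ reduces, using (\ref{chern}) and $c(\zeta,\eta)\in\mathfrak{l}\subset\mathfrak{t}$, to
\[
d\theta(L_{\zeta},L_{\eta}) \;=\; -\,\theta\bigl(c(\zeta,\eta)_M\bigr) \;=\; -\,c(\zeta,\eta).
\]
Thus the curvature form $\Omega = d\theta$, which is a basic $\mathfrak{t}$\--valued two\--form on $M_{\rm{reg}}$ descending to $M_{\rm{reg}}/T$, satisfies $\Omega(L_{\zeta},L_{\eta}) = -c(\zeta,\eta)$ pointwise.

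Next, I would pull this back along any $N$\--orbit in $(M/T)_{\rm{reg}}$. Because on $(M/T)_{\rm{reg}}$ the action of $\zeta \in N$ is by the constant vector field pushed forward from $L_{\zeta}$, the restriction of $[\Omega]$ to an $N$\--orbit, under the identification ${\rm H}^2(N/P,\mathfrak{t}) \simeq (\Lambda^2 N^*)\otimes\mathfrak{t}$ observed by \'Elie Cartan, equals the constant antisymmetric bilinear map $(\zeta,\eta)\mapsto -c(\zeta,\eta)$, i.e.\ $-c \in (\Lambda^2 N^*)\otimes\mathfrak{l}$. Finally, the standard fact recalled in the paragraph preceding the statement is that the image of the Chern class $\mathcal{C}$ under the coefficient homomorphism ${\rm H}^2(M_{\rm{reg}}/T,T_{\mathbb{Z}})\to {\rm H}^2(M_{\rm{reg}}/T,\mathfrak{t})$ is $-[\Omega]$ for any connection; composing with restriction to an $N$\--orbit yields $-(-c)=c$, which is precisely the sense in which $c$ represents $\mathcal{C}$.

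The only real obstacle is being careful with the two negative signs -- one from the structure equation (the bracket term) and one from the convention that the image of the integral Chern class equals the \emph{negative} of the curvature cohomology class. Everything else is formal, since the existence of the connection $\zeta\mapsto L_{\zeta}$, the antisymmetry and Jacobi\--type identity for $c$, and the fact that $c$ is $\mathfrak{l}$\--valued are already built into Theorem~\ref{ppp}, and the identification of ${\rm H}^2$ of a torus with exterior powers of the dual lattice is classical.
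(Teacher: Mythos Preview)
Your proposal is correct and follows the same route as the paper: the paper does not give a separate proof, but states the proposition as the immediate consequence of the two facts recorded just before it, namely that $c$ equals the negative of the pull\--back to an $N$\--orbit of the curvature class of the connection (\ref{con}), and that the image of the integral Chern class under the coefficient homomorphism equals the negative of the curvature class. You have simply made explicit the first of these two facts by writing down the connection one\--form $\theta$ and computing $d\theta(L_{\zeta},L_{\eta})=-c(\zeta,\eta)$ from the bracket relation (\ref{chern}); the sign bookkeeping you emphasize is exactly the content of the argument.
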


\subsubsection{Toric foliation} \label{mh}

 Next we
describe a foliation of $M$ by symplectic\--toric manifolds as in Section~\ref{dp}.
Let
$
D_x:=\textup{span}\{\, L_{\eta}(x),\,\,Y_M(x) \,\,\,| \,\,\, Y \in \mathfrak{t}_{\textup{h}}, \,\, \eta \in C\,\},\,\,\, 
x \in M_{\rm{reg}}, 
$
where $C \oplus N = \mathfrak{l}^*$
and let 
$
\mathcal{D}:=\{D_x \,\,\, |\,\,\, x\in M\}.
$

\begin{prop} \label{p}
The distribution $\mathcal{D}:=\{D_x\}_{x \in M}$  is smooth, integrable and $T$\--invariant and the integral manifolds  of $\mathcal{D}$ are $(2\,\dim  T_{{\rm h}})$\--dimensional
symplectic manifolds and $T_{\rm h}$\--equivariantly
symplectomorphic to each other.
\end{prop}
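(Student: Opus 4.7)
The plan is to combine a Frobenius analysis on $M_{\rm reg}$ using the singular connection of Theorem~\ref{ppp} with the Benoist--Ortega--Ratiu local normal form (Lemma~\ref{pt} and Theorem~\ref{Gthm}) to extend across the non-regular stratum, and then to identify each integral manifold as a symplectic-toric manifold with momentum polytope $\Delta$ so that Theorem~\ref{delzant} yields the desired $T_{\rm h}$-equivariant symplectomorphisms between leaves.

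First I would treat $M_{\rm reg}$. Since $N=(\mathfrak{l}/\mathfrak{t}_{\rm h})^*$ and $C$ is a complement of $N$ in $\mathfrak{l}^*$, one has $\dim C=\dim \mathfrak{t}_{\rm h}$; and at any $x\in M_{\rm reg}$ the vectors $\{L_\eta(x),\,Y_M(x):\eta\in C,\,Y\in\mathfrak{t}_{\rm h}\}$ are linearly independent, because under $d_x\pi$ the $L_\eta(x)$ map onto $C\subset\mathfrak{l}^*$ while the $Y_M(x)$ lie in the kernel of $d_x\pi$ and are independent there since $T$ acts freely on $M_{\rm reg}$. Hence $\dim D_x=2\dim T_{\rm h}$. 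Involutivity of $\mathcal{D}|_{M_{\rm reg}}$ then follows by Frobenius from three facts: $[Y_M,Y'_M]=0$ since $T$ is abelian; $[L_\eta,Y_M]=0$ by $T$-invariance of the principal connection $L$; and $[L_\eta,L_{\eta'}]=0$ for $\eta,\eta'\in C$, because Theorem~\ref{ppp} asserts that $[L_\zeta,L_\eta]$ is nonzero only when both arguments lie in $N$.

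Next I would extend $\mathcal{D}$ smoothly across the non-regular set by means of Lemma~\ref{pt}. Writing $T=T_{\rm h}\times T_{\rm f}$ and $K=K_{\rm h}\times T_{\rm f}$ with $K_{\rm h}$ complementary to $T_x$ in $T_{\rm h}$, and splitting $(\mathfrak{l}/\mathfrak{t}_x)^*=(\mathfrak{t}_{\rm h}/\mathfrak{t}_x)^*\oplus N$, the claim is that on the chart $U_x\cong K\times(\mathfrak{l}/\mathfrak{t}_x)^*\times\mathbb{C}^m$ the distribution $\mathcal{D}$ corresponds to the constant linear subbundle
\[
\mathfrak{k}_{\rm h}\;\oplus\;\big((\mathfrak{t}_{\rm h}/\mathfrak{t}_x)^*\oplus\{0\}\big)\;\oplus\;\mathbb{C}^m,
\]
whose rank $(\dim T_{\rm h}-\dim T_x)+(\dim T_{\rm h}-\dim T_x)+2\dim T_x=2\dim T_{\rm h}$ is correct. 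To justify this I would express $Y_M$ and $L_\eta$ in the model coordinates of Theorem~\ref{Gthm} and verify that they span the displayed subbundle on $U_x\cap M_{\rm reg}$. Smoothness, $T$-invariance, and involutivity of $\mathcal{D}$ on $U_x$ are then manifest, and since the subbundle is independent of the auxiliary choices ($T_{\rm f}$, $K_{\rm h}$, $\iota$, $\mathfrak{c}$) the local extensions glue into a globally defined smooth, $T$-invariant, integrable distribution of rank $2\dim T_{\rm h}$ on $M$.

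Finally I would identify the leaves. Restricting the model symplectic form of Theorem~\ref{Gthm} to the subbundle above yields the sum of the canonical pairing between $\mathfrak{k}_{\rm h}$ and $(\mathfrak{t}_{\rm h}/\mathfrak{t}_x)^*$ with the standard form $\omega^{\mathbb{C}^m}$, both non-degenerate, so $\omega|_F$ is symplectic. The leaf $F$ is $T_{\rm h}$-invariant, compact, and connected, and because the only $\mathfrak{t}$-components in $D_x$ come from $\mathfrak{t}_{\rm h}$, the leaf is a local slice of the $T_{\rm f}$-action inside the fiber of the projection $M\to M/T\simeq\Delta\times(N/P)\to N/P$; a connectedness and dimension argument promotes this to a global slice, and then $T_{\rm f}$-invariance of the momentum map $\mu$ in~(\ref{mumu}) for the Hamiltonian $T_{\rm h}$-action of Proposition~\ref{hc} gives $\mu(F)=\Delta$. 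Thus $(F,\omega|_F,T_{\rm h})$ is a symplectic-toric $2\dim T_{\rm h}$-manifold with polytope $\Delta$, and Theorem~\ref{delzant} produces a $T_{\rm h}$-equivariant symplectomorphism between any two leaves. The main obstacle I anticipate is the local computation in the second step: expressing $Y_M$ and $L_\eta$ explicitly in the Benoist--Ortega--Ratiu coordinates, verifying that their span is exactly the displayed constant subbundle, and checking that this description is choice-independent; once that is in place, the remaining claims reduce to direct local inspection or to an invocation of Delzant's classification.
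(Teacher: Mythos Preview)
The paper itself does not prove Proposition~\ref{p}; it is stated without argument and attributed to \cite{DuPe}, so there is no in-paper proof to compare your proposal against.

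Your outline is along the lines of the argument in \cite{DuPe}. The Frobenius check on $M_{\rm reg}$ is correct: the three bracket relations you invoke ($[Y_M,Y'_M]=0$; $[L_\eta,Y_M]=0$ by $T$\--invariance of the principal connection; and $[L_\eta,L_{\eta'}]=0$ for $\eta,\eta'\in C$ by the ``otherwise'' clause of Theorem~\ref{ppp}) yield involutivity, and the dimension count is right. You also correctly identify the local normal-form computation---showing that $\mathcal{D}$ becomes a constant linear subbundle in the chart of Lemma~\ref{pt}---as the technical crux of the extension across $M\setminus M_{\rm reg}$; that is exactly where the work lies.

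The one place your sketch is genuinely thin is compactness of the leaves. The phrase ``a connectedness and dimension argument promotes this to a global slice'' does not suffice: from local transversality you get that the $T_{\rm f}$\--saturation of $F$ is open and closed in the fiber $\Phi$ over $q\in N/P$, hence equals $\Phi$, but a priori a given $T_{\rm f}$\--orbit could meet $F$ in several points (equivalently, the local diffeomorphism $F\to\Phi/T_{\rm f}$ could be a nontrivial covering). You need an additional ingredient to rule this out---for example, showing that $\Phi/T_{\rm f}$ is simply connected, or giving a direct properness argument for $\mu|_F$---before you can conclude that $F$ is compact and invoke Theorem~\ref{delzant}. Once compactness is secured, your appeal to Delzant's classification to obtain the $T_{\rm h}$\--equivariant symplectomorphisms between leaves is the right finishing move.
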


We pick an integral manifold of  Proposition~\ref{p} and call it $M_{{\rm h}}$.  Then  
$\omega$ restricts to a symplectic form $\omega_{{\rm h}}$
on $M_{{\rm h}}$ and $T_{{\rm h}}$ acts Hamiltonianly 
on it.

\subsubsection{Group extensions} \label{Gdef}

 $N$ in (\ref{N}) is the maximal subgroup of $\mathfrak{l}^*$ 
which acts on $M/T$. Denote the flow after time $t\in\R$ of 
$v\in \mathcal{X}^{\infty}(M)$ by ${\rm e}^{t\, v}$. This  defines
a map $v\mapsto {\rm e}^v$, $\mathcal{X}^{\infty}(M) \to {\rm Diff}^{\infty}(M)$, 
analogous to  (\ref{emap}).  

\begin{definition}
The \emph{extension of $N$ by $T$} is the Lie group
$
G:= T \times N 
$
with operation
\begin{eqnarray} \label{nonabgroup}
(t,\,\zeta )\, (t',\,\eta)=
(t\, t'\,{\rm e}^{-c(\zeta ,\,\eta)/2},\,\zeta +\eta).
\end{eqnarray}
\end{definition}

\medskip

\begin{prop}
The Lie group $G$ acts smoothly on 
$M$ by
$
(t,\,\zeta )\mapsto t_M\circ {\rm e}^{L_{\zeta}},
$
where we are using the identification  $G
\simeq (\mathfrak{t}/T_{\Z})\times N$. 
\end{prop}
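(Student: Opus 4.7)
I would verify three things about the candidate map $\Phi \colon G \times M \to M$, $\Phi((t,\zeta),x) := t_M \circ e^{L_\zeta}(x)$: first, that the time-$1$ flow $e^{L_\zeta}$, a priori defined only on $M_{{\rm reg}}$ where the connection (\ref{con}) lives, extends to a smooth diffeomorphism of all of $M$; second, that $\Phi$ is jointly smooth in $(t,\zeta,x)$ and $\Phi((1,0),x)=x$; third, that $\Phi$ intertwines the twisted group operation (\ref{nonabgroup}) with composition. The cocycle $e^{-c(\zeta,\eta)/2}$ in (\ref{nonabgroup}) is tailored precisely so that the third property follows from the curvature identity (\ref{chern}) via Baker--Campbell--Hausdorff, and that computation is the heart of the proof.

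For the group law, fix $(t,\zeta),(t',\eta)\in G$ and $x\in M$. Because each $L_\zeta$ is a horizontal lift with respect to a $T$-invariant principal connection, $L_\zeta$ is itself $T$-invariant and $e^{L_\zeta}$ commutes with every $t'_M$; combining this with the fact that $T$ is abelian gives
\[
\Phi\big((t,\zeta),\,\Phi((t',\eta),x)\big) \;=\; (tt')_M \circ e^{L_\zeta} \circ e^{L_\eta}(x).
\]
Now $[L_\zeta,L_\eta]=c(\zeta,\eta)_M$ from (\ref{chern}) is the fundamental vector field of an element of $\mathfrak{l}\subset\mathfrak{t}$, so by the $T$-invariance of the $L$'s and (\ref{xmym}) it commutes with both $L_\zeta$ and $L_\eta$. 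The Baker--Campbell--Hausdorff formula for flows of vector fields (with the sign appropriate to the convention that $X\mapsto X_M$ is an anti-homomorphism) therefore truncates to
\[
e^{L_\zeta} \circ e^{L_\eta} \;=\; e^{L_\zeta + L_\eta - \tfrac{1}{2}[L_\zeta,L_\eta]} \;=\; e^{L_{\zeta+\eta} - \tfrac{1}{2}c(\zeta,\eta)_M} \;=\; e^{L_{\zeta+\eta}} \circ \big(e^{-c(\zeta,\eta)/2}\big)_M,
\]
where the last equality uses $e^{-\tfrac12 c(\zeta,\eta)_M} = (\exp(-c(\zeta,\eta)/2))_M$ together with the commutation just noted. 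Moving $(e^{-c(\zeta,\eta)/2})_M$ past $e^{L_{\zeta+\eta}}$ (again by $T$-invariance) and absorbing into the torus factor produces $(tt'\,e^{-c(\zeta,\eta)/2})_M \circ e^{L_{\zeta+\eta}}(x)$, which by (\ref{nonabgroup}) is $\Phi((t,\zeta)(t',\eta),x)$. The identity condition is automatic because $L_0=0$.

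The principal obstacle is the first step: the singular connection (\ref{con}) blows up along $M\setminus M_{{\rm reg}}$, and one must check that the time-$1$ map nevertheless extends smoothly across the singular stratum. I would handle this using the local normal form of Lemma~\ref{pt}: near $x\in M$ with $T_x=H$, a $T$-equivariant symplectomorphism identifies a neighborhood of the orbit with an open piece of $K\times(\mathfrak{l}/\mathfrak{t}_x)^*\times\C^m$, and in these coordinates $L_\zeta$ takes an explicit smooth form (the component of $\zeta$ in $(\mathfrak{l}/\mathfrak{t}_x)^*$ acts by translation on that factor, with a quadratic correction in the $\C^m$ variables encoding the Chern data $c$) whose time-$1$ flow is visibly a smooth diffeomorphism of the chart. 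Equivariance of the local models guarantees that these local flows agree on overlaps and patch to a single smooth $e^{L_\zeta}\colon M\to M$. Joint smoothness of $\Phi$ then follows from smooth dependence of ODE solutions on parameters together with smoothness of the $T$-action, completing the proof.
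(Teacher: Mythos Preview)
The paper states this proposition without proof; like several results in this survey, the argument is deferred to \cite{DuPe}. Your proposal is the natural proof and is essentially the one given there, so there is nothing to ``compare'' against in the present text.

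Your three steps are the right ones, and the Baker--Campbell--Hausdorff computation is correct: the curvature identity (\ref{chern}) together with $T$\--invariance of the lifts makes $[L_\zeta,L_\eta]$ central among the fields involved, BCH truncates at second order, and the minus sign (arising because the flow map from vector fields to diffeomorphisms reverses brackets) produces exactly the twist ${\rm e}^{-c(\zeta,\eta)/2}$ of (\ref{nonabgroup}). One small comment: the parenthetical about ``$X\mapsto X_M$ is an anti\--homomorphism'' is not quite the relevant fact here---what matters is that $\mathcal{X}^\infty(M)$, as the Lie algebra of ${\rm Diff}(M)$ with the flow as exponential, carries the \emph{negative} of the usual bracket of vector fields. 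That is what fixes the sign.

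On the first step, your instinct to appeal to the local model of Lemma~\ref{pt} is right, but the description of $L_\zeta$ in the chart is slightly off. For $\zeta\in N=(\mathfrak{l}/\mathfrak{t}_{\rm h})^*\subset(\mathfrak{l}/\mathfrak{t}_x)^*$, the lift in the chart $K\times(\mathfrak{l}/\mathfrak{t}_x)^*\times\C^m$ is translation in the $(\mathfrak{l}/\mathfrak{t}_x)^*$ factor up to a correction along the $T$\--direction (not in the $\C^m$ variables); the $\C^m$ factor is untouched because $\zeta$ vanishes on $\mathfrak{t}_{\rm h}\supset\mathfrak{t}_x$. This is what makes the flow manifestly extend smoothly across the singular stratum and is how the extension is handled in \cite{DuPe}. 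Once that is established, compactness of $M$ gives completeness, and joint smoothness follows as you say.
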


The projection $\pi :M\to M/T$  intertwines the action of $G$ on $M$ with the 
action of $N$ on $M/T$ and there is an exact sequence 
$1\to T\to G\to N\to 1,$ 
where $G \to N$ corresponds to passing from 
the action of $G$ on $M$ to the action of $N$ on $M/T$, on which the action of 
$T$ is trivial.

\begin{prop}
The Lie algebra of $G$ with {\rm (\ref{nonabgroup})} is the two\--step nilpotent
Lie algebra $\mathfrak{g}=\mathfrak{t}\times N$ with 
$
[(X,\,\zeta ),\, (X',\,\eta)]=\, -(c(\zeta ,\,\eta),\, 0)$.   
The product $\mathfrak{t}\times N$ endowed with the operation
$
(X,\,\zeta )\, (X',\,\eta)=
(X+X'-c(\zeta ,\,\eta)/2,\,\zeta +\eta)$
is a two\--step nilpotent Lie group with Lie algebra ${\got g}$, 
and the identity as the exponential map. 
\end{prop}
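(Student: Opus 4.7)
The plan is to establish both claims together by first working out the group laws on $\got{g} = \mathfrak{t}\times N$ directly, and then transferring the conclusions to $G$ via the exponential map of the torus.

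First I would verify that the formula $(X,\zeta)(X',\eta) := (X + X' - c(\zeta,\eta)/2,\, \zeta + \eta)$ does define a Lie group on $\mathfrak{t}\times N$. Smoothness is immediate since $c$ is bilinear. Associativity reduces to checking that both of $((X,\zeta)(X',\eta))(X'',\xi)$ and $(X,\zeta)((X',\eta)(X'',\xi))$ produce the same first coordinate $X+X'+X'' - \tfrac{1}{2}\bigl(c(\zeta,\eta)+c(\zeta,\xi)+c(\eta,\xi)\bigr)$, which uses only bilinearity of $c$; the Jacobi\--type identity for $c$ from Theorem~\ref{ppp} plays no role here. The element $(0,0)$ is the identity, and antisymmetry of $c$ (in particular $c(\zeta,-\zeta)=0$) forces $(-X,-\zeta)$ to be the two\--sided inverse of $(X,\zeta)$.

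Next I would compute the group commutator explicitly. A short chain of cancellations, using antisymmetry of $c$ at each step, yields $(X,\zeta)(X',\eta)(X,\zeta)^{-1}(X',\eta)^{-1} = (-c(\zeta,\eta),\, 0)$. Rescaling the inputs by $s$ and extracting the coefficient of $s^2$ reads off the Lie bracket on the tangent space at the identity: $[(X,\zeta),(X',\eta)] = -(c(\zeta,\eta),\,0)$. Since $c$ takes values in $\got{l}\subset\mathfrak{t}$, this bracket lies in the central ideal $\mathfrak{t}\times\{0\}$, so all iterated brackets vanish and $\got{g}$ is two\--step nilpotent.

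For the exponential map claim, I would observe that $t\mapsto(tX,t\zeta)$ is a one\--parameter subgroup, since $c(s\zeta,t\zeta)=st\,c(\zeta,\zeta)=0$ makes the product $(sX,s\zeta)(tX,t\zeta)$ collapse to $((s+t)X,(s+t)\zeta)$; its velocity at $t=0$ is $(X,\zeta)$, so the identity map is indeed the exponential. Finally, to identify this Lie group with $G$ near the identity (and thereby confirm that $\got{g}$ is the Lie algebra of $G$), use that the torus exponential $\exp_T:\mathfrak{t}\to T$ is a surjective Lie group homomorphism of abelian groups and a local diffeomorphism near $0$; then $(X,\zeta)\mapsto(\exp_T(X),\zeta)$ intertwines the two product formulas and gives a local isomorphism of Lie groups. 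The main obstacle is purely bookkeeping with the signs of the $c(\zeta,\eta)/2$ terms; no genuinely new input beyond the bilinearity and antisymmetry of $c$ (already in hand from Theorem~\ref{ppp}) enters the argument.
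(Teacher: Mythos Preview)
The paper states this proposition without proof; it is presented as a routine consequence of the definition of the group law (\ref{nonabgroup}), so there is nothing to compare against. Your argument is correct and supplies exactly the verifications one would expect: associativity and inverses from bilinearity and antisymmetry of $c$, the commutator computation $(-c(\zeta,\eta),0)$, the identification of the bracket via the quadratic term, central nilpotency from the fact that brackets land in $\mathfrak{t}\times\{0\}$, and the covering homomorphism $(X,\zeta)\mapsto(\exp_T X,\zeta)$ onto $G$ to transfer the Lie algebra identification.
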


\subsubsection{Holonomy}

For $\zeta\in P$ and $p\in M/T$ consider the loop
$\gamma_{\zeta}(t):=p+t\,\zeta$. If 
$p=\pi (x)$, then 
$
\delta (t)={\rm e}^{t\, L_{\zeta}}(x), 
$
$0\leq t\leq 1$ is  the \emph{horizontal lift of $\gamma _{\zeta}$} 
which starts at $x$ because $\delta (0)=x$ and 
$\delta '(t)=L_{\zeta}(\delta (t))$ is a horizontal 
tangent vector mapped by ${\rm d}_{\delta (t)}\!\pi$ 
to  $\zeta$. Hence
$\pi (\delta (t))=\gamma _{\zeta}(t)$ for all $0\leq t\leq 1$.

\begin{definition}
The element of $T$ which maps 
$\delta (0)=x$ to 
$\delta (1)$ is called the {\em holonomy} 
$\tau _{\zeta}(x)$ of the 
loop $\gamma _{\zeta}$ at $x$
with respect to the 
connection (\ref{con}).
\end{definition}

\medskip

 Because $\delta (1)
={\rm e}^{L_{\zeta}}(x)$, we have that
$\tau _{\zeta}(x)\cdot x={\rm e}^{L_{\zeta}}(x).$
The  element $\tau _{\zeta}(x)$  does depend
on the point $x\in M$, on the period $\zeta\in P$,
and on the choice of connection (\ref{con}). Let $H=
\{(t,\,\zeta )\in G \,\,\, |\,\,\, \zeta\in P,\,\,\, t\,\tau _{\zeta}\in T_{{\rm h}} \}. 
$
The elements $\tau_{\zeta} \in T$, $\zeta \in P$, encode the holonomy of (\ref{con}).
So the holonomy is an element of the set
 ${\rm Hom}_c(P,\, T)$  of maps $\tau \colon P \to T$, denoted by
$\zeta \mapsto \tau_{\zeta}$, such that 
$
\tau_{\zeta}\tau_{\eta}=\tau_{\zeta+\eta}
{\rm e}^{c(\eta,\,\zeta)/2}.
$ 
There is a Lie subgroup  
$
B \leq {\rm Hom}_c(P,\, T)
$
which eliminates the dependance on the choice of connection and base point,
so the true holonomy invariant of $(M,\omega)$ is an element of $ {\rm Hom}_c(P,\, T)/B$.
The precise definition of $B$ is technical and appeared in \cite{DuPe}.

\subsubsection{Nilmanifolds}

The quotient $G/H$ is with respect to the non standard group structure in expression (\ref{nonabgroup}). 
On $G/H$ we still have the free action of the torus $T/T_{\rm h}$, 
which exhibits $G/H$ as a principal $T/T_{\rm h}$\--bundle 
over the torus $(G/H)/T\simeq N/P$. Palais and Stewart \cite{ps} 
showed that every principal torus bundle over a torus is diffeomorphic to 
a nilmanifold for a two\--step nilpotent Lie group. 
When the nilpotent Lie group is not abelian, then the manifold 
$M$ does not admit  a K\"ahler structure, cf. Benson and Gordon \cite{bg}.  
Next we  give a description of $G/H$. 

\begin{prop}
The $G$\--space $G/H$ is isomorphic 
to the quotient of the simply connected two\--step 
nilpotent Lie group $(\mathfrak{t}/\mathfrak{t}_{\rm h})\times N$ 
by the discrete subgroup of elements $(Z,\,\zeta )$ such that 
${\rm e}^Z\,\tau _{\zeta}\in T_{\rm h}$.
\end{prop}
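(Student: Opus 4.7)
The plan is to realize $G/H$ as a double quotient of the simply connected cover of $G$ and then read off the resulting presentation. By the preceding proposition, the Lie algebra $\got{g}=\mathfrak{t}\times N$ of $G$, with bracket $[(X,\zeta),(X',\eta)]=-(c(\zeta,\eta),0)$, is also the Lie algebra of the simply connected two\--step nilpotent group $\widetilde G$ whose underlying manifold is $\mathfrak{t}\times N$ and whose product is $(X,\zeta)(X',\eta)=(X+X'-c(\zeta,\eta)/2,\zeta+\eta)$, with the identity as exponential map. The map $(X,\zeta)\mapsto({\rm e}^X,\zeta)$ is a surjective Lie group homomorphism $\widetilde G\to G$ whose kernel is the central lattice $\mathfrak{t}_{\Z}\times\{0\}$. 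Thus $G=\widetilde G/(\mathfrak{t}_{\Z}\times\{0\})$.

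Next I would pull $H$ back to $\widetilde G$. Let $\widetilde H:=\{(X,\zeta)\in\widetilde G\mid \zeta\in P,\ {\rm e}^X\tau_\zeta\in T_{\rm h}\}$. Since $\mathfrak{t}_{\Z}\times\{0\}\subset\widetilde H$ (take $\zeta=0$, so $\tau_0=1$ and ${\rm e}^X=1\in T_{\rm h}$), the homogeneous space $G/H$ is canonically identified with $\widetilde G/\widetilde H$. That $\widetilde H$ is indeed a subgroup of $\widetilde G$ under the nilpotent product follows from the holonomy cocycle identity $\tau_\zeta\tau_\eta=\tau_{\zeta+\eta}\,{\rm e}^{c(\eta,\zeta)/2}$ recalled earlier in the paper: multiplying $(X,\zeta),(X',\eta)\in\widetilde H$ gives ${\rm e}^{X+X'-c(\zeta,\eta)/2}\tau_{\zeta+\eta}=({\rm e}^X\tau_\zeta)({\rm e}^{X'}\tau_\eta)\in T_{\rm h}$.

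Now I would quotient by the extra central subgroup $\mathfrak{t}_{\rm h}\times\{0\}$, which lies in $\widetilde H$ because ${\rm e}^Y\in T_{\rm h}$ for every $Y\in\mathfrak{t}_{\rm h}$. A direct check using the product formula shows that $\mathfrak{t}\times\{0\}$ is central in $\widetilde G$ (conjugating $(Y,0)$ by any $(X,\zeta)$ yields $(Y,0)$ back, since $c(\zeta,0)=0$), so $\mathfrak{t}_{\rm h}\times\{0\}$ is normal. The quotient $\widetilde G/(\mathfrak{t}_{\rm h}\times\{0\})$ inherits a two\--step nilpotent group structure, has underlying manifold the real vector space $(\mathfrak{t}/\mathfrak{t}_{\rm h})\times N$, and is therefore simply connected. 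The image of $\widetilde H$ in it is precisely $\overline H=\{(Z,\zeta)\mid \zeta\in P,\ {\rm e}^Z\tau_\zeta\in T_{\rm h}\}$, where the condition is well defined on classes $Z\in\mathfrak{t}/\mathfrak{t}_{\rm h}$ because replacing $Z$ by $Z+Y$ with $Y\in\mathfrak{t}_{\rm h}$ changes ${\rm e}^Z\tau_\zeta$ by the $T_{\rm h}$\--factor ${\rm e}^Y$. Discreteness of $\overline H$ follows from the discreteness of $P$ in $N$ together with the condition ${\rm e}^Z\in T_{\rm h}\tau_\zeta^{-1}$, which defines a discrete set of classes in $\mathfrak{t}/\mathfrak{t}_{\rm h}$. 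Combining the two quotients,
\[
G/H \;=\; \widetilde G/\widetilde H \;=\; \bigl((\mathfrak{t}/\mathfrak{t}_{\rm h})\times N\bigr)/\overline H,
\]
which is the stated description.

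The only nontrivial point in this plan is the verification that $\widetilde H$ (and hence $\overline H$) is a subgroup, which is where the holonomy cocycle identity and the definition of the nilpotent product conspire exactly right; everything else is bookkeeping of coverings and central subgroups.
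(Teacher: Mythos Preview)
Your argument is correct and follows essentially the same approach as the paper: both realize $G/H$ as a discrete quotient of the simply connected two\--step nilpotent group $(\mathfrak{t}/\mathfrak{t}_{\rm h})\times N$, with the discrete subgroup cut out by the holonomy condition ${\rm e}^Z\tau_\zeta\in T_{\rm h}$. The only difference is the order of operations: the paper first quotients $G$ by the identity component $H^o=T_{\rm h}\times\{0\}$ to obtain the Lie group $G/H^o=(T/T_{\rm h})\times N$, identifies $G/H=(G/H^o)/(H/H^o)$, and then passes to the universal cover $(\mathfrak{t}/\mathfrak{t}_{\rm h})\times N$ of $G/H^o$; you instead first pass to the universal cover $\widetilde G=\mathfrak{t}\times N$ of $G$, pull back $H$, and then quotient by the central subgroup $\mathfrak{t}_{\rm h}\times\{0\}$. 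These two routes traverse the same commutative square and use the same key ingredient (the holonomy cocycle identity to check closure under the nilpotent product), so the difference is cosmetic.
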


\begin{proof}
The identity component $H^o=T_{\rm h}\times\{ 0\}$ 
of $H$ is a closed normal Lie subgroup 
of both $G$ and $H$. The mapping $(G/H^o)/(H/H^o) \to G/H$
given by $(g\, H^o)\, (H/H^o)\mapsto g\, H$ is a $G$\--equivariant 
diffeomorphism. The structure in $G/H^o=(T/T_{\rm h})\times N$ is 
$
(t,\,\zeta)\, (t',\,\eta)
=(t\, t'\,{\rm e}^{-c_{\mathfrak{l}/\mathfrak{t}_{\rm h}}
(\zeta ,\,\eta)/2},\,\zeta +\eta), 
\quad t,\, t'\in T/T_{\rm h},\quad\zeta ,\,\eta\in N, 
$
where $c_{\mathfrak{l}/\mathfrak{t}_{\rm h}}
:N\times N\to\mathfrak{l}/\mathfrak{t}_{\rm h}$ 
is the composite of $c:N\times N\to\mathfrak{l}$ and
the projection $\mathfrak{l}\to\mathfrak{l}/\mathfrak{t}_{\rm h}$. 
Hence $G/H^o$ is a two\--step nilpotent Lie group 
with universal covering $(\mathfrak{t}/\mathfrak{t}_{\rm h})\times N$ 
and covering group $(T/T_{\rm h})_{\Z}\simeq 
T_{\Z}/(T_{\rm h})_{\Z}$. Also 
$P \to H/H^o$ given by  $\iota :\zeta\mapsto ({\tau _{\zeta}}^{-1},\,\zeta )\, H^o$ 
is an isomorphism. 
\end{proof}

\subsubsection{Classification} \label{t1sec}

Let $h\in H$ act on $G\times M_{\rm h}$ by 
$(g,\, x) \mapsto (g\, h^{-1},\, h\cdot x)$ and consider 
$G \times_H M_{\rm h}$.

The $T$\--action by translations on the left factor of $G$ passes to an action on 
$G \times_{H} M_{{\rm h}}$. 
Each of the fibers of $G \times_H M_{\rm h}$ is identified with the 
symplectic\--toric manifold $(M_{\rm h},\omega_{\rm h}, T_{\rm h})$. Any
complementary subtorus $T_{\rm f}$ permutes the fibers of $G \times_H M_{\rm h} \to G/H$,
each of which is identified with  $(M_{\rm h},\omega_{\rm h}, T_{\rm h})$.

Next let us explicitly construct a symplectic form on $G \times_{H} M_{{\rm h}}$. This construction
uses in an essential way Lemma~\ref{pt} but for simplicity here we skip the details
as the general formula may be given directly. 
Let $\delta a=((\delta t,\,\delta\zeta ),\,\delta x)$,
and $\delta' a=((\delta 't,\,\delta '\zeta ),\,\delta ' x)$ 
be tangent vectors to the product $G\times M_{{\rm h}}$ at 
the point $a=((t,\,\zeta ),\, x)$, 
where we identify each tangent space of  $T$ with 
$\mathfrak{t}$. Write 
$X=\delta t+c(\delta\zeta ,\,\zeta )/2$
and 
$
X'=\delta 't+c(\delta '\zeta ,\,\zeta )/2.
$
Let $X_{{\rm h}}$ be
$\mathfrak{t}_{{\rm h}}$\--component of $X\in\mathfrak{t}$ in $\mathfrak{t}_{{\rm h}}\oplus 
\mathfrak{t}_{\rm{f}}$, and  similarly for $X_{\mathfrak{l}}$ and define
\begin{eqnarray}
\Omega _a(\delta a,\,\delta 'a)
&=&\omega ^{\mathfrak{t}}(\delta t,\,\delta 't)
+\delta\zeta ({X'}_{\mathfrak{l}})
-\delta '\zeta (X_{\mathfrak{l}})
-\mu (x)(c_{{\rm h}}(\delta\zeta ,\,\delta '\zeta ))
\nonumber\\
&&+\, (\omega _{{\rm h}})_x(\delta x,\, 
({X'}_{{\rm h}})_{M_{{\rm h}}}(x))
-(\omega _{{\rm h}})_x(\delta 'x,\, 
(X_{{\rm h}})_{M_{{\rm h}}}(x))
+\, (\omega _{{\rm h}})_x(\delta x,\,\delta 'x).
\label{A*sigma} 
\end{eqnarray}

If $\pi _{M}$ is the projection 
$G\times M_{{\rm h}} \to G\times _HM_{{\rm h}}$, 
 the $T$\--invariant symplectic form 
on $G\times _HM_{{\rm h}}$
is the unique two\--form $\beta$ on 
$G\times _HM_{{\rm h}}$ such that 
$\Omega={\pi _{M}}^*\,\beta$.

We are ready to state the model theorem.

\begin{theorem}[\cite{DuPe}] \label{t1}
Let $(M,\omega)$ be a compact connected symplectic manifold endowed with a
coisotropic $T$\--action. Then $(M,\omega)$ is $T$\--equivariantly symplectomorphic to the total space  $G \times_H M_{{\rm h}}$ of the symplectic fibration 
$
(M_{{\rm h}},\omega_{\rm h}, T_{\rm h}) \hookrightarrow (G \times_H M_{{\rm h}}, \Omega, T) \to G/H
$
with base  $G/H$ being a torus bundle
over a torus, and symplectic toric manifolds $(M_{{\rm h}},\omega_{\rm h}, T_{\rm h})$ as fibers.  The $T$\--action on $G \times_H M_{\rm h}$ is the symplectic action by translations on the $T$\--factor of $G$, 
and the symplectic form $\Omega$ is given pointwise by formula {\rm (\ref{A*sigma})}.
\end{theorem}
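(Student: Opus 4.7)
The plan is to build the equivariant symplectomorphism directly from the $G$-action on $M$ already described in Section~\ref{Gdef}. Define
\[
\Psi \colon G \times M_{\rm h} \to M, \qquad \bigl((t,\zeta),\, x\bigr) \longmapsto t_M \bigl( {\rm e}^{L_\zeta}(x)\bigr).
\]
The first step is to check that $\Psi$ is $H$-invariant on the left factor in the appropriate way, so that it descends to a smooth map $\psi \colon G \times_H M_{\rm h} \to M$. This uses the very definition of $H$ as the set of $(t,\zeta)\in G$ with $\zeta\in P$ and $t\,\tau_\zeta\in T_{\rm h}$, together with the holonomy identity $\tau_\zeta(x)\cdot x = {\rm e}^{L_\zeta}(x)$ for $x\in M_{\rm h}$ and the fact (Proposition~\ref{p}) that $M_{\rm h}$ is $T_{\rm h}$-invariant. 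The $T$-equivariance of $\psi$ is immediate from the construction, since the translation $T$-action on the left factor of $G$ matches the $T$-action on $M$.

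The second step is to show $\psi$ is a diffeomorphism. For dimensions, $G/H$ fibers over $N/P$ with fiber $T/T_{\rm h}$, so $\dim(G/H) = \dim T - \dim T_{\rm h} + \dim N$; since $\dim M_{\rm h} = 2\dim T_{\rm h}$ and $\dim M = \dim T + \dim \mathfrak{l}$ (by Lemma~\ref{coisotropicorbitlem}) with $N = (\mathfrak{l}/\mathfrak{t}_{\rm h})^*$, the dimensions match. For surjectivity, factor $\psi$ through the orbit space: the composition $M_{\rm h} \hookrightarrow M \to M/T \simeq \Delta \times (N/P)$ sends $M_{\rm h}$ onto $\Delta \times \{*\}$ (because $\mu$ restricted to $M_{\rm h}$ has image $\Delta$), and translating by elements of $N$ via the flows ${\rm e}^{L_\zeta}$ sweeps out all of $N/P$; together with the residual $T$-action, this yields all of $M$. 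Injectivity is precisely the content of the definition of $H$: two pairs map to the same point iff they differ by an element of $H$.

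The third and most substantial step is to verify that $\psi^*\omega$ equals the form $\Omega$ given by~(\ref{A*sigma}). This is a pointwise linear-algebra computation on a tangent vector of the form $((\delta t,\delta\zeta),\delta x)$ at $((t,\zeta),x)$. Splitting into pure types and using bilinearity, the nonzero pairings reduce to four ingredients already established: (i) $\omega(X_M,Y_M) = \omega^{\mathfrak{t}}(X,Y)$ from Proposition~\ref{constlem}; (ii) $\omega(L_\zeta,L_\eta) = -\mu(c_{\rm h}(\zeta,\eta))$ from Theorem~\ref{ppp}; (iii) $\omega(X_M,L_\zeta) = \widehat{\omega}(X_M)(\zeta) = \zeta(X_{\mathfrak{l}})$, which comes from the basic one-form $\widehat{\omega}$ of Section~\ref{361}; and (iv) the restriction $\omega|_{M_{\rm h}} = \omega_{\rm h}$ together with the Hamiltonian formula $\omega_{\rm h}(Y_{M_{\rm h}},\cdot) = -{\rm d}\langle\mu,Y\rangle$ for $Y\in\mathfrak{t}_{\rm h}$. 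The substitution $X = \delta t + c(\delta\zeta,\zeta)/2$ (and similarly for $X'$) corresponds to the identification of tangent vectors to $G$ in left-trivialization versus the product trivialization, and is precisely what absorbs the non-abelian correction in~(\ref{nonabgroup}).

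The main obstacle will be step three, specifically the bookkeeping that produces the correction term $c(\delta\zeta,\zeta)/2$ and that disentangles the $\mathfrak{t}_{\rm h}$ and $\mathfrak{l}\cap\mathfrak{t}_{\rm f}$ contributions in $\omega(X_M,L_\zeta)$ and $\omega(L_\zeta,L_\eta)$. One must check that ingredient (ii) on $M_{\rm h}$ correctly produces the $\mu(x)\,c_{\rm h}(\delta\zeta,\delta'\zeta)$ term, while the transverse $\mathfrak{l}\cap\mathfrak{t}_{\rm f}$-component of $c$ is killed by the freeness of $T_{\rm f}$. Once this is organized carefully, the identity $\psi^*\omega = \Omega$ drops out, and combining with the earlier $T$-equivariant diffeomorphism statement completes the proof.
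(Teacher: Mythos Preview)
Your proposal is correct and follows essentially the same approach as the paper: the paper's sketch defines the identical map $F\colon G\times_H M_{\rm h}\to M$, $((t,\xi),x)\mapsto t\cdot{\rm e}^{L_\xi}(x)$, and asserts it is a $T$-equivariant symplectomorphism, citing Lemma~\ref{pt}, Theorem~\ref{ppp}, and Proposition~\ref{p} as the ingredients for the verification. Your outline is in fact more detailed than the paper's own sketch (which defers the careful checking to~\cite{DuPe}), and your breakdown of the pullback computation into the four pairings (i)--(iv) and the identification of the $c(\delta\zeta,\zeta)/2$ correction with the left-trivialization of $G$ is exactly the right organization of that computation.
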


\begin{proof}[Sketch of proof]
The map from $F \colon G \times_H M_{\textup{h}}$ to $M$ given by
$
((t,\,\xi),\,x) \mapsto t \cdot \textup{e}^{L_{\xi}}(x) 
$
is a $T$\--equivariant symplectomorphism. 
\end{proof}

In view of Lemma~\ref{pt}, Theorem~\ref{ppp}, and Proposition~\ref{p},  it is not difficult to verify that
$F$ is a $T$\--equivariant diffeomorphism and $F^*\omega=\Omega$ since the way we have
arrived at the model $(G \times_H M_{{\rm h}}, \Omega, T)$ of $(M,\omega,T)$ is constructive. However careful
checking is still fairly technical and not necessarily illuminating on a first reading; we refer
interested readers to \cite{DuPe} for a full proof.

\begin{figure}[htbp]
  \begin{center}
    \includegraphics[height=8cm, width=6cm]{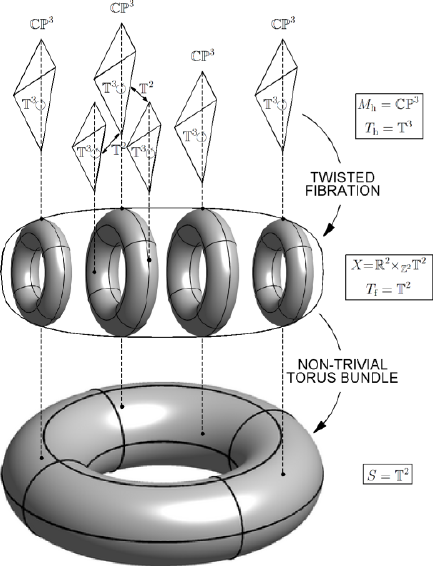}
    \caption{A $10$\--dimensional symplectic manifold with a torus
      action with Lagrangian orbits. The fiber is 
      $(\mathbb{CP}^3,\mathbb{T}^3)$. The base is a $\R^2 \times_{\Z^2} \mathbb{T}^2$.}
    \label{LO}
  \end{center}
\end{figure}

Notice that: 

(i) {if the $T$\--action is free}, then the Hamiltonian subtorus $T_{{\rm h}}$
is trivial, and hence $M$ is itself a torus bundle over a torus. Concretely,
$M$ is of the form $G/H$. The Kodaira variety (Example~\ref{ktexample})
is one of these spaces. Since $M$ is a principal torus 
bundle over a torus, it is a nilmanifold for a two\--step 
nilpotent Lie group as explained in  Palais\--Stewart \cite{ps}.  In the
case when this nilpotent Lie group is not abelian,  $M$
does not admit a K\"ahler structure, see Benson\--Gordon \cite{bg}.

(ii) In the case of  $4$\--dimensional manifolds $M$, item (i) corresponds to the third case 
in Kodaira's description \cite[Theorem~19]{kodaira} of the 
compact complex analytic surfaces which 
have a  holomorphic $(2,\, 0)$\--form that is nowhere 
vanishing, see \cite{DuPecois}\footnote{In~\cite{DuPecois} the authors show
 that  a compact connected symplectic 
$4$\--manifold with a symplectic $2$\--torus action admits an invariant complex structure and give
an identification of those that do not admit a K\"ahler structure with Kodaira's class of complex surfaces
which admit a nowhere vanishing holomorphic $(2, 0)$\--form, but are not a torus or a K3 surface.}. 
As mentioned, these were rediscovered by Thurston \cite{Th}
as the first examples of compact connected symplectic manifolds 
without K\"ahler structure.

(iii) {If on the other hand the $T$\--action is Hamiltonian}, then
$T_{{\rm h}}=T$, and in this case $M$ is itself a symplectic toric manifold an hence a 
toric variety (see~\cite{De, guillemin, DuPeTV} for
the relations between symplectic toric manifolds and toric varieties). 

(iv)
Henceforth,
{we may view the coisotropic orbit case as a twisted mixture of the Hamiltonian case,
and of the free symplectic case.

\begin{theorem}[\cite{DuPe}] \label{t2}
Compact connected symplectic manifolds $(M, \omega)$ with a coisotropic $T$\--action 
are determined up to $T$\--equivariant symplectomorphisms by: {\rm 1)} fundamental form
$\omega^{\mathfrak{t}} \colon \mathfrak{t} \times \mathfrak{t} \to
\mathbb{R}$;  {\rm 2)} Hamiltonian torus $T_{{\rm h}}$ and its associated 
polytope $\Delta$;  {\rm 3)}  period lattice $P$ of $N=(\mathfrak{l}/\mathfrak{t}_{{\rm h}})^*$;  
{\rm 4)}  Chern class $c \colon N \times N \to \mathfrak{l}$ of
$M_{\rm{reg}} \to M_{\rm{reg}}/T$; {\rm 5)} holonomy $[\tau \colon P \to T]_B 
\in {\rm Hom}_c(P,\, T)/B$. 

Moreover, for any list {\rm 1)\--5)} there exists a compact connected
symplectic manifold with a coisotropic $T$\--action whose list of invariants is precisely this one.
\end{theorem}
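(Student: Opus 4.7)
The plan is to deduce Theorem~\ref{t2} from the model Theorem~\ref{t1} by establishing that the five invariants completely determine (and are freely realizable by) the explicit model $(G \times_H M_{{\rm h}}, \Omega, T)$.

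For uniqueness, suppose $(M,\omega,T)$ and $(M',\omega',T)$ share the same invariants 1)--5). By Theorem~\ref{t1} each is $T$-equivariantly symplectomorphic to its associated model, so it suffices to show that every ingredient of that model is pinned down by the invariants. The fiber $(M_{{\rm h}},\omega_{{\rm h}},T_{{\rm h}})$ is a symplectic-toric manifold with Delzant polytope $\Delta$, hence fixed $T_{{\rm h}}$-equivariantly symplectomorphically by Delzant's theorem (Theorem~\ref{delzant}). The Lie algebra $\mathfrak{l} = \ker \omega^{\mathfrak{t}}$ is read off the fundamental form, which together with $T_{{\rm h}}$ yields $N$, and combined with $c$ via the operation (\ref{nonabgroup}) yields the group $G$. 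The subgroup $H$ is prescribed by any representative $\tau$ of the class $[\tau]_B$, and replacing $\tau$ by $\tau b$ with $b \in B$ changes $H$ by an automorphism of $G$ which intertwines the $T$-action and preserves the form (\ref{A*sigma}). Since $\Omega$ itself is built pointwise from $\omega^{\mathfrak{t}}$, $c$, $\mu$ (determined by $\Delta$) and $\omega_{{\rm h}}$, chaining these identifications with the two instances of Theorem~\ref{t1} produces the required $T$-equivariant symplectomorphism.

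For existence, given abstract data $(\omega^{\mathfrak{t}}, T_{{\rm h}}, \Delta, P, c, [\tau]_B)$ satisfying the compatibility relations (the Jacobi-type cocycle identity for $c$ of Theorem~\ref{ppp} and the cocycle identity $\tau_{\zeta}\tau_{\eta} = \tau_{\zeta+\eta}\,\mathrm{e}^{c(\eta,\zeta)/2}$), I would first use Theorem~\ref{delzant} to construct the symplectic-toric fiber $(M_{\Delta},\omega_{\Delta},T_{{\rm h}})$. The Jacobi identity for $c$ makes (\ref{nonabgroup}) an associative group law on $G = T \times N$, and the cocycle condition on $\tau$ turns $H = \{(t,\zeta) : \zeta \in P,\; t\,\tau_{\zeta} \in T_{{\rm h}}\}$ into a closed Lie subgroup. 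Forming the associated bundle $G \times_H M_{\Delta}$ and equipping it with the two-form induced by (\ref{A*sigma}), a routine check of $H$-invariance and of vanishing along the fibers of the projection $\pi_M$ shows that the form descends; closedness and non-degeneracy follow from the cocycle property of $c$ together with closedness and non-degeneracy of $\omega_{{\rm h}}$.

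The main obstacle is the final part of existence: verifying that the constructed $(G \times_H M_{{\rm h}}, \Omega, T)$ actually produces the prescribed invariants. Concretely, one must check that the Hamiltonian subtorus of the model equals exactly $T_{{\rm h}}$ (not some strictly larger subtorus arising from extra stabilizers), that the Chern class of the principal $T$-bundle over the regular part matches the given $c$, and that the holonomy of the connection furnished by Theorem~\ref{ppp} belongs to the prescribed $B$-class of $\tau$. These verifications hinge on a careful analysis of how the tube model of Lemma~\ref{pt} assembles near the boundary strata of $\Delta$ and on an explicit computation that the built-in connection realizes the prescribed holonomy; the definition of the subgroup $B$ in~\cite{DuPe} is engineered precisely to absorb the ambiguity in the choice of connection (\ref{con}) and of base point that enters this computation, and this is where the proof becomes technical.
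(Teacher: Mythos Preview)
The paper does not actually supply a proof of Theorem~\ref{t2}; it is stated as a result from~\cite{DuPe}, with the paper remarking only that the first part is uniqueness, the second is existence, and referring the reader to~\cite{DuPe} for details. So there is no in-paper proof to compare against directly.

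That said, your proposal follows exactly the architecture the paper sets up and that the original argument in~\cite{DuPe} uses: deduce both directions from the model Theorem~\ref{t1}. For uniqueness you correctly observe that every piece of the model $(G\times_H M_{\rm h},\Omega,T)$ is assembled from the five invariants (with Delzant's Theorem~\ref{delzant} handling the fiber and the definition of $B$ absorbing the choice-dependence of $\tau$), and for existence you correctly build the model from abstract data satisfying the compatibility constraints. Your honest flagging of the ``main obstacle''---checking that the constructed model returns the prescribed invariants, in particular that the Hamiltonian subtorus is exactly $T_{\rm h}$ and the holonomy class is exactly $[\tau]_B$---is precisely where the work in~\cite{DuPe} lies, and is the reason the present survey omits the proof. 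Your outline is correct and there is no gap in the strategy; what remains is the technical bookkeeping you have already identified.
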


 {Theorem~\ref{t2} is analogous to Theorem~\ref{delzant}.
 
 The first part of Theorem~\ref{t2} is a uniqueness theorem. The last part is an existence
theorem for which we have not provided details for simplicity. Nonetheless we shall say that, for example, any antisymetric bilinear
form can appear as invariant 1), and any subtorus $S \subset T$ and Delzant polytope
can appear as ingredient 2) etc. This is explained in \cite{DuPe}.

\begin{example}
In the case of the Kodaira variety
$M=\R^2 \times_{\Z^2} (\mathbb{R}/\mathbb{Z})^2$ in Example~\ref{ktexample}, $T=(\R/\Z)^2$, 
$\mathfrak{t}\simeq \R^2$ and
its invariants are:  1) fundamental form: $\omega^{\mathfrak{t}}=0$; 2) Hamiltonian torus: $T_{{\rm h}}=\{[0,\,0]\}$; Delzant polytope: 
$\Delta=\{(0,\,0)\}$; 3) period lattice is $P=\mathbb{Z}^2$;  4) Chern class 
$
c \colon \mathbb{R}^2 \times \mathbb{R}^2 \to \mathbb{R}^2$,
defined by $c(e_1,\,e_2)=e_1$; 5) The holonomy  is the class of 
$\tau$
given by
$
\tau_{e_1}=\tau_{e_2}=[0,\,0]$. In this case
$G=(\mathbb{R}/\mathbb{Z})^2 \times \mathbb{R}^2$,
$M_{{\rm h}}=\{p\}$, and
$H=\{[0,\,0]\} \times \mathbb{Z}^2$. The model of $M$ is 
$
G \times_H M_{{\rm h}} \,\, \simeq \, \,
G/H \,\, \simeq \, \mathbb{R}^2 \times_{\mathbb{Z}^2} (\mathbb{R}/\mathbb{Z})^2.
$
\end{example}

\subsection{Symplectic $2$\--torus actions on $4$\--manifolds} \label{four:sec}

Consider on  $(\R/\Z)^2 \times S^2$  the product symplectic form.
The action of the $2$\--torus is: one circle acts on the first circle of $(\R/\Z)^2$ by translations,
while the other circle acts on $S^2$ by rotations about the vertical axis. 

If $T$ is a $2$\--dimensional
torus, consider the product  $T \times \mathfrak{t}^*$ with the standard cotangent bundle form 
and the standard $T$\--action on left factor of $T \times \mathfrak{t}^*$. If the symplectic\--toric
manifold $M_{\rm h}$ is trivial,  then the model for a symplectic $T$\--action with coisotropic orbits
simplifies greatly, and it splits into cases (1), (2) and (3) below.  

The following is a simplified version of the main result of \cite{Pe}; readers may consult \cite[Theorem~8.2.1]{Pe} for the 
complete version of the statement.

\begin{theorem}[\cite{Pe}] \label{main:thm}
Let $(M,\omega)$ be a compact, connected, symplectic $4$\--manifold equipped with an effective symplectic action of
a $2$\--torus  $T$. If the symplectic $T$\--action is Hamiltonian, then {\rm (1)} $(M, \, \omega)$ is a symplectic toric $4$\--manifold, and hence classified up to $T$\--equivariant
symplectomorphisms by the image $\Delta$ of the momentum map $\mu \colon M \to \mathfrak{t}^*$ of the $T$\--action.

If the symplectic $T$\--action is not Hamiltonian, then one and only one of the following cases occurs:
\begin{itemize}
\item[{\rm (2)}]
$(M, \, \omega)$ is equivariantly symplectomorphic to $(\R/\Z)^2 \times S^2$.
\item[{\rm (3)}]
$(M, \, \omega)$ is equivariantly symplectomorphic to $(T \times \mathfrak{t}^*)/Q$
with the induced form and $T$\--action,  where
$Q \le T \times \mathfrak{t}^*$ is a discrete cocompact subgroup for
the  group structure {\rm (\ref{nonabgroup})} on $T\times \mathfrak{t}^*$.
\item[{\rm (4)}]
$(M, \, \omega)$ is equivariantly symplectomorphic to a symplectic orbifold bundle
$\widetilde{\Sigma} \times_{\pi^{\textup{orb}}_1(\Sigma, \, p_0)}T
$
over a good orbisurface $\Sigma$, where the symplectic form and $T$\--action are induced by the product ones, 
and $\pi^{\textup{orb}}_1(\Sigma, \, p_0)$ acts on $\widetilde{\Sigma} \times T$ diagonally,
and on $T$ is by means
of any homomorphism $\mu \colon \pi_1^{\rm{orb}}(\Sigma) \to T$.
\end{itemize}
\end{theorem}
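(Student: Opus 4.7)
The proof would apply the structure theorems for symplectic torus actions (Theorem~\ref{t4} for maximal symplectic actions and Theorem~\ref{t1} for coisotropic actions) to the specific setting $\dim M = 4$, $\dim T = 2$, and then identify the resulting models with cases (1)--(4). The primary dichotomy comes from the fundamental form $\omega^{\mathfrak{t}}$ on the two-dimensional Lie algebra $\mathfrak{t}$ of Proposition~\ref{constlem}: by antisymmetry, $\omega^{\mathfrak{t}}$ is either zero or non-degenerate. If $\omega^{\mathfrak{t}}$ is non-degenerate, then every orbit is a symplectic $2$-torus (Lemma~\ref{plo}) and the action is maximal symplectic, hence never Hamiltonian. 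Applying Theorem~\ref{t4} with $\dim T = \dim M - 2 = 2$ presents $M$ as the model $\widetilde{M/T} \times_{\pi_1^{\mathrm{orb}}(M/T)} T$ of Theorem~\ref{t3}, with $M/T$ a compact connected symplectic $2$-orbisurface classified topologically by its Fuchsian signature (Lemma~\ref{TT}). That $M/T$ is a good orbisurface follows from the flat connection $\Omega$ of symplectic orthocomplements to the orbits, whose integral submanifolds provide a manifold orbifold-cover, making $\widetilde{M/T}$ itself a manifold. This matches case (4) with $\Sigma = M/T$ and the monodromy homomorphism of $\Omega$ playing the role of the twisting $\mu \colon \pi_1^{\mathrm{orb}}(\Sigma) \to T$.

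When $\omega^{\mathfrak{t}} \equiv 0$, principal orbits are Lagrangian by Proposition~\ref{isotropicorbitlem}(2), the action is coisotropic, and I would invoke Theorem~\ref{t1}, stratifying by $\dim T_{\rm h}$ where $T_{\rm h}$ is the Hamiltonian subtorus of Proposition~\ref{hc}. Since $T_{\rm h}$ is a subtorus of $T \simeq T^2$, one has $\dim T_{\rm h} \in \{0,1,2\}$. The case $\dim T_{\rm h} = 2$ gives $T_{\rm h} = T$, a Hamiltonian action, and Delzant's theorem (Theorem~\ref{delzant}) identifies $M$ with the symplectic-toric manifold associated to its momentum polytope, yielding case (1). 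The case $\dim T_{\rm h} = 0$ forces every stabilizer to be trivial (Proposition~\ref{pra}), so the action is free, the toric fiber $M_{\rm h}$ collapses to a point, and Theorem~\ref{t1} reduces to $M \cong G/H$ with $G = T \times \mathfrak{t}^*$ carrying the two-step nilpotent group structure (\ref{nonabgroup}) and $H = Q$ a discrete cocompact subgroup; this is case (3). In the intermediate case $\dim T_{\rm h} = 1$, the fiber $M_{\rm h}$ is a compact connected $2$-dimensional Hamiltonian $S^1$-space, hence necessarily $(S^2, \omega_{\mathrm{std}})$ with the standard rotational action, and the base $G/H$ is a $2$-torus; the Chern class $c\colon N \times N \to \mathfrak{l}$ automatically vanishes since $\dim N = 1$, and it remains to match the resulting equivariant $S^2$-bundle over $T^2$ with the product of case (2).

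The main obstacle is precisely this final sub-case: proving that $G \times_H M_{\rm h}$ is always equivariantly symplectomorphic to the product $(\R/\Z)^2 \times S^2$, rather than to a non-trivially twisted $S^2$-bundle over $T^2$. This reduces to showing that the holonomy invariant $[\tau \colon P \to T]_B \in \mathrm{Hom}_c(P, T)/B$ of Theorem~\ref{t2} is always trivial once one uses the broader equivariance notion permitted in the theorem (allowing automorphisms of $T$). I would combine two sources of rigidity: the $S^1$-equivariant symplectomorphism group of $(S^2, \omega_{\mathrm{std}})$, generated by $T_{\rm h}$-rotations, absorbs the component of $\tau$ landing in $T_{\rm h}$ and enlarges $B$ accordingly; and reparametrizing the free factor $T_{\rm f}$ by an automorphism of $T$ absorbs the remaining $T_{\rm f}$-component. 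The conclusion is that in dimension four, the only coisotropic, non-Hamiltonian symplectic $T^2$-action with a one-dimensional Hamiltonian subtorus is the evident product action on $(\R/\Z)^2 \times S^2$, completing both the identification of case (2) and the mutual exclusivity of cases (1)--(4).
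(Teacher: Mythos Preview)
Your proposal is correct and follows essentially the same route as the paper's own ``Idea of proof'': the primary dichotomy on $\omega^{\mathfrak{t}}$ (non-degenerate versus zero), the invocation of Theorems~\ref{t3}/\ref{t4} for case~(4), and the unfolding of the coisotropic situation via Theorems~\ref{t1}/\ref{t2} into cases~(1)--(3). Your stratification by $\dim T_{\rm h}\in\{0,1,2\}$ and your identification of the $\dim T_{\rm h}=1$ holonomy-trivialization as the substantive step are exactly what the paper has in mind when it says that ``a significant part of the proof \ldots\ consists of unfolding item~(b) into items 1), 2), 3)''; in fact you supply more detail on that unfolding than the paper's sketch does.
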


The proof of Theorem~\ref{main:thm} uses as stepping stones the maximal symplectic and coisotropic cases. 

\begin{proof}[Idea of proof]
The  fundamental observation to use the results in order to prove Theorem~\ref{main:thm} is that under the 
assumptions of Theorem~\ref{main:thm}, 
there are (by linear algebra of $\omega$) precisely two possibilities: (a)
 the $T$\--orbits are symplectic $2$\--tori, so the fundamental
form $\omega^{\mathfrak{t}}$ is non\--degenerate and hence $\mathfrak{l}$ is trivial. This 
corresponds to case 4); (b)  the $2$\--dimensional $T$\--orbits are Lagrangian $2$\--tori, and hence $\mathfrak{l}=\mathfrak{t}$.
  This corresponds to cases 1), 2), and 3). Case 3) is derived from Theorems~\ref{t1} and \ref{t2}.  Case 4) is derived from Theorems~\ref{t3} and \ref{t4}. 

A significant part of 
the proof of Theorem~\ref{main:thm} 
consists of unfolding item b) above into items 1), 2), 3) in the statement
of Theorem~\ref{main:thm}. 

Notice that item 1)  
is classified in terms of the Delzant polytope in view of Theorem~\ref{delzant}, which is the only invariant in the Hamiltonian case.
\end{proof}

Case 3) corresponds to the third case 
in the description of Kodaira \cite[Th. 19]{kodaira} of the 
compact complex analytic surfaces which carry a nowhere 
vanishing holomorphic $(2,\, 0)$\--form. These were rediscovered 
as the first examples of compact symplectic manifolds 
without K\"ahler structure by Thurston \cite{Th}.

The article \cite{DuPesymp} shows that the first Betti number of $M/T$ is equal to the first
Betti number of $M$ minus the dimension of $T$.

\begin{example}
The invariants of $M=S^2 \times_{\Z/2\,\Z} (\R/\Z)^2$  are: the non\--degenerate antisymmetric bilinear form
$
\omega^{\mathbb{R}^2}=\left( \begin{array}{cc}
0 & 1  \\
-1 & 0 
\end{array} \right)
$;  2) The Fuchsian signature $(g;\, \vec{o})=(0;\,2,\,2)$ of the orbit space $M/\T^2$; 3)
The symplectic area of  $S^2/(\mathbb{Z}/2\,\mathbb{Z})$: $1$ (half of the area of $S^2$); 4)
The monodromy invariant: 
$
 \mathcal{G}_{(0; \, 2, \,2)} \cdot (\mu_{\textup{h}}(\gamma_1), \, \mu_{\textup{h}}(\gamma_2))=
  \Big\{ \left( \begin{array}{cc}
1 & 0  \\
0 & 1 
\end{array} \right), \, 
  \left( \begin{array}{cc}
0 & 1  \\
1 & 0 
\end{array} \right)
                 \Big \} \cdot ([1/2, \, 0],\, [1/2, \, 0]).$
Here the $\gamma_1,\gamma_2$ are small loops around the poles of $S^2$. Then $M/T=S^2 / (\Z/2\,\Z)$,
$\pi^{\textup{orb}}_1(M/T,\,p_0)=\langle \gamma_1\, |\, \gamma_1^2=1 \rangle \simeq \Z/2\,\Z$, and
$\mu \colon \langle \gamma_1\, |\, \gamma_1^2=1 \rangle  \to T= (\R/\Z)^2$ is 
$
\mu(\gamma_1)=[1/2, \,0].
$
We have a $T$\--equivariant symplectomorphism
$
\widetilde{M/T} \times_{\pi^{\textup{orb}}_1(M/T,\,p_0)} T 
=\widetilde{S^2 / (\Z/2\,\Z)} \times_{\pi^{\textup{orb}}_1(S^2 / (\Z/2\,\Z),\,p_0)} (\R/\Z)^2 
\simeq M.
$
\end{example}

\section{Final remarks} \label{remarks}

In this paper we have covered symplectic {Hamiltonian} actions as contained in the works of 
Audin, Ahara, Hattori, Delzant, Duistermaat, Heckman
 Kostant, Atiyah, Guillemin, Karshon, Sternberg, Tolman, Weitsman~\cite{ AH,  A1, A2, DuHe, K, kostant, atiyah, gs, De} among others, 
and more general symplectic actions as in the works of Benoist, Duistermaat, Frankel, McDuff, Ortega, Ratiu, and the author \cite{benoist, benoistcorr, DuPe, ortegaratiu, Pe} among others. 

We have described classifications (on compact manifolds) in four cases:
(i) \emph{maximal Hamiltonian case}:  Hamiltonian $T$\--action, $\dim M=2\dim T$;
(ii)
 \emph{$S^1$\--Hamiltonian case}: Hamiltonian $T$\--action, $\dim M=4$,  $\dim T=1$;
(iii)
\emph{four\--dimensional case}: $\dim M=4$ and $\dim T=2$; 
(iv)
\emph{maximal symplectic case}: there is a $\dim T$\--orbit symplectic orbit;
(v)
\emph{coisotropic case}: there is a coisotropic orbit. We have outlined the connections of these works with
\emph{complex algebraic geometry}, in particular Kodaira's classification of complex analytic 
surfaces \cite{kodaira}), the theory of toric varieties and K\"ahler manifolds~\cite{DuPeTV}, 
and toric log symplectic\--geometry~\cite{GuLiPeRa}; \emph{geometric topology}, in particular the work of Palais\--Stewart~\cite{ps} and Benson\--Gordon \cite{bg} on torus bundles over tori and nilpotent  Lie groups; also with orbifold theory (for instance Thurston's classification of compact $2$\--dimensional orbifolds); and \emph{integrable systems}, in particular the work of Guillemin\--Sternberg on multiplicity\--free spaces \cite{multfree}
and semitoric systems~\cite{HoSaSe, PeVN09, PeVN11}.

Some of the techniques to study Hamiltonian torus actions (see for instance 
the books by Guillemin~\cite{guillemin}, Guillemin\--Sjamaar~\cite{gusj2005}, 
and  Ortega\--Ratiu~\cite{ORbook}) are useful in the study of 
non\--Hamiltonian symplectic torus actions (since many non\--Hamiltonian actions exhibit proper
subgroups which act Hamiltonianly). 

 In the study of Hamiltonian actions, one tool that  is often used is Morse theory for the (components of the) momentum map of the action. Since there is no momentum map in the classical sense for
a general symplectic action, Morse theory does not appear as a natural tool in the  non\--Hamiltonian case. 

There is an  analogue, however, ``{circle valued\--Morse theory}" (since any symplectic circle action 
admits a circle\--valued momentum map, see McDuff~\cite{MD} and \cite{PeRa}, which is also Morse in a sense) but  it is less 
immediately useful in our setting; for instance a more complicated form of the Morse inequalities holds 
(see Pajitnov~\cite[Chapter 11, Proposition 2.4]{Pajitnov2006} and Farber~\cite[Theorem 2.4]{Farber2004}),
and the theory appears more difficult to apply, at least in the context of non\--Hamiltonian symplectic actions; see \cite[Remark~6]{PeRa} for further discussion in this direction. This could be one reason that non\--Hamiltonian symplectic actions have been studied less in the literature than their Hamiltonian counterparts. 

The moduli space of coisotropic actions 
includes as a particular case Hamiltonian actions of maximal dimension (see \cite{PePiRaSa}
for the description of the moduli space of Hamiltonian actions of maximal dimension on $4$\--manifolds), classified
in Delzant's  article~\cite{De}.

We conclude with a general problem for further research:

\begin{problem} \label{lastp}
Let $T$ be an $m$\--dimensional torus (or even more generally, a compact Lie group). 
Give a classification of effective symplectic $T$\--actions on compact connected symplectic 
$2n$\--dimensional manifolds $(M,\omega)$. For instance, where $2n=4$ or $2n=6$.
\end{problem}

In this paper we have given an answer to this question under the additional assumptions in the 
cases (i)\--(v) above. Theorem~\ref{main:thm} give the complete answer when $m=2$ and $2n=4$ (using 
Theorem~\ref{delzant} for case 1) therein), under no additional assumptions. 

Current techniques are dependent on the additional assumptions (i.e. being Hamiltonian,
having some orbit of a certain type etc.) and solving Problem~\ref{lastp} in further cases
poses a challenge.

\section*{Dedication} This paper is dedicated to J.J. Duistermaat (1942--2010).

The memorial article \cite{GuPeVNWe} edited by V. Guillemin, \'A. Pelayo, S. V\~u Ng\d oc, and A. Weinstein outlines 
some of Duistermaat's most  influential contributions (see also \cite[Section 2.4]{pevn11}). Here is a brief extraction from that article: ``We are honored to pay tribute to Johannes (Hans) J. Duistermaat (1942-2010),
 a world leading figure in geometric analysis and one of the foremost Dutch mathematicians of the XX  century, by presenting a collection of contributions by some of Hans' colleagues, collaborators and students. Duistermaat's first striking contribution was his article ``Fourier integral operators II" with H\"ormander (published 
in Acta Mathematica), a work which he did after his doctoral dissertation. Several influential results in analysis and geometry have the name Duistermaat attached to them, for instance the Duistermaat-Guillemin trace formula
(1975), Duistermaat's global action-angle Theorem (1980), the Duistermaat\--Heckman Theorem (1982) and
the Duistermaat\--Grunbaum Bi\--spectral Theorem (1986). Duistermaat's papers offer an unusual display of
originality and technical mastery."

\medskip
\noindent
\'Alvaro Pelayo\\
Department of Mathematics\\
University of California, San Diego\\
9500 Gilman Drive  $\#$ 0112\\
La Jolla, CA 92093-0112, USA\\
E\--mail: alpelayo@math.ucsd.edu

\end{document}